\documentclass[a4paper,11pt,twoside, eqno]{article}
\usepackage{mathrsfs}
\usepackage{amssymb}
\usepackage{amsmath}
\usepackage{amsthm}
\usepackage{amsfonts}
\usepackage{color}
\usepackage{latexsym}
\usepackage{mathtools}
\usepackage{indentfirst}
\usepackage{amscd}
\usepackage{bbm}
\usepackage{graphicx}
\usepackage[all]{xy}
\usepackage[mathscr]{eucal}
\usepackage{subfigure}

\newcommand{\N}{{\mathbb N}}

\newcommand{\R}{{\mathbb R}}

\allowdisplaybreaks
\newtheorem{theorem}{Theorem}[section]

\newtheorem{corollary}[theorem]{Corollary}

\newtheorem{definition}[theorem]{Definition}

\newtheorem{remark}[theorem]{Remark}

\newtheorem{lemma}[theorem]{Lemma}

\newtheorem{proposition}[theorem]{Proposition}
\newtheorem{claim}[theorem]{Claim}

\newtheorem{assumption}[theorem]{Assumption}

\numberwithin{equation}{section}

\begin{document}

\title{\bf\Large Bifurcations in Lagrangian systems and geodesics I
\footnotetext{\hspace{-0.35cm} 2020
{\it Mathematics Subject Classification}.
Primary 37J20, 34C23, 53C22.
\endgraf
{\it Key words and phrases.}
Bifurcation, Lagrangian system, Euler-Lagrange curves, focal point.
}}
\date{}
\author{Guangcun Lu\footnote{
E-mail: \texttt{gclu@bnu.edu.cn}}}

\maketitle

\vspace{-0.7cm}

\begin{center}

\begin{minipage}{13cm}
{\small {\bf Abstract}\quad
This paper is Part I of a two-part series.  We investigate bifurcation phenomena in Lagrangian systems with
various boundary conditions and constraints, focusing on the interplay between Morse theory and the existence of multiple solutions
 through three principal configurations: Lagrangian trajectories connecting two submanifolds or with endpoints  related by an isometry,
and brake orbits in Lagrangian systems. For each configuration, we establish necessary and sufficient conditions
for bifurcation using Morse index and nullity techniques,  including classification of Rabinowitz-type alternative bifurcation scenarios.
 For Euler-Lagrange curves emanating perpendicularly from a submanifold, we develop a unified Morse-theoretic framework that rigorously connects
geometric focal structure (e.g., conjugate points) and analytic bifurcation behavior (e.g., solution branching patterns).}
\end{minipage}
\end{center}
	
\vspace{0.2cm}

\tableofcontents

\vspace{0.2cm}

\section{Introduction and overview of the series}\label{sec:LgrResults}
\setcounter{equation}{0}

\subsection{Background and motivation} \label{ssec:background} 
The study of bifurcation phenomena in variational problems lies at the heart of nonlinear analysis and differential geometry. A fundamental question is to understand how families of solutions to Euler-Lagrange equations change their qualitative behavior as a parameter varies. Classical results, such as the Lyapunov-Schmidt reduction and the celebrated work of Rabinowitz on potential operators, provide abstract frameworks for analyzing such phenomena. In the geometric context, the Morse index theorem elegantly relates the index of a geodesic to its conjugate points, offering a powerful tool for understanding the stability and multiplicity of geodesics in Riemannian geometry.

However, extending this elegant theory to more general Lagrangian systems with constraints and various boundary conditions presents significant challenges. While substantial progress has been made for specific cases--such as periodic orbits, geodesics between fixed points, or brake orbits--a \textsf{unified framework} that systematically handles configurations like trajectories connecting two submanifolds, endpoints constrained by an isometry, or generalized periodic solutions has remained less developed. This paper aims to fill this gap by establishing a comprehensive Morse-theoretic approach to bifurcation theory for a broad class of Lagrangian boundary value problems.

Our work is motivated by the desire to create a common foundation that not only recovers known results in differential geometry (e.g., bifurcation of geodesics) as special cases but also applies to non-geometric, time-dependent Lagrangian systems. The key insight is to leverage the interplay between the analytic data (Morse index and nullity) and the geometric structure of the problem to derive sharp necessary and sufficient conditions for bifurcation.

\subsection{Main objectives and structure of the series} \label{ssec:objectives}

This two-part series has the following primary objectives:

1.  \textbf{Part I (Current Paper):} To develop a unified Morse-theoretic framework for analyzing bifurcations in Lagrangian systems under three general configurations:
    \begin{itemize}
        \item Trajectories connecting two submanifolds.
        \item Trajectories with endpoints related by an isometry (including generalized periodic solutions).
        \item Brake orbits (symmetric periodic trajectories).
    \end{itemize}
    For each configuration, we establish necessary and sufficient conditions for bifurcation, prove existence theorems, and classify the possible bifurcation scenarios following the tradition of Rabinowitz's alternatives.

2.  \textbf{Part II (companion paper):} To apply the framework developed in Part I to autonomous Lagrangian systems and delve deeper into bifurcation phenomena for geodesics in both Finsler and Riemannian settings, extending the results to broader geometric variational problems.

The core technical questions we address in this Part I are precisely formulated in Section 1.4 (see Questions (1)-(3)). The answers, summarized in Theorems 1.4--1.24, provide a complete characterization of bifurcation points in terms of changes in the Morse index and nullity along the trivial branch of solutions.

\subsection{Relation to existing literature} \label{ssec:literature}

Our work builds upon and extends several key strands of research. The abstract bifurcation theory for potential operators was pioneered by Rabinowitz \cite{Rab77}. In the context of geodesics, the connection between conjugate points and the Morse index is classical \cite{Mor1934, Mil1962}. More recent contributions by authors such as Duistermaat \cite{Du} (whose index theorem we rely upon in Theorem~\ref{th:DustMorse}), and Piccione et al. \cite{PiPoTa04} on the Maslov index and bifurcation, have significantly advanced the field.

The novelty of our approach lies in its systematicity and generality. While previous works often focused on specific settings, we provide a unified treatment that encompasses a wide range of boundary conditions. The techniques employed here combine the abstract critical point theory from 
\cite{Lu8, Lu9, Lu11, Lu10} with careful geometric reductions to Euclidean spaces, allowing us to handle the technical complexities arising from the manifold setting and constraints.

\subsection{Basic assumptions, conventions, and 
statement of the research problems} \label{ssec:basic_assumptions}

Throughout, $\mathbb{N}$, $\mathbb{Z}$, and $\mathbb{R}$ denote, respectively, the natural numbers, integers, and real numbers. The symbol $\tau$ is a fixed positive number, and $\Lambda$ denotes the parameter space (typically a real interval or a topological space).\\

\noindent\textbf{Assumption 1.0} \textsf{(Basic assumptions and conventions).}
Let $M$ be a $n$-dimensional, connected $C^7$ submanifold of $\R^N$.
Its tangent bundle $TM$ is a $C^6$-smooth manifold of dimension $2n$,
whose points are denoted by $(x,v)$, with $x\in M$ and $v\in T_xM$.
The bundle projection $\pi:TM\to M,\,(x,v)\mapsto x$ is $C^6$.
Let $g$ be a $C^6$ Riemannian metric and $\mathbb{I}_g$ a $C^7$ isometry  on $(M, g)$,
i.e., $\mathbb{I}_g:M\to M$ is $C^7$ and satisfies $g((\mathbb{I}_g)_\ast(u), (\mathbb{I}_g)_\ast(v))=g(u,v)$
for all $u,v\in TM$.
(Thus the Christoffel symbols $\Gamma^i_{jk}$ and the exponential map $\exp:TM\to M$ are $C^5$.)\\

Let ${\bf N}$ be a submanifold of $M\times M$ (specifically, either a product of two submanifolds of $M$ or the graph of a Riemannian isometry of $(M,g)$), and let $L:\Lambda\times [0,\tau]\times TM\to\mathbb{R}$ be as in Assumption~\ref{ass:Lagr6}.
  The current manuscript focuses on  bifurcations research for the following 
  Lagrangian  boundary value problem:
  \begin{equation}\label{e:LagrIntro}
\left.\begin{array}{ll}
\frac{d}{dt}\big(\partial_vL_\lambda(t, \gamma(t), \dot{\gamma}(t))\big)-\partial_q L_\lambda(t, \gamma(t), \dot{\gamma}(t))=0\;\;\forall
t\in [0,\tau],\\
(\gamma(0), \gamma(\tau))\in{\bf N}\quad\hbox{and}\quad\\
\partial_vL_\lambda(0, \gamma(0), \dot{\gamma}(0))[v_0]=\partial_vL_\lambda(\tau, \gamma(\tau), \dot{\gamma}(\tau))[v_1]\;\;\forall
(v_0,v_1)\in T_{(\gamma(0),\gamma(\tau))}{\bf N}
\end{array}\right\}
\end{equation} 
  with respect to a continuous family $\{\gamma_\lambda\,|\,\lambda\in\Lambda\}$ of solutions of this problem,
    where the superposed dot denotes the derivative with respect to $t$.
If every neighborhood of $(\mu, \gamma_\mu)$ in  $\Lambda\times C^1([0,\tau];\R^{2n})$ contains a point $(\lambda, \alpha_\lambda)\notin\{(\lambda,\gamma_\lambda)\,|\,\lambda\in\Lambda\}$
satisfying (\ref{e:LagrIntro}) we say $(\mu,\gamma_\mu)$ to be a \textsf{bifurcation point} of (\ref{e:LagrIntro})
in  $\Lambda\times C^1([0,\tau];\R^{2n})$ with respect to the trivial branch $\{(\lambda,\gamma_\lambda)\,|\,\lambda\in\Lambda\}$.

 Observe that solutions of (\ref{e:LagrIntro}) correspond to critical points of
 $C^2$ functionals defined by
   \begin{equation}\label{e:EnergyFunc}
 \mathcal{E}_\lambda(\gamma)=\int^\tau_0L_\lambda(t, \gamma(t), \dot{\gamma}(t))dt
\end{equation}
  on the Banach manifold
  \begin{equation}\label{e:BanachM}
  C^{1}_{\bf N}([0,\tau]; M)=\{\gamma\in C^{1}([0,\tau]; M)\,|\, (\gamma(0),\gamma(\tau))\in{\bf N}\}.
  \end{equation}
 Following \cite[p.175]{Du}, we define   the Morse index $m^-(\mathcal{E}_\lambda,\gamma_\lambda)$ and nullity
  $m^0(\mathcal{E}_\lambda,\gamma_\lambda)$ of  $\mathcal{E}_\lambda$ at
  $\gamma_\lambda\in C^{1}_{\bf N}([0,\tau]; M)$, respectively, as
   \begin{align}
m^-(\mathcal{E}_\lambda,\gamma_\lambda) &= \sup\{ \dim L \mid L \subset 
  T_{\gamma_\lambda}C^{1}_{\bf N}([0,\tau]; M) \text{ is a linear subspace such that} \nonumber\\
  &\qquad D^{2}\mathcal{E}_\lambda(\gamma_\lambda) \text{ is negative definite on } L \bigr\}, \label{e:DuMorseIndex} \\
m^0(\mathcal{E}_\lambda,\gamma_\lambda) &= \dim \ker \bigl( D^{2}\mathcal{E}_\lambda(\gamma_\lambda) \bigr),
 \label{e:DuNullity}
\end{align}
      where $T_{\gamma_\lambda}C^{1}_{\bf N}([0,\tau]; M)$ is 
 the tangent space of $C^{1}_{\bf N}([0,\tau]; M)$ at $\gamma_\lambda$, i.e.,
  the  Banach space of all $C^1$-sections $\xi$ of the pullback bundle \(\gamma_\lambda^*TM\)
  that satisfy the boundary condition  $(\xi(0), \xi(\tau))\in T_{(\gamma_\lambda(0), \gamma_\lambda(\tau))}\mathbf{N}$.
  Using $m^-(\mathcal{E}_\lambda,\gamma_\lambda)$ and $m^0(\mathcal{E}_\lambda,\gamma_\lambda)$, 
  which depend only on the second variation
    $D^{2}\mathcal{E}_\lambda(\gamma_\lambda)$
     of the functional  $\mathcal{E}_\lambda$ at
  $\gamma_\lambda\in C^{1}_{\bf N}([0,\tau]; M)$ (cf.~\cite[\S5]{BoTr} 
  for the precise expression of
  $D^{2}\mathcal{E}_\lambda(\gamma_\lambda)$ in terms of covariant derivatives),
     we address the following questions:
   \begin{description}
\item[(1)] Under what conditions there is  a bifurcation point $(\mu,\gamma_\mu)$ in the above sense?
\item[(2)] What are the necessary (resp. sufficient) condition for a given point $(\mu, \gamma_\mu)$ to be a bifurcation point in the above sense?
\item[(3)] How is the solutions of (\ref{e:LagrIntro}) distributed near a bifurcation point $(\mu, \gamma_\mu)$ as above ?
\end{description}
Roughly speaking, our answers are:
\begin{description}
\item[(a)] If $\Lambda$ is path-connected and there exist two  points $\lambda^+, \lambda^-\in\Lambda$ 
for which the Morse index intervals 
    $$
     [m^-({\mathcal{E}}_{\lambda^+}, \gamma_{\lambda^+}),
m^-({\mathcal{E}}_{\lambda^+}, \gamma_{\lambda^+})+ m^0({\mathcal{E}}_{\lambda^+}, \gamma_{\lambda^+})]
$$
and
$$
[m^-({\mathcal{E}}_{\lambda^-}, \gamma_{\lambda^-}),
m^-({\mathcal{E}}_{\lambda^-}, \gamma_{\lambda^-})+ m^0({\mathcal{E}}_{\lambda^-}, \gamma_{\lambda^-})]
    $$
    are disjoint, and either
     $m^0(\mathcal{E}_{\lambda^+}, \gamma_{\lambda^+})=0$ or $m^0(\mathcal{E}_{\lambda^-}, \gamma_{\lambda^-})=0$,
then there exists a point $\mu$ on any path connecting $\lambda^+$ to $\lambda^-$ in $\Lambda$
such that $(\mu,\gamma_\mu)$ is a bifurcation point  for the family as described as above.
\item[(b)] If $\Lambda$ is first countable and there exist two points $\lambda^+, \lambda^-\in\Lambda$ in any neighborhood of some $\mu\in\Lambda$
satisfying the properties as in (a), then $(\mu,\gamma_\mu)$ is a bifurcation point. Conversely, for
a bifurcation point $(\mu,\gamma_\mu)$ it must hold that $m^0({\mathcal{E}}_{\mu}, \gamma_{\mu})>0$.
\item[(c)] If $\Lambda$ is a real interval, $\mu\in{\rm Int}(\Lambda)$, then the solutions of (\ref{e:LagrIntro})
near a bifurcation point $(\mu, \gamma_\mu)$ have alternative bifurcations of Rabinowitz's type (as in \cite{Rab77}) provided that
 $m^0(\mathcal{E}_{\lambda}, \gamma_{\lambda})=0$  for each $\lambda\in\Lambda\setminus\{\mu\}$ near $\mu$, and
  $m^-(\mathcal{E}_{\lambda}, \gamma_{\lambda})$ take, respectively, values $m^-(\mathcal{E}_{\mu}, \gamma_\mu)$ and
  $m^-(\mathcal{E}_{\mu}, \gamma_\mu)+ m^0(\mathcal{E}_{\mu}, \gamma_\mu)$
 as $\lambda\in\Lambda$ varies in two deleted half neighborhoods  of $\mu$.
\end{description}
The precise assumptions and main results are organized in the following three subsections.
The proofs are built upon the abstract theory from \cite{Lu8, Lu10}, combined with techniques in Appendix A of \cite{Lu10}.
As an application, Theorem~\ref{th:MorseBif} establishes a rigorous one-to-one correspondence between focal points and bifurcation phenomena along Euler-Lagrange curves of (\ref{e:LagrCurve}), providing a complete classification of branching behaviors near critical instants.

\subsection{Bifurcations of Lagrangian system trajectories connecting two submanifolds}\label{sec:submanifolds}

\begin{assumption}\label{ass:Lagr6}
{\rm  Let $(M, g)$ be as in Assumption~1.0 in Introduction.
For a real $\tau>0$ and a topological space $\Lambda$,
let $L:\Lambda\times [0,\tau]\times TM\to\R$ be a continuous function such that
for each $C^3$ chart $\alpha:U_\alpha\to\alpha(U_\alpha)\subset\mathbb{R}^n$ and the induced bundle
chart $T\alpha:TM|_{U_\alpha}\to \alpha(U_\alpha)\times\mathbb{R}^n\subset\mathbb{R}^n\times\mathbb{R}^n$ the function
$$
L^\alpha:\Lambda\times[0,\tau]\times \alpha(U_\alpha)\times\mathbb{R}^n\to\mathbb{R},\;
(\lambda,t, q,v)\mapsto L(\lambda,t, (T\alpha)^{-1}(q,v))
$$
is $C^2$ with respect to $(t,q,v)$ and strictly convex with respect to $v$, and
all its partial derivatives also depend continuously on $(\lambda, t, q, v)$.
Let $S_0$ and $S_1$ be two boundaryless and connected submanifolds of $M$ of dimensions less than $\dim M$.  }
\end{assumption}

By \cite[Theorem~4.2]{PiTa01}, for each integer $1\le i\le 4$,  $C^{i}([0, \tau]; M)$  is
 a  $C^{5-i}$  Banach manifold modeled on the Banach space $C^i([0, \tau];\R^n)$ with the tangent space
 \begin{equation}\label{e:LagrGener000}
 T_\gamma C^i([0, \tau];M)=C^i(\gamma^\ast TM)=\{\xi\in C^i([0,\tau];\mathbb{R}^N)\,|\,\xi(t)\in T_{\gamma(t)}M\;\forall t\}
 \end{equation}
at $\gamma\in C^i([0, \tau];M)$.
Thus
\begin{equation}\label{e:LagrGener00}
C^{1}_{S_0\times S_1}([0,\tau]; M):=\left\{\gamma\in C^{1}([0,\tau]; M)\,|\, (\gamma(0),\gamma(\tau))\in S_0\times S_1\right\}
\end{equation}
 is a $C^4$ Banach submanifold of $C^{1}([0,\tau]; M)$. 
Its tangent space at $\gamma\in C^{1}_{S_0\times S_1}([0,\tau]; M)$ is
\begin{equation}\label{e:LagrGener00+}
C^{1}_{S_0\times S_1}(\gamma^\ast TM):=\left\{\xi\in C^1(\gamma^\ast TM)\,|\, (\xi(0),\xi(\tau))\in T_{(\gamma(0),\gamma(1))}(S_0\times S_1)\right\},
\end{equation}
which is dense in the Hilbert subspace
\begin{equation}\label{e:LagrGener0}
W^{1,2}_{S_0\times S_1}(\gamma^\ast TM):=\left\{\xi\in W^{1,2}(\gamma^\ast TM)\,|\, (\xi(0),\xi(\tau))\in T_{(\gamma(0),\gamma(1))}(S_0\times S_1)\right\}
\end{equation}
of $W^{1,2}(\gamma^\ast TM)$ (consisting of
all $W^{1,2}$-sections of the pull-back bundle $\gamma^\ast TM\to [0,\tau]$) with inner product
  given by
\begin{equation}\label{e:1.1}
\langle\xi,\eta\rangle_{1,2}=\int^\tau_0\langle\xi(t),\eta(t)\rangle_g dt+
\int^\tau_0\langle \frac{D\xi}{dt}(t), \frac{D\eta}{dt}(t)\rangle_g dt,
\end{equation}
where $\frac{D\xi}{dt}$ is the $W^{1,2}$ covariant
derivative of $\xi$ along $\gamma$,
defined via the Levi-Civita connection  of the metric $g$
in (\ref{e:covariantDerivative}).
Hereafter $\langle u,v\rangle_g=g(u,v)$ for $u,v\in TM$.

For each $\lambda\in\Lambda$,  as in the proof of the first claim in \cite[Proposition~4.2]{Lu9}  we get that
\begin{equation}\label{e:LagrGener-}
\mathcal{E}_\lambda: C^{1}_{S_0\times S_1}([0,\tau]; M)\to\mathbb{R},\;\gamma\mapsto \int^\tau_0L_\lambda(t, \gamma(t), \dot{\gamma}(t))dt
\end{equation}
is a $C^2$ functional.
A path $\gamma_0\in C^{1}_{S_0\times S_1}([0,\tau]; M)$ is a critical point of $\mathcal{E}_\lambda$
if and only if it belongs to $C^{2}([0,\tau]; M)$ and satisfies the Euler-Lagrange equation
\begin{equation}\label{e:LagrGener}
\frac{d}{dt}\big(\partial_vL_\lambda(t, \gamma(t), \dot{\gamma}(t))\big)-\partial_q L_\lambda(t, \gamma(t), \dot{\gamma}(t))=0\;\forall
t\in [0,\tau]
\end{equation}
and the boundary condition
\begin{equation}\label{e:LagrGenerB}
\left.\begin{array}{ll}
(\gamma(0), \gamma(\tau))\in S_0\times S_1\quad\hbox{and}\quad\\
\partial_vL_\lambda(0, \gamma(0), \dot{\gamma}(0))[v_0]=0\quad\forall v_0\in T_{\gamma(0)}S_0,\\
\partial_vL_\lambda(\tau, \gamma(\tau), \dot{\gamma}(\tau))[v_1]=0\quad\forall v_1\in T_{\gamma(\tau)}S_1.
\end{array}\right\}
\end{equation}
By \cite{Du}, the second-order differential $D^2\mathcal{E}_\lambda(\gamma_0)$ of $\mathcal{E}_\lambda$ at such a critical point $\gamma_0$
can be extended into a continuous symmetric bilinear form on $W^{1,2}_{S_0\times S_1}(\gamma^\ast TM)$
with finite Morse index and nullity
\begin{equation}\label{e:MorseIndexTwo}
m^-(\mathcal{E}_\lambda,\gamma_0)\quad\hbox{and}\quad m^0(\mathcal{E}_\lambda,\gamma_0),
\end{equation}
which coincide with those defined by (\ref{e:DuMorseIndex}) and (\ref{e:DuNullity}) 
for $\mathbf{N}=S_0\times S_1$, respectively.

\begin{assumption}\label{ass:LagrGenerB}
{\rm  Under Assumption~\ref{ass:Lagr6}, for each $\lambda\in\Lambda$ let
$\gamma_\lambda\in C^2([0,\tau];M)$ satisfy (\ref{e:LagrGener})-(\ref{e:LagrGenerB}).
It is also assumed that $\Lambda\times [0,\tau]\ni (\lambda,t)\mapsto\gamma_\lambda(t)\in M$ and
$\Lambda\times [0,\tau]\ni (\lambda,t)\mapsto\dot{\gamma}_\lambda(t)\in TM$ are continuous,
that is, for any $C^2$ coordinate chart $\phi:W\to\phi(W)\subset\mathbb{R}^n$, maps
$$
(\lambda,t)\mapsto(\phi\circ\gamma_\lambda)(t),\quad (\lambda,t)\mapsto\frac{d}{dt}(\phi\circ\gamma_\lambda)(t)
$$
are continuous.
}
\end{assumption}

\begin{definition}\label{def:LagrGenerBifur}
{\rm  
Let  $X=C^{1}_{S_0\times S_1}([0,\tau]; M)$ (or $C^{2}_{S_0\times S_1}([0,\tau]; M)$).
For $\mu\in\Lambda$,
we call $(\mu, \gamma_\mu)$  a \textsf{bifurcation point} of the problem (\ref{e:LagrGener})--(\ref{e:LagrGenerB})
in $\Lambda\times X$ with respect to the branch $\{(\lambda,\gamma_\lambda)\,|\,\lambda\in\Lambda\}$
  if  there exists a point  $(\lambda_0, \gamma_0)$ in any neighborhood of $(\mu,\gamma_\mu)$ in $\Lambda\times X$
    such that $\gamma_0\ne\gamma_{\lambda_0}$
  is a solution of (\ref{e:LagrGener})--(\ref{e:LagrGenerB}) with $\lambda=\lambda_0$.
  Moreover, $(\mu, \gamma_\mu)$ is said to be a \textsf{bifurcation point along sequences} of the problem (\ref{e:LagrGener})--(\ref{e:LagrGenerB})
in $\Lambda\times X$ with respect to the branch $\{(\lambda,\gamma_\lambda)\,|\,\lambda\in\Lambda\}$
  if  there exists a sequence  $\{(\lambda_k, \gamma^k)\}_{k\ge 1}$ in $\Lambda\times X$,
   converging to $(\mu,\gamma_\mu)$ such that each $\gamma^k\ne\gamma_{\lambda_k}$
  is a solution of (\ref{e:LagrGener})--(\ref{e:LagrGenerB}) with $\lambda=\lambda_k$, $k=1,2,\cdots$.
  (These two notions are equivalent if $\Lambda$ is first countable.)  }
\end{definition}

Recall that an isolated critical point $p$ of a $C^1$-functional $f$ on a Banach manifold $\mathcal{M}$
is called \textsf{homological visible} if there exists a nonzero critical group $C_m(f,p;\mathbf{K})$ for some
Abel group $\mathbf{K}$.

\begin{theorem}\label{th:bif-nessLagrGener}
Let  Assumptions~\ref{ass:Lagr6},~\ref{ass:LagrGenerB} be satisfied, and $\mu\in\Lambda$ be such that
\footnote{This assumption is to guarantee the existence of a Riemannian metric $g$ on $M$ such that
$S_0$ (resp. $S_1$) is totally geodesic near $\gamma_\mu(0)$ (resp. $\gamma_\mu(\tau)$)
when we reduce the problems to Euclidean spaces in Section~\ref{sec:LagrBound.1.1}.
Therefore it is not needed if $M$ is an open subset in  Euclidean spaces and $S_0$ and $S_1$ are linear subspaces.
Actually, when $\gamma_\mu(0)=\gamma_\mu(\tau)$ we only need a weaker condition that there exists a coordinate chart
$(U,\varphi)$ around this point on $M$ such that $\varphi(S_0\cap S_1\cap U)$ is the intersection $\varphi(U)$ of
the union of two linear subspaces in $\mathbb{R}^n$.}
 $\gamma_\mu(0)\ne\gamma_\mu(\tau)$
in the case $\dim S_0>0$ and $\dim S_1>0$.
\begin{enumerate}
\item[\rm (I)]{\rm (\textsf{Necessary condition}):}
Suppose  that $(\mu, \gamma_\mu)$  is a  bifurcation point along sequences of the problem (\ref{e:LagrGener})--(\ref{e:LagrGenerB})
with respect to the branch $\{(\lambda,\gamma_\lambda)\,|\,\lambda\in\Lambda\}$ in $\Lambda\times C^{1}_{S_0\times S_1}([0,\tau]; M)$.
Then $m^0(\mathcal{E}_{\mu}, \gamma_\mu)>0$.
\item[\rm (II)]{\rm (\textsf{Sufficient condition}):}
Suppose that $\Lambda$ is first countable and that there exist two sequences in  $\Lambda$ converging to $\mu$, $(\lambda_k^-)$ and
$(\lambda_k^+)$,  such that one of the following conditions is satisfied:
 \begin{enumerate}
 \item[\rm (II.1)] For each $k\in\mathbb{N}$, either $\gamma_{\lambda^+_k}$  is not an isolated critical point of ${\mathcal{E}}_{\lambda^+_k}$,
 or $\gamma_{\lambda^-_k}$ is not an isolated critical point of ${\mathcal{E}}_{\lambda^-_k}$,
 or $\gamma_{\lambda^+_k}$ {\rm (}resp. $\gamma_{\lambda^-_k}${\rm )}
  is an isolated critical point of $\mathcal{E}_{\lambda^+_k}$ {\rm (}resp. $\mathcal{E}_{\lambda^-_k}${\rm )} and
  $C_m(\mathcal{E}_{\lambda^+_k}, \gamma_{\lambda^+_k};{\bf K})$ and $C_m(\mathcal{E}_{\lambda^-_k}, \gamma_{\lambda^-_k};{\bf K})$
  are not isomorphic for some Abel group ${\bf K}$ and some $m\in\mathbb{Z}$.
\item[\rm (II.2)] For each $k\in\mathbb{N}$, there exists $\lambda\in\{\lambda^+_k, \lambda^-_k\}$
such that $\gamma_{\lambda}$  is an either non-isolated or homological visible critical point of
$\mathcal{E}_{\lambda}$ , and
$$
\left.\begin{array}{ll}
&[m^-(\mathcal{E}_{\lambda_k^-}, \gamma_{\lambda^-_k}), m^-(\mathcal{E}_{\lambda_k^-}, \gamma_{\lambda^-_k})+
m^0(\mathcal{E}_{\lambda_k^-}, \gamma_{\lambda^-_k})]\\
&\cap[m^-(\mathcal{E}_{\lambda_k^+}, \gamma_{\lambda^+_k}),
m^-(\mathcal{E}_{\lambda_k^+}, \gamma_{\lambda^+_k})+m^0(\mathcal{E}_{\lambda_k^+}, \gamma_{\lambda^+_k})]=\emptyset.
\end{array}\right\}\eqno(\hbox{$\ast_k$})
$$
\item[\rm (II.3)] For each $k\in\mathbb{N}$, (\hbox{$\ast_k$}) holds true,
and either $m^0(\mathcal{E}_{\lambda_k^-}, \gamma_{\lambda^-_k})=0$ or $m^0(\mathcal{E}_{\lambda_k^+}, \gamma_{\lambda^+_k})=0$.
 \end{enumerate}
 Then there exists a sequence $\{(\lambda_k,\gamma^k)\}_{k\ge 1}$ in
 $\hat\Lambda\times C^2_{S_0\times S_1}([0,\tau]; M)$  converging to
 $(\mu, \gamma_\mu)$  such that each $\gamma^k\ne\gamma_{\lambda_k}$ is a solution of the problem
 (\ref{e:LagrGener})--(\ref{e:LagrGenerB})  with $\lambda=\lambda_k$,
 $k=1,2,\cdots$,  where
 $\hat{\Lambda}=\{\mu,\lambda^+_k, \lambda^-_k\,|\,k\in\mathbb{N}\}$.
 In particular, $(\mu,\gamma_\mu)$ is a bifurcation point of the problem
 (\ref{e:LagrGener})--(\ref{e:LagrGenerB}) in $\hat\Lambda\times C^2_{S_0\times S_1}([0,\tau]; M)$ with  respect to the branch $\{(\lambda, \gamma_\lambda)\,|\,\lambda\in\hat\Lambda\}$
  {\rm (}and so $\{(\lambda, \gamma_\lambda)\,|\,\lambda\in\Lambda\}${\rm )}.
\end{enumerate}
  \end{theorem}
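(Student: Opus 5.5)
The plan is to reduce the problem near $\gamma_\mu$ to a parameterized family of functionals on an open subset of a Banach (or Hilbert) space in which the trivial branch becomes the zero section, and then to invoke the abstract bifurcation theorems for potential operators from \cite{Lu8, Lu10}. These abstract results give the necessary condition (nonzero nullity) and the sufficient conditions (II.1)--(II.3) in terms of critical groups, Morse indices and nullities along sequences $\lambda_k^\pm\to\mu$.

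\textbf{Step 1: choice of metric and coordinates.} Using the hypothesis $\gamma_\mu(0)\ne\gamma_\mu(\tau)$ when both $\dim S_0>0$ and $\dim S_1>0$, we choose a $C^6$ Riemannian metric on $M$ for which $S_0$ is totally geodesic near $\gamma_\mu(0)$ and $S_1$ is totally geodesic near $\gamma_\mu(\tau)$; the condition that the two endpoints differ lets the two local modifications be made independently. For $\lambda$ near $\mu$ and $\xi$ in a small ball of $C^1_{S_0\times S_1}(\gamma_\lambda^*TM)$, we consider the map $\xi\mapsto \exp_{\gamma_\lambda(t)}\xi(t)$. Total geodesicity ensures that this map sends $C^1_{S_0\times S_1}(\gamma_\lambda^*TM)$ into $C^1_{S_0\times S_1}([0,\tau];M)$, so it is a chart. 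We then trivialize $\gamma_\lambda^*TM$ by a parallel (or continuous-in-$\lambda$) orthonormal frame. Because $\gamma_\lambda$ and $\dot\gamma_\lambda$ depend continuously on $\lambda$ (Assumption~\ref{ass:LagrGenerB}), this identifies every chart with a fixed space
\[
\{\xi\in C^1([0,\tau];\R^n)\,:\,\xi(0)\in V_0,\ \xi(\tau)\in V_1\},
\]
where $V_0,V_1$ are fixed linear subspaces of $\R^n$ of dimensions $\dim S_0$ and $\dim S_1$.

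\textbf{Step 2: the pulled-back functional.} In these coordinates we obtain a family $\tilde{\mathcal E}_\lambda(\xi)=\int_0^\tau \tilde L_\lambda(t,\xi,\dot\xi)\,dt$. Its Lagrangian $\tilde L_\lambda$ is $C^2$ in $(t,\xi,v)$, strictly convex in $v$, and has all derivatives continuous in $\lambda$. Moreover $0$ is a critical point of $\tilde{\mathcal E}_\lambda$ for every $\lambda$. Critical points of $\tilde{\mathcal E}_\lambda$ near $0$ correspond exactly to solutions of (\ref{e:LagrGener})--(\ref{e:LagrGenerB}) near $\gamma_\lambda$, and the chart preserves Morse index, nullity and critical groups. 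After modifying $\tilde L_\lambda$ outside a neighborhood of the zero section so that it becomes quadratic at infinity in $v$, we are in exactly the setting of the Euclidean results of \cite[Appendix A]{Lu10} and \cite{Lu9}. There, $\tilde{\mathcal E}_\lambda$ restricted to the $W^{1,2}$-space with boundary subspaces $V_0,V_1$ satisfies the hypotheses of the abstract splitting and bifurcation theorems. These hypotheses are: a $C^1$ gradient on the Hilbert space with a $C^2$ restriction to the $C^1$ space, a Fredholm Hessian of the form positive-definite plus compact, and continuity in $\lambda$.

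\textbf{Step 3: applying the abstract theorems.} The abstract necessary-condition theorem gives (I): if $m^0=0$ at $\mu$, the Hessian is invertible uniformly for $\lambda$ near $\mu$, and the implicit-function argument excludes nontrivial critical points near $0$ in $C^1$. For (II), the abstract sufficient theorems of \cite{Lu8, Lu10} produce $\lambda_k\in\hat\Lambda$ and critical points $\xi_k\ne0$ with $\xi_k\to0$ in $W^{1,2}$. Under (II.3), the disjointness $(\ast_k)$ together with vanishing nullity makes one endpoint nondegenerate, so its critical groups are concentrated in a single degree lying outside the other interval; by the shifting theorem this reduces to (II.2), which in turn reduces to (II.1). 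Finally, elliptic bootstrapping via the Euler--Lagrange equation upgrades the convergence $\xi_k\to0$ to convergence in $C^2$.

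The main obstacle is Step 2. The regularity of the pulled-back functional is only $C^2$ on $C^1$-type spaces and not $C^2$ on $W^{1,2}$, so the abstract theory must be applied in its Banach–Hilbert form. One must also check the uniformity in $\lambda$ of the chart construction and of the Fredholm and compactness estimates near $\mu$. My approach would be to follow the proof of \cite[Proposition~4.2]{Lu9} and \cite[Appendix A]{Lu10} verbatim, tracking the $\lambda$-dependence of the moving frame.
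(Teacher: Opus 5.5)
Your plan follows the paper's architecture: make $S_0,S_1$ totally geodesic near the distinct endpoints, pass to exponential coordinates where the boundary condition becomes $V_0\times V_1$, modify the Lagrangian to be quadratic at infinity on the sequentially compact set $\hat\Lambda=\{\mu,\lambda_k^\pm\}$, apply Lu's abstract theorems in the $C^1$/$W^{1,2}$ setting, and upgrade to $C^2$ by the uniform estimate of Proposition~\ref{prop:solutionLagr}.

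\textbf{The chart.} This is where you genuinely differ. You center a moving chart at $\gamma_\lambda$. The paper instead fixes one chart $\phi_{\overline\gamma}$, built from a $C^7$ curve $\overline\gamma$ with the same endpoints as $\gamma_\mu$ and a parallel frame along it. This makes $\phi_{\overline\gamma}$ of class $C^5$ and keeps $V_0,V_1$ fixed automatically. It then straightens the branch by the translation $\tilde L^\ast_\lambda(t,q,v)=L^\ast_\lambda(t,q+{\bf u}_\lambda(t),v+\dot{\bf u}_\lambda(t))$.

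Your variant can work, but not with a parallel frame along $\gamma_\lambda$. Once the frame at $t=0$ is adapted to $T_{\gamma_\lambda(0)}S_0$, the frame at $t=\tau$ is determined by holonomy. The coordinate image of $T_{\gamma_\lambda(\tau)}S_1$ then moves with $\lambda$, so $V_1$ is not fixed. You must actually construct the adapted frame. One way is to parallel transport the paper's frame from $\overline\gamma(t)$ to $\gamma_\lambda(t)$ along short geodesics; by total geodesicity this preserves $TS_0$ and $TS_1$ at the endpoints.

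In either version, the pulled-back Lagrangian is not $C^2$ in $t$ as you claim. Only the partials listed in (L0) exist. Continuity of $\partial_{tv}$ in $\lambda$, which the $C^2$ bootstrap needs, requires continuity of $(\lambda,t)\mapsto\ddot\gamma_\lambda(t)$. That comes from the Euler--Lagrange equation (Lemma~\ref{lem:regu}(i), cf.\ Lemma~\ref{lem:twoCont+}), not from Assumption~\ref{ass:LagrGenerB} alone.

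\textbf{Regularity on $W^{1,2}$ and part (I).} The functional is not $C^2$ on $W^{1,2}$, and its gradient is not $C^1$ there. It is only $C^{2-0}$ and twice G\^ateaux differentiable, with $B_\lambda(\zeta)=P_\lambda(\zeta)+Q_\lambda(\zeta)$, where $P$ is only strongly continuous and $Q$ is compact and norm-continuous at $0$. The gradient is $C^1$ only on $\mathcal U^X$. The paper proves (I) with \cite[Theorem~3.1]{Lu8}, a Hilbert-space result that uses exactly this structure (Proposition~\ref{prop:funct-analy}(i),(v)--(vii)).

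Your implicit-function argument in $\mathbf X_{V_0\times V_1}$ is a legitimate alternative, since the bifurcating sequence converges in $C^1$. But it needs two facts you do not address:
\begin{itemize}
\item invertibility of $B_\mu(0)$ on $\mathbf X_{V_0\times V_1}$, which follows from $m^0=0$ together with Proposition~\ref{prop:funct-analy}(viii);
\item norm-continuity of $(\lambda,x)\mapsto dA_\lambda(x)$ in $\mathscr{L}(\mathbf X_{V_0\times V_1})$ at $(\mu,0)$.
\end{itemize}

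\textbf{Part (II).} Your reduction (II.3)$\Rightarrow$(II.2)$\Rightarrow$(II.1) via the Morse and shifting lemmas is sound, and mirrors the separate cases of \cite[Theorem~C.4]{Lu11}. What is missing is the transfer of the hypotheses, which are stated for $\mathcal E_\lambda$ on the $C^1$ manifold, to the Hilbert-space functional. Isolatedness passes from $\mathcal U^X$ to $\mathcal U$ by Proposition~\ref{prop:solutionLagr}. Critical groups agree by \cite[Corollary~2.8]{JM}.
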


\begin{theorem}[\textsf{Existence for bifurcations}]\label{th:bif-existLagrGener}
Let  Assumptions~\ref{ass:Lagr6},~\ref{ass:LagrGenerB} be satisfied,
and let $\Lambda$ be path-connected. Suppose that
  there exist two  points $\lambda^+, \lambda^-\in\Lambda$ such that
  one of the following conditions is satisfied:
 \begin{enumerate}
 \item[\rm (i)] Either $\gamma_{\lambda^+}$  is not an isolated critical point of $\mathcal{E}_{\lambda^+}$,
 or $\gamma_{\lambda^-}$ is not an isolated critical point of $\mathcal{E}_{\lambda^-}$,
 or $\gamma_{\lambda^+}$ {\rm (}resp. $\gamma_{\lambda^-}${\rm )}  is an isolated critical point of
  $\mathcal{E}_{\lambda^+}$ {\rm (}resp. $\mathcal{E}_{\lambda^-}${\rm )} and
  $C_m(\mathcal{E}_{\lambda^+}, \gamma_{\lambda^+};{\bf K})$ and $C_m(\mathcal{E}_{\lambda^-}, \gamma_{\lambda^-};{\bf K})$ are not isomorphic for some Abel group ${\bf K}$ and some $m\in\mathbb{Z}$.

\item[\rm (ii)] %The intervals $[m^-({\mathcal{E}}_{\lambda^-}, \gamma_{\lambda^-}),
%m^-({\mathcal{E}}_{\lambda^-}, \gamma_{\lambda^-})+ m^0({\mathcal{E}}_{\lambda^-}, \gamma_{\lambda^-})]$ and
%$$
%[m^-({\mathcal{E}}_{\lambda^+}, \gamma_{\lambda^+}),
%m^-({\mathcal{E}}_{\lambda^+}, \gamma_{\lambda^+})+m^0({\mathcal{E}}_{\lambda^+}, \gamma_{\lambda^+})]
%$$
%is disjoint,
$[m^-({\mathcal{E}}_{\lambda^-}, \gamma_{\lambda^-}),
m^-({\mathcal{E}}_{\lambda^-}, \gamma_{\lambda^-})+ m^0({\mathcal{E}}_{\lambda^-}, \gamma_{\lambda^-})]\cap
[m^-({\mathcal{E}}_{\lambda^+}, \gamma_{\lambda^+}),
m^-({\mathcal{E}}_{\lambda^+}, \gamma_{\lambda^+})+m^0({\mathcal{E}}_{\lambda^+}, \gamma_{\lambda^+})]
\\=\emptyset$,
and there exists $\lambda\in\{\lambda^+, \lambda^-\}$ such that $\gamma_{\lambda}$  is an either non-isolated or homological visible critical point of
$\mathcal{E}_{\lambda}$.

\item[\rm (iii)] $[m^-({\mathcal{E}}_{\lambda^-}, \gamma_{\lambda^-}),
m^-({\mathcal{E}}_{\lambda^-}, \gamma_{\lambda^-})+ m^0({\mathcal{E}}_{\lambda^-}, \gamma_{\lambda^-})]\cap[m^-({\mathcal{E}}_{\lambda^+}, \gamma_{\lambda^+}),
m^-({\mathcal{E}}_{\lambda^+}, \gamma_{\lambda^+})+m^0({\mathcal{E}}_{\lambda^+}, \gamma_{\lambda^+})]\\=\emptyset$,
and either $m^0(\mathcal{E}_{\lambda^+}, \gamma_{\lambda^+})=0$ or $m^0(\mathcal{E}_{\lambda^-}, \gamma_{\lambda^-})=0$.
 \end{enumerate}
  Then for any path $\alpha:[0,1]\to\Lambda$ connecting $\lambda^+$ to $\lambda^-$ such that $\gamma_{\alpha(s)}(0)\ne \gamma_{\alpha(s)}(\tau)$
  for any $s\in [0,1]$ in the case $\dim S_0>0$ and $\dim S_1>0$,   there exists
 a sequence $(\lambda_k)\subset\alpha([0,1])$ converging to some $\mu\in\alpha([0,1])$, and
    solutions $\gamma^k\ne\gamma_{\lambda_k}$ of the problem (\ref{e:LagrGener})--(\ref{e:LagrGenerB})
  with $\lambda=\lambda_k$, $k=1,2,\cdots$, such that $\|\gamma^k-\gamma_{\lambda_k}\|_{C^2([0,\tau];\mathbb{R}^N)}\to 0$
  as $k\to\infty$. {\rm (}In particular,
  $(\mu, \gamma_{\mu})$  is a  bifurcation point along sequences  of the problem (\ref{e:LagrGener})--(\ref{e:LagrGenerB}) in
 $\Lambda\times C^2_{S_0\times S_1}([0,\tau]; M)$  with respect to the branch $\{(\lambda, \gamma_\lambda)\,|\,\lambda\in\Lambda\}$.{\rm )}
Moreover, $\mu$ is not equal to $\lambda^+$ {\rm (}resp. $\lambda^-${\rm )} if $m^0_\tau(\mathcal{E}_{\lambda^+}, \gamma_{\lambda^+})=0$
{\rm (}resp. $m^0_\tau(\mathcal{E}_{\lambda^-}, \gamma_{\lambda^-})=0${\rm )}.
  \end{theorem}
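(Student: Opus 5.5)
We reduce Theorem~\ref{th:bif-existLagrGener} to Theorem~\ref{th:bif-nessLagrGener}(II) by a connectedness argument along the path $\alpha$. Fix a path $\alpha:[0,1]\to\Lambda$ with $\alpha(0)=\lambda^+$, $\alpha(1)=\lambda^-$. Since $[0,1]$ is first countable and the hypotheses on the trivial branch pull back along $\alpha$, we work with the pulled-back family $L_{\alpha(s)}$, $\gamma_{\alpha(s)}$, $s\in[0,1]$. This family satisfies Assumptions~\ref{ass:Lagr6} and~\ref{ass:LagrGenerB} with $\Lambda$ replaced by $[0,1]$. The condition $\gamma_{\alpha(s)}(0)\ne\gamma_{\alpha(s)}(\tau)$ for all $s$ is exactly what makes Theorem~\ref{th:bif-nessLagrGener} applicable at every $\mu=\alpha(s)$.

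Argue by contradiction. Suppose no $s\in[0,1]$ gives a bifurcation point along sequences in $[0,1]\times C^2_{S_0\times S_1}$. Then every $s_0\in[0,1]$ has a neighborhood $J_{s_0}$ and a $C^1$ (hence $C^2$, by the Euler--Lagrange regularity) neighborhood $\mathcal{U}_{s_0}$ of $\gamma_{\alpha(s_0)}$ in which $\gamma_{\alpha(s)}$ is the only critical point of $\mathcal{E}_{\alpha(s)}$ for $s\in J_{s_0}$. Here we use the continuity of $(s,t)\mapsto(\gamma_{\alpha(s)}(t),\dot\gamma_{\alpha(s)}(t))$.

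On each $J_{s_0}$ we use the reduction to Euclidean spaces from Section~\ref{sec:LagrBound.1.1}. The Riemannian metric is chosen so that $S_0,S_1$ are totally geodesic near the endpoints; this is where $\gamma(0)\ne\gamma(\tau)$ is needed. In the resulting chart, near $\gamma_{\alpha(s_0)}$, the functionals become a continuous family of $C^2$ functionals on a fixed neighborhood in a fixed Banach space. This family is of the type treated by the abstract parameterized splitting theorems of \cite{Lu8,Lu10}. Since each $\gamma_{\alpha(s)}$ is isolated, uniformly in $s\in J_{s_0}$, the stability (homotopy invariance) of critical groups under such deformations gives that $s\mapsto C_*(\mathcal{E}_{\alpha(s)},\gamma_{\alpha(s)};\mathbf{K})$ is locally constant. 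By connectedness of $[0,1]$ it is constant, which contradicts (i).

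For (ii) and (iii) we use the shifting theorem. If $\gamma_\lambda$ is isolated, then $C_q(\mathcal{E}_\lambda,\gamma_\lambda;\mathbf{K})=0$ for $q\notin[m^-,m^-+m^0]$. If moreover $m^0=0$, then $C_q=\delta_{q,m^-}\mathbf{K}$, so the point is homologically visible.
\begin{itemize}
\item In case (ii), if some $\gamma_\lambda$ with $\lambda\in\{\lambda^\pm\}$ is non-isolated, we are in case (i). Otherwise one of them is homologically visible, and its nonzero critical group sits in degrees disjoint from the support interval of the other. The critical groups therefore differ, which again gives (i).
\item Case (iii) is a special case of (ii).
\end{itemize}
Hence a bifurcation parameter $\mu=\alpha(s^*)$ exists. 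A sequence $s_k\to s^*$ with solutions $\gamma^k\ne\gamma_{\alpha(s_k)}$ is then produced, and the $C^2$ convergence $\|\gamma^k-\gamma_{\lambda_k}\|_{C^2}\to0$ follows from $C^1$ convergence together with the Euler--Lagrange equation solved for $\ddot\gamma$ (strict convexity in $v$).

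For the final claim, suppose $m^0(\mathcal{E}_{\lambda^+},\gamma_{\lambda^+})=0$. Then part (I) of Theorem~\ref{th:bif-nessLagrGener} excludes $\mu=\lambda^+$. Strictly, it excludes $s^*$ with $\alpha(s^*)=\lambda^+$; to handle an $\alpha$ that revisits $\lambda^+$, we restrict $\alpha$ to a subpath.

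The main obstacle is the local constancy of critical groups in $s$. It requires the Euclidean reduction to produce a family that is uniformly controlled in $s$ on a neighborhood of $s_0$, including $C^2$ continuity in $s$ of the coordinate transformations. It also requires verifying the hypotheses of the abstract stability theorem in \cite{Lu8,Lu10}, such as the Palais--Smale-type and Fredholm structure conditions, for the $W^{1,2}$-extended second variation. I would try to quote the corresponding abstract result directly after checking those hypotheses with the estimates of Appendix~A of \cite{Lu10}.
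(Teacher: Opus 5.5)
Your argument is essentially the paper's proof, and your derivation of the final claim from Theorem~\ref{th:bif-nessLagrGener}(I) is valid. You assume no bifurcation along $\alpha$, obtain local uniform isolation of the trivial branch, use the local Euclidean reduction (where $\gamma_{\alpha(s)}(0)\ne\gamma_{\alpha(s)}(\tau)$ enters) together with the stability argument behind Lu's abstract existence theorem to make $s\mapsto C_*(\mathcal{E}_{\alpha(s)},\gamma_{\alpha(s)};\mathbf{K})$ locally constant and hence constant, and then contradict (i)--(iii) via the shifting theorem. Only two remarks are needed. First, your opening sentence misdescribes the argument: it is not a reduction to Theorem~\ref{th:bif-nessLagrGener}(II), which you never use. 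Second, the step you flag as the main obstacle is handled in the paper by passing to the modified Lagrangian $\check L$ on $W^{1,2}$, where Proposition~\ref{prop:solutionLagr} turns $C^2$-isolation into uniform $W^{1,2}$-isolation and the critical groups on $\mathcal{U}^X$ and on $\mathcal{U}$ are identified, so your plan goes through as you outlined it.
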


  \begin{theorem}[\textsf{Alternative bifurcations of Rabinowitz's type}]\label{th:bif-suffLagrGener}
  Under Assumptions~\ref{ass:Lagr6},~\ref{ass:LagrGenerB} with $\Lambda$ being a real interval,
let $\mu\in{\rm Int}(\Lambda)$ satisfy $\gamma_\mu(0)\ne\gamma_\mu(\tau)$
{\rm (}if $\dim S_0>0$ and $\dim S_1>0${\rm )} and $m^0(\mathcal{E}_{\mu}, \gamma_\mu)>0$.
   If  $m^0(\mathcal{E}_{\lambda}, \gamma_{\lambda})=0$  for each $\lambda\in\Lambda\setminus\{\mu\}$ near $\mu$, and
  $m^-(\mathcal{E}_{\lambda}, \gamma_{\lambda})$ take, respectively, values $m^-(\mathcal{E}_{\mu}, \gamma_\mu)$ and
  $m^-(\mathcal{E}_{\mu}, \gamma_\mu)+ m^0(\mathcal{E}_{\mu}, \gamma_\mu)$
 as $\lambda\in\Lambda$ varies in two deleted half neighborhoods  of $\mu$,
then  one of the following alternatives occurs:
\begin{enumerate}
\item[\rm (i)] The problem (\ref{e:LagrGener})--(\ref{e:LagrGenerB})
 with $\lambda=\mu$ has a sequence of solutions, $\gamma_k\ne \gamma_\mu$, $k=1,2,\cdots$,
which converges to $\gamma_\mu$ in $C^2([0,\tau], M)$.

\item[\rm (ii)]  For every $\lambda\in\Lambda\setminus\{\mu\}$ near $\mu$ there is a  solution $\alpha_\lambda\ne \gamma_\lambda$ of
(\ref{e:LagrGener})--(\ref{e:LagrGenerB}) with parameter value $\lambda$,
 such that  $\alpha_\lambda-\gamma_\lambda$
 converges to zero in  $C^2([0,\tau], \R^N)$ as $\lambda\to \mu$.
{\rm (}Recall that we have assumed $M\subset\R^N$.{\rm )}

\item[\rm (iii)] For a given neighborhood $\mathcal{W}$ of $\gamma_\mu$ in $C^1([0,\tau], M)$, 
there is a one-sided  neighborhood $\Lambda^0$ of $\mu$ such that
for any $\lambda\in\Lambda^0\setminus\{\mu\}$,  (\ref{e:LagrGener})--(\ref{e:LagrGenerB})
 with parameter value $\lambda$
has at least two distinct solutions in $\mathcal{W}$, $\gamma_\lambda^1\ne \gamma_\lambda$ and $\gamma_\lambda^2\ne \gamma_\lambda$,
which can also be chosen to satisfy $\mathcal{E}_{\lambda}(\gamma_\lambda^1)\ne \mathcal{E}_{\lambda}(\gamma_\lambda^2)$
provided that  $m^0(\mathcal{E}_{\mu}, \gamma_\mu)>1$ and (\ref{e:LagrGener})--(\ref{e:LagrGenerB}) with parameter value $\lambda$
has only finitely many distinct solutions in $\mathcal{W}$.
\end{enumerate}
\end{theorem}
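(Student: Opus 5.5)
The plan is to reduce Theorem~\ref{th:bif-suffLagrGener} to the abstract Rabinowitz-type alternative bifurcation theorem for families of $C^1$ functionals on Hilbert spaces from \cite{Lu8, Lu10}. That abstract result is stated for functionals which are $C^1$ on a Hilbert space $H$, have $C^2$ restrictions to a dense Banach subspace $X$, and have gradients satisfying the splitting and Fredholm-type hypotheses there. Its conclusion is exactly the trichotomy (i)--(iii), provided the nullity vanishes off $\mu$ and the Morse index jumps from $m^-_\mu$ to $m^-_\mu+m^0_\mu$ across $\mu$.

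\textbf{Step 1: local chart.} First I would localize near $\gamma_\mu$, using the hypothesis $\gamma_\mu(0)\ne\gamma_\mu(\tau)$ when $\dim S_0,\dim S_1>0$. This lets me choose a $C^6$ Riemannian metric $g$ on $M$ for which $S_0$ is totally geodesic near $\gamma_\mu(0)$ and $S_1$ is totally geodesic near $\gamma_\mu(\tau)$; this is the reduction announced for Section~\ref{sec:LagrBound.1.1}. Via $g$ I would build the chart
\[
\Phi_\lambda(\xi)(t)=\exp_{\gamma_\lambda(t)}(\xi(t))
\]
on a neighborhood of $0$ in $C^1_{S_0\times S_1}(\gamma_\lambda^*TM)$. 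By total geodesicity, $\Phi_\lambda$ maps this neighborhood onto a neighborhood of $\gamma_\lambda$ in $C^1_{S_0\times S_1}([0,\tau];M)$.

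\textbf{Step 2: common Euclidean space.} Next I would trivialize $\gamma_\lambda^*TM$ by a parallel (or $\lambda$-continuous orthonormal) frame, chosen so that $T_{\gamma_\lambda(0)}S_0$ and $T_{\gamma_\lambda(\tau)}S_1$ become fixed subspaces $V_0, V_1\subset\mathbb{R}^n$. This yields functionals $\mathcal{L}_\lambda(x)=\mathcal{E}_\lambda(\Phi_\lambda(x))$ on a neighborhood of $0$ in
\[
H=\{x\in W^{1,2}([0,\tau];\mathbb{R}^n)\,|\,x(0)\in V_0,\ x(\tau)\in V_1\}
\]
and on $X=H\cap C^1$.

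To keep the nonlinearity controllable on $H$ despite only $C^2$ growth in $v$, I would modify $L$ outside a neighborhood of the compact set $\{(\lambda,t,\gamma_\lambda(t),\dot\gamma_\lambda(t))\}$ so that it becomes quadratic at infinity in $v$, using the method of Appendix A of \cite{Lu10}. Critical points near $0$ in $C^1$ are unaffected by this modification.

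\textbf{Step 3: verify the abstract hypotheses.} The functionals $\mathcal{L}_\lambda$ have critical point $0$ for all $\lambda$. Their Morse indices and nullities at $0$ equal $m^-(\mathcal{E}_\lambda,\gamma_\lambda)$ and $m^0(\mathcal{E}_\lambda,\gamma_\lambda)$, which follows from \cite{Du} and the density of $C^1$ in $W^{1,2}$. Continuity in $\lambda$ comes from Assumption~\ref{ass:LagrGenerB} and the continuity of the partial derivatives of $L$. The Fredholm and splitting condition follows from strict convexity in $v$: the Hessian is a compact perturbation of a positive definite operator. Applying the abstract theorem then gives the three alternatives for $\mathcal{L}_\lambda$ in $H$ or $X$.

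\textbf{Step 4: transfer back.} Critical points $x$ of $\mathcal{L}_\lambda$ satisfy the Euler--Lagrange equation in the chart. A bootstrap via the Legendre transform shows they are $C^2$, and as $x\to0$ in $W^{1,2}$ they converge to $0$ in $C^2$, uniformly in $\lambda$ near $\mu$. I then map them back through $\Phi_\lambda$. The norm $\|\alpha_\lambda-\gamma_\lambda\|_{C^2}$ is controlled by $\|x\|_{C^2}$, which gives (i) and (ii). For (iii), energy values are preserved by the chart, and finiteness of solutions in $\mathcal{W}$ corresponds to finiteness in the chart, so the distinct critical values transfer as well.

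\textbf{Main obstacle.} I expect the hardest step to be the uniform-in-$\lambda$ construction of Steps 1--2, since the frames and the totally geodesic adapted metric must depend continuously on $\lambda$. Together with this comes the regularity upgrade from $W^{1,2}$ convergence to $C^2$ convergence with $\lambda$-uniform constants. For the regularity I would first try the argument of the first claim of \cite[Proposition~4.2]{Lu9}: write the Euler--Lagrange equation in integrated form, $\partial_vL=\int\partial_qL+c$, and invert $\partial_vL$ in $v$ by the implicit function theorem.
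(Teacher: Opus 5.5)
Your outline is correct and follows the same architecture as the paper's proof. The shared steps are: a single $C^6$ metric making $S_0$, $S_1$ totally geodesic near $\gamma_\mu(0)$, $\gamma_\mu(\tau)$; reduction to a family of Lagrangians on $B^n_\iota(0)$ whose trivial branch is $0$; modification to quadratic growth at infinity (Lemma~\ref{lem:Gener-modif}); the abstract Rabinowitz-type alternative applied on $(\mathcal{U},\mathcal{U}^X)$; the $\lambda$-uniform $W^{1,2}\Rightarrow C^2$ estimate of Proposition~\ref{prop:solutionLagr}; and transfer back. Your integrated Euler--Lagrange argument for that estimate is a valid and more direct substitute for the paper's use of the gradient formula with the constants $c_1(\lambda,x)$, $c_2(\lambda,x)$. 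You still have to control the constant $c$, for instance by integrating over $[0,\tau]$ and using the growth bounds (L5).

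The genuine difference is the chart. You center a $\lambda$-dependent chart on $\gamma_\lambda$ itself. The paper instead fixes one $C^7$ curve $\overline{\gamma}$ with $\overline{\gamma}(0)=\gamma_\mu(0)$ and $\overline{\gamma}(\tau)=\gamma_\mu(\tau)$, together with one parallel frame along it. Then $\phi_{\overline{\gamma}}$ is $C^5$ and $V_0$, $V_1$ are automatically independent of $\lambda$. All $\lambda$-dependence is pushed into the translation $\gamma_\lambda=\Phi_{\overline{\gamma}}(\mathbf{u}_\lambda)$ and the shifted Lagrangian $\tilde L^\ast_\lambda(t,q,v)=L^\ast_\lambda(t,q+\mathbf{u}_\lambda(t),v+\dot{\mathbf{u}}_\lambda(t))$. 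The price is Lemmas~\ref{lem:twoCont} and~\ref{lem:twoCont+}, i.e. continuity of $\mathbf{u}_\lambda,\dot{\mathbf{u}}_\lambda,\ddot{\mathbf{u}}_\lambda$ in $(\lambda,t)$. This device removes exactly what you identify as the main obstacle.

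In your version, a parallel frame along $\gamma_\lambda$ does not work in general. If you adapt it to $S_0$ at $t=0$, the relative position of $T_{\gamma_\lambda(0)}S_0$ and the parallel-transported $T_{\gamma_\lambda(\tau)}S_1$ generally changes with $\lambda$. The coordinate subspace $V_1$ then moves, and your Hilbert space $H$ would depend on $\lambda$. You must use your parenthetical alternative: for example, the parallel frame multiplied by a rotation that is smooth in $t$, continuous in $\lambda$, and corrects the endpoint $t=\tau$. Also, since $\gamma_\lambda$ is only $C^2$, continuity of $\partial_t\tilde L$ and $\partial_{vt}\tilde L$ in your chart requires continuity of $\ddot{\gamma}_\lambda$ and of the frame's second derivative in $(\lambda,t)$. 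You need these for the $C^2$ bootstrap, and you get them from Lemma~\ref{lem:regu}(i) in local coordinates. The metric itself, contrary to your remark, need not depend on $\lambda$ at all.

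Finally, Step~3 omits one hypothesis of the abstract theorem that convexity and Fredholmness do not supply by themselves. This is the regularity statement of Proposition~\ref{prop:funct-analy}(viii): eigenvectors of $B_\lambda(0)$ for eigenvalues $\le 0$ lie in $X$, and $B_\lambda(0)\xi\in X$ implies $\xi\in X$. The paper obtains it from Duistermaat's regularity argument for the linearized boundary problem.
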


When $M$ is an open subset in $\mathbb{R}^n$, the conditions in Assumptions~\ref{ass:Lagr6},~\ref{ass:LagrGenerB} in theorems above
can be weakened, see Theorems~\ref{th:bif-nessLagrGenerEu},~\ref{th:bif-existLagrGenerEu+},~\ref{th:bif-suffLagrGenerEu}. 

\begin{assumption}\label{ass:LagrGenerC}
{\rm Let $S_0$ be a boundaryless  submanifold of $M$ of dimension $\dim S_0<\dim M$,
 and let $L: [0,\tau]\times TM\to\R$ be $C^3$ and  fiberwise strictly convex,
 that is, for each $(t,q,v)\in [0,\tau]\times TM$ the bilinear form $\partial_{vv}L(t,q,v)$ is positive definite.}
\end{assumption}

Under Assumption~\ref{ass:LagrGenerC}, a $C^2$ curve $\gamma:[0, \lambda]\to M$ with $\lambda\in (0,\tau]$ is called
a \textsf{Euler-Lagrange curve of $L$ emanating perpendicularly from $S_0$} if it solves  the following boundary problem
\begin{equation}\label{e:LagrCurve}
\left.\begin{array}{ll}
&\frac{d}{dt}\big(\partial_vL(t, \gamma(t), \dot{\gamma}(t))\big)-\partial_q L(t, \gamma(t), \dot{\gamma}(t))=0,\;0\le t\le\lambda,\\
&\gamma(0)\in S_0\quad\hbox{and}\quad\partial_vL(0, \gamma(0), \dot{\gamma}(0))[v]=0
\;\forall v\in T_{\gamma(0)}S_0.
\end{array}\right\}
\end{equation}
In particular, if $S_0$ consists of a point $p$ we call 
$\gamma$ a \textsf{Euler-Lagrange curve of $L$  starting at $p$}.
 Since $L$ is $C^3$, with a local coordinate chart
it may follow from \cite[Proposition~4.3]{BuGiHi}  that the Euler-Lagrange curves of $L$ are $C^3$.
Clearly, for each $s\in (0, \lambda]$ the Euler-Lagrange curve $\gamma_s:=\gamma|_{[0,s]}$ of $L$
emanating perpendicularly from $S_0$ is a critical point of the $C^2$ functional
\begin{equation}\label{e:LagrFinEnergyPQ0}
\mathcal{L}_{S_0,s}(\alpha):=\int^s_0L(t, \alpha(t), \dot{\alpha}(t))dt
\end{equation}
 on the $C^4$ Banach manifold
\begin{equation}\label{e:BanachEnd}
C^{1}_{S_0\times\{\gamma(s)\}}([0, s]; M)=\left\{\alpha\in C^{1}([0,s]; M)\,\big|\, \alpha(0)\in S_0, \alpha(s)=\gamma(s))\right\}.
\end{equation}
We say $s\in (0, \lambda]$ to be a \textsf{$S_0$-focal point along $\gamma$}
if the linearization of (\ref{e:LagrCurve}) on $[0, s]$ (called
the the \textsf{Jacobi equation of the functional} $\mathcal{L}_{S_0, s}$) has nonzero solutions, i.e.,
the second order differential $D^2\mathcal{L}_{S_0, s}(\gamma_s)$ of
$\mathcal{L}_{S_0,s}$ at $\gamma_s$ is degenerate; moreover
 $\dim{\rm Ker}(D^2\mathcal{L}_{S_0,s}(\gamma_s))$ is called the \textsf{multiplicity} of $s$,
 denoted by 
 \begin{equation}\label{e:NullityFocus}
 \nu^{S_0}_\gamma(s)\quad\text{or}\quad m^0(\mathcal{L}_{S_0,s},\gamma_s).
\end{equation}

As done in \cite[Definition~6.1]{PiPoTa04} for geodesics,
similar to Jacobi's original
definition of conjugate points along an extremal of quadratic functionals (cf. \cite[Definition 4, page 114]{GeFo63}) we introduce:

\begin{definition}\label{def:bifurLagr}
{\rm Under Assumption~\ref{ass:LagrGenerC},  $\mu\in (0, \lambda)$
is called a \textsf{bifurcation instant for $(S_0, \gamma)$} if there exists a sequence $(t_k)\subset (0,\lambda]$ converging to $\mu$
and a sequence of Euler-Lagrange curves of $L$ emanating perpendicularly from $S_0$, $\gamma^k:[0, t_k]\to M$,   such that
\begin{eqnarray}\label{e:Lgeobifu1}
&&\gamma^k(t_k)=\gamma(t_k)\;\hbox{for all $k\in\N$},\\
&&0<\|\gamma^k-\gamma|_{[0,t_k]}\|_{C^1([0, t_k],\mathbb{R}^N)}\to 0\;\hbox{ as $k\to\infty$}.
\label{e:Lgeobifu2}
\end{eqnarray}}
\end{definition}

As proved in Lemma~\ref{lem:regu}(ii), using local coordinate charts
we can derive from the basic existence, uniqueness and smoothness theorem of ODE solutions that
the limit of (\ref{e:Lgeobifu2}) is equivalent to any one of the following two conditions:
\begin{description}
\item[$\bullet$] $\|\gamma^k-\gamma|_{[0,t_k]}\|_{C^2([0, t_k],\mathbb{R}^N)}\to 0$ as $k\to\infty$.
\item[$\bullet$] $\gamma^k(0)\to\gamma(0)$ and $\dot{\gamma}^k(0)\to\dot{\gamma}(0)$.
\end{description}

 \begin{theorem}\label{th:MorseBif}
 Under Assumption~\ref{ass:LagrGenerC}, let $\gamma:[0, \tau]\to M$ be
a Euler-Lagrange curve of $L$  emanating perpendicularly from $S_0$.
Then:
\begin{enumerate}
\item[\rm (i)] There exist only finitely many $S_0$-focal points  along $\gamma$.
\item[\rm (ii)] If $\mu\in (0, \tau]$  is a bifurcation instant for $(S_0, \gamma)$, then  it is a $S_0$-focal point along $\gamma$.
\item[\rm (iii)] If $\mu\in (0, \tau)$ is a $S_0$-focal point along $\gamma$, then it is a bifurcation instant for $(S_0, \gamma)$,
and  one of the following alternatives occurs:
\begin{enumerate}
\item[\rm (iii-1)] There exists a sequence of distinct $C^3$ Euler-Lagrange curves of $L$
 emanating perpendicularly from $S_0$ and ending at $\gamma(\mu)$,
$\alpha_k:[0,\mu]\to M$,  $\alpha_k\ne\gamma|_{[0,\mu]}$, $k=1,2,\cdots$, such that
 $\alpha_k\to\gamma|_{[0,\mu]}$ in $C^2([0,\mu],\mathbb{R}^N)$ as $k\to\infty$.

\item[\rm (iii-2)]  For every $\lambda\in (0, \tau)\setminus\{\mu\}$ near $\mu$ there
exists a $C^3$ Euler-Lagrange curves of $L$ emanating perpendicularly from $S_0$ and ending at $\gamma(\lambda)$,
$\alpha_\lambda:[0,\lambda]\to M$,  $\alpha_\lambda\ne\gamma|_{[0,\lambda]}$, such that
 $\|\alpha_\lambda-\gamma|_{[0,\lambda]}\|_{C^2([0,\lambda],\mathbb{R}^N)}\to 0$ as $\lambda\to\mu$.

\item[\rm (iii-3)]
For a given small $\epsilon>0$ there is a one-sided neighborhood $\Lambda^\ast$ of $\mu$ such that
for any $\lambda\in\Lambda^\ast\setminus\{\mu\}$, there exist at least two distinct $C^3$
Euler-Lagrange curves of $L$ emanating perpendicularly from $S_0$ and ending at $\gamma(\lambda)$,
$\beta^i_\lambda:[0,\lambda]\to M$,  $\beta_\lambda^i\ne\gamma|_{[0,\lambda]}$, $i=1,2$,
to satisfy the condition that  $\|\beta^i_\lambda-\gamma|_{[0,\lambda]}\|_{C^1([0,\lambda],\mathbb{R}^N)}<\epsilon$, $i=1,2$.
Moreover, if the multiplicity of $\gamma(\mu)$ as a $S_0$-focal point along $\gamma$ is greater than one and
there exist only finitely many distinct $C^3$ Euler-Lagrange curves of $L$
  emanating perpendicularly from $S_0$ and ending at $\gamma(\lambda)$,
$\alpha_1,\cdots,\alpha_m$, such that $\|\alpha_i-\gamma|_{[0,\lambda]}\|_{C^1([0,\lambda],\mathbb{R}^N)}<\epsilon$, $i=1,\cdots, m$,
then the above two distinct $C^3$ L-curves  $\beta_\lambda^i\ne\gamma|_{[0,\lambda]}$, $i=1,2$,
can also be chosen to satisfy
\begin{equation}\label{e:diffEnergy}
\int^\lambda_0L(t, \beta_\lambda^1(t), \dot{\beta}_\lambda^1(t))dt\ne \int^\lambda_0
L(t, \beta_\lambda^2(t), \dot{\beta}_\lambda^2(t))dt.
\end{equation}
\end{enumerate}
\end{enumerate}
\end{theorem}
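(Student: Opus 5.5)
The plan is to reduce everything to Theorems~\ref{th:bif-nessLagrGener} and \ref{th:bif-suffLagrGener} by a rescaling of time that turns the varying endpoint instant $s$ into a parameter. For $\lambda\in(0,\tau]$ define the rescaled Lagrangian $L^\lambda(t,q,v):=\lambda L(\lambda t/\tau\cdot 1, q, \tau v/\lambda)/\tau$ on $[0,\tau]$, more simply on $[0,1]$ after normalizing, and the reparametrized curve $\gamma_\lambda(t):=\gamma(\lambda t)$, $t\in[0,1]$. A curve $\alpha:[0,\lambda]\to M$ is an Euler--Lagrange curve of $L$ emanating perpendicularly from $S_0$ with $\alpha(\lambda)=\gamma(\lambda)$ if and only if $\alpha(\lambda\cdot)$ is a solution on $[0,1]$ of (\ref{e:LagrGener})--(\ref{e:LagrGenerB}) for $L^\lambda$ with $S_1=S_1^\lambda=\{\gamma(\lambda)\}$. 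Since the target point moves with $\lambda$, I would first compose with a $C^3$ family of diffeomorphisms $\phi_\lambda$ of $M$, supported near $\gamma(\mu)$, equal to the identity near $S_0$ and with $\phi_\lambda(\gamma(\mu))=\gamma(\lambda)$ (for instance, the time-$(\lambda-\mu)$ flow of a compactly supported extension of $\dot\gamma$ near $\gamma(\mu)$, which is possible since $\gamma(\mu)\notin S_0$ for $\mu$ small and otherwise handled by a cutoff). Pulling back $L^\lambda$ by $\phi_\lambda$ gives a family $\tilde L_\lambda$ satisfying Assumption~\ref{ass:Lagr6} with fixed $S_1=\{\gamma(\mu)\}$ (so $\dim S_1=0$ and the extra condition on endpoints is void), together with a trivial branch $\tilde\gamma_\lambda=\phi_\lambda^{-1}\circ\gamma(\lambda\cdot)$ satisfying Assumption~\ref{ass:LagrGenerB}. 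The $C^1$/$C^2$ closeness in Definition~\ref{def:bifurLagr} corresponds to closeness in the rescaled picture, uniformly for $\lambda$ near $\mu$, and Lemma~\ref{lem:regu}(ii) upgrades $C^1$ to $C^2$.

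Under this correspondence, Morse index and nullity are preserved: $m^0(\tilde{\mathcal E}_\lambda,\tilde\gamma_\lambda)=\nu^{S_0}_\gamma(\lambda)$ and $m^-(\tilde{\mathcal E}_\lambda,\tilde\gamma_\lambda)=m^-(\mathcal L_{S_0,\lambda},\gamma_\lambda)$, because the reparametrization and diffeomorphism induce isomorphisms of tangent spaces intertwining the second variations. Part (ii) then follows directly from Theorem~\ref{th:bif-nessLagrGener}(I): a bifurcation instant yields bifurcation along sequences of the reduced family, so $\nu^{S_0}_\gamma(\mu)>0$.

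The core input for (i) and (iii) is a Morse index theorem of Duistermaat type (Theorem~\ref{th:DustMorse}, quoted in the introduction). It gives
\[
m^-(\mathcal L_{S_0,s},\gamma_s)=\sum_{0<t<s}\nu^{S_0}_\gamma(t)+c,
\]
where $c$ is a constant concerning the initial submanifold (zero in the convex case with perpendicular start, by strict convexity). For (i), focal instants are isolated: near $t=0$, strict convexity gives positivity of the second variation on short intervals, so there are no focal points in $(0,\delta)$. Away from $0$, the focal instants are the zeros of $\det$ of a matrix built from the Jacobi fields with initial data in the Lagrangian subspace determined by $S_0$, and the index formula together with finiteness of the Morse index on $[0,\tau]$ forces finiteness. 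Equivalently, the index is nondecreasing and jumps by $\nu(t)$ at each focal $t$, so finiteness of $m^-$ at $s=\tau$ bounds the number of focal points. The same formula shows that for $\mu\in(0,\tau)$ focal, $m^0=0$ on a deleted neighborhood, $m^-$ equals $m^-(\mu)$ on the left, and $m^-(\mu)+\nu(\mu)$ on the right. That last statement needs the index at $s\downarrow\mu$ to include $\nu(\mu)$, which I would take from the same theorem via upper semicontinuity of $m^-+m^0$.

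Then Theorem~\ref{th:bif-suffLagrGener} applies to the reduced family at $\mu\in\mathrm{Int}$ and gives alternatives (i)--(iii). Translating back via $\phi_\lambda$ and the rescaling yields (iii-1)--(iii-3), including the energy inequality (\ref{e:diffEnergy}), since $\tilde{\mathcal E}_\lambda$ equals $\mathcal L_{S_0,\lambda}$ under the correspondence. Each alternative also gives bifurcation instants, proving the first assertion of (iii). $C^3$ regularity follows from \cite[Proposition~4.3]{BuGiHi}.

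The main obstacle I expect is the reduction step. One must verify that $\tilde L_\lambda$ has partial derivatives continuous in $\lambda$ as required by Assumption~\ref{ass:Lagr6}; this needs $L\in C^3$ and $\phi_\lambda$ $C^1$ in $\lambda$. One must also check that the jump behaviour of $m^-$ across $\mu$ is exactly as Theorem~\ref{th:bif-suffLagrGener} demands, which rests on the precise form of the Duistermaat index theorem for the perpendicular boundary condition.
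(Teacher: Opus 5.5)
Your skeleton agrees with the paper's: rescale $[0,\lambda]$ to $[0,1]$ so the instant becomes the parameter, freeze the moving endpoint, transfer index and nullity, read off finiteness and the index jump from Theorem~\ref{th:DustMorse}, then apply the necessary condition and the Rabinowitz alternative. The device you use to freeze the endpoint, however, does not work in general.

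A time-independent diffeomorphism $\phi_\lambda$ of $M$ acts in the same way at $t=0$ and at $t=\lambda$. It must therefore do two things at once. It must preserve $S_0$ near $\gamma(0)$; otherwise the constraint $\alpha(0)\in S_0$ and the transversality condition are not carried to the fixed $S_0$ of Theorems~\ref{th:bif-nessLagrGener} and \ref{th:bif-suffLagrGener}. It must also send $\gamma(\mu)$ to $\gamma(\lambda)$. If $\gamma(\mu)=\gamma(0)$, this forces $\gamma(\lambda)\in S_0$ for all $\lambda$ near $\mu$, which is false in general.

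This case lies squarely inside the theorem. Take the round $S^2$ with $L=\frac12 g(v,v)$, $S_0=\{p\}$, $\gamma$ a unit-speed great circle and $\tau=3\pi$. The instant $\mu=2\pi$ is an $S_0$-focal point and $\gamma(2\pi)=p=\gamma(0)$, yet no diffeomorphism fixing $p$ can move $p$ to $\gamma(\lambda)\neq p$. Your literal requirement ``identity near $S_0$'' already fails whenever $\gamma(\mu)\in S_0$. Shrinking $S_0$ to a neighbourhood of $\gamma(0)$ repairs that, but not the case $\gamma(\mu)=\gamma(0)$.

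Your sample construction has further defects. The flow of an extension of $\dot\gamma$ leaves $\gamma(\mu)$ fixed when $\dot\gamma(\mu)=0$, which can happen for non-geodesic $L$. Also, $\dot\gamma$ is only $C^2$, so its flow is only $C^2$ and $d\phi_\lambda$ is only $C^1$. The pulled-back Lagrangian is then not $C^2$ in $q$, as Assumption~\ref{ass:Lagr6} requires.

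The repair is to use a \emph{time-dependent} change of variables, which the cited theorems allow since $L$ may depend on $t$. Choose $\Psi_\lambda(s,\cdot)$, $C^3$ in $(s,q)$, equal to the identity for $s$ near $0$ and with $\Psi_\lambda(1,\gamma(\mu))=\gamma(\lambda)$; a cut-off translation in a chart around $\gamma(\mu)$, multiplied by a bump function of $s$, will do. Then pull back to $\lambda L\bigl(\lambda s,\Psi_\lambda(s,q),(\partial_s\Psi_\lambda(s,q)+d_q\Psi_\lambda(s,q)[v])/\lambda\bigr)$.

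This is what the paper does. Its chart $\Phi_\lambda(\xi)(t)=\phi_{\overline{\gamma}}(t,{\bf u}(t)+\xi(t))$ is time-dependent, so $\xi(0)\in V_0$ encodes $S_0$ and $\xi(\lambda)=0$ encodes the endpoint $\gamma(\lambda)$, independently of each other. Composing with $\Gamma_\lambda$ gives $\hat L$ in (\ref{e:Lagr3.15Gener*}). The paper then applies the Euclidean Theorems~\ref{th:bif-nessLagrGenerEu} and \ref{th:bif-suffLagrGenerEu} with $V_1=\{0\}$, using Claim~\ref{cl:DustMorse} for the correspondence of bifurcation points. With this change the rest of your plan goes through.

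Two small points remain. First, Theorem~\ref{th:DustMorse} carries no additive constant. The value $m^-(\mu)+m^0(\mu)$ for $\lambda>\mu$ can be read off directly, because the sum over $(0,\lambda)$ then contains $s=\mu$, so no semicontinuity argument is needed. Second, obtaining the first assertion of (iii) from the Rabinowitz alternatives is valid; the paper instead gets it from Theorem~\ref{th:bif-nessLagrGenerEu}(II) under condition (II.3).
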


\begin{remark}\label{rm:MorseBif}
{\rm If $M$ is an open subset in the Euclidean space, we may assume that $L$
in Assumption~\ref{ass:LagrGenerC} and Theorem~\ref{th:MorseBif}
is $C^2$, see Theorems~\ref{th:bif-nessLagrGenerEu},~\ref{th:bif-suffLagrGenerEu}.
}
\end{remark}

\subsection{Bifurcations of Lagrangian trajectories with endpoints constrained in  ${\rm Graph}(\mathbb{I}_{g})$}\label{sec:Graph}

\begin{assumption}\label{ass:Lagr7}
{\rm Let $(M, g, \mathbb{I}_g)$ be as in Assumption~1.0 in Introduction,
and let $(\tau, \Lambda, L)$ be as in Assumption~\ref{ass:Lagr6}.
%and also satisfy
%$L(\lambda, \tau, \mathbb{I}_g(x), d\mathbb{I}_g(x)[v])=L(\lambda, 0, x,v)$ for all $(\lambda, x,v)\in\Lambda\times TM$.
For each $\lambda\in\Lambda$ let $\gamma_\lambda\in C^2([0,\tau];M)$ satisfy
 the following boundary problem
\begin{equation}\label{e:Lagr2}
\left.\begin{array}{ll}
&\frac{d}{dt}\big(\partial_vL_\lambda(t, \gamma(t), \dot{\gamma}(t))\big)-\partial_q L_\lambda(t, \gamma(t), \dot{\gamma}(t))=0,\;0\le t\le\tau,\\
&\mathbb{I}_g(\gamma(0))=\gamma(\tau)\quad\hbox{and}\quad
d\mathbb{I}_g(\gamma(0))\big[\frac{\partial
L_\lambda}{\partial v}(0, \gamma(0), \dot{\gamma}(0))\big]=\frac{\partial
L_\lambda}{\partial v}(\tau, \gamma(\tau), \dot{\gamma}(\tau)).
\end{array}\right\}
\end{equation}
Suppose also that $\Lambda\times \R\ni (\lambda,t)\mapsto\gamma_\lambda(t)\in M$ and
$\Lambda\times \R\ni (\lambda,t)\mapsto\dot{\gamma}_\lambda(t)\in TM$ are continuous.
}
\end{assumption}

Consider the following  $C^{4}$  Banach submanifold of $C^{1}([0, \tau]; M)$ of codimension $n$,
\begin{equation}\label{e:Rot-space}
C^{1}_{\mathbb{I}_g}([0, \tau]; M):=\{\gamma\in C^{1}([0, \tau]; M)\,|\, \mathbb{I}_g(\gamma(0))=\gamma(\tau)\}.
\end{equation}
Its tangent space at $\gamma\in C^{1}_{\mathbb{I}_g}([0, \tau]; M)$ is
\begin{equation}\label{e:Rot-spaceA}
 C^1_{\mathbb{I}_g}(\gamma^\ast TM):=T_{\gamma}C^{1}_{\mathbb{I}_g}([0, \tau]; M)=\{\xi\in C^1(\gamma^\ast TM)\,|\, d\mathbb{I}_g(\gamma(0))[\xi(0)]=\xi(\tau)\},
\end{equation}
which is dense in the Hilbert subspace
\begin{equation}\label{e:Rot-spaceB}
W^{1,2}_{\mathbb{I}_g}(\gamma^\ast TM):=\{\xi\in H^1(\gamma^\ast TM)\,|\, d\mathbb{I}_g(\gamma(0))[\xi(0)]=\xi(\tau)\}
\end{equation}
of $W^{1,2}(\gamma^\ast TM)$.

For each $\lambda\in\Lambda$, by the proof of \cite[Proposition~4.2]{Lu9} we have a $C^2$ functional
\begin{equation}\label{e:Lagr0}
\mathscr{E}_\lambda: C^{1}_{\mathbb{I}_g}([0, \tau]; M)\to\mathbb{R},\;
\gamma\mapsto\int^\tau_0L_\lambda(t, \gamma(t), \dot{\gamma}(t))dt.
\end{equation}
 \cite[Proposition~4.2]{BuGiHi} shows that
$\gamma\in C^{1}_{\mathbb{I}_g}([0, \tau]; M)$ is
 a critical point of $\mathscr{E}_\lambda$
if and only if it belongs to $C^{3}([0, \tau]; M)$ and satisfy  (\ref{e:Lagr2}).
By \cite{Du}, the second-order differential $D^2\mathscr{E}_\lambda(\gamma)$ of $\mathscr{E}_\lambda$ at such a critical point $\gamma$
can be extended into a continuous symmetric bilinear form on $W^{1,2}_{\mathbb{I}_g}(\gamma^\ast TM)$
with finite Morse index and nullity
\begin{equation}\label{e:Lagr3}
m^-_\tau(\mathscr{E}_\lambda,\gamma)\quad\hbox{and}\quad m^0_\tau(\mathscr{E}_\lambda,\gamma),
\end{equation}
which coincide with those defined by (\ref{e:DuMorseIndex}) and (\ref{e:DuNullity}) 
for $\mathbf{N}=\text{Graph}(\mathbb{I}_g)$, respectively.

\begin{definition}\label{def:Bifur}
{\rm In Definition~\ref{def:LagrGenerBifur},  ``
$X=C^{1}_{S_0\times S_1}([0,\tau]; M)$ (or $C^{2}_{S_0\times S_1}([0,\tau]; M)$)''
is replaced by ``$X=C^{1}_{\mathbb{I}_g}([0,\tau]; M)$ (or $C^{2}_{\mathbb{I}_g}([0,\tau]; M)$)'',
and ``(\ref{e:LagrGener})--(\ref{e:LagrGenerB})''
is replaced by ``(\ref{e:Lagr2})''.
}
 \end{definition}

\begin{theorem}\label{th:bif-nessLagr*}
Let Assumption~\ref{ass:Lagr7} be satisfied, and $\mu\in\Lambda$.
\begin{enumerate}
\item[\rm (I)]{\rm (\textsf{Necessary condition}):}
Suppose  that $(\mu, \gamma_\mu)$  is a  bifurcation point along sequences of
 (\ref{e:Lagr2}) with respect to the branch $\{(\lambda,\gamma_\lambda)\,|\,\lambda\in\Lambda\}$ in $C^{1}_{\mathbb{I}_g}([0,\tau]; M)$.
Then $m^0(\mathscr{E}_{\mu}, \gamma_\mu)>0$.
\item[\rm (II)]{\rm (\textsf{Sufficient condition}):}
Suppose that $\Lambda$ is first countable and that there exist two sequences in  $\Lambda$ converging to $\mu$, $(\lambda_k^-)$ and
$(\lambda_k^+)$,  such that one of the following conditions is satisfied:
 \begin{enumerate}
 \item[\rm (II.1)] For each $k\in\mathbb{N}$, either $\gamma_{\lambda^+_k}$  is not an isolated critical point of ${\mathscr{E}}_{\lambda^+_k}$,
 or $\gamma_{\lambda^-_k}$ is not an isolated critical point of ${\mathscr{E}}_{\lambda^-_k}$,
 or $\gamma_{\lambda^+_k}$ {\rm (}resp. $\gamma_{\lambda^-_k}${\rm )}
  is an isolated critical point of $\mathscr{E}_{\lambda^+_k}$ {\rm (}resp. $\mathscr{E}_{\lambda^-_k}${\rm )} and
  $C_m(\mathscr{E}_{\lambda^+_k}, \gamma_{\lambda^+_k};{\bf K})$ and $C_m(\mathscr{E}_{\lambda^-_k}, \gamma_{\lambda^-_k};{\bf K})$
  are not isomorphic for some Abel group ${\bf K}$ and some $m\in\mathbb{Z}$.
\item[\rm (II.2)] For each $k\in\mathbb{N}$, there exists $\lambda\in\{\lambda^+_k, \lambda^-_k\}$
such that $\gamma_{\lambda}$  is an either nonisolated or homological visible critical point of
$\mathscr{E}_{\lambda}$ , and
$$
\left.\begin{array}{ll}
&[m^-(\mathscr{E}_{\lambda_k^-}, \gamma_{\lambda^-_k}), m^-(\mathscr{E}_{\lambda_k^-}, \gamma_{\lambda^-_k})+
m^0(\mathscr{E}_{\lambda_k^-}, \gamma_{\lambda^-_k})]\\
&\cap[m^-(\mathscr{E}_{\lambda_k^+}, \gamma_{\lambda^+_k}),
m^-(\mathscr{E}_{\lambda_k^+}, \gamma_{\lambda^+_k})+m^0(\mathscr{E}_{\lambda_k^+}, \gamma_{\lambda^+_k})]=\emptyset.
\end{array}\right\}\eqno(\hbox{$2\ast_k$})
$$
\item[\rm (II.3)] For each $k\in\mathbb{N}$, (\hbox{$2\ast_k$}) holds true,
and either $m^0(\mathscr{E}_{\lambda_k^-}, \gamma_{\lambda^-_k})=0$ or $m^0(\mathscr{E}_{\lambda_k^+}, \gamma_{\lambda^+_k})=0$.
 \end{enumerate}
 Then there exists a sequence $\{(\lambda_k,\gamma^k)\}_{k\ge 1}$ in
 $\hat\Lambda\times C^{2}_{\mathbb{I}_g}([0,\tau]; M)$   converging to
 $(\mu, \gamma_\mu)$  such that each $\gamma^k\ne\gamma_{\lambda_k}$ is a solution of the problem
 (\ref{e:Lagr2})  with $\lambda=\lambda_k$,
 $k=1,2,\cdots$,  where
 $\hat{\Lambda}=\{\mu,\lambda^+_k, \lambda^-_k\,|\,k\in\mathbb{N}\}$.
 In particular, $(\mu,\gamma_\mu)$ is a bifurcation point of the problem
 (\ref{e:Lagr2}) in $\hat\Lambda\times C^{2}_{\mathbb{I}_g}([0,\tau]; M)$
 with respect to the branch $\{(\lambda, \gamma_\lambda)\,|\,\lambda\in\hat\Lambda\}$
  {\rm (}and so $\{(\lambda, \gamma_\lambda)\,|\,\lambda\in\Lambda\}${\rm )}.
 \end{enumerate}
\end{theorem}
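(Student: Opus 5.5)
The plan is to reduce the problem, near $\gamma_\mu$, to a family of $C^2$ functionals on a fixed Banach space, so that the abstract bifurcation theorems of \cite{Lu8, Lu10} apply. These theorems cover the necessary condition via the nullity and the sufficient conditions via critical groups and Morse index intervals. The reduction follows the strategy of Appendix A of \cite{Lu10}, adapted to the boundary condition $\mathbb{I}_g(\gamma(0))=\gamma(\tau)$.

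The first step is to build a single chart for $C^1_{\mathbb{I}_g}([0,\tau];M)$ around $\gamma_\lambda$ for all $\lambda$ near $\mu$. Consider the map
$$\Phi_\lambda:\xi\mapsto \exp_{\gamma_\lambda(t)}(\xi(t)),$$
defined on small sections $\xi\in C^1_{\mathbb{I}_g}(\gamma_\lambda^\ast TM)$. Since $\mathbb{I}_g$ is an isometry, it commutes with $\exp$:
$$\mathbb{I}_g(\exp_x v)=\exp_{\mathbb{I}_g x}\bigl(d\mathbb{I}_g(x)[v]\bigr).$$
Hence $\Phi_\lambda$ carries the linear boundary condition $d\mathbb{I}_g(\gamma(0))[\xi(0)]=\xi(\tau)$ exactly onto the nonlinear one. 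This is why no totally geodesic hypothesis and no condition $\gamma_\mu(0)\ne\gamma_\mu(\tau)$ is needed here, in contrast with Theorem~\ref{th:bif-nessLagrGener}.

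The second step is to trivialize $\gamma_\lambda^\ast TM$ by a $g$-orthonormal frame $(e_i^\lambda(t))$, obtained by parallel transport along $\gamma_\lambda$ starting from $\gamma_\mu(0)$. The frame depends continuously on $\lambda$. It is twisted at $t=\tau$ by the orthogonal matrix $P_\lambda$ relating $e^\lambda(\tau)$ to $d\mathbb{I}_g(\gamma_\lambda(0))[e^\lambda(0)]$. To get a $\lambda$-independent model space $\{u\in C^1([0,\tau];\mathbb{R}^n)\mid u(\tau)=P_\mu u(0)\}$, I will correct the frame by a path of orthogonal matrices $t\mapsto Q_\lambda(t)$, with $Q_\lambda(0)=I$, $Q_\lambda(\tau)=P_\mu P_\lambda^{-1}$ and $Q_\mu\equiv I$. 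Such a path exists for $\lambda$ close to $\mu$, since $P_\lambda\to P_\mu$ and we can use $\exp$ of $t\log(\cdot)/\tau$ near the identity.

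Pulling back $\mathscr{E}_\lambda$ gives functionals $F_\lambda$ on a neighborhood of $0$ in a fixed Banach space $X\subset H:=\{u\in W^{1,2}([0,\tau];\mathbb{R}^n)\mid u(\tau)=P_\mu u(0)\}$. Each $F_\lambda$ has the form $\int_0^\tau \tilde L_\lambda(t,u,\dot u)\,dt$, with $\tilde L_\lambda$ fiberwise strictly convex and depending continuously on $\lambda$ in $C^2$. The branch becomes the trivial branch $u=0$. The chart preserves Morse index, nullity, isolatedness and critical groups, because $\Phi_\lambda$ is a $C^2$ diffeomorphism onto its image and critical groups are invariant under such diffeomorphisms.

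I then verify the hypotheses of the abstract theorems in \cite{Lu8, Lu10} (the ``Lagrangian'' versions in \cite{Lu10}, Appendix A) for $F_\lambda$ on the pair $(X,H)$. Two ingredients are needed. First, the gradient map must be $C^1$ in the appropriate sense and continuous in $\lambda$. Second, the second variation must split as a positive definite part (from $\partial_{vv}\tilde L_\lambda$) plus a compact part, which gives a Fredholm structure uniformly in $\lambda$. This uses the continuity of all partial derivatives from Assumption~\ref{ass:Lagr6}.

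With these hypotheses in place, part (I) follows from the abstract necessary-condition theorem: a sequence of nontrivial critical points converging in $C^1$ forces $\ker D^2F_\mu(0)\ne0$. For part (II), the abstract sufficient-condition theorem gives solutions $u^k\ne0$ with $u^k\to0$ in $X$ under (II.1)--(II.3). The (II.3) case reduces to (II.2), since zero nullity makes the point nondegenerate and hence homologically visible. The bootstrap to $C^2$ convergence uses the Euler--Lagrange equation in the form $\frac{d}{dt}\partial_v\tilde L=\partial_q\tilde L$ and the inverse function theorem in $v$, giving $\dot u^k\to0$ in $C^1$.

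The main obstacle is the uniform-in-$\lambda$ chart. The twist $P_\lambda$ has to be absorbed while keeping the continuity in $\lambda$ of all derivatives of $\tilde L_\lambda$ that the abstract theorems require. I would handle it with the $Q_\lambda$-correction described above and check the needed continuity in local coordinates.
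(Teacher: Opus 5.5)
Your overall strategy is the paper's. You straighten the branch to $u\equiv 0$ in a Euclidean model with the linear boundary condition $u(\tau)=Eu(0)$ for a fixed orthogonal $E$, using that $\mathbb{I}_g$ commutes with $\exp$, and then apply Lu's abstract theorems. Your chart differs only in an inessential way. The paper fixes one $C^7$ reference curve $\overline{\gamma}\in C^1_{\mathbb{I}_g}([0,\tau];M)$ near $\gamma_\mu$ with a parallel frame, so the twist $E_{\overline{\gamma}}$ is $\lambda$-independent from the start. It then removes the branch by translating the Lagrangian, $\tilde L^\ast_\lambda(t,q,v)=L^\ast_\lambda(t,q+{\bf u}_\lambda(t),v+\dot{\bf u}_\lambda(t))$. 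Your $\lambda$-dependent exponential charts centred at $\gamma_\lambda$, with the $Q_\lambda$-correction, also work. The needed continuity of $\partial_t$ and $\partial_{tv}$ of the pulled-back Lagrangian comes from the continuity of $\ddot\gamma_\lambda$ (Lemma~\ref{lem:regu}(i)), just as the paper needs Lemma~\ref{lem:twoCont+}.

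The genuine gap is the passage from $X=C^1$ to $H=W^{1,2}$. Pulling back gives $F_\lambda$ only on a $C^1$-neighbourhood of $0$. The abstract theorems, however, need a functional defined on an open subset of $H$, whose gradient has G\^ateaux derivative $B_\lambda(\zeta)=P_\lambda(\zeta)+Q_\lambda(\zeta)$ for every $\zeta$ in that $H$-neighbourhood. They require $P_\lambda(\zeta)$ uniformly positive and $Q_\lambda(\zeta)$ compact and continuous at $\zeta=0$ (Proposition~\ref{prop:funct-analy}(v)--(vii)). Continuity of the partial derivatives does not give this. For $\zeta$ in a $W^{1,2}$-ball, $\dot\zeta$ is unbounded, so $\int_0^\tau\tilde L_\lambda(t,\zeta,\dot\zeta)\,dt$ may diverge. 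Also, $\partial_{vv}\tilde L_\lambda(t,\zeta,\dot\zeta)$ need be neither bounded nor bounded below by a positive constant; strict convexity allows $\partial_{vv}L\to 0$ as $|v|\to\infty$.

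The paper handles this by modifying $\tilde L^\ast$ for $|v|\ge\rho_0$ (Lemma~\ref{lem:Gener-modif}), which gives (L2)--(L6) uniformly on the sequentially compact set $\hat\Lambda=\{\mu,\lambda^\pm_k\}$. It then proves two things your proposal does not address:
\begin{enumerate}
\item The modified functional agrees with $F_\lambda$ on $\{\|x\|_{C^1}<\rho_0\}$. Hence Morse index and nullity computed in $X$ agree with those in $H$, using Proposition~\ref{prop:funct-analy}(viii). Critical groups in $X$ and in $H$ also agree, via a comparison result for critical groups on $X$ versus $H$, and isolatedness transfers. This matters because the statement's hypotheses are phrased in $X$.
\item Crucially, Proposition~\ref{prop:solutionLagr} gives a uniform estimate: critical points of the modified functional that are small in $W^{1,2}$ are small in $C^2$, uniformly in $\lambda\in\hat\Lambda$.
\end{enumerate}
The estimate in the second item is what turns the nontrivial critical points delivered by the abstract theorem into genuine solutions of (\ref{e:Lagr2}) converging in $C^2$. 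Those critical points are only known to be small in $W^{1,2}$. Once they are $C^2$-small they satisfy $|\dot x|<\rho_0$, the region where nothing was modified. Your bootstrap starts from $C^1$ convergence, which the abstract theorem does not supply. The paper instead uses the explicit formula for $\nabla\check{\mathcal{E}}_\lambda$ to bound $\|\dot x\|_{C^0}$ by quantities that tend to zero as $\|x\|_{1,2}\to0$. It then uses the Euler--Lagrange equation and the continuity of $\partial_{vt}\check L$ to control $\ddot x$.
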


\begin{theorem}[\textsf{Existence for bifurcations}]\label{th:bif-existLagr*}
Let Assumption~\ref{ass:Lagr7} be satisfied,
and let $\Lambda$ be path-connected. Suppose that
 there exist two  points $\lambda^+, \lambda^-\in\Lambda$ such that
  one of the following conditions is satisfied:
 \begin{enumerate}
 \item[\rm (i)] Either $\gamma_{\lambda^+}$  is not an isolated critical point of $\mathscr{E}_{\lambda^+}$,
 or $\gamma_{\lambda^-}$ is not an isolated critical point of $\mathscr{E}_{\lambda^-}$,
 or $\gamma_{\lambda^+}$ {\rm (}resp. $\gamma_{\lambda^-}${\rm )}  is an isolated critical point of
  $\mathscr{E}_{\lambda^+}$ {\rm (}resp. $\mathscr{E}_{\lambda^-}${\rm )} and
  $C_m(\mathscr{E}_{\lambda^+}, \gamma_{\lambda^+};{\bf K})$ and $C_m(\mathscr{E}_{\lambda^-}, \gamma_{\lambda^-};{\bf K})$ are not isomorphic for some Abel group ${\bf K}$ and some $m\in\mathbb{Z}$.

\item[\rm (ii)] $[m^-({\mathscr{E}}_{\lambda^-}, \gamma_{\lambda^-}),
m^-({\mathscr{E}}_{\lambda^-}, \gamma_{\lambda^-})+ m^0({\mathscr{E}}_{\lambda^-}, \gamma_{\lambda^-})]\cap[m^-({\mathscr{E}}_{\lambda^+}, \gamma_{\lambda^+}),
m^-({\mathscr{E}}_{\lambda^+}, \gamma_{\lambda^+})+m^0({\mathscr{E}}_{\lambda^+}, \gamma_{\lambda^+})]\\=\emptyset$,
and there exists $\lambda\in\{\lambda^+, \lambda^-\}$ such that $\gamma_{\lambda}$  is an either non-isolated or homological visible critical point of $\mathscr{E}_{\lambda}$.

\item[\rm (iii)] $[m^-({\mathscr{E}}_{\lambda^-}, \gamma_{\lambda^-}),
m^-({\mathscr{E}}_{\lambda^-}, \gamma_{\lambda^-})+ m^0({\mathscr{E}}_{\lambda^-}, \gamma_{\lambda^-})]\cap[m^-({\mathscr{E}}_{\lambda^+}, \gamma_{\lambda^+}),
m^-({\mathscr{E}}_{\lambda^+}, \gamma_{\lambda^+})+m^0({\mathscr{E}}_{\lambda^+}, \gamma_{\lambda^+})]\\=\emptyset$,
and either $m^0(\mathscr{E}_{\lambda^+}, \gamma_{\lambda^+})=0$ or $m^0(\mathscr{E}_{\lambda^-}, \gamma_{\lambda^-})=0$.
 \end{enumerate}
  Then for any path $\alpha:[0,1]\to\Lambda$ connecting $\lambda^+$ to $\lambda^-$ there exists
 a sequence $(\lambda_k)\subset\alpha([0,1])$ converging to some $\mu\in\alpha([0,1])$, and
   solutions $\gamma^k\ne\gamma_{\lambda_k}$ of the problem (\ref{e:Lagr2})
  with $\lambda=\lambda_k$, $k=1,2,\cdots$, such that $\|\gamma^k-\gamma_{\lambda_k}\|_{C^2([0,\tau];\mathbb{R}^N)}\to 0$
  as $k\to\infty$. {\rm (}In particular,
  $(\mu, \gamma_{\mu})$  is a  bifurcation point along sequences  of the problem (\ref{e:Lagr2}) in
 $\Lambda\times C^{2}_{\mathbb{I}_g}([0,\tau]; M)$  with respect to the branch $\{(\lambda, \gamma_\lambda)\,|\,\lambda\in\Lambda\}$.{\rm )}
Moreover, $\mu$ is not equal to $\lambda^+$ {\rm (}resp. $\lambda^-${\rm )}
 if $m^0_\tau(\mathscr{E}_{\lambda^+}, \gamma_{\lambda^+})=0$ {\rm (}resp. $m^0_\tau(\mathscr{E}_{\lambda^-}, \gamma_{\lambda^-})=0${\rm )}.
\end{theorem}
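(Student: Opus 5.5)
The plan is to reduce Theorem~\ref{th:bif-existLagr*} to its local counterpart, Theorem~\ref{th:bif-nessLagr*}(II), by the standard connectedness argument along the path $\alpha$. Pull the family back along $\alpha$: set $\hat L(s,t,x,v)=L(\alpha(s),t,x,v)$ and $\hat\gamma_s=\gamma_{\alpha(s)}$ for $s\in[0,1]$. Then Assumption~\ref{ass:Lagr7} holds with $\Lambda$ replaced by $[0,1]$, because the continuity requirements are preserved under composition with the continuous map $\alpha$. Morse indices, nullities and critical groups are unchanged, since $\mathscr{E}_{\alpha(s)}$ is the same functional.

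We may assume that no point of $\alpha([0,1])$ is a bifurcation point along sequences, and derive a contradiction.

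\emph{Step 1 (necessary condition).} By Theorem~\ref{th:bif-nessLagr*}(I), every $s$ with $m^0(\mathscr{E}_{\alpha(s)},\hat\gamma_s)=0$ is not a bifurcation point. More importantly, at each $s\in[0,1]$ that is not a bifurcation point, there is a neighborhood $J_s$ of $s$ and a neighborhood $\mathcal{W}_s$ of $\hat\gamma_s$ in which $\hat\gamma_{s'}$ is the only critical point of $\mathscr{E}_{\alpha(s')}$ for every $s'\in J_s$. Continuity of $s\mapsto\hat\gamma_s$ in $C^1$ lets us shrink these sets so that $\hat\gamma_{s'}\in\mathcal{W}_s$.

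\emph{Step 2 (critical groups are locally constant).} Work in the local Euclidean reduction used for Theorem~\ref{th:bif-nessLagr*}, namely the splitting/Lyapunov--Schmidt framework from \cite{Lu8, Lu10} applied on $W^{1,2}_{\mathbb{I}_g}$. Uniform isolation over $J_s$, together with the stability (homotopy invariance) of critical groups for continuous families of functionals, gives
\[
C_*(\mathscr{E}_{\alpha(s')},\hat\gamma_{s'};\mathbf{K})\cong C_*(\mathscr{E}_{\alpha(s)},\hat\gamma_s;\mathbf{K})\quad\text{for all } s'\in J_s .
\]
Cover $[0,1]$ by finitely many $J_s$ using compactness. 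Then every $\hat\gamma_s$ is an isolated critical point, and all the critical groups along the path are isomorphic.

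\emph{Step 3 (contradiction in each case).} Case (i) contradicts Step 2 directly. For (ii) and (iii), use the shifting theorem: $C_q(\mathscr{E}_\lambda,\gamma_\lambda;\mathbf{K})\cong C_{q-m^-}(\hat f_\lambda,0;\mathbf{K})$, where $\hat f_\lambda$ is the reduced functional on the $m^0$-dimensional kernel. Hence the critical groups vanish outside $[m^-,m^-+m^0]$. If $m^0=0$, they are $\mathbf{K}$ in degree $m^-$ and zero elsewhere, so the point is homologically visible; this reduces (iii) to (ii). In case (ii), take $\lambda$ homologically visible (the non-isolated alternative already contradicts Step 1). Its nonzero group sits in a degree inside its interval, while the other point's groups vanish on that interval by disjointness. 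The groups therefore differ, contradicting Step 2.

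Consequently some $\mu=\alpha(s_0)$ is a bifurcation point along sequences. The $C^2$-convergence $\|\gamma^k-\gamma_{\lambda_k}\|_{C^2}\to0$ follows from $C^1$-convergence via the Euler--Lagrange equation, written in charts and solved for $\ddot\gamma$ using strict convexity. The final claim comes from Step 1 with Theorem~\ref{th:bif-nessLagr*}(I): if $m^0(\mathscr{E}_{\lambda^+},\gamma_{\lambda^+})=0$, then $\lambda^+$ is not a bifurcation point along sequences, so $\mu\ne\lambda^+$.

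The main obstacle is Step 2. It needs a parameter-uniform reduction: a single chart and bundle trivialization around $\hat\gamma_s$ that identifies $C^1_{\mathbb{I}_g}([0,\tau];M)$ near $\hat\gamma_{s'}$, for all $s'\in J_s$, with a fixed open set of a fixed Banach space with an $\mathbb{I}_g$-type linear boundary condition. It also needs the reduced functionals to form a continuous family satisfying the hypotheses of the abstract stability theorem in \cite{Lu10}. I would first try translating $\gamma\mapsto\exp_{\hat\gamma_{s'}}(\xi)$ with $\xi\in W_{\mathbb{I}_g}(\hat\gamma_{s'}^*TM)$, transported to $\hat\gamma_s$ by parallel transport. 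Because $\mathbb{I}_g$ is an isometry, it commutes with $\exp$, so the boundary condition is preserved exactly.
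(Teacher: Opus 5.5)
Your route is essentially the paper's: Theorem~\ref{th:bif-existLagr*} is proved there exactly like Theorem~\ref{th:bif-existLagrGener}, by assuming no bifurcation along $\alpha$, obtaining uniform local isolation, deducing local (hence, by connectedness, global) constancy of the critical groups from the Euclidean reduction and the stability argument of \cite[Theorem~C.5]{Lu11}, and then contradicting (i)--(iii) via the disjoint index intervals (so, despite your opening sentence, Theorem~\ref{th:bif-nessLagr*}(II) is never actually used). The parameter-uniform reduction you flag as the main obstacle is handled more simply than by your $\exp_{\hat\gamma_{s'}}$-plus-parallel-transport idea: one fixed smooth curve $\overline{\gamma}$ near $\gamma_\mu$ gives a single chart $\Phi_{\overline{\gamma}}$ on the fixed model space $C^1_{E_{\overline{\gamma}}}([0,\tau];\mathbb{R}^n)$, each $\gamma_\lambda$ becomes $\mathbf{u}_\lambda$ there, and translation by $\mathbf{u}_\lambda$ preserves the linear condition $x(\tau)=E_{\overline{\gamma}}x(0)$; Proposition~\ref{prop:PsolutionLagr} together with \cite[Corollary~2.8]{JM} then transfers isolation and critical groups between the $C^1$ and $W^{1,2}$ settings.
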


\begin{theorem}[\textsf{Alternative bifurcations of Rabinowitz's type}]\label{th:bif-suffLagr*}
 Under Assumption~\ref{ass:Lagr7} with $\Lambda$ being a real interval,
let $\mu\in{\rm Int}(\Lambda)$ satisfy $m^0(\mathcal{E}_{\mu}, \gamma_\mu)>0$.
   If  $m^0(\mathcal{E}_{\lambda}, \gamma_{\lambda})=0$  for each $\lambda\in\Lambda\setminus\{\mu\}$ near $\mu$, and
  $m^-(\mathcal{E}_{\lambda}, \gamma_{\lambda})$ take, respectively, values $m^-(\mathcal{E}_{\mu}, \gamma_\mu)$ and
  $m^-(\mathcal{E}_{\mu}, \gamma_\mu)+ m^0(\mathcal{E}_{\mu}, \gamma_\mu)$
 as $\lambda\in\Lambda$ varies in two deleted half neighborhoods  of $\mu$,
then  one of the following alternatives occurs:
\begin{enumerate}
\item[\rm (i)] The problem (\ref{e:Lagr2})
 with $\lambda=\mu$ has a sequence of solutions, $\gamma_k\ne \gamma_\mu$, $k=1,2,\cdots$,
which converges to $\gamma_\mu$ in $C^2([0,\tau], M)$.

\item[\rm (ii)]  For every $\lambda\in\Lambda\setminus\{\mu\}$ near $\mu$ there is a  solution $\alpha_\lambda\ne \gamma_\lambda$ of
(\ref{e:Lagr2}) with parameter value $\lambda$,
 such that  $\alpha_\lambda-\gamma_\lambda$
 converges to zero in  $C^2([0,\tau], \R^N)$ as $\lambda\to \mu$.
{\rm (}Recall that we have assumed $M\subset\R^N$.{\rm )}

\item[\rm (iii)] For a given neighborhood $\mathcal{W}$ of $\gamma_\mu$ in $C^1_{\mathbb{I}_g}([0,\tau]; M)$,
there is a one-sided neighborhood $\Lambda^0$ of $\mu$ such that
for any $\lambda\in\Lambda^0\setminus\{\mu\}$, (\ref{e:Lagr2}) with parameter value $\lambda$
has at least two distinct solutions in $\mathcal{W}$, $\gamma_\lambda^1\ne \gamma_\lambda$ and $\gamma_\lambda^2\ne \gamma_\lambda$,
which can also be chosen to satisfy $\mathcal{E}_{\lambda}(\gamma_\lambda^1)\ne \mathcal{E}_{\lambda}(\gamma_\lambda^2)$
provided that  $m^0(\mathcal{E}_{\mu}, \gamma_\mu)>1$ and (\ref{e:Lagr2}) with parameter value $\lambda$
has only finitely many distinct solutions in $\mathcal{W}$.
\end{enumerate}
\end{theorem}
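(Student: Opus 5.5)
The plan is to reduce Theorem~\ref{th:bif-suffLagr*} to the abstract Rabinowitz-type alternative bifurcation theorem for families of $C^1$ functionals on Hilbert spaces from \cite{Lu8, Lu10}, in the same way as for Theorem~\ref{th:bif-suffLagrGener}. The essential step is to convert the constraint ${\rm Graph}(\mathbb{I}_g)$ into a fixed linear boundary condition in a Euclidean model.

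\textbf{Step 1: a chart around the whole trivial branch.} Following the construction in Appendix~A of \cite{Lu10}, I would build a $C^2$ chart of $C^1_{\mathbb{I}_g}([0,\tau];M)$ around $\gamma_\mu$. Choose a $C^1$ family of orthonormal frames $e_1(t),\dots,e_n(t)$ along $\gamma_\mu$ satisfying $e_j(\tau)=d\mathbb{I}_g(\gamma_\mu(0))[e_j(0)]$. This is possible because $d\mathbb{I}_g$ is a linear isometry. First take a parallel frame, then correct it by a path in $O(n)$ or $U$; if the resulting monodromy is not in the identity component, allow a twisted boundary condition $\xi(\tau)=E\xi(0)$ with $E$ orthogonal. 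Define
$$
\Phi(\xi)(t)=\exp_{\gamma_\mu(t)}\Big(\sum_j\xi_j(t)e_j(t)\Big).
$$
Since $\mathbb{I}_g$ is an isometry, it commutes with $\exp$, so $\Phi$ maps a neighborhood of $0$ in $C^1_E([0,\tau];\mathbb{R}^n)=\{\xi:\xi(\tau)=E\xi(0)\}$ onto a neighborhood of $\gamma_\mu$ in $C^1_{\mathbb{I}_g}([0,\tau];M)$. By continuity of $(\lambda,t)\mapsto(\gamma_\lambda,\dot\gamma_\lambda)$, the curves $\gamma_\lambda$ with $\lambda$ near $\mu$ lie in this chart; write them as $\xi_\lambda=\Phi^{-1}(\gamma_\lambda)$. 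Pulling back gives $\tilde L_\lambda(t,\xi,\dot\xi)$, which is $C^2$ and fiberwise strictly convex. Translating by $\xi_\lambda$ then gives a family of Lagrangians on a Euclidean domain whose trivial critical point is $0$. Moreover, $\Phi$ maps critical points to critical points and preserves Morse indices and nullities, since these depend only on the Hessian, and \cite{Du} computes them equally on $W^{1,2}$.

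\textbf{Step 2: the Euclidean alternative theorem.} Next apply the Euclidean version of the Rabinowitz-alternative theorem for Lagrangians with linear boundary condition ${\rm Graph}(E)$, the analogue of Theorem~\ref{th:bif-suffLagrGenerEu}. This in turn rests on the abstract splitting and parameterized Morse lemma of \cite{Lu8, Lu10}. Its functional is posed on the Hilbert space $W^{1,2}_E$ but satisfies the $C^1$ regularity hypotheses only on a $C^1$ dense subspace.

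\textbf{Step 3: checking the hypotheses.} The nullity condition is directly given. The index jump from $m^-$ to $m^-+m^0$ across the two half-neighborhoods is exactly the hypothesis required by the abstract theorem. That theorem produces one of three outcomes: a sequence of critical points at $\mu$; a family of critical points $\alpha_\lambda$ tending to the trivial one; or two distinct critical points, with distinct critical values under the finiteness and $m^0>1$ assumption. Each outcome is mapped back by $\Phi$.

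\textbf{Step 4: upgrading convergence.} Convergence obtained in $W^{1,2}$ or $C^1$ must be upgraded to $C^2$. For this I would use the Euler--Lagrange equation: solve for $\ddot\gamma$ via strict convexity, using the continuous dependence of solutions of ODEs on initial data, as in Lemma~\ref{lem:regu}. This step is routine.

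\textbf{Main obstacle.} The key difficulty is Step 1. The frame must be compatible with $d\mathbb{I}_g$ at the endpoints, and the chart must handle all $\gamma_\lambda$ simultaneously while keeping $\tilde L$ continuous in $\lambda$ with $C^2$ dependence in $(t,\xi,\dot\xi)$. The isometry property of $\mathbb{I}_g$ is exactly what makes the pulled-back boundary condition linear and independent of $\lambda$. No hypothesis of the form $\gamma(0)\ne\gamma(\tau)$ is needed here, because the constraint is a graph.
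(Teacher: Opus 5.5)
Your proposal follows the paper's route. An exponential chart along a reference curve in $C^1_{\mathbb{I}_g}([0,\tau];M)$, whose orthonormal frame has monodromy $E$, turns ${\rm Graph}(\mathbb{I}_g)$ into the fixed linear condition $\xi(\tau)=E\xi(0)$; one then translates by ${\bf u}_\lambda$, notes that $m^-$ and $m^0$ are preserved, and applies the ${\rm Graph}(E)$ analogue of Theorem~\ref{th:bif-suffLagrGenerEu}, namely Theorem~\ref{th:Pbif-suffLagrGenerEu}, which rests on \cite[Theorem~3.6]{Lu10}.

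The differences are only technical. The paper centers the chart at a $C^7$ curve $\overline{\gamma}$ with a parallel frame and keeps its monodromy $E_{\overline{\gamma}}$, so no frame correction is needed. It gets continuity of $\ddot{\bf u}_\lambda$ in $(\lambda,t)$ from Lemma~\ref{lem:regu}(i) via Lemma~\ref{lem:PeriCont}, which settles your ``main obstacle''. For alternative (i), the abstract theorem yields only $W^{1,2}$-convergence, which does not control initial velocities, so continuous dependence on initial data is not what the paper uses; it obtains $C^2$-convergence from the $\lambda$-uniform estimate of Proposition~\ref{prop:PsolutionLagr}, applied to the modified Lagrangian $\check L$ of Lemma~\ref{lem:Gener-modif}.
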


\subsection{Bifurcations of generalized periodic solutions  in %time dependent
non-autonomous Lagrangian systems}\label{secA}

\begin{assumption}\label{ass:Lagr8}
{\rm  In Assumption~\ref{ass:Lagr7}, the interval $[0, \tau]$ is replaced by $\mathbb{R}$, and
$L$ is also required to be truly dependent on time $t\in\mathbb{R}$ and $\mathbb{I}_g$-invariant in the
following sense:
\begin{equation}\label{e:Lagr8}
L(\lambda, t+\tau, \mathbb{I}_g(x), d\mathbb{I}_g(x)[v])=L(\lambda, t, x,v)\quad\forall (\lambda, t,x,v)\in\Lambda\times\mathbb{R}\times TM.
\end{equation}
The problem (\ref{e:Lagr2}) is replaced by
\begin{equation}\label{e:Lagr10}
\left.\begin{array}{ll}
&\frac{d}{dt}\big(\partial_vL_\lambda(t, \gamma(t), \dot{\gamma}(t))\big)-\partial_q L_\lambda(t, \gamma(t), \dot{\gamma}(t))=0\;\;\forall t\in\mathbb{R},\\
&\mathbb{I}_g(\gamma(t))=\gamma(t+\tau)\quad\forall t\in\mathbb{R}.
\end{array}\right\}
\end{equation}
 }
\end{assumption}

Solutions of (\ref{e:Lagr10}) are also called 
 \textsf{$\mathbb{I}_g$-periodic trajectories with period $\tau$} (\cite{Dav}).
When $\mathbb{I}_g$ generates a cyclic group, that is, it is of finite order $p\in\mathbb{N}$,
every $\mathbb{I}_g$-periodic trajectory is $p\tau$-periodic.

Since the functional in (\ref{e:Lagr0}) is $C^2$, 
(or by the proof of \cite[Proposition~4.2]{Lu9}) we have a $C^2$
functional on a $C^4$ Banach manifold $\mathcal{X}^1_{\tau}(M, \mathbb{I}_g)$,
  \begin{equation}\label{e:Lagr9}
\mathfrak{E}_\lambda(\gamma)=\int^\tau_0L_\lambda(t, \gamma(t), \dot{\gamma}(t))dt
\end{equation}
  where
\begin{equation}\label{e:BanachPerMani}
\mathcal{X}^i_{\tau}(M, \mathbb{I}_g):=\{\gamma\in C^i(\mathbb{R}, M)\,|\, \mathbb{I}_g(\gamma(t))=\gamma(t+\tau)\;\forall t\},\quad
i=0,1,2,\cdots.
\end{equation}
A $C^2$ curve $\gamma:\mathbb{R}\to M$ satisfies (\ref{e:Lagr10}) if and only if it is a critical point of $\mathfrak{E}_\lambda$.
(If $\gamma$ is a nonconstant solution of (\ref{e:Lagr10}), for any $\theta\in\mathbb{R}\setminus\{0\}$,  $\gamma(\theta+\cdot)$
cannot be a solution of (\ref{e:Lagr10}) again because $L$ is truly dependent on time $t\in\mathbb{R}$.)
Clearly, a solution $\gamma$ of the problem (\ref{e:Lagr10}) restricts to a solution
$\gamma|_{[0,\tau]}$ of (\ref{e:Lagr2}).
Conversely, any solution $\gamma^\ast$ of (\ref{e:Lagr2}) may extend into that of (\ref{e:Lagr10}),
$\gamma:\mathbb{R}\to M$, via
 \begin{equation}\label{e:extend*}
\gamma(t)=(\mathbb{I}_g)^k(\gamma^\ast(t-k\tau))\;\hbox{if $k\tau<t\le (k+1)\tau$ with $\pm k\in\mathbb{N}$}.
\end{equation}
Moreover, for a solution $\gamma$ of (\ref{e:Lagr10}) we call
\begin{equation}\label{e:Lagr12}
m^-_\tau(\mathfrak{E}_\lambda,\gamma):=m^-_\tau(\mathscr{E}_\lambda,\gamma|_{[0,\tau]})\quad\hbox{and}\quad
 m^0_\tau(\mathfrak{E}_\lambda,\gamma):=m^0_\tau(\mathscr{E}_\lambda, \gamma|_{[0,\tau]})
\end{equation}
the Morse index and nullity of $\mathfrak{E}_\lambda$ at $\gamma$, respectively, where
$m^-_\tau(\mathscr{E}_\lambda,\gamma|_{[0,\tau]})$ and $m^0_\tau(\mathscr{E}_\lambda, \gamma|_{[0,\tau]})$
are as in (\ref{e:Lagr3}). These are well-defined by \cite[\S4]{Dav}.

 Theorem~\ref{th:bif-nessLagr*},~\ref{th:bif-existLagr*},~\ref{th:bif-suffLagr*} 
 immediately leads to the following two results, respectively.

\begin{theorem}\label{th:bif-nessLagr**}
Let  Assumption~\ref{ass:Lagr8} be satisfied, and  $\mu\in\Lambda$.
\begin{enumerate}
\item[\rm (I)]{\rm (\textsf{Necessary condition}):}
Suppose  that $(\mu, \gamma_\mu)$  is a  bifurcation point along sequences of the problem
(\ref{e:Lagr10}) in $\Lambda\times\mathcal{X}^1_{\tau}(M, \mathbb{I}_g)$ with respect to the branch
$\{(\lambda,\gamma_\lambda)\,|\,\lambda\in\Lambda\}$.
Then $m^0_\tau(\mathfrak{E}_{\mu}, \gamma_\mu)\ne 0$.

\item[\rm (II)]{\rm (\textsf{Sufficient condition}):}
Suppose that $\Lambda$ is first countable and that there exist two sequences in  $\Lambda$ converging to $\mu$, $(\lambda_k^-)$ and
$(\lambda_k^+)$,  such that one of the following conditions is satisfied:
 \begin{enumerate}
 \item[\rm (II.1)] For each $k\in\mathbb{N}$, either $\gamma_{\lambda^+_k}$  is not an isolated critical point of
 ${\mathfrak{E}}_{\lambda^+_k}$,
 or $\gamma_{\lambda^-_k}$ is not an isolated critical point of ${\mathfrak{E}}_{\lambda^-_k}$,
 or $\gamma_{\lambda^+_k}$ {\rm (}resp. $\gamma_{\lambda^-_k}${\rm )} is an isolated critical point
 of $\mathfrak{E}_{\lambda^+_k}$ {\rm (}resp. $\mathfrak{E}_{\lambda^-_k}${\rm )} and
  $C_m(\mathfrak{E}_{\lambda^+_k}, \gamma_{\lambda^+_k};{\bf K})$ and $C_m(\mathfrak{E}_{\lambda^-_k}, \gamma_{\lambda^-_k};{\bf K})$
  are not isomorphic for some Abel group ${\bf K}$ and some $m\in\mathbb{Z}$.
\item[\rm (II.2)] For each $k\in\mathbb{N}$, there exists $\lambda\in\{\lambda^+_k, \lambda^-_k\}$
such that $\gamma_{\lambda}$  is an either nonisolated or homological visible critical point of
$\mathfrak{E}_{\lambda}$ , and
$$
\left.\begin{array}{ll}
&[m^-(\mathfrak{E}_{\lambda_k^-}, \gamma_{\lambda^-_k}), m^-(\mathfrak{E}_{\lambda_k^-}, \gamma_{\lambda^-_k})+
m^0(\mathfrak{E}_{\lambda_k^-}, \gamma_{\lambda^-_k})]\\
&\cap[m^-(\mathfrak{E}_{\lambda_k^+}, \gamma_{\lambda^+_k}),
m^-(\mathfrak{E}_{\lambda_k^+}, \gamma_{\lambda^+_k})+m^0(\mathfrak{E}_{\lambda_k^+}, \gamma_{\lambda^+_k})]=\emptyset.
\end{array}\right\}\eqno(\hbox{$3\ast_k$})
$$
\item[\rm (II.3)] For each $k\in\mathbb{N}$, (\hbox{$3\ast_k$}) holds true,
and either $m^0(\mathfrak{E}_{\lambda_k^-}, \gamma_{\lambda^-_k})=0$ or $m^0(\mathfrak{E}_{\lambda_k^+}, \gamma_{\lambda^+_k})=0$.
 \end{enumerate}
 Then there exists a sequence $\{(\lambda_k,\gamma^k)\}_{k\ge 1}$ in
 $\hat\Lambda\times \mathcal{X}^2_{\tau}(M, \mathbb{I}_g)$    converging to
 $(\mu, \gamma_\mu)$  such that each $\gamma^k\ne\gamma_{\lambda_k}$ is a solution of the problem
 (\ref{e:Lagr10})  with $\lambda=\lambda_k$,
 $k=1,2,\cdots$,  where
 $\hat{\Lambda}=\{\mu,\lambda^+_k, \lambda^-_k\,|\,k\in\mathbb{N}\}$.
 In particular, $(\mu,\gamma_\mu)$ is a bifurcation point of the problem
 (\ref{e:Lagr10}) in $\hat\Lambda\times \mathcal{X}^2_{\tau}(M, \mathbb{I}_g)$
 with respect to the branch $\{(\lambda, \gamma_\lambda)\,|\,\lambda\in\hat\Lambda\}$
 {\rm (}and so $\{(\lambda, \gamma_\lambda)\,|\,\lambda\in\Lambda\}${\rm )}.
   \end{enumerate}
\end{theorem}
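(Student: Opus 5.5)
The plan is to reduce Theorem~\ref{th:bif-nessLagr**} to Theorem~\ref{th:bif-nessLagr*} through the restriction map
$$
\mathcal{R}:\mathcal{X}^i_{\tau}(M,\mathbb{I}_g)\to C^i_{\mathbb{I}_g}([0,\tau];M),\qquad \gamma\mapsto\gamma|_{[0,\tau]},
$$
for $i=1,2$. First I check that $\mathcal{R}$ is injective: a curve satisfying $\mathbb{I}_g(\gamma(t))=\gamma(t+\tau)$ is determined by its values on $[0,\tau]$.

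Next I check that $\mathcal{R}$ is a homeomorphism onto the subset it hits. For $i=1$ the image consists of those $\xi\in C^1_{\mathbb{I}_g}([0,\tau];M)$ that also satisfy $d\mathbb{I}_g(\xi(0))[\dot\xi(0)]=\dot\xi(\tau)$. The inverse is the extension (\ref{e:extend*}). Since $\mathbb{I}_g$ is a $C^7$ isometry, $\|(\mathbb{I}_g)^k\circ\beta-(\mathbb{I}_g)^k\circ\beta'\|$ on each interval $[k\tau,(k+1)\tau]$ is controlled by $\|\beta-\beta'\|_{C^i}$, uniformly on compact time sets. I take $\mathcal{X}^i_\tau$ with the compact-open $C^i$ topology, or equivalently the $C^i$ topology on $[0,\tau]$, which is what makes the Banach manifold structure.

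Third, I identify solutions on both sides. By the discussion preceding the theorem, $\gamma$ solves (\ref{e:Lagr10}) if and only if $\gamma|_{[0,\tau]}$ solves (\ref{e:Lagr2}). The forward direction uses the $\mathbb{I}_g$-invariance (\ref{e:Lagr8}) to turn the momentum matching at $t=\tau$ into the second boundary condition of (\ref{e:Lagr2}). The converse uses (\ref{e:extend*}) together with (\ref{e:Lagr8}): the glued curve is $C^1$ at $k\tau$ because of the momentum condition and strict convexity of $L$ in $v$, and it is then a $C^2$ solution by uniqueness for the Euler--Lagrange ODE. Hence the trivial branches correspond, $\mathcal{R}(\gamma_\lambda)$ being the branch for (\ref{e:Lagr2}), and Assumption~\ref{ass:Lagr8} implies Assumption~\ref{ass:Lagr7}. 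Moreover, $\mathfrak{E}_\lambda=\mathscr{E}_\lambda\circ\mathcal{R}$ on $\mathcal{X}^1_\tau$, and the Morse indices and nullities coincide by definition (\ref{e:Lagr12}).

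Part (I) then follows quickly. A sequence $(\lambda_k,\gamma^k)\to(\mu,\gamma_\mu)$ in $\Lambda\times\mathcal{X}^1_\tau$ with $\gamma^k\ne\gamma_{\lambda_k}$ restricts to a sequence $(\lambda_k,\mathcal{R}(\gamma^k))$ converging to $(\mu,\gamma_\mu|_{[0,\tau]})$ in $C^1$. By injectivity, $\mathcal{R}(\gamma^k)\ne\gamma_{\lambda_k}|_{[0,\tau]}$. So Theorem~\ref{th:bif-nessLagr*}(I) gives $m^0_\tau(\mathscr{E}_\mu,\gamma_\mu|_{[0,\tau]})>0$, which is the claim.

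The main obstacle is part (II), because its hypotheses involve isolatedness and critical groups of $\mathfrak{E}_\lambda$ rather than of $\mathscr{E}_\lambda$. I need:
\begin{itemize}
\item $\gamma_\lambda$ is an isolated critical point of $\mathfrak{E}_\lambda$ if and only if $\gamma_\lambda|_{[0,\tau]}$ is an isolated critical point of $\mathscr{E}_\lambda$;
\item $C_m(\mathfrak{E}_\lambda,\gamma_\lambda;{\bf K})\cong C_m(\mathscr{E}_\lambda,\gamma_\lambda|_{[0,\tau]};{\bf K})$.
\end{itemize}
The first follows from the solution correspondence and the homeomorphism property of $\mathcal{R}$ near critical points, since critical points of $\mathscr{E}_\lambda$ all lie in the image of $\mathcal{R}$.

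For critical groups I will argue as follows. $\mathcal{R}$ embeds $\mathcal{X}^1_\tau$ as a closed $C^4$ Banach submanifold of $C^1_{\mathbb{I}_g}([0,\tau];M)$, of codimension $n$, cut out by the derivative matching condition. It contains all critical points of $\mathscr{E}_\lambda$, and at a critical point the tangent spaces of the two manifolds are dense in the same $W^{1,2}_{\mathbb{I}_g}$ space. I would then invoke the shifting/splitting theorem of \cite{Lu8, Lu10}. That result expresses both critical groups through the Morse index, here equal on both sides, together with the critical group of the same finite-dimensional reduced functional on the common kernel. This is what gives the isomorphism.

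If that route is too delicate, I would instead reprove (II) directly with the same Euclidean reduction used for Theorem~\ref{th:bif-nessLagr*}. That reduction works in coordinates adapted to $\mathrm{Graph}(\mathbb{I}_g)$ and already yields the periodic-type setting. With both identifications in place, each of (II.1)--(II.3) transfers verbatim, and Theorem~\ref{th:bif-nessLagr*}(II) produces solutions $\gamma^k$ of (\ref{e:Lagr2}) converging in $C^2$. Their extensions $\mathcal{R}^{-1}(\gamma^k)\in\mathcal{X}^2_\tau$ converge to $\gamma_\mu$, which concludes the proof.
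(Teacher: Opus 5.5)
Your proposal is correct and follows the paper's route: the paper derives this theorem directly from Theorem~\ref{th:bif-nessLagr*} through the restriction/extension correspondence (\ref{e:extend*}) and the definition (\ref{e:Lagr12}) of $m^-_\tau$ and $m^0_\tau$. Your explicit transfer of isolatedness and critical groups, via the common Hilbert space $W^{1,2}_{\mathbb{I}_g}$, fills in a step the paper leaves implicit, with one small correction: the $C^2$ gluing at the junctions $k\tau$ does not need ODE uniqueness, which can fail for a merely $C^2$ Lagrangian, because solving the Euler--Lagrange equation for $\ddot\gamma$ expresses it as a continuous function of $(t,\gamma,\dot\gamma)$ on each side, so once $\gamma$ and $\dot\gamma$ match, $\ddot\gamma$ matches too.
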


  \begin{theorem}[\textsf{Existence for bifurcations}]\label{th:bif-existLagr**}
Let  Assumption~\ref{ass:Lagr8} be satisfied,  and let $\Lambda$ be path-connected.
Suppose that
 there exist two  points $\lambda^+, \lambda^-\in\Lambda$ such that
  one of the following conditions is satisfied:
 \begin{enumerate}
 \item[\rm (i)] Either $\gamma_{\lambda^+}$  is not an isolated critical point of $\mathfrak{E}_{\lambda^+}$,
 or $\gamma_{\lambda^-}$ is not an isolated critical point of $\mathfrak{E}_{\lambda^-}$,
 or $\gamma_{\lambda^+}$ {\rm (}resp. $\gamma_{\lambda^-}${\rm )}  is an isolated critical point of
  $\mathfrak{E}_{\lambda^+}$ {\rm (}resp. $\mathfrak{E}_{\lambda^-}${\rm )} and
  $C_m(\mathfrak{E}_{\lambda^+}, \gamma_{\lambda^+};{\bf K})$ and $C_m(\mathfrak{E}_{\lambda^-}, \gamma_{\lambda^-};{\bf K})$ are not isomorphic for some Abel group ${\bf K}$ and some $m\in\mathbb{Z}$.

\item[\rm (ii)] $[m^-({\mathfrak{E}}_{\lambda^-}, \gamma_{\lambda^-}),
m^-({\mathfrak{E}}_{\lambda^-}, \gamma_{\lambda^-})+ m^0({\mathfrak{E}}_{\lambda^-}, \gamma_{\lambda^-})]\cap[m^-({\mathfrak{E}}_{\lambda^+}, \gamma_{\lambda^+}),
m^-({\mathfrak{E}}_{\lambda^+}, \gamma_{\lambda^+})+m^0({\mathfrak{E}}_{\lambda^+}, \gamma_{\lambda^+})]\\=\emptyset$,
and there exists $\lambda\in\{\lambda^+, \lambda^-\}$ such that $\gamma_{\lambda}$  is an either non-isolated or homological visible critical point of $\mathfrak{E}_{\lambda}$.

\item[\rm (iii)] $[m^-({\mathfrak{E}}_{\lambda^-}, \gamma_{\lambda^-}),
m^-({\mathfrak{E}}_{\lambda^-}, \gamma_{\lambda^-})+ m^0({\mathfrak{E}}_{\lambda^-}, \gamma_{\lambda^-})]\cap[m^-({\mathfrak{E}}_{\lambda^+}, \gamma_{\lambda^+}),
m^-({\mathfrak{E}}_{\lambda^+}, \gamma_{\lambda^+})+m^0({\mathfrak{E}}_{\lambda^+}, \gamma_{\lambda^+})]\\=\emptyset$,
and either $m^0(\mathfrak{E}_{\lambda^+}, \gamma_{\lambda^+})=0$ or $m^0(\mathfrak{E}_{\lambda^-}, \gamma_{\lambda^-})=0$.
 \end{enumerate}
  Then for any path $\alpha:[0,1]\to\Lambda$ connecting $\lambda^+$ to $\lambda^-$ there exists
 a sequence $(\lambda_k)\subset\alpha([0,1])$ converging to $\mu\in\alpha([0,1])$, and
   solutions $\gamma^k\ne\gamma_{\lambda_k}$ of the problem  (\ref{e:Lagr10})
  with $\lambda=\lambda_k$, $k=1,2,\cdots$, such that $(\gamma^k-\gamma_{\lambda_k})|_{[0,\tau]}\to 0$ in $C^2([0,\tau];\mathbb{R}^N)$
  as $k\to\infty$. {\rm (}In particular,
  $(\mu, \gamma_{\mu})$  is a  bifurcation point along sequences of the problem  (\ref{e:Lagr10}) in
 $\Lambda\times \mathcal{X}^2_{\tau}(M, \mathbb{I}_g)$  with respect to the branch $\{(\lambda, \gamma_\lambda)\,|\,\lambda\in\Lambda\}$.
 {\rm )}
Moreover, $\mu$ is not equal to $\lambda^+$ {\rm (}resp. $\lambda^-${\rm )}
if $m^0_\tau(\mathfrak{E}_{\lambda^+}, \gamma_{\lambda^+})=0$ {\rm (}resp. $m^0_\tau(\mathfrak{E}_{\lambda^-}, \gamma_{\lambda^-})=0${\rm )}.
\end{theorem}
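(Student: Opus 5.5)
The plan is to deduce Theorem~\ref{th:bif-existLagr**} directly from Theorem~\ref{th:bif-existLagr*} by passing between solutions of (\ref{e:Lagr10}) on $\mathbb{R}$ and solutions of (\ref{e:Lagr2}) on $[0,\tau]$. The paper asserts that this passage is immediate.

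First I check that the hypotheses transfer. The family $\{\gamma_\lambda\}$ under Assumption~\ref{ass:Lagr8} restricts to a family $\{\gamma_\lambda|_{[0,\tau]}\}$ of solutions of (\ref{e:Lagr2}). This uses the $\mathbb{I}_g$-invariance (\ref{e:Lagr8}) of $L$ to get the boundary condition on momenta: differentiating $\mathbb{I}_g(\gamma(t))=\gamma(t+\tau)$ gives $d\mathbb{I}_g(\gamma(0))[\dot\gamma(0)]=\dot\gamma(\tau)$. Differentiating (\ref{e:Lagr8}) in $v$ then shows that $\partial_vL_\lambda(\tau,\gamma(\tau),\dot\gamma(\tau))$ is the push-forward of $\partial_vL_\lambda(0,\gamma(0),\dot\gamma(0))$, with covectors identified via $g$ and the isometry. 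Continuity of $(\lambda,t)\mapsto(\gamma_\lambda(t),\dot\gamma_\lambda(t))$ is inherited. Hence Assumption~\ref{ass:Lagr7} holds for $L|_{\Lambda\times[0,\tau]\times TM}$.

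Next I identify the critical point data. By definition (\ref{e:Lagr12}), the Morse index and nullity of $\mathfrak{E}_\lambda$ at $\gamma_\lambda$ equal those of $\mathscr{E}_\lambda$ at $\gamma_\lambda|_{[0,\tau]}$. For conditions (i) and (ii) I also need that isolatedness and critical groups agree. For this I will show that the restriction map $R:\mathcal{X}^1_\tau(M,\mathbb{I}_g)\to C^1_{\mathbb{I}_g}([0,\tau];M)$ is a homeomorphism onto its image and intertwines $\mathfrak{E}_\lambda$ with $\mathscr{E}_\lambda$. The image consists of the curves whose $C^1$ extension via (\ref{e:extend*}) exists, namely those with $d\mathbb{I}_g[\dot\gamma(0)]=\dot\gamma(\tau)$. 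This image contains all critical points, and near a critical point $\gamma_\lambda$ the extension gives the inverse.

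This image is not all of $C^1_{\mathbb{I}_g}([0,\tau];M)$, so the critical groups must be compared carefully. This is the main obstacle. My first attempt is to avoid it by working in the $C^1$ topology on $[0,\tau]$, as in (\ref{e:Lagr12}): one regards $\mathfrak{E}_\lambda$ as defined through $\mathscr{E}_\lambda$, so that non-isolatedness and the critical groups are by convention those of $\mathscr{E}_\lambda$. If that is not acceptable, I would instead note that both functionals reduce, via the splitting used for Theorem~\ref{th:bif-existLagr*}, to the same finite-dimensional reduced functional on the kernel. The critical groups then coincide by the shifting theorem, since the Morse indices agree.

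Finally I apply Theorem~\ref{th:bif-existLagr*} along the given path $\alpha$. It yields $\lambda_k\to\mu$ and solutions $\gamma^k_\ast\neq\gamma_{\lambda_k}|_{[0,\tau]}$ of (\ref{e:Lagr2}) with $C^2$-distance tending to zero. Extending $\gamma^k_\ast$ by (\ref{e:extend*}) produces solutions $\gamma^k$ of (\ref{e:Lagr10}). These satisfy $\gamma^k\ne\gamma_{\lambda_k}$ and $(\gamma^k-\gamma_{\lambda_k})|_{[0,\tau]}\to0$ in $C^2$. Because $\mathbb{I}_g$ is a $C^7$ isometry, convergence on each compact interval $[j\tau,(j+1)\tau]$ follows, which gives convergence in $\mathcal{X}^2_\tau(M,\mathbb{I}_g)$. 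The statement that $\mu\ne\lambda^\pm$ when the corresponding nullity vanishes transfers verbatim.
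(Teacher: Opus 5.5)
Your proposal is correct and follows the paper's route. The paper simply asserts that Theorem~\ref{th:bif-existLagr**} follows immediately from Theorem~\ref{th:bif-existLagr*}, using the restriction/extension correspondence (\ref{e:extend*}) and the definition (\ref{e:Lagr12}) of $m^-_\tau, m^0_\tau$ through $\mathscr{E}_\lambda$. The paper passes over the identification of isolatedness and critical groups of $\mathfrak{E}_\lambda$ with those of $\mathscr{E}_\lambda$, which you rightly flag; of your two fixes, the first is only a convention, and the second is the one that actually closes it: both functionals are restrictions of the same $W^{1,2}$ functional, whose gradient preserves the velocity-matching condition because of (\ref{e:Lagr8}), so the comparison used in Section~\ref{sec:LagrBound.1.4} identifies both sets of critical groups with the $W^{1,2}$ ones.
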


\begin{theorem}[\textsf{Alternative bifurcations of Rabinowitz's type}]\label{th:bif-suffLagr**}
Under Assumption~\ref{ass:Lagr8}
with $\Lambda$ being a real interval,
let $\mu\in{\rm Int}(\Lambda)$ satisfy $m^0_\tau(\mathfrak{E}_{\mu}, \gamma_\mu)\ne 0$.
  Suppose that $m^0_\tau(\mathfrak{E}_{\lambda}, \gamma_{\lambda})=0$  for each $\lambda\in\Lambda\setminus\{\mu\}$ near $\mu$, and
 that $m^-_\tau(\mathfrak{E}_{\lambda}, \gamma_{\lambda})$ take, respectively, values $m^-_\tau(\mathfrak{E}_{\mu}, \gamma_\mu)$ and
  $m^-_\tau(\mathfrak{E}_{\mu}, \gamma_\mu)+ m^0_\tau(\mathfrak{E}_{\mu}, \gamma_\mu)$
 as $\lambda\in\Lambda$ varies in two deleted half neighborhoods  of $\mu$.
Then  one of the following alternatives occurs:
\begin{enumerate}
\item[\rm (i)] The problem (\ref{e:Lagr10})
 with $\lambda=\mu$ has a sequence of solutions, $\gamma_k\ne \gamma_\mu$, $k=1,2,\cdots$,
 such that $\gamma_k\to\gamma_\mu$ in $\mathcal{X}^2_{\tau}(M, \mathbb{I}_g)$
 (or equivalently $\gamma_k|_{[0,\tau]}-\gamma_\mu|_{[0,\tau]}$  
 converges to zero in  $C^2([0,\tau], \R^N)$).

\item[\rm (ii)]  For every $\lambda\in\Lambda\setminus\{\mu\}$ near $\mu$ there exists a  solution $\alpha_\lambda\ne \gamma_\lambda$ of
(\ref{e:Lagr10}) with parameter value $\lambda$,
 such that  $\|(\alpha_\lambda-\gamma_\lambda)|_{[0,\tau]}\|_{C^2([0,\tau], \R^N)}\to 0$ 
  as $\lambda\to \mu$.

\item[\rm (iii)] For a given neighborhood $\mathcal{W}$ of 
$\gamma_\mu$ in $\mathcal{X}^1_{\tau}(M, \mathbb{I}_g)$,
there exists a one-sided neighborhood $\Lambda^0$ of $\mu$ such that
for any $\lambda\in\Lambda^0\setminus\{\mu\}$, the problem (\ref{e:Lagr10}) with parameter value $\lambda$
has at least two distinct solutions in $\mathcal{W}$, $\gamma_\lambda^1\ne \gamma_\lambda$ and $\gamma_\lambda^2\ne \gamma_\lambda$,
which can also be chosen to satisfy $\mathfrak{E}_{\lambda}(\gamma_\lambda^1)\ne\mathfrak{E}_{\lambda}(\gamma_\lambda^2)$
provided that  $m^0_\tau(\mathfrak{E}_{\mu}, \gamma_\mu)>1$ and the problem (\ref{e:Lagr10}) with parameter value
$\lambda\in\Lambda^0\setminus\{\mu\}$ has only finitely many distinct solutions in $\mathcal{W}$.
\end{enumerate}
\end{theorem}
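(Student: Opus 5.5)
The plan is to deduce Theorem~\ref{th:bif-suffLagr**} from Theorem~\ref{th:bif-suffLagr*} through the restriction/extension correspondence between solutions of (\ref{e:Lagr10}) and of (\ref{e:Lagr2}). First I would check that, under Assumption~\ref{ass:Lagr8}, the restricted family $\{\gamma_\lambda|_{[0,\tau]}\}$ satisfies Assumption~\ref{ass:Lagr7} with $L$ restricted to $\Lambda\times[0,\tau]\times TM$. Each $\gamma_\lambda|_{[0,\tau]}$ solves the Euler--Lagrange equation on $[0,\tau]$ and satisfies $\mathbb{I}_g(\gamma_\lambda(0))=\gamma_\lambda(\tau)$. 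For the momentum condition I would differentiate $\mathbb{I}_g(\gamma(t))=\gamma(t+\tau)$, which gives $d\mathbb{I}_g(\gamma(0))[\dot\gamma(0)]=\dot\gamma(\tau)$. Differentiating the invariance (\ref{e:Lagr8}) in $v$ then gives $\partial_vL_\lambda(\tau,\mathbb{I}_g x,d\mathbb{I}_g v)\circ d\mathbb{I}_g(x)=\partial_vL_\lambda(0,x,v)$. Since $\mathbb{I}_g$ is an isometry, identifying covectors with vectors via $g$ turns this into the second boundary condition in (\ref{e:Lagr2}). Joint continuity of $(\lambda,t)\mapsto\gamma_\lambda(t),\dot\gamma_\lambda(t)$ is inherited directly.

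By definition (\ref{e:Lagr12}), the Morse index and nullity of $\mathfrak{E}_\lambda$ at $\gamma_\lambda$ equal those of $\mathscr{E}_\lambda$ at $\gamma_\lambda|_{[0,\tau]}$. The hypotheses of Theorem~\ref{th:bif-suffLagr**} therefore translate verbatim into those of Theorem~\ref{th:bif-suffLagr*}, which yields one of its alternatives (i)--(iii) for (\ref{e:Lagr2}). I would transport each alternative back using the extension (\ref{e:extend*}). The main point is that a solution $\gamma^\ast$ of (\ref{e:Lagr2}) extends to a $C^2$ solution of (\ref{e:Lagr10}). At the junctions $t=k\tau$, positions match by the first boundary condition. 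Velocities match because the momentum condition, strict convexity in $v$ (so $v\mapsto\partial_vL$ is injective on fibers) and the invariance (\ref{e:Lagr8}) together force $d\mathbb{I}_g[\dot\gamma^\ast(0)]=\dot\gamma^\ast(\tau)$. The Euler--Lagrange equation on each piece $[k\tau,(k+1)\tau]$ holds by invariance of $L$ under $(t,x,v)\mapsto(t+\tau,\mathbb{I}_gx,d\mathbb{I}_gv)$. With positions and velocities matching, uniqueness for the second-order ODE (solvable for $\ddot\gamma$ by convexity) shows the pieces glue into a $C^2$ solution on $\mathbb{R}$.

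Restriction and extension are mutually inverse and injective, so distinct solutions of (\ref{e:Lagr2}) give distinct solutions of (\ref{e:Lagr10}), and they preserve the functional values, $\mathfrak{E}_\lambda(\gamma)=\mathscr{E}_\lambda(\gamma|_{[0,\tau]})$. Convergence in $\mathcal{X}^2_\tau(M,\mathbb{I}_g)$ is, by the statement's own convention, equivalent to $C^2([0,\tau],\mathbb{R}^N)$ convergence of the restrictions. Alternatives (i) and (ii) therefore carry over directly.

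For (iii), a neighborhood $\mathcal{W}$ of $\gamma_\mu$ in $\mathcal{X}^1_\tau(M,\mathbb{I}_g)$ corresponds under restriction to a neighborhood of $\gamma_\mu|_{[0,\tau]}$ in $C^1_{\mathbb{I}_g}([0,\tau];M)$. The reason is that the restriction map is a homeomorphism, with inverse given by (\ref{e:extend*}) on curves satisfying the periodicity relation. The finiteness hypothesis transfers through the same bijection, as does the condition that the two solutions have different energies. The step I expect to be the main obstacle is exactly this identification of the topologies on $\mathcal{X}^1_\tau$ and $C^1_{\mathbb{I}_g}$ together with the $C^1$/$C^2$ matching at junctions. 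The $\mathcal{X}^i_\tau$ topology should be understood as the one induced by restriction to $[0,\tau]$, or equivalently by compact-open $C^i$ convergence. With that reading, continuity of $\mathbb{I}_g$ and its derivatives on compacts makes the extension map continuous.
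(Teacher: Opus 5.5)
Your proposal is correct and follows the paper's own route: the paper simply asserts that Theorem~\ref{th:bif-suffLagr*} immediately yields this result through the restriction/extension correspondence (\ref{e:extend*}) and the definition (\ref{e:Lagr12}) of $m^-_\tau, m^0_\tau$, and your argument fills in those details. Two small touch-ups are needed. First, the $C^2$ gluing at $t=k\tau$ does not need ODE uniqueness, which is not guaranteed when $L$ is only $C^2$; it suffices that both one-sided second derivatives equal the same continuous function of $(t,\gamma,\dot\gamma)$. Second, in (iii) the restriction of $\mathcal{W}$ is not itself open in $C^1_{\mathbb{I}_g}([0,\tau];M)$, since velocity matching at the endpoints is a closed condition with empty interior. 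You should therefore apply Theorem~\ref{th:bif-suffLagr*}(iii) to an open $\mathcal{W}'$ whose intersection with the velocity-matched curves lies in the restriction of $\mathcal{W}$; this suffices because every solution of (\ref{e:Lagr2}) is velocity-matched.
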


\subsection{Bifurcations of brake orbits in Lagrangian systems}\label{sec:Lagrbroke}

\begin{assumption}\label{ass:Lagrbrake}
{\rm  Let $(M, g)$ be as in Assumption~1.0 in Introduction.
For a real $\tau>0$ and a topological space $\Lambda$,
let $L:\Lambda\times\R\times TM\to\R$ be a continuous function satisfying
 \begin{equation}\label{e:brakeLM}
 L(\lambda, -t, q, -v)=L(\lambda, t, q, v)=L(\lambda, t+\tau, q, v)\quad\forall (\lambda,t,q,v)\in\Lambda\times\R\times TM.
\end{equation}
Suppose that for each $C^3$ chart $\alpha:U_\alpha\to\alpha(U_\alpha)\subset\mathbb{R}^n$ and the induced bundle
chart $T\alpha:TM|_{U_\alpha}\to \alpha(U_\alpha)\times\mathbb{R}^n\subset\mathbb{R}^n\times\mathbb{R}^n$ the function
$$
L^\alpha:\Lambda\times\mathbb{R}\times \alpha(U_\alpha)\times\mathbb{R}^n\to\mathbb{R},\;
(\lambda,t, q,v)\mapsto L(\lambda,t, (T\alpha)^{-1}(q,v))
$$
is $C^2$ with respect to $(t,q,v)$ and strictly convex with respect to $v$, and
all its partial derivatives also depend continuously on $(\lambda, t, q, v)$. (Here $L$ is allowed to be independent of time.)
  }
\end{assumption}

Consider the following problem
\begin{equation}\label{e:PPerLagrBrakeorbit}
\left.\begin{array}{ll}
&\frac{d}{dt}\big(\partial_vL_\lambda(t, \gamma(t), \dot{\gamma}(t))\big)-
\partial_x L_\lambda(t, \gamma(t), \dot{\gamma}(t))=0\;\forall t\in\mathbb{R}\\
&\gamma(-t)=\gamma(t)=\gamma(t+\tau)\quad\forall t\in\mathbb{R}
\end{array}\right\}
\end{equation}
and  $C^4$ Banach manifolds
 \begin{equation}\label{e:EBanachM}
 EC^{1}(S_\tau; M):=\{\gamma\in C^1(\R;M)\,|\,\gamma(t+\tau)=\gamma(t)\,\&\, \gamma(-t)=\gamma(t)\;\forall t\in\R\}. 
 \end{equation}
Solutions of (\ref{e:PPerLagrBrakeorbit}) are called  \textsf{brake orbits}.
Assumption~\ref{ass:Lagrbrake} assures that the solutions of (\ref{e:PPerLagrBrakeorbit}) are  critical points of the $C^2$ functionals
\begin{equation}\label{e:BrakeFunct}
{\mathcal{L}}^E_\lambda:EC^{1}(S_\tau; M)\to\R,\; \gamma\mapsto \int^\tau_0{L}_\lambda(t, \gamma(t), \dot{\gamma}(t))dt\in\R,\quad\lambda\in\Lambda.
\end{equation}
For a critical point $\gamma$ of ${\mathcal{L}}^E_\lambda$,
the second-order differential $D^2\mathcal{L}^E_\lambda(\gamma)$ can be extended into a continuous symmetric bilinear form on $W^{1,2}(\gamma^\ast TM)$
with finite Morse index and nullity
\begin{equation}\label{e:BrakeLagrMorse}
m^-_\tau(\mathcal{L}^E_\lambda,\gamma)\quad\hbox{and}\quad m^0_\tau(\mathcal{L}^E_\lambda,\gamma).
\end{equation}

\begin{assumption}\label{ass:Lagr7brake}
{\rm For each $\lambda\in\Lambda$ let $\gamma_\lambda\in EC^{1}(S_\tau; M)\cap C^2(S_\tau;M)$  satisfy
 (\ref{e:PPerLagrBrakeorbit}) and  the maps $\Lambda\times \R\ni(\lambda,t)\to \gamma_\lambda(t)\in M$ and
$\Lambda\times\R\ni(\lambda, t)\mapsto \dot{\gamma}_\lambda(t)\in TM$ are continuous.}
\end{assumption}

For $\mu\in\Lambda$ we call $(\mu, \gamma_\mu)$  a \textsf{bifurcation point along sequences} of the problem (\ref{e:PPerLagrBrakeorbit})
in $\Lambda\times EC^{1}(S_\tau; M)$ with respect to the branch $\{(\lambda,\gamma_\lambda)\,|\,\lambda\in\Lambda\}$
  if  there exists a sequence $\{(\lambda_k, \gamma^k)\}_{k\ge 1}$ in $\Lambda\times EC^{1}(S_\tau; M)$
  converging to $(\mu,\gamma_\mu)$, such that each $\gamma^k\ne\gamma_{\lambda_k}$
  is a solution of (\ref{e:PPerLagrBrakeorbit}) with $\lambda=\lambda_k$, $k=1,2,\cdots$.

\begin{theorem}\label{th:bif-nessLagrBrakeM}
Let Assumptions~\ref{ass:Lagrbrake},\ref{ass:Lagr7brake} be satisfied.
\begin{enumerate}
\item[\rm (I)]{\rm (\textsf{Necessary condition}):}
Suppose   that $(\mu, \gamma_\mu)$  is a  bifurcation point along sequences of the problem (\ref{e:PPerLagrBrakeorbit}).
Then  $m^0_\tau(\mathcal{L}^E_{\mu}, \gamma_\mu)>0$.
\item[\rm (II)]{\rm (\textsf{Sufficient condition}):}
Let $\Lambda$ be first countable.
Suppose  that there exist two sequences in  $\Lambda$ converging to $\mu$, $(\lambda_k^-)$ and
$(\lambda_k^+)$,  such that
one of the following conditions is satisfied:
 \begin{enumerate}
 \item[\rm (II.1)] For each $k\in\mathbb{N}$, either $\gamma_{\lambda^+_k}$  is not an isolated critical point of
 $\mathcal{L}^E_{\lambda^+_k}$,
 or $\gamma_{\lambda^-_k}$ is not an isolated critical point of $\mathcal{L}^E_{\lambda^-_k}$,
 or $\gamma_{\lambda^+_k}$ {\rm (}resp. $\gamma_{\lambda^-_k}${\rm )}
  is an isolated critical point of $\mathcal{L}^E_{\lambda^+_k}$ {\rm (}resp. $\mathcal{L}^E_{\lambda^-_k}${\rm )} and
  $C_m(\mathcal{L}^E_{\lambda^+_k}, \gamma_{\lambda^+_k};{\bf K})$ and $C_m(\mathcal{L}^E_{\lambda^-_k}, \gamma_{\lambda^-_k};{\bf K})$
  are not isomorphic for some Abel group ${\bf K}$ and some $m\in\mathbb{Z}$.
\item[\rm (II.2)] For each $k\in\mathbb{N}$, there exists $\lambda\in\{\lambda^+_k, \lambda^-_k\}$
such that $\gamma_{\lambda}$  is an either nonisolated or homological visible critical point of
$\mathcal{L}^E_{\lambda}$ , and
$$
\left.\begin{array}{ll}
&[m^-(\mathcal{L}^E_{\lambda_k^-}, \gamma_{\lambda^-_k}), m^-(\mathcal{L}^E_{\lambda_k^-}, \gamma_{\lambda^-_k})+
m^0(\mathcal{L}^E_{\lambda_k^-}, \gamma_{\lambda^-_k})]\\
&\cap[m^-(\mathcal{L}^E_{\lambda_k^+}, \gamma_{\lambda^+_k}),
m^-(\mathcal{L}^E_{\lambda_k^+}, \gamma_{\lambda^+_k})+m^0(\mathcal{L}^E_{\lambda_k^+}, \gamma_{\lambda^+_k})]=\emptyset.
\end{array}\right\}\eqno(\hbox{$4\ast_k$})
$$
\item[\rm (II.3)] For each $k\in\mathbb{N}$, (\hbox{$4\ast_k$}) holds true,
and either $m^0(\mathcal{L}^E_{\lambda_k^-}, \gamma_{\lambda^-_k})=0$ or $m^0(\mathcal{L}^E_{\lambda_k^+}, \gamma_{\lambda^+_k})=0$.
 \end{enumerate}
Then there exists a sequence $\{(\lambda_k,\gamma^k)\}_{k\ge 1}$ in
 $\hat\Lambda\times EC^{1}(S_\tau; M)$    converging to
 $(\mu, \gamma_\mu)$  such that each $\gamma^k\ne\gamma_{\lambda_k}$ is a solution of the problem
(\ref{e:PPerLagrBrakeorbit})   with $\lambda=\lambda_k$,
 $k=1,2,\cdots$,  where
 $\hat{\Lambda}=\{\mu,\lambda^+_k, \lambda^-_k\,|\,k\in\mathbb{N}\}$.
 In particular, $(\mu,\gamma_\mu)$ is a bifurcation point of the problem
 (\ref{e:PPerLagrBrakeorbit}) in $\hat\Lambda\times EC^{1}(S_\tau; M)$
 with respect to the branch $\{(\lambda, \gamma_\lambda)\,|\,\lambda\in\hat\Lambda\}$
  {\rm (}and so $\{(\lambda, \gamma_\lambda)\,|\,\lambda\in\Lambda\}${\rm )}.
 \end{enumerate}
\end{theorem}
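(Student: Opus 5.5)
The plan is to reduce the brake-orbit problem to the two-submanifold problem already handled in Theorem~\ref{th:bif-nessLagrGener}, using the reflection symmetry. First, the symmetries \(L(\lambda,-t,q,-v)=L(\lambda,t,q,v)=L(\lambda,t+\tau,q,v)\) imply that a curve \(\gamma\in EC^1(S_\tau;M)\) is determined by its restriction to \([0,\tau/2]\). Conversely, a curve \(\beta\in C^1([0,\tau/2];M)\) extends evenly and \(\tau\)-periodically to a \(C^1\) curve exactly when \(\dot\beta(0)=0=\dot\beta(\tau/2)\). Since the Legendre map \(v\mapsto\partial_vL_\lambda(t,q,v)\) is injective by strict convexity and \(\partial_vL_\lambda(t,q,0)\) need not vanish, I would not impose these conditions directly. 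Instead I would work with the functional \(\mathcal{L}^E_\lambda\) itself on \(EC^1(S_\tau;M)\), which is a \(C^4\) Banach manifold, and identify it, via restriction to \([0,\tau/2]\), with the free-endpoint space \(C^1([0,\tau/2];M)\) modulo the evenness constraint. Concretely, I would use the map \(\gamma\mapsto\gamma|_{[0,\tau/2]}\). The functional becomes \(2\int_0^{\tau/2}L_\lambda\). Its critical points on the even class satisfy the Euler--Lagrange equation on \([0,\tau/2]\) with natural boundary conditions \(\partial_vL_\lambda(0,\gamma(0),\dot\gamma(0))=0\) and \(\partial_vL_\lambda(\tau/2,\cdot)=0\). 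These are the conditions (\ref{e:LagrGenerB}) with \(S_0=S_1=M\), and by the symmetry and the injectivity of the Legendre map they force \(\dot\gamma(0)=\dot\gamma(\tau/2)=0\), so the even extension is \(C^2\).

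Rather than fighting the case \(\dim S_i=\dim M\), which is excluded in Assumption~\ref{ass:Lagr6}, I would prefer the direct route modelled on the proofs of Theorems~\ref{th:bif-nessLagrGener} and \ref{th:bif-nessLagr*}. Near \(\gamma_\mu\), I would choose a tubular coordinate system along \(\gamma_\mu\) given by \(\xi\mapsto\exp_{\gamma_\mu(t)}(\xi(t))\). This system is compatible with the symmetry \(t\mapsto -t\), because \(\gamma_\mu\) is even, so \(\xi\) even corresponds to \(\gamma\) even. Through it, a neighbourhood of \(\gamma_\mu\) in \(EC^1(S_\tau;M)\) becomes a neighbourhood of \(0\) in the Banach space \(EC^1(S_\tau;\gamma_\mu^*TM)\) of even periodic sections. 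Continuity in \(\lambda\) of \(\gamma_\lambda\) and \(\dot\gamma_\lambda\) (Assumption~\ref{ass:Lagr7brake}) lets the same chart serve all nearby \(\lambda\): I would translate by \(\gamma_\lambda\), obtaining a family of functionals \(F_\lambda\) on a fixed neighbourhood of \(0\) with \(0\) critical for every \(\lambda\). The next step is to verify the hypotheses of the abstract theorems in \cite{Lu8, Lu10}: \(F_\lambda\) is \(C^2\) on the \(C^1\)-space, and its gradient extends to the Hilbert space \(EW^{1,2}\). The second variation has the Legendre form (strictly convex leading term plus compact perturbation), which gives the Fredholm and splitting structure. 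The required continuity in \(\lambda\) then follows from Assumption~\ref{ass:Lagrbrake} exactly as in Appendix A of \cite{Lu10}. Morse index and nullity are invariant under this reduction, and so are critical groups, by the invariance of critical groups under \(C^2\) diffeomorphisms and the Gromoll--Meyer type splitting.

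With this in place, (I) follows from the abstract necessary condition, i.e.\ the implicit function theorem. If \(m^0_\tau(\mathcal{L}^E_\mu,\gamma_\mu)=0\), then \(0\) is a nondegenerate critical point of \(F_\mu\), hence the unique critical point near \(0\) for \(\lambda\) near \(\mu\), which contradicts bifurcation along sequences. For (II), I would apply the abstract sufficiency theorem, which says that, given the stated conditions (II.1)--(II.3), critical groups of the \(F_{\lambda^\pm_k}\) at \(0\) differ. For (II.2) and (II.3), I would use the shifting theorem: \(C_q\) is concentrated in \([m^-,m^-+m^0]\), and it equals \(\delta_{q,m^-}\mathbf K\) when \(m^0=0\). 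From this it produces critical points \(\xi^k\ne0\) of \(F_{\lambda_k}\) with \(\xi^k\to0\) in the \(C^1\) topology. Pulling back through the chart gives the required brake orbits \(\gamma^k\) in \(EC^1(S_\tau;M)\), and ODE regularity (as in Lemma~\ref{lem:regu}) upgrades the convergence.

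I expect the main obstacle to be the functional-analytic verification in the equivariant setting. The key point is that the even periodic Hilbert space \(EW^{1,2}(S_\tau;\gamma_\mu^*TM)\) carries a second variation of Legendre type whose gradient map satisfies the uniform-in-\(\lambda\) continuity and compactness conditions of \cite{Lu10}. I would handle this by checking that the symmetry \(\xi\mapsto\xi(-\cdot)\) commutes with the gradient. Then everything restricts from the periodic case, which I would treat as Theorem~\ref{th:bif-nessLagr**} with \(\mathbb{I}_g=\mathrm{id}\), to the fixed-point subspace, and the required estimates are inherited from that case.
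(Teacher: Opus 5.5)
Your direct route is essentially the paper's proof. The paper also uses an exponential chart along an even $\tau$-periodic base curve (a smooth $\overline{\gamma}\in EC^7(S_\tau;M)$ near $\gamma_\mu$ rather than $\gamma_\mu$ itself), translates by ${\bf u}_\lambda$, and checks that the translated and modified Lagrangian keeps the symmetry $\check L(\lambda,-t,q,-v)=\check L(\lambda,t,q,v)=\check L(\lambda,\tau+t,q,v)$. It then observes that the gradient of the periodic functional at an even curve is even, so the machinery of Section~\ref{sec:LagrBound} restricts to the even subspace. The one ingredient you leave implicit is Claim~\ref{cl:pallVEctorF}: a parallel orthonormal frame along an even periodic curve is automatically even and $\tau$-periodic, i.e. it has trivial holonomy. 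This is what turns your intrinsic space of even periodic sections into $EC^1(S_\tau;\mathbb{R}^n)$, with the reflection acting simply as $x\mapsto x(-\cdot)$.
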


\begin{theorem}[\textsf{Existence for bifurcations}]\label{th:bif-existLagrBrakeM}
Let Assumptions~\ref{ass:Lagrbrake},\ref{ass:Lagr7brake} be satisfied,
and let $\Lambda$ be path-connected. Suppose that
  there exist two  points $\lambda^+, \lambda^-\in\Lambda$ such that
  one of the following conditions is satisfied:
 \begin{enumerate}
 \item[\rm (i)] Either $\gamma_{\lambda^+}$  is not an isolated critical point of
  $\mathcal{L}^E_{\lambda^+}$,
 or $\gamma_{\lambda^-}$ is not an isolated critical point of $\mathcal{L}^E_{\lambda^-}$,
 or $\gamma_{\lambda^+}$ {\rm (}resp. $\gamma_{\lambda^-}${\rm )} is an isolated critical point of
  $\mathcal{L}^E_{\lambda^+}$ {\rm (}resp. $\mathcal{L}^E_{\lambda^-}${\rm )} and
  $C_m(\mathcal{L}^E_{\lambda^+}, \gamma_{\lambda^+};{\bf K})$ and $C_m(\mathcal{L}^E_{\lambda^-}, \gamma_{\lambda^-};{\bf K})$
  are not isomorphic for some Abel group ${\bf K}$ and some $m\in\mathbb{Z}$.

\item[\rm (ii)] $[m^-(\mathcal{L}^E_{\lambda^-}, \gamma_{\lambda^-}),
m^-(\mathcal{L}^E_{\lambda^-}, \gamma_{\lambda^-})+ m^0(\mathcal{L}^E_{\lambda^-}, \gamma_{\lambda^-})]\cap[m^-(\mathcal{L}^E_{\lambda^+}, \gamma_{\lambda^+}),
m^-(\mathcal{L}^E_{\lambda^+}, \gamma_{\lambda^+})+m^0(\mathcal{L}^E_{\lambda^+}, \gamma_{\lambda^+})]\\=\emptyset$,
and there exists $\lambda\in\{\lambda^+, \lambda^-\}$ such that $\gamma_{\lambda}$  is an either non-isolated or homological visible critical point of $\mathcal{L}^E_{\lambda}$.

\item[\rm (iii)] $[m^-(\mathcal{L}^E_{\lambda^-}, \gamma_{\lambda^-}),
m^-(\mathcal{L}^E_{\lambda^-}, \gamma_{\lambda^-})+ m^0(\mathcal{L}^E_{\lambda^-}, \gamma_{\lambda^-})]\cap[m^-(\mathcal{L}^E_{\lambda^+}, \gamma_{\lambda^+}),
m^-(\mathcal{L}^E_{\lambda^+}, \gamma_{\lambda^+})+m^0(\mathcal{L}^E_{\lambda^+}, \gamma_{\lambda^+})]\\=\emptyset$,
and either $m^0(\mathcal{L}^E_{\lambda^+}, \gamma_{\lambda^+})=0$ or $m^0(\mathcal{L}^E_{\lambda^-}, \gamma_{\lambda^-})=0$.
 \end{enumerate}
  Then for any path $\alpha:[0,1]\to\Lambda$ connecting $\lambda^+$ to $\lambda^-$ there exists
 a sequence $(\lambda_k)\subset\alpha([0,1])$ converging to some $\mu\in\alpha([0,1])$, and
   solutions $\gamma^k\ne\gamma_{\lambda_k}$ of the problem (\ref{e:PPerLagrBrakeorbit})
  with $\lambda=\lambda_k$, $k=1,2,\cdots$,  such that $\|\gamma^k-\gamma_{\lambda_k}\|_{C^2(S_\tau;\mathbb{R}^N)}\to 0$
  as $k\to\infty$. {\rm (}In particular,  $(\mu, \lambda_\mu)$  is a  bifurcation point along sequences of the problem (\ref{e:PPerLagrBrakeorbit})
  in  $\Lambda\times EC^{1}(S_\tau; M)$  with respect to the branch $\{(\lambda, \gamma_\lambda)\,|\,\lambda\in\Lambda\}$.{\rm )}
Moreover, $\mu$ is not equal to $\lambda^+$ {\rm (}resp. $\lambda^-${\rm )} if $m^0_\tau(\mathcal{L}^E_{\lambda^+}, \gamma_{\lambda^+})=0$
{\rm (}resp. $m^0_\tau(\mathcal{L}^E_{\lambda^-}, \gamma_{\lambda^-})=0${\rm )}.
\end{theorem}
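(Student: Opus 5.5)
The plan is to reduce Theorem~\ref{th:bif-existLagrBrakeM} to the abstract existence-of-bifurcation theorem for families of $C^1$ functionals on a fixed Hilbert/Banach pair from \cite{Lu8, Lu10}. This follows the same pattern as the proofs of Theorems~\ref{th:bif-existLagrGener} and~\ref{th:bif-existLagr*}. The main new point is that brake orbits are handled through the symmetric (even) subspace.

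\textbf{Step 1: pull back to the path.} Compose with the path $\alpha$, i.e.\ consider the family $s\mapsto \mathcal{L}^E_{\alpha(s)}$, $s\in[0,1]$, with trivial branch $\gamma_{\alpha(s)}$. Since $[0,1]$ is compact and metrizable, it suffices to produce $s_k\to s_0$ and solutions $\gamma^k\ne\gamma_{\alpha(s_k)}$ with $\gamma^k-\gamma_{\alpha(s_k)}\to0$ in $C^2$. Then set $\lambda_k=\alpha(s_k)$ and $\mu=\alpha(s_0)$.

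\textbf{Step 2: flatten the family near a point of the path.} Fix $s_0$. Using the exponential map of $g$ along $\gamma_{\alpha(s)}$ and a parallel orthonormal frame, identify a neighbourhood of $\gamma_{\alpha(s)}$ in $EC^1(S_\tau;M)$ with a neighbourhood of $0$ in a fixed Banach space of even $\tau$-periodic $C^1$ maps into $\mathbb{R}^n$. The frame must be chosen compatibly with $t\mapsto -t$; since $\gamma(-t)=\gamma(t)$, a frame on $[0,\tau/2]$ extended evenly works, possibly after a correction by a constant orthogonal matrix.

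One issue is that the frame is not periodic in general. Periodicity holds only up to a holonomy matrix, so a fixed periodic frame is not available. I would instead work with brake symmetry on $[0,\tau/2]$. An even $\tau$-periodic curve is determined by its restriction to $[0,\tau/2]$, and the Euler--Lagrange condition reduces to the Neumann-type conditions $\partial_vL(0,\gamma(0),\dot\gamma(0))=0$ and $\partial_vL(\tau/2,\gamma(\tau/2),\dot\gamma(\tau/2))=0$. By (\ref{e:brakeLM}) these are equivalent to $\dot\gamma(0)=\dot\gamma(\tau/2)=0$.

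So brake orbits correspond bijectively to solutions of (\ref{e:LagrGener})--(\ref{e:LagrGenerB}) on $[0,\tau/2]$ with $S_0=S_1=M$. Because $S_0=S_1=M$, the condition $\gamma_\mu(0)\ne\gamma_\mu(\tau)$ of Theorem~\ref{th:bif-existLagrGener} is not needed. Under this correspondence the functional is doubled, $\mathcal{L}^E_\lambda(\gamma)=2\int_0^{\tau/2}L_\lambda$. Hence Morse indices, nullities, isolatedness and critical groups all coincide, and so do the $C^1$/$C^2$ topologies on the two spaces.

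\textbf{Step 3: apply Theorem~\ref{th:bif-existLagrGener}.} Apply it with $\tau$ replaced by $\tau/2$, $S_0=S_1=M$, and the restricted Lagrangian; Assumptions~\ref{ass:Lagr6} and~\ref{ass:LagrGenerB} follow from Assumptions~\ref{ass:Lagrbrake} and~\ref{ass:Lagr7brake}. Hypotheses (i)--(iii) translate verbatim, and its conclusion gives the sequence. Extending back evenly and periodically preserves $C^2$ convergence, since the extension is an isometry for $C^2$ norms on $S_\tau$. The clause "$\mu\ne\lambda^\pm$ when the nullity vanishes" carries over by the same identification.

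\textbf{Main obstacle.} The delicate step is the claim that the Morse index and nullity of $\mathcal{L}^E_\lambda$ on $EC^1(S_\tau;M)$ equal those of the half-interval functional with free endpoints. One must check that even $W^{1,2}$ variations restrict bijectively to arbitrary $W^{1,2}$ variations on $[0,\tau/2]$, and that $D^2\mathcal{L}^E_\lambda(\gamma)$ equals twice the half-interval Hessian. This uses $L(\lambda,-t,q,-v)=L(\lambda,t,q,v)$ to match the integrands on $[-\tau/2,0]$ and $[0,\tau/2]$. If the Hessian identification proves awkward, the fallback is to apply the abstract theorem of \cite{Lu10} directly on the even subspace. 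In that route the principle of symmetric criticality (the involution $\gamma\mapsto\gamma(-\cdot)$ is an isometry preserving $\mathcal{L}^E_\lambda$) guarantees that critical points on $EC^1$ are genuine brake orbits.
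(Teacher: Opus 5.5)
Your unfolding to the half-period Neumann problem is a different route from the paper's. The paper builds an even, $\tau$-periodic parallel frame (Claim~\ref{cl:pallVEctorF}), charts $EC^1(S_\tau;M)$ directly, and reruns the Section~\ref{sec:LagrBound} machinery on the even subspaces $\mathbf{H}_e,\mathbf{X}_e$. Several parts of your route are fine:
the bijection between brake orbits and solutions on $[0,\tau/2]$ with $\dot\gamma(0)=\dot\gamma(\tau/2)=0$;
the doubling of the action;
the equality of $m^-$ and $m^0$, because at the $W^{1,2}$ level restriction maps even sections isomorphically onto all sections over $[0,\tau/2]$.
So case (iii) goes through, and the Hessian identification you flagged as the main obstacle is the easy part.

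The genuine gap is your assertion that critical groups coincide, which you need for (i) and for homological visibility in (ii). At the $C^1$ level, restriction does \emph{not} identify $EC^1(S_\tau;M)$ with $C^1([0,\tau/2];M)$. Its image is only the closed codimension-$2n$ submanifold $Y=\{\gamma\,|\,\dot\gamma(0)=0=\dot\gamma(\tau/2)\}$, so $\mathcal{L}^E_\lambda$ is twice the restriction to $Y$ of the half-interval action. Critical groups of a restriction to a submanifold through the critical point generally differ from the ambient ones (e.g.\ $x^2-y^2$ restricted to $\{y=0\}$). Sharing the critical set and the $W^{1,2}$ Hessian does not by itself give $C_*(\mathcal{L}^E_\lambda,\gamma_\lambda;\mathbf{K})\cong C_*(\mathcal{E}_\lambda,\gamma_\lambda|_{[0,\tau/2]};\mathbf{K})$.

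To close this, compare both $C^1$-type spaces with their common $W^{1,2}$ completion via \cite[Corollary~2.8]{JM}, as the paper does. For $Y$ you must verify the analogues of Proposition~\ref{prop:funct-analy}(iv),(viii). Work in a chart based at a reference curve with zero velocity at $t=0,\tau/2$, with a modification $\check L$ that keeps $\check L(\lambda,-t,q,-v)=\check L(\lambda,t,q,v)=\check L(\lambda,t+\tau,q,v)$. You then need two facts: the $W^{1,2}$-gradient maps $Y\cap\mathcal{U}$ into $Y$, and the eigenvectors of $B_\lambda(0)$ with eigenvalue $\le 0$ lie in $Y$. Both follow from the brake symmetry. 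It forces $\partial_v\check L(\lambda,t_0,q,0)=0$, hence $\partial_{qv}\check L(\lambda,t_0,q,0)=0$, at $t_0=0,\tau/2$, so the natural boundary terms force $\dot z(t_0)=0$ and $\dot\xi(t_0)=0$. This is exactly the even-subspace analysis the paper invokes in Section~\ref{sec:Lagr7}, so the unfolding saves less than it appears.

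Two smaller points.
\begin{enumerate}
\item Theorem~\ref{th:bif-existLagrGener} cannot be cited verbatim. Assumption~\ref{ass:Lagr6} requires $\dim S_i<\dim M$, and the theorem demands $\gamma_{\alpha(s)}(0)\ne\gamma_{\alpha(s)}(\tau/2)$ when $\dim S_0,\dim S_1>0$, which fails e.g.\ for equilibria. You must note that its proof runs unchanged with $V_0=V_1=\mathbb{R}^n$.
\item Your holonomy worry is unfounded. An even $\tau$-periodic curve retraces itself, so its parallel frame is automatically even and $\tau$-periodic (Claim~\ref{cl:pallVEctorF}). That is what lets the paper work on $EC^1(S_\tau;M)$ directly.
\end{enumerate}
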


\begin{theorem}[\textsf{Alternative bifurcations of Rabinowitz's type}]\label{th:bif-suffLagrBrakeM}
Under Assumptions~\ref{ass:Lagrbrake},\ref{ass:Lagr7brake} with $\Lambda$ being a real interval,
let $\mu\in{\rm Int}(\Lambda)$ satisfy   $m^0_\tau(\mathcal{L}^E_{\mu}, \gamma_\mu)>0$.
 Suppose that $m^0_\tau(\mathcal{L}^E_{\lambda}, \gamma_\lambda)=0$  for each $\lambda\in\Lambda\setminus\{\mu\}$ near $\mu$, and
 that $m^-_\tau(\mathcal{L}^E_{\lambda}, \gamma_\lambda)$ take, respectively, values $m^-_\tau(\mathcal{L}^E_{\mu}, \gamma_\mu)$ and
  $m^-_\tau(\mathcal{L}^E_{\mu}, \gamma_\mu)+ m^0_\tau(\mathcal{L}^E_{\mu}, \gamma_\mu)$
 as $\lambda\in\Lambda$ varies in two deleted half neighborhoods  of $\mu$.
  Then  one of the following alternatives occurs:
\begin{enumerate}
\item[\rm (i)] The problem (\ref{e:PPerLagrBrakeorbit})
 with $\lambda=\mu$ has a sequence of solutions, $\gamma_k\ne \gamma_\mu$, $k=1,2,\cdots$,
which converges to $\gamma_\mu$ in $C^2(S_\tau, M)$.

\item[\rm (ii)]  For every $\lambda\in\Lambda\setminus\{\mu\}$ near $\mu$ there exists a  solution $\alpha_\lambda\ne \gamma_\lambda$ of
(\ref{e:PPerLagrBrakeorbit}) with parameter value $\lambda$, such that  $\alpha_\lambda-\gamma_\lambda$
 converges to zero in  $C^2(S_\tau, \R^N)$ as $\lambda\to \mu$.
{\rm (}Recall that $M\subset\R^N$.{\rm )}

\item[\rm (iii)] For a given neighborhood $\mathcal{W}$ of $\gamma_\mu$ in $C^2(S_\tau, M)$,
there exists a one-sided neighborhood $\Lambda^0$ of $\mu$ such that
for any $\lambda\in\Lambda^0\setminus\{\mu\}$, (\ref{e:PPerLagrBrakeorbit}) with parameter value $\lambda$
has at least two distinct solutions in $\mathcal{W}$, $\gamma_\lambda^1\ne \gamma_\lambda$ and $\gamma_\lambda^2\ne \gamma_\lambda$,
which can also be chosen to satisfy $\mathcal{L}^E_{\lambda}(\gamma_\lambda^1)\ne\mathcal{L}^E_{\lambda}(\gamma_\lambda^2)$
provided that $m^0_\tau(\mathcal{L}^E_{\mu}, \gamma_\mu)>1$ and (\ref{e:PPerLagrBrakeorbit}) with parameter value $\lambda$
has only finitely many solutions in $\mathcal{W}$.
\end{enumerate}
\end{theorem}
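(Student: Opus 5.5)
The plan is to reduce Theorem~\ref{th:bif-suffLagrBrakeM} to an abstract Rabinowitz-type alternative for a continuous family of $C^2$ functionals on a Hilbert space. This is the abstract theorem from \cite{Lu8, Lu10}, which is also used for Theorems~\ref{th:bif-suffLagrGener} and~\ref{th:bif-suffLagr*}.

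\emph{Step 1: reduction to a fixed Banach space.} Near $\mu$, pull the problem back to Euclidean space using the exponential map of $g$ along $\gamma_\lambda$. Fix a $C^2$-small perturbation of $\gamma_\mu$ and a parallel (or orthonormal) frame along it. The evenness and $\tau$-periodicity of $\gamma_\lambda$ let us choose this frame to be invariant under $t\mapsto -t$ and $t\mapsto t+\tau$. Concretely, build the frame on $[0,\tau/2]$ and extend it by reflection. This is compatible because $\gamma(-t)=\gamma(t)$ forces $\gamma(0)$ and $\gamma(\tau/2)$ to be turning points.

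For $\lambda$ near $\mu$, define
\[
\Phi_\lambda(\xi)(t)=\exp_{\gamma_\lambda(t)}\Big(\sum_i\xi_i(t)e_i^\lambda(t)\Big).
\]
This identifies a neighborhood of $\gamma_\lambda$ in $EC^1(S_\tau;M)$ with a neighborhood of $0$ in the space $EC^1(S_\tau;\R^n)$ of even $\tau$-periodic $C^1$ maps. Pulling back ${\mathcal L}^E_\lambda$ gives the functional $\xi\mapsto\int_0^\tau \tilde L_\lambda(t,\xi,\dot\xi)\,dt$. Here $\tilde L_\lambda$ inherits the symmetry (\ref{e:brakeLM}), is $C^2$, is fiberwise strictly convex, and depends continuously on $\lambda$. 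The trivial branch becomes $\xi=0$, and Morse index and nullity are preserved under this diffeomorphism.

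\emph{Step 2: the abstract setting.} Work on the Hilbert space $H=EW^{1,2}(S_\tau;\R^n)$, which is dense in the Banach space $X=EC^1(S_\tau;\R^n)$. The functional is $C^2$ on $X$, but only its second differential extends to $H$. This places us in the "Banach–Hilbert" framework of \cite{Lu8,Lu10}. We verify its hypotheses, namely that the gradient map and the Fredholm decomposition of $D^2$ behave continuously in $(\lambda,\xi)$, by the estimates already developed in Appendix~A of \cite{Lu10} for the periodic case, now restricted to the even subspace. The even subspace is invariant under the relevant operators because $\tilde L_\lambda(-t,q,-v)=\tilde L_\lambda(t,q,v)$. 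By the principle of symmetric criticality (the functional is invariant under the involution $\xi(t)\mapsto\xi(-t)$), critical points in $H$ are critical points of the full periodic functional. Elliptic regularity for the Euler–Lagrange equation then places them in $C^2$, so they are brake orbits.

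\emph{Step 3: apply the abstract alternative and transfer back.} The hypotheses on $m^0_\tau$ and $m^-_\tau$ near $\mu$ are exactly those of the abstract Rabinowitz alternative. That theorem gives: (a) a sequence of nontrivial critical points at $\lambda=\mu$; or (b) a nontrivial critical point for each $\lambda\neq\mu$ near $\mu$, converging to $0$; or (c) two distinct nontrivial ones on a one-sided neighborhood, with distinct critical values under the stated finiteness and $m^0>1$ hypotheses. Convergence here is in $X$.

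To get convergence in $C^2$, upgrade $C^1$ smallness using the Euler–Lagrange equation and strict convexity. Solve for $\ddot\xi$ via the implicit function theorem, as in Lemma~\ref{lem:regu}, to obtain $C^2$-smallness. Map back by $\Phi_\lambda$; since $\Phi_\lambda$ is a diffeomorphism, it preserves distinctness and the functional values. This yields (i)–(iii).

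\emph{Main obstacle.} The main difficulty is Step 1. One must construct coordinates, uniform in $\lambda$, that respect both the reflection and periodicity constraints, and check that the pulled-back Lagrangian keeps the continuity in $\lambda$ required by the abstract theorem. I would first attempt the frame extension by reflection on $[0,\tau/2]$ described above.
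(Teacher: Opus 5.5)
Your architecture matches the paper's: a symmetric exponential chart, a pulled-back Lagrangian that inherits the brake symmetry, restriction to the even subspace (where gradients stay even), the abstract Rabinowitz alternative, and transfer back. The frame issue you single out as the main obstacle is actually easy. A parallel frame started at $t=0$ along an even $\tau$-periodic curve is automatically even and $\tau$-periodic by ODE uniqueness (Claim~\ref{cl:pallVEctorF}).

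The real gap is in Step~2. The abstract alternative, as used in the proof of Theorem~\ref{th:bif-suffLagrGenerEu}, concerns functionals defined on an open set $\mathcal{U}$ of the \emph{Hilbert} space. It requires:
\begin{itemize}
\item each functional is $C^{2-0}$ and twice G\^ateaux differentiable on $\mathcal{U}$;
\item $B_\lambda(\zeta)=P_\lambda(\zeta)+Q_\lambda(\zeta)$ for every $\zeta\in\mathcal{U}$, with $P_\lambda(\zeta)$ uniformly positive;
\item $P$ is strongly continuous and $Q$ is norm-continuous at $(\mu,0)$ along $W^{1,2}$-convergent sequences (Proposition~\ref{prop:funct-analy}(i),(v)--(vii)).
\end{itemize}
Knowing that the second differential at $0$ extends to $EW^{1,2}$ is not enough. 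For an $L$ that is only $C^2$ and fiberwise strictly convex, $\int_0^\tau\tilde L_\lambda(t,\xi,\dot\xi)\,dt$ need not be finite on $EW^{1,2}$ (think of a $|v|^4$ term). Nor need $\partial_{vv}\tilde L$ be bounded below uniformly in $v$.

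So you must first replace $\tilde L$, over a compact $\hat\Lambda=[\mu-\varepsilon,\mu+\varepsilon]$, by a Lagrangian $\check L$ with three properties:
\begin{itemize}
\item it agrees with $\tilde L$ for $|q|<3\iota/4$ and $|v|<\rho_0$;
\item it satisfies (L0)--(L6) of Lemma~\ref{lem:Gener-modif};
\item it still satisfies $\check L(\lambda,-t,q,-v)=\check L(\lambda,t,q,v)=\check L(\lambda,t+\tau,q,v)$.
\end{itemize}
The last property holds for the construction in Lemma~\ref{lem:Gener-modif}, because every modification there is a function of $\tilde L$ and $|v|^2$.

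This modification creates a second missing step. The abstract theorem produces critical points of the \emph{modified} functional, and in the first alternative they converge to $0$ only in $W^{1,2}$, not in $X$ as you assert. To conclude that they are genuine brake orbits (i.e.\ lie where $\check L=\tilde L$) and converge in $C^2$, you need the uniform estimate of Proposition~\ref{prop:solutionLagr}. It says that for each $\epsilon>0$ there is $\varepsilon>0$, independent of $\lambda\in\hat\Lambda$, such that every critical point with $\|x\|_{1,2}<\varepsilon$ satisfies $\|x\|_{C^2}<\epsilon$. Its proof uses:
\begin{itemize}
\item the explicit gradient formula (\ref{e:4.14});
\item uniform control of the constants $c_1(\lambda,x)$ and $c_2(\lambda,x)$;
\item the convexity inequality $\check c|v|\le|\partial_v\check L_\lambda(t,q,v)-\partial_v\check L_\lambda(t,q,0)|$ coming from (L2);
\item continuity of $\partial_{vt}\check L$.
\end{itemize}
Your Lemma~\ref{lem:regu}-type upgrade only passes from $C^1$ to $C^2$, and it presupposes exactly the $C^1$-smallness that is missing here.
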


 As noted in Remark~\ref{rm:reduction},  when $M$ is an open subset in $\mathbb{R}^n$ the conditions in the above theorems may be weakened suitably.

\begin{remark}\label{rm:RiemannGeodesic}
{\rm
	Clearly, if the Lagrangian $L$ in Assumption~\ref{ass:Lagrbrake}
	comes from a family of $C^6$ Riemannian metrics $\{h_\lambda\,|\,\lambda\in\Lambda\}$
	on $M$, i.e., $L(\lambda,t, x,v)=(h_\lambda)_x(v,v)$ for all $(\lambda,t,x,v)$,
	as direct consequences of the above results we immediately obtain many bifurcation theorems of geodesics on Riemannian manifolds.
 See \cite[Part B]{Lu12}. 
 }
\end{remark}

\begin{remark}\label{rm:NewAdd1}
{\rm 
A natural question is how to verify the conditions required in Theorems~\ref{th:bif-suffLagrGener},~\ref{th:bif-suffLagr*},~\ref{th:bif-suffLagr**}, and~\ref{th:bif-suffLagrBrakeM}. We address this question in 
Section~\ref{sec:Cor-example} with Appendix~\ref{app:Abst}.
 }
\end{remark}

\subsection{Organization of the article}\label{sec:organ}

Section~\ref{sec:preliminaries} introduces the basic notation and presents some 
technical lemmas that will be used in the subsequent sections.

In Section~\ref{sec:LagrBound}, we discuss bifurcations of Lagrangian system trajectories connecting two submanifolds. By reducing the problem to Euclidean spaces, we prove the main results presented in Section~\ref{sec:submanifolds}, namely, Theorems~\ref{th:bif-nessLagrGener},~\ref{th:bif-existLagrGener},~\ref{th:bif-suffLagrGener}, and~\ref{th:MorseBif}. The proof of an important lemma, Lemma~\ref{lem:twoCont}, is deferred to Appendix~\ref{app:Exp}.

In Section~\ref{sec:LagrPPerio1}, we study 
bifurcations of Lagrangian trajectories with endpoints related by an isometry. 
 The main results of Section~\ref{sec:Graph}---Theorems~\ref{th:bif-nessLagr*},~\ref{th:bif-existLagr*} and \ref{th:bif-suffLagr*}---are proved by a reduction to Euclidean spaces.

Section~\ref{sec:Lagr7} deals with bifurcations of brake orbits in Lagrangian systems. Based on a reduction to Euclidean spaces (using Claim~\ref{cl:pallVEctorF}, to be proved in Appendix~\ref{app:Exp}), we prove the main results of Section~\ref{sec:Lagrbroke}: Theorems~\ref{th:bif-nessLagrBrakeM},~\ref{th:bif-existLagrBrakeM}, and~\ref{th:bif-suffLagrBrakeM}.

In Section~\ref{sec:Cor-example}, using the abstract results in Appendix~\ref{app:Abst}, we derive more readily applicable corollaries of the main results. We then apply them to bifurcation problems concerning physical Lagrangians with potential and electromagnetic forces, specifically presenting bifurcation results for the planar forced simple pendulum.

Appendix~\ref{app:Exp}  provides the proofs of Lemma~\ref{lem:twoCont} and Claim~\ref{cl:pallVEctorF}.

In Appendix~\ref{app:Abst}, we study the dependence on parameters of the Morse indices of a family of abstract functionals defined on a separable real Hilbert space at a common critical point.\\

\noindent{\bf Acknowledgements.}\; I would like to thank the anonymous referee for his/her many valuable comments and suggestions regarding the mathematical content and the writing, which have greatly improved the revised version.

\section{Preliminaries and notation} \label{sec:preliminaries} 
\setcounter{equation}{0}

This section collects the standard notations, function spaces, and basic technical lemmas used throughout the paper. It aims to make the paper self-contained and to clarify the meaning of symbols introduced in subsequent sections.

\subsection{Notation and Conventions}\label{sec:Notation}

\begin{itemize}
    \item Vectors in \(\mathbb{R}^m\) are understood as column vectors;
     \((\cdot, \cdot)_{\mathbb{R}^m}\) and \(|\cdot|\) denote, respectively, 
     the standard Euclidean inner product and norm in \(\mathbb{R}^m\).
            \item The transpose of a matrix \(M\) is denoted by \(M^\top\).
    \item For a Hilbert space $H$, $\mathscr{L}_s(H)$ denotes the space of continuous linear
    self-adjoint operators on $H$. We also use  \(\mathcal{L}_s(\mathbb{R}^m)\) 
    to denote the set of real symmetric matrices of order $m$).

\item For normed (real) linear spaces $X$ and $Y$, $\mathscr{L}(X,Y)$ is the space of
continuous linear mappings from $X$ to $Y$. Call $X^\ast:=\mathscr{L}(X,\mathbb{R})$ 
the dual (or adjoint) space of $X$.

\item For  a map $f$ from normed (real) linear spaces $X$ to $Y$,
let $Df(x)$  denote the G\^ateaux  derivative of $f$ at $x\in X$,
and let $df(x)$ (or $f'(x)$) denotes the  Fr\'echet derivative of $f$ at $x\in X$.
Both are elements of the space  $\mathscr{L}(X,Y)$. 
We may also use the notation  $f'(x)$ for the G\^ateaux derivative $Df(x)$
when no confusion arises.

\item For  a real functional $f$ on a Hilbert space $H$, the 
 the derivative $f'(x)$ of $f$ at $x\in H$ belongs to the dual space $H^\ast$. 
  By the Riesz representation theorem, $f'(x)$ corresponds to a unique 
  element in 
  $H$, which we call the gradient of $f$ at $x$, denoted by $\nabla f(x)$. 
 
  The Fr\'echet (or G\^ateaux) derivative of $\nabla f$ at $x\in H$
 is denoted by $f''(x)$. This is an element of $\mathscr{L}_s(H)$.
More precisely, $f''(x)$ is identified with the second derivative $(f')'(x)\in \mathscr{L}(H; H^\ast)$,
 which corresponds to a symmetric bilinear form on $H$ via the Riesz representation theorem
(identifying $H^\ast$ with $H$). This is equivalent to the derivative
 $D(\nabla f)(x)$. 

    \item $L^p([0, \tau]; \mathbb{R}^m)$ ($1\le p<\infty$) denotes the space of (equivalence classes of) $p$-integrable functions from $[0,\tau]$ to $\mathbb{R}^m$, equipped with the norm $\|u\|_p=(\int_0^\tau|u(t)|^p)^{1/p}$.
    In particular,   $L^2([0, \tau]; \mathbb{R}^m)$ 
     becomes a Hilbert space  equipped  with the inner product 
        \begin{equation}\label{e:innerP}
        (u,v)_{2}=\int_0^\tau(u(t),v(t))_{\mathbb{R}^{m}}dt.
        \end{equation}
                
    \item   $W^{1,p}([0, \tau]; \mathbb{R}^m)$ ($1\le p<\infty$)
    denotes the Sobolev space of absolutely continuous functions with $p$-integrable derivatives, 
    equipped with the norm 
    $$\|u\|_{1,p}=\left(\int_0^\tau|u(t)|^pdt+ \int_0^\tau|\dot{u}(t)|^pdt\right)^{1/p}.
    $$
    In particular, $W^{1,2}([0, \tau]; \mathbb{R}^m)$ becomes a Hilbert space   equipped with the inner product 
        \begin{equation}\label{e:innerP2}
          (u,v)_{1,2}=\int_0^\tau[(u(t),v(t))_{\mathbb{R}^{m}}+ (\dot{u},\dot{v})_{\mathbb{R}^{m}}]dt.
       \end{equation}
      \end{itemize}

\begin{itemize}
    \item \( C^k([0, \tau]; M) \): The Banach manifold of \( C^k \)-smooth curves from \([0, \tau]\) to \( M \), $k\in\mathbb{N}$. \( C^0([0, \tau]; M) \) denotes the topological space 
        of continuous curves from \([0, \tau]\) to \( M \). When $M\subset\mathbb{R}^N$,
        it has the induced subspace topology  from the Banach space \( C^0([0, \tau]; \mathbb{R}^N) \)
        with norm
        $\|u\|_{C^0}=\max\{|u(t)|_{\mathbb{R}^N}\mid t\in [0,\tau]\}$.

    \item \( C^1_{\mathbf{N}}([0, \tau]; M) \): The Banach submanifold of \( C^1([0, \tau]; M) \) defined by the boundary condition \( (\gamma(0), \gamma(\tau)) \in \mathbf{N} \), where \( \mathbf{N} \subset M \times M \) is a submanifold. See \eqref{e:BanachM}.

      \item $C_{\mathbb{I}_g}^1([0, \tau]; M)$ is defined as $C^1_{\mathbf{N}}([0, \tau]; M)$ 
      when $\mathbf{N}$ is the graph $\operatorname{Graph}(\mathbb{I}_g)$ of
      an isometry $\mathbb{I}_g$ on $(M, g)$. See (\ref{e:Rot-space}).

    \item \( \mathcal{X}^i_\tau(M, \mathbb{I}_g) \): The space of \( \mathbb{I}_g \)-periodic 
    $C^i$ curves (see (\ref{e:BanachPerMani})).

    \item \( EC^1(S_\tau; M) \): The space of 
   even and \( \tau \)-periodic $C^1$ curves (see (\ref{e:EBanachM})).
   
    \item \( W^{1,2}(\gamma^*TM) \): The Hilbert space of \( W^{1,2} \)-sections of the pullback bundle \(\gamma^*TM\), where $\gamma\in W^{1,2}([0,\tau]; M)$. It is equipped  with inner product
  given by (\ref{e:1.1}). When $\gamma$ belongs to 
       $W^{1,2}_{S_0\times S_1}([0,\tau]; M)$ (resp. $W^{1,2}_{\mathbb{I}_g}([0, \tau]; M)$),
     the space  \( W^{1,2}(\gamma^*TM) \) contains 
      $W^{1,2}_{S_0\times S_1}(\gamma^\ast TM)$ (resp.
      $W^{1,2}_{\mathbb{I}_g}(\gamma^\ast TM)$. These subspaces are defined in
           (\ref{e:LagrGener0})) and  (\ref{e:Rot-spaceB}), respectively. 

\item $C^1(\gamma^\ast TM)$: The Banach space of  $C^1$-sections of the pullback bundle \(\gamma^*TM\), where $\gamma\in C^1([0,\tau]; M)$. See (\ref{e:LagrGener000}).
 When $\gamma$ belongs to  $C^{1}_{S_0\times S_1}([0,\tau]; M)$ (resp. $C^{1}_{\mathbb{I}_g}([0, \tau]; M)$),
 the space $C^1(\gamma^\ast TM)$ contains the subspace $C^{1}_{S_0\times S_1}(\gamma^\ast TM)$ 
 (resp. $C^1_{\mathbb{I}_g}(\gamma^\ast TM)$). These subspaces are defined in
 (\ref{e:LagrGener00+}) and (\ref{e:Rot-spaceA}), respectively. 
\end{itemize}
\begin{itemize}
     \item \(\mathcal{E}_\lambda\): 
       The action functional associated with a parameterized Lagrangian function
       $L : \Lambda \times [0, \tau] \times TM \to \mathbb{R}$, 
    on $C^1_{\bf N}([0,\tau];M)$, specifically on the subspace 
     $C^1_{S_0\times S_1}([0,\tau];M)$. See (\ref{e:LagrGener-}) and (\ref{e:EnergyFunc}).
          
     \item \(\mathcal{L}_{S_{0},s}\): The action functional 
     associated with a Lagrangian function \( L :[0, \tau] \times TM \to \mathbb{R} \),
     defined on $C^1_{S_0\times \{\gamma(s)\}}([0,\tau];M)$
     for a given $C^2$ curve $\gamma:[0, \lambda]\to M$ with $\lambda\in (0,\tau]$ that solves (\ref{e:LagrCurve}), where $s\in (0,\lambda]$. See (\ref{e:LagrFinEnergyPQ0})
     for the explicit expression.    
      
    \item \(\mathscr{E}_\lambda\): 
       The action functional on $C^{1}_{\mathbb{I}_g}([0, \tau]; M)$
       associated with a parameterized Lagrangian function
       $L : \Lambda \times [0, \tau] \times TM \to \mathbb{R}$.
          See (\ref{e:Lagr0}).

    \item \(\mathfrak{E}_\lambda\): The action functional 
    on $\mathcal{X}^1_\tau(M, \mathbb{I}_g)$  associated with a parameterized Lagrangian function
       $L : \Lambda \times \mathbb{R} \times TM \to \mathbb{R}$ satisfying (\ref{e:Lagr8}).             
        See (\ref{e:Lagr9}).

    \item \(\mathcal{L}^E_\lambda\): The action functional on 
     $EC^1(S_\tau; M)$  associated with a parameterized Lagrangian function
       $L : \Lambda \times \mathbb{R} \times TM \to \mathbb{R}$ satisfying (\ref{e:brakeLM}).   
       See (\ref{e:BrakeFunct}).

\item $m^-(F, p)$ (resp.\ $m^0(F, p)$): The Morse index (resp.\ nullity) of a critical point $p$ of a functional $F$, where $F$ denotes one of the functionals $\mathcal{E}_\lambda$, $\mathcal{L}_{S_{0},s}$, $\mathscr{E}_\lambda$, $\mathfrak{E}_\lambda$, and $\mathcal{L}^E_\lambda$.
    See (\ref{e:MorseIndexTwo}), (\ref{e:NullityFocus}),
    (\ref{e:Lagr3}), (\ref{e:Lagr12}), and (\ref{e:BrakeLagrMorse}).        
\end{itemize}

\subsection{Technical lemmas}\label{sec:Tech}

For $1\le p<\infty$, we adopt the standard convention of identifying 
a function $u\in L^{p}([0,\tau];\R^{n})$ with a chosen representative of their equivalence class.
Consequently, we  identify every $u\in W^{1,p}([0,\tau];\mathbb{R}^{n})$ with its (unique) continuous representative. Therefore, we have
 $W^{1,p}([0,\tau];\mathbb{R}^{n})\hookrightarrow C^0([0,\tau];\mathbb{R}^{n})$ and
\begin{equation}\label{e:C-L}
\left.\begin{array}{ll}
\|u\|_{C^0}\le (\tau+1)\|u\|_{1,1},\quad\forall u\in W^{1,1}([0,\tau];\mathbb{R}^{n}),\\
\|u\|_{C^0}\le (\sqrt{\tau}+1/\sqrt{\tau})\|u\|_{1,2},\quad\forall u\in W^{1,2}([0,\tau];\mathbb{R}^{n}).
\end{array}\right\}
\end{equation}

 \begin{lemma}[\hbox{\cite[Lemma~2.1]{Lu4}}]\label{lem:Fin2.1}
Given positive numbers $c>0$ and $C_1\ge 1$, choose positive
parameters $0<\varepsilon<\delta<\frac{2c}{3C_1}$. Then:
\begin{enumerate}
\item[\rm (i)]  There exists
a $C^\infty$ function $\psi_{\varepsilon,\delta}:[0, \infty)\to\R$
such that: $\psi'_{\varepsilon,\delta}>0$ and $\psi_{\varepsilon,\delta}$ is convex on
$(\varepsilon, \infty)$, $\psi_{\varepsilon,\delta}$ vanishes in $[0, \varepsilon)$ and is equal to
the affine function $\kappa t+ \varrho_0$ on $[\delta, \infty)$,
where $\kappa>0$ and $\varrho_0<0$ are suitable constants.
\item[\rm (ii)] There exists a $C^\infty$ function
$\phi_{\mu,b}:[0, \infty)\to\R$ depending on parameters $\mu>0$ and
$b>0$, such that: $\phi_{\mu,b}$ is nondecreasing and concave (and
hence $\phi_{\mu,b}^{\prime\prime}\le 0$ ), and equal to the affine
function $\mu t-\mu\delta$ on $[0, \delta]$, and equal to constant
$b>0$ on $[\frac{2c}{3C_1}, \infty)$.
\item[\rm (iii)] Under the above assumptions, $\psi_{\varepsilon,\delta}(t)+
\phi_{\mu,b}(t)-b=\kappa t+ \varrho_0$ for any $t\ge
\frac{2c}{3C_1}$ (and hence for $t\ge\frac{2c}{3}$). Moreover,
$\psi_{\varepsilon,\delta}(t)+ \phi_{\mu,b}(t)-b\ge
-\mu\delta-b$ for any $t\ge 0$, and $\psi_{\varepsilon,\delta}(t)+
\phi_{\mu,b}(t)-b= -\mu\delta-b$ if and only if $t=0$.

\item[\rm (iv)] Under the assumptions (i)-(ii), suppose that the constant $\mu>0$ satisfies
\begin{eqnarray}\label{e:Fin1.0}
\mu+\frac{\varrho_0}{\delta-\varepsilon}>0\quad\hbox{and}\quad\mu\delta+b+\varrho_0>0.
\end{eqnarray}
Then $\psi_{\varepsilon,\delta}(t)+ \phi_{\mu,b}(t)-b\le\kappa t+
\varrho_0$ for all $t\ge\varepsilon$, and
$\psi_{\varepsilon,\delta}(t)+ \phi_{\mu,b}(t)-b\le\kappa t+
\varrho_0$ for all $t\in [0,\varepsilon]$ if $\kappa\ge\mu$.
\end{enumerate}
\end{lemma}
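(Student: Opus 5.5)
The lemma is an elementary real-variable statement. I would prove it by explicit construction, building $\psi_{\varepsilon,\delta}$ and $\phi_{\mu,b}$ separately and then checking (iii) and (iv) by comparing the functions on the three ranges $[0,\varepsilon]$, $[\varepsilon,\delta]$ and $[\delta,\infty)$.

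\textbf{Construction of $\psi_{\varepsilon,\delta}$ for (i).}
1. Take a $C^\infty$ function $\rho\ge0$ with support in $[\varepsilon,\delta]$ and positive on $(\varepsilon,\delta)$; a bump built from $e^{-1/s}$ will do.
2. Set $\psi''=\rho$ and $\psi(t)=\int_0^t\int_0^s\rho$.
3. Then $\psi$ vanishes on $[0,\varepsilon]$, is convex, and $\psi'(t)=\int_\varepsilon^t\rho>0$ for $t>\varepsilon$.
4. On $[\delta,\infty)$ we have $\psi'\equiv\kappa:=\int\rho>0$, so $\psi(t)=\kappa t+\varrho_0$ there.
5. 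Since $\psi(\delta)<\kappa\delta$ (because $\psi'<\kappa$ on $(\varepsilon,\delta)$ and $\psi(\varepsilon)=0$), we get $\varrho_0=\psi(\delta)-\kappa\delta<0$.

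\textbf{Construction of $\phi_{\mu,b}$ for (ii).} Put $a=\frac{2c}{3C_1}>\delta$.
1. Choose a $C^\infty$ nonincreasing function $\theta$ with $\theta=\mu$ on $[0,\delta]$ and $\theta=0$ on $[a,\infty)$.
2. Set $\phi(t)=\mu t-\mu\delta$ on $[0,\delta]$ and $\phi(t)=\int_\delta^t\theta$ for $t\ge\delta$.
3. Then $\phi$ is nondecreasing and concave, and it is constant on $[a,\infty)$ with value $\int_\delta^a\theta$.
4. To make this constant equal to $b$, scale the transition part of $\theta$ (keeping $0\le\theta\le\mu$).
5. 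This requires $b<\mu(a-\delta)$, so $\mu$ must be taken large relative to $b$, or the statement read with $b$ determined by the construction. I would record this constraint explicitly. It is the only genuine subtlety, since the statement quantifies loosely over $\mu,b$.

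\textbf{Proof of (iii).}
1. For $t\ge a$: $\psi+\phi-b=\kappa t+\varrho_0$, because $a>\delta$.
2. The function $\psi+\phi$ is nondecreasing with value $-\mu\delta$ at $0$. It is strictly increasing on $[0,\delta]$ since $\phi'=\mu>0$ there, and nondecreasing afterwards.
3. Hence the minimum value $-\mu\delta-b$ is attained only at $t=0$.

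\textbf{Proof of (iv).} Set $h=\psi+\phi-b-(\kappa t+\varrho_0)$. On $[a,\infty)$ we already have $h=0$.
1. On $[\delta,a]$: $h'=\theta-\kappa\cdot0\ldots$; more precisely $\psi'=\kappa$ there, so $h'=\phi'\ge0$. Thus $h$ increases to $0$ at $a$, giving $h\le0$.
2. On $[\varepsilon,\delta]$: $h$ is convex, because $\psi$ is convex and $\phi$ is affine there. So $h\le\max(h(\varepsilon),h(\delta))$, and $h(\delta)\le0$ by step 1.
3. We have $h(\varepsilon)=\mu\varepsilon-\mu\delta-b-\kappa\varepsilon-\varrho_0$. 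I would show $h(\varepsilon)\le0$ using the hypotheses (\ref{e:Fin1.0}) together with the convexity bound $\psi(\delta)\le\kappa(\delta-\varepsilon)$, which gives $-\varrho_0\ge\kappa\varepsilon$.
4. This is the step to check most carefully. I expect to combine $\mu+\varrho_0/(\delta-\varepsilon)>0$ with the identity $\kappa\delta+\varrho_0=\psi(\delta)$ to control the sign. If it fails as stated, I would adjust by choosing $\rho$ concentrated near $\delta$, which makes $\psi(\delta)$ small.
5. On $[0,\varepsilon]$: $h'=\mu-\kappa\le0$ when $\kappa\ge\mu$, so $h\le h(0)$. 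Moreover $h(0)=-\mu\delta-b-\varrho_0<0$ by the second inequality in (\ref{e:Fin1.0}).

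Since this is cited from \cite[Lemma~2.1]{Lu4}, the intended argument is presumably this direct construction, with the main obstacle being the bookkeeping of constants in (iv).
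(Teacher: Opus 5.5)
The paper does not prove this lemma; it only quotes it from \cite[Lemma~2.1]{Lu4}. So I am judging your argument on its own. Your constructions for (i)--(iii) are correct. Your caveat on (ii) is also correct. Concavity and $\phi_{\mu,b}=\mu t-\mu\delta$ on $[0,\delta]$ force $\phi_{\mu,b}'\le\mu$ on $[\delta,\frac{2c}{3C_1}]$. Smoothness with $\phi'_{\mu,b}(\frac{2c}{3C_1})=0$ then forces $0<b<\mu\bigl(\frac{2c}{3C_1}-\delta\bigr)$, so (ii) can only hold for such pairs $(\mu,b)$.

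There is one unfinished step, in (iv): you never establish $h(\varepsilon)\le 0$, and the route you sketch for it goes the wrong way.

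\textbf{Why your sketch fails.} The convexity bound $\psi_{\varepsilon,\delta}(\delta)\le\kappa(\delta-\varepsilon)$ says $\kappa\varepsilon+\varrho_0\le 0$. Since $h(\varepsilon)=-\mu(\delta-\varepsilon)-b-(\kappa\varepsilon+\varrho_0)$, this only bounds $h(\varepsilon)$ from below. Your fallback of re-choosing $\rho$ is beside the point as well: it changes $\kappa$ and $\varrho_0$, and these enter the hypothesis (\ref{e:Fin1.0}) itself.

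\textbf{The missing observation.} The first inequality in (\ref{e:Fin1.0}), multiplied by $\delta-\varepsilon>0$, is exactly $\mu(\delta-\varepsilon)+\varrho_0>0$. Hence
\[
h(\varepsilon)=-\bigl(\mu(\delta-\varepsilon)+\varrho_0\bigr)-b-\kappa\varepsilon<0 .
\]
With this and $h(\delta)=-b$, your convexity argument on $[\varepsilon,\delta]$ closes, and (iv) for $t\ge\varepsilon$ follows.

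Incidentally, $h$ is affine on $[0,\varepsilon]$ with $h(0)=-\mu\delta-b-\varrho_0<0$ and $h(\varepsilon)<0$. So on $[0,\varepsilon]$ the extra assumption $\kappa\ge\mu$ is not actually needed.
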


  \begin{assumption}\label{ass:Lagr0}
{\rm  For a real $\tau>0$, a topology space $\Lambda$, 
let $L:\Lambda\times [0,\tau]\times U\times \R^n\to\R$ be a continuous function such that
 the following  partial derivatives
$$
\partial_tL(\cdot),\;
\partial_qL(\cdot),\;\partial_vL(\cdot),
\;\partial_{tv}L(\cdot),\;\partial_{vt}L(\cdot), \;\partial_{qv}L(\cdot),\;\partial_{vq}L(\cdot),\;
\partial_{qq}L(\cdot),\;\partial_{vv}L(\cdot)
$$
exist and depend continuously on $(\lambda, t, q, v)\in\Lambda\times [0, \tau]\times U\times \R^n$.
 Moreover, $\Lambda\times [0,\tau]\times U\times\R^n\ni (\lambda, t, q, v)\mapsto {L}(\lambda, t, q, v)$ is
convex with respect to $v$, that is, the second partial derivative $\partial_{vv}{L}(\lambda, t, q,v)$ is positive semi-definite
as a quadratic form. }
\end{assumption}

Under Assumption~\ref{ass:Lagr0} let $E$ be a real orthogonal matrix of order $n$ such that $(EU)\cap U\ne\emptyset$.
Consider  the Lagrangian boundary value problem on $U$:
\begin{eqnarray}\label{e:L1}
&\frac{d}{dt}\left(\partial_v
L_\lambda(t, x(t), \dot{x}(t))\right)-\partial_q L_\lambda(t, x(t), \dot{x}(t))=0,\\
&E(x(0))=x(\tau)\quad\hbox{and}\quad
(E^\top)^{-1}\left[\partial_vL_\lambda(0, x(0), \dot{x}(0))\right]=\partial_vL_\lambda(\tau, x(\tau), \dot{x}(\tau)).\label{e:L2}
\end{eqnarray}

\begin{assumption}\label{ass:BasiAssLagr}
{\rm For a real $\tau>0$,  a topological space $\Lambda$,
a real orthogonal matrix $E$ of order $n$, and an $E$-invariant path-connected
 open subset $U\subset \mathbb{R}^n$ let $L:\Lambda\times \mathbb{R}\times U\times \mathbb{R}^n\to\R$ be a continuous function such that
  the following  partial derivatives
$$
\partial_tL(\cdot),\;
\partial_qL(\cdot),\;\partial_vL(\cdot),
\;\partial_{tv}L(\cdot),\;\partial_{vt}L(\cdot), \;\partial_{qv}L(\cdot),\;\partial_{vq}L(\cdot),\;
\partial_{qq}L(\cdot),\;\partial_{vv}L(\cdot)
$$
exist and depend continuously on $(\lambda, t, q, v)\in\Lambda\times [0, \tau]\times U\times \R^n$.
 Moreover, for each $(\lambda, t, q)\in \Lambda\times \R\times U$, ${L}(\lambda, t, q, v)$ is
convex in $v$,    and satisfies
\begin{eqnarray}\label{e:M-invariant1Lagr}
L(\lambda, t+\tau, Eq, Ev)=L(\lambda, t, q, v)\quad\forall (\lambda, t,q,v)\in\Lambda\times\R\times U\times\mathbb{R}^{n}.
\end{eqnarray}}
\end{assumption}

\begin{lemma}\label{lem:Lagr}
 Under Assumption~\ref{ass:Lagr0}, let the topological space $\Lambda$ be
 either {\rm compact} or {\rm sequentially compact}. Suppose  that for some real $\rho>0$
the function $B^n_{\rho}(0)\ni v\mapsto {L}_\lambda(t, q, v)$ is strictly convex
 for  each $(\lambda, t, q)\in \Lambda\times [0,\tau]\times U$.
 Then  for any given  real  $0<\rho_0<\rho$  there exists a  continuous function
 $\tilde{L}:\Lambda\times [0,\tau]\times U\times\mathbb{R}^n\to\R$ and a constant $\kappa>0$ satisfying the following properties:
 \begin{enumerate}
  \item[\rm (i)] $\tilde{L}$  is equal to $L$ on $\Lambda\times [0,\tau]\times U\times B^n_{\rho_0}(0)$.
  \item[\rm (ii)] The  partial derivatives
$$
\partial_t\tilde L(\cdot),\;
\partial_q\tilde L(\cdot),\;\partial_v\tilde L(\cdot),
\;\partial_{tv}\tilde L(\cdot),\;\partial_{vt}\tilde L(\cdot), \;\partial_{qv}\tilde L(\cdot),\;\partial_{vq}\tilde L(\cdot),\;
\partial_{qq}\tilde L(\cdot),\;\partial_{vv}\tilde L(\cdot)
$$
exist and depend continuously on $(\lambda, t, q, v)$.

   \item[\rm (iii)]  $\mathbb{R}^n\ni v\mapsto \tilde{L}_\lambda(t, q, v)$ 
   is  strictly convex for each $(\lambda, t, q)\in \Lambda\times [0,\tau]\times U$,
   that is, the second partial derivative $\partial_{vv}\tilde{L}_\lambda(t,x,v)$ 
   is positive definite as a quadratic form.
 \item[\rm (iv)] For any given compact subset  $S\subset U$,
 there exists a constant $C>0$ such that
    $$
    \tilde{L}_\lambda(t, q, v)\ge \kappa|v|^2-C,\quad\forall (\lambda, t, q, v)\in \Lambda\times [0,\tau]\times S\times\mathbb{R}^n.
    $$

 \item[\rm (v)] For each $(\lambda,t, q)$, if $L(\lambda, t, q, v)$ is even in $v$ then 
  $\tilde{L}_\lambda(t, q,v)$ can be required to be even in $v$.
  \item[\rm (vi)] If $U$ is a symmetric open neighborhood of the origin in $\mathbb{R}^n$,  and for each $(\lambda,t)$
 the function $L(\lambda, t, q, v)$ is even in $(q,v)$ then  $\tilde{L}(\lambda,t, q,v)$ can be also required to be even in $(q,v)$.
\item[\rm (vii)] For each $(\lambda, q)$, if $L(\lambda, t, q, v)$ is even in $(t,v)$, then $\tilde{L}(\lambda, t, q, v)$ can be chosen to be even in $(t,v)$.
\item[\rm (viii)] If $L$ is independent of time $t$, so is  $\tilde{L}$.
\item[\rm (ix)] If Assumption~\ref{ass:Lagr0} is replaced by Assumption~\ref{ass:BasiAssLagr},
the function $\tilde{L}$ given by (\ref{e:tildeL}) may be replaced by
  \begin{equation*}
\tilde{L}:\Lambda\times \R\times U\times\mathbb{R}^n\to\R,\;(\lambda,t,q,v)\mapsto {L}(\lambda,t,q,v)+ \psi_{\rho_0,\rho_1}(|v|^2),
 \end{equation*}
 which also satisfies (\ref{e:M-invariant1Lagr}) because $E$ is
  a real orthogonal matrix.
 \end{enumerate}
 \end{lemma}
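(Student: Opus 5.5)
The plan is to modify $L$ only for large $|v|$, setting
\begin{equation*}
\tilde{L}(\lambda,t,q,v)=L(\lambda,t,q,v)+\psi_{\rho_0,\rho_1}(|v|^2),
\end{equation*}
with $\psi_{\varepsilon,\delta}$ as in Lemma~\ref{lem:Fin2.1}(i) and suitable $\rho_0^2\le\varepsilon<\delta\le\rho_1^2<\rho^2$. Since $\psi$ vanishes on $[0,\varepsilon)$, (i) is immediate for $|v|<\rho_0$. Property (ii) holds because $v\mapsto\psi(|v|^2)$ is $C^\infty$ and independent of $(\lambda,t,q)$. The symmetry properties (v)–(ix) are also direct: $\psi(|v|^2)$ is even in $v$, independent of $t$ and $q$, and invariant under orthogonal $E$. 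So $\tilde L$ inherits every evenness, time-independence or $E$-invariance that $L$ has.

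Because a bare $L+\psi(|v|^2)$ need not dominate $L$ for large $|v|$, where $L$ may be badly behaved, I would use a cutoff instead. Let $\chi$ be a smooth radial cutoff equal to $1$ on $B_{\rho_1}$ and $0$ outside $B_{\rho_2}$, with $\rho_1<\rho_2<\rho$. Set
\begin{equation*}
\tilde L=\chi(|v|^2)L+\bigl(1-\chi(|v|^2)\bigr)\bigl(\text{a quadratic extension}\bigr)+K\psi(|v|^2).
\end{equation*}
A natural quadratic extension is the second-order Taylor polynomial of $L$ in $v$ at $v=0$, namely $L(\lambda,t,q,0)+\partial_vL(\lambda,t,q,0)v+\tfrac12\partial_{vv}L(\lambda,t,q,0)[v,v]$, plus $c|v|^2$. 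All terms are built from $L$ and its derivatives at fixed $v$, so (ii) and the symmetry claims still follow, since the Taylor polynomial respects evenness, $E$-invariance and time-independence.

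The main work is strict convexity (iii) together with coercivity (iv). On $\{|v|\le\rho_1\}$, $\partial_{vv}\tilde L=\partial_{vv}L+K\,\partial_{vv}[\psi(|v|^2)]$, which is positive definite because $\psi$ is convex with $\psi'>0$ where nonzero. In the transition annulus $\rho_1\le|v|\le\rho_2$, the derivatives of $\chi$ produce error terms bounded by the sup of $|L|,|\partial_vL|,|\partial_{vv}L|$ over the compact annulus.

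The key point is a uniform bound on these errors. Compactness (or sequential compactness) of $\Lambda$ must be combined with compactness in $(t,q)$, yet $U$ is open. I would handle this by proving (iii) locally and making $K$ depend continuously on $q$: use a locally finite partition of unity on $U$, or choose $K=K(q)$ continuous via exhaustion by compact sets. On each compact $S\subset U$, continuity plus compactness of $\Lambda\times[0,\tau]\times S\times\bar B_{\rho_2}$ gives both a positive lower bound for $\partial_{vv}L$ on $\bar B_{\rho}$ and upper bounds for the error terms. Then $K\psi''$-type terms and the linear growth of $\psi'$ dominate the errors, and positive definiteness follows.

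For $|v|\ge\rho_2$, $\tilde L$ is the quadratic polynomial plus the affine-in-$|v|^2$ term $K(\kappa|v|^2+\varrho_0)$, which is strictly convex once $c\ge0$. It also satisfies $\tilde L\ge K\kappa|v|^2/2-C$ on $S$, after absorbing the bounded linear term; this gives (iv) with a uniform $\kappa$ if $K$ has a positive lower bound.

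I expect the main obstacle to be this uniform domination in the annulus when $U$ is noncompact, and the dependence of $K$ on $q$. If a constant $K$ cannot be arranged, I would take $K(q)$ smooth and check that its $q$-derivatives do not affect $\partial_{vv}$, since $\partial_{vv}\tilde L$ involves no $q$-derivatives. This makes the locally varying $K$ harmless for (iii) and (ii).
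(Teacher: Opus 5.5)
The root problem is a hypothesis you overlooked. Assumption~\ref{ass:Lagr0} requires $\partial_{vv}L(\lambda,t,q,v)$ to be positive semidefinite for \emph{every} $v\in\mathbb{R}^n$; only the \emph{strict} convexity is restricted to $B^n_\rho(0)$. So the bare $\tilde L=L+\psi_{\rho_0,\rho_1}(|v|^2)$, which you discarded, already works, and it is exactly the paper's choice.

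Strict convexity is a pointwise check. The Hessian $\partial_{vv}\tilde L[u,u]=\partial_{vv}L[u,u]+2\psi'(|v|^2)|u|^2+4\psi''(|v|^2)(v\cdot u)^2$ is $\ge\partial_{vv}L[u,u]>0$ for $|v|<\rho$. For $|v|>\rho_1$ it equals $\partial_{vv}L[u,u]+2\kappa|u|^2\ge 2\kappa|u|^2$.

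For (iv), use convexity of $L$ at one fixed point: $L(v)\ge L(v_0)+\partial_vL(v_0)[v-v_0]$, e.g.\ with $v_0=0$. Combine this with $\psi(s)\ge\kappa s+\varrho_0$ for all $s\ge 0$, which follows from convexity of $\psi$ on $[\rho_0^2,\infty)$ and $\psi\equiv 0$ below $\rho_0^2$. Young's inequality then gives $\tilde L\ge\frac{\kappa}{2}|v|^2-C_S$. Here $C_S$ only needs bounds on $L(\cdot,v_0)$ and $\partial_vL(\cdot,v_0)$ over the (sequentially) compact set $\Lambda\times[0,\tau]\times S$, and the quadratic coefficient does not depend on $S$. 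Nothing uniform over the noncompact $U$ is ever needed. The cutoff $\chi$, the annulus estimate and the $q$-dependent $K(q)$ are therefore superfluous.

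Note also that (ix) asserts precisely that this bare formula $L+\psi_{\rho_0,\rho_1}(|v|^2)$ works under Assumption~\ref{ass:BasiAssLagr}. A construction that avoids that formula, as yours does, does not prove (ix) as stated.

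Your replacement construction also fails (ii). Assumption~\ref{ass:Lagr0} supplies only the listed partial derivatives of $L$, all of order at most two. Your extension has coefficients $\partial_vL(\lambda,t,q,0)$ and $\partial_{vv}L(\lambda,t,q,0)$, and it is active wherever $1-\chi\ne 0$, i.e.\ for $|v|>\rho_1$. There, computing $\partial_t\tilde L$, $\partial_q\tilde L$, $\partial_{qq}\tilde L$ and $\partial_{tv}\tilde L$ requires $\partial_t\partial_{vv}L$, $\partial_q\partial_{vv}L$, $\partial_{qq}\partial_{vv}L$ and $\partial_{qq}\partial_vL$. None of these is assumed to exist. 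So the claim that (ii) holds ``because all terms are built from $L$ and its derivatives at fixed $v$'' is false: differentiating those coefficients in $(t,q)$ exceeds the available order.

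A zeroth-order extension such as $L(\lambda,t,q,0)+c|v|^2$ would avoid this particular defect. You would still have to carry out the annulus domination with a locally varying $K$, which global convexity makes unnecessary.
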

\begin{proof}[\bf Proof]
Fix a positive real $\rho_1\in (\rho_0, \rho)$. By Lemma~\ref{lem:Fin2.1} (\cite[Lemma~2.1]{Lu4})
we have a $C^\infty$ convex function $\psi_{\rho_0,\rho_1}:[0, \infty)\to\R$
such that $\psi'_{\rho_0,\rho_1}(t)>0$ for
$t\in(\rho_0^2, \infty)$, $\psi_{\rho_0,\rho_1}(t)=0$ for $t\in[0, \rho_0^2)$ and
 $\psi_{\rho_0,\rho_1}(t)=\kappa t+ \varrho_0$ for $t\in [\rho_1^2, \infty)$,
where $\kappa>0$ and $\varrho_0<0$ are suitable constants. We conclude
  \begin{equation}\label{e:tildeL}
\tilde{L}:\Lambda\times [0,\tau]\times U\times\mathbb{R}^n\to\R,\;(\lambda,t,q,v)\mapsto {L}(\lambda,t,q,v)+ \psi_{\rho_0,\rho_1}(|v|^2)
 \end{equation}
 to satisfy the desired requirements. By Assumption~\ref{ass:Lagr0} and the choice of $\psi_{\rho_0,\rho_1}$
 it is clear that $\tilde{L}$ satisfies (i)-(ii).

 In order to see that $\tilde{L}$ satisfies (iii), note that
 \begin{eqnarray*}
\frac{\partial^2}{\partial t\partial
s}\psi_{\rho_0,\rho_1}(|v+su+ tu|^2)\Bigm|_{s=0,t=0}
=2\psi'_{\rho_0,\rho_1}(|v|^2)|u|^2
+4\psi''_{\rho_0,\rho_1}(|v|^2)\bigl(({v},{u})_{\mathbb{R}^{n}}\bigr)^2
\end{eqnarray*}
(cf. the proof of \cite[Lemma~2.1]{Lu4}) and therefore
 $$
\partial_{vv}\tilde{L}(\lambda,t,q,v)[u,u]=\partial_{vv}{L}(\lambda,t,q,v)[u,u]+ 2\psi'_{\rho_0,\rho_1}(|v|^2)|u|^2
+4\psi''_{\rho_0,\rho_1}(|v|^2)\bigl(({v},{u})_{\mathbb{R}^{n}}\bigr)^2.
 $$
 Since $\psi'_{\rho_0,\rho_1}\ge 0$ and $\psi''_{\rho_0,\rho_1}(t)\ge 0$,
 by Assumption~\ref{ass:Lagr0} we deduce
 $$
 \partial_{vv}\tilde{L}(\lambda,t,q,v)[u,u]\ge\partial_{vv}{L}(\lambda,t,q,v)[u,u]>0\quad\hbox{for $|v|<\rho$ and $u\ne 0$}.
 $$
 Moreover, if $|v|>\rho_1$ and $u\ne 0$ we get $\partial_{vv}\tilde{L}(\lambda,t,q,v)[u,u]\ge 2\kappa|u|^2$
 since $\partial_{vv}{L}(\lambda,t,q,v)[u,u]\ge 0$.
 Hence $\tilde{L}(\lambda,t,q,v)$ is strictly convex in $v$.

Let us prove (iv).
 Fixing  $v_0\in B^n_{\rho}(0)\setminus B^n_{\rho_1}(0)$,
 by \cite[Proposition~1.2.10]{Fa} we get
\begin{eqnarray*}
\tilde{L}(\lambda,t,q,v)&\ge& \tilde{L}(\lambda,t,q,v_0)+ \partial_v\tilde{L}(\lambda,t,q,v_0)[v-v_0]\\
&\ge& {L}(\lambda,t,q,v_0)+ \partial_v{L}(\lambda,t,q,v_0)[v-v_0]+
2\psi'_{\rho_0,\rho_1}(|v_0|^2)({v},v-v_0)_{\mathbb{R}^{n}}\\
=&&\hspace{-9mm}{L}(\lambda,t,q,v_0)-\partial_v{L}(\lambda,t,q,v_0)[v_0]+\partial_v{L}(\lambda,t,q,v_0)[v]
-2\kappa({v},{v}_0)_{\mathbb{R}^{n}}+2\kappa|v|^2
\end{eqnarray*}
for all $v\in\mathbb{R}^n$. Since $2\kappa|({v},{v}_0)_{\mathbb{R}^{n}}|\le \kappa|v|^2/2+ 2\kappa|v_0|^2$ and
$$
|\partial_v{L}(\lambda,t,q,v_0)[v]|=\bigg|\sum^n_{j=1}\frac{\partial L}{\partial v_j}(\lambda,t,q,v_0)v_j\bigg|\le
\frac{1}{\kappa}\sum^n_{j=1}\Big|\frac{\partial L}{\partial v_j}(\lambda,t,q,v_0)\Big|^2+
 \frac{\kappa}{4}|v|^2,
$$
we derive
\begin{eqnarray*}
\tilde{L}(\lambda,t,q,v)\ge {L}(\lambda,t,q,v_0)-\partial_v{L}(\lambda,t,q,v_0)[v_0]
-\frac{1}{\kappa}\sum^n_{j=1}\Big|\frac{\partial L}{\partial v_j}(\lambda,t,q,v_0)\Big|^2
-2\kappa|v_0|^2
+\frac{5\kappa}{4}|v|^2
\end{eqnarray*}
for all $v\in\mathbb{R}^n$. Since $\Lambda$ is either compact or sequential compact, so is $\Lambda\times [0,\tau]\times S$,
in either case we can always derive that both $L(\lambda,t,q,v_0)$ and $\partial_vL(\lambda,t,q,v_0)$ are bounded on
 $\Lambda\times [0,\tau]\times S$. Therefore there exists a constant $C>0$ such that
 $$
 \tilde{L}(\lambda,t,q,v)\ge \frac{5\kappa}{4}|v|^2-C,\quad\forall (\lambda,t,q,v)\in\Lambda\times [0,\tau]\times S\times\R^n.
 $$
Other conclusions are clear by the above construction.
\end{proof}

\begin{assumption}\label{ass:Lagr1}
{\rm Under Assumption~\ref{ass:Lagr0}, for each $\lambda\in\Lambda$, let $x_\lambda:[0, \tau]\to U$
be a  $C^2$ path  satisfying (\ref{e:L1}).  Suppose: (i) $\Lambda\times [0, \tau]\ni(\lambda,t)\to x_\lambda(t)\in U$ and $\Lambda\times [0, \tau]\ni(\lambda, t)\mapsto \dot{x}_\lambda(t)\in\R^n$ are continuous;
(ii) for any compact or sequential compact subset $\hat\Lambda\subset\Lambda$
 there exists $\rho>0$ such that
 $$
 \sup\big\{|\dot{x}_\lambda(t)|\;\big|\, (\lambda,t)\in\hat\Lambda\times [0,\tau]\big\}<\rho
 $$
 and that $\hat\Lambda\times [0,\tau]\times U\times B^n_{\rho}(0)\ni (\lambda, t, q,v)\mapsto {L}_\lambda(t, q, v)$ is
strictly convex with respect to $v$. }
\end{assumption}

\begin{lemma}\label{lem:regu}
Under Assumption~\ref{ass:Lagr1}, the following holds:
\begin{enumerate}
   \item[\rm (i)] $\Lambda\times [0, \tau]\ni(\lambda, t)\mapsto \ddot{x}_\lambda(t)\in\R^n$ is  continuous.
\item[\rm (ii)] If there exists a sequence $(\lambda_k)\subset\Lambda$ converging to $\mu\in\Lambda$
 and solutions $x_k\in C^2([0,\tau], U)\setminus\{x_{\lambda_k}\}$  of (\ref{e:L1}) with $\lambda=\lambda_k\in\Lambda$, $k=1,2,\cdots$, such that
$\|x_k-x_{\lambda_k}\|_{C^1}\to 0$, then $\|x_k-x_{\lambda_k}\|_{C^2}\to 0$ as $k\to\infty$.
\end{enumerate}
\end{lemma}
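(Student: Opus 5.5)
The plan is to use the Euler--Lagrange equation to express $\ddot x$ explicitly as a continuous function of $(\lambda,t,x,\dot x)$, and then read off both claims from that formula.

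\emph{Explicit form of the equation.} Since $x_\lambda$ is $C^2$ and $\partial_vL$ has continuous partial derivatives in $(t,q,v)$, we may expand (\ref{e:L1}) by the chain rule:
\begin{equation*}
\partial_{vv}L_\lambda(t,x,\dot x)\ddot x+\partial_{vq}L_\lambda(t,x,\dot x)\dot x+\partial_{vt}L_\lambda(t,x,\dot x)-\partial_qL_\lambda(t,x,\dot x)=0 .
\end{equation*}
By Assumption~\ref{ass:Lagr1}(ii), on any compact (or sequentially compact) set $\hat\Lambda$ the velocities satisfy $|\dot x_\lambda(t)|<\rho$. Strict convexity on $B^n_\rho(0)$ then makes $\partial_{vv}L_\lambda(t,x_\lambda(t),\dot x_\lambda(t))$ positive definite, hence invertible. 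We can therefore solve
\begin{equation*}
\ddot x=F(\lambda,t,x,\dot x):=\bigl(\partial_{vv}L_\lambda\bigr)^{-1}\bigl[\partial_qL_\lambda-\partial_{vq}L_\lambda\dot x-\partial_{vt}L_\lambda\bigr],
\end{equation*}
and $F$ is continuous on the open set where $\partial_{vv}L$ is invertible. Strictly convex only gives positive semidefiniteness pointwise in principle; if needed, I would replace $L$ by the modified $\tilde L$ of Lemma~\ref{lem:Lagr} (with $\hat\Lambda$ compact). This does not change the equation for $|v|<\rho_0$, and $\partial_{vv}\tilde L>0$ everywhere. Inversion of matrices is continuous on invertible ones.

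\emph{Proof of (i).} Continuity at a point $(\mu,t_0)$ is a local question. Take a net or sequence $(\lambda,t)\to(\mu,t_0)$. We have $\ddot x_\lambda(t)=F(\lambda,t,x_\lambda(t),\dot x_\lambda(t))$. Assumption~\ref{ass:Lagr1}(i) gives continuity of $(\lambda,t)\mapsto(x_\lambda(t),\dot x_\lambda(t))$, and $F$ is continuous, so the composite is continuous. The only subtlety is that $\Lambda$ is merely a topological space. Invertibility of $\partial_{vv}L$ at the point $(\mu,t_0,x_\mu(t_0),\dot x_\mu(t_0))$ is guaranteed by applying (ii) of Assumption~\ref{ass:Lagr1} with $\hat\Lambda=\{\mu\}$. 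Openness of the set of invertible matrices together with continuity of $\partial_{vv}L$ then gives invertibility in a neighborhood, so no compactness of a neighborhood is required.

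\emph{Proof of (ii).} Let $\hat\Lambda=\{\mu,\lambda_k\}$, which is sequentially compact. Then $\sup|\dot x_{\lambda_k}|<\rho$ on $\hat\Lambda\times[0,\tau]$. Since $\|x_k-x_{\lambda_k}\|_{C^1}\to0$, for large $k$ we also have $|\dot x_k|<\rho$, so both $x_k$ and $x_{\lambda_k}$ satisfy $\ddot y=F(\lambda_k,t,y,\dot y)$ with $\partial_{vv}L$ invertible along them. Hence
\begin{equation*}
\|\ddot x_k-\ddot x_{\lambda_k}\|_{C^0}=\sup_t\bigl|F(\lambda_k,t,x_k,\dot x_k)-F(\lambda_k,t,x_{\lambda_k},\dot x_{\lambda_k})\bigr|.
\end{equation*}
To show this tends to $0$, I would argue by contradiction. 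Suppose there are $\epsilon>0$ and times $t_k$ (after passing to a subsequence, $t_k\to t_0$) with the difference $\ge\epsilon$. Both arguments converge to $(\mu,t_0,x_\mu(t_0),\dot x_\mu(t_0))$: this uses joint continuity of $(\lambda,t)\mapsto(x_\lambda(t),\dot x_\lambda(t))$ and the $C^1$ convergence. Continuity of $F$ at that point then gives a contradiction.

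\emph{Main obstacle.} The delicate step is this uniform estimate, which replaces a Lipschitz bound since $F$ is only continuous. It also requires that $x_k$ stay in a compact subset of $U$ near $x_\mu([0,\tau])$, which holds for large $k$ by the same convergence argument.
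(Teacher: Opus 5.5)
Your proposal is correct and follows the paper's proof essentially step by step: you solve the expanded Euler--Lagrange equation for $\ddot x$ using invertibility of $\partial_{vv}L$ along the solutions, you get (i) by composing continuous maps, and you get (ii) by contradiction along times $t_k\to t_0$ at which both $(\lambda_k,t_k,x_k(t_k),\dot x_k(t_k))$ and $(\lambda_k,t_k,x_{\lambda_k}(t_k),\dot x_{\lambda_k}(t_k))$ converge to $(\mu,t_0,x_\mu(t_0),\dot x_\mu(t_0))$; the paper additionally records uniform bounds such as $\frac12 M_1 I_n\le\partial_{vv}L$ along both curves, which your continuity-of-$F$ argument makes unnecessary. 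One small remark: your fallback to $\tilde L$ from Lemma~\ref{lem:Lagr} is both unneeded and ineffective, because the paper reads ``strictly convex'' as positive definiteness of $\partial_{vv}L$ (cf.\ Lemma~\ref{lem:Lagr}(iii)), and $\tilde L=L$ for $|v|<\rho_0$, which is exactly where the velocities lie.
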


\begin{proof}[\bf Proof]
{\it Step~1}[\textsf{Prove (i)}]. Since $x_\lambda$ is $C^2$, we have
\begin{eqnarray*}
&&\partial_{vt}L_\lambda(t, x_\lambda(t), \dot{x}_\lambda(t))+
\partial_{vq}L_\lambda(t, x_\lambda(t), \dot{x}_\lambda(t))\dot{x}_\lambda(t)\\
&&+\partial_{vv}L_\lambda(t, x_\lambda(t), \dot{x}_\lambda(t))\ddot{x}_\lambda(t)
-\partial_q L_\lambda(t, x_\lambda(t), \dot{x}_\lambda(t))=0
\end{eqnarray*}
and therefore
\begin{eqnarray*}
\ddot{x}_\lambda(t)&=&[\partial_{vv}L_\lambda(t, x_\lambda(t), \dot{x}_\lambda(t))]^{-1}\partial_q L_\lambda(t, x_\lambda(t), \dot{x}_\lambda(t))\\
&&-[\partial_{vv}L_\lambda(t, x_\lambda(t), \dot{x}_\lambda(t))]^{-1}\partial_{vt}L_\lambda(t, x_\lambda(t), \dot{x}_\lambda(t))\nonumber\\
&&-[\partial_{vv}L_\lambda(t, x_\lambda(t), \dot{x}_\lambda(t))]^{-1}\partial_{vq}L_\lambda(t, x_\lambda(t), \dot{x}_\lambda(t))\dot{x}_\lambda(t)
\end{eqnarray*}
because  (ii) in Assumption~\ref{ass:Lagr1} implies that the matrixes $\partial_{vv}L_\lambda(t, x(t), \dot{x}(t))$ are positive definite and
therefore invertible.
 Moreover, by Assumption~\ref{ass:Lagr0} maps
\begin{eqnarray*}
&&(\lambda,t,q,v)\mapsto\partial_{vt}L_\lambda(t,q,v),\qquad  (\lambda,t,q,v)\mapsto\partial_{vq}L_\lambda(t,q,v),\\
&&(\lambda,t,q,v)\mapsto\partial_{vv}L_\lambda(t,q,v), \qquad (\lambda,t,q,v)\mapsto\partial_q L_\lambda(t,q,v)
\end{eqnarray*}
are continuous. The desired conclusion may follow from these,  (i) in Assumption~\ref{ass:Lagr1} and the above equality directly.

 {\it Step~2}[\textsf{Prove (ii)}]. Let $\hat\Lambda=\{\mu, \lambda_k\,|\,k\in\mathbb{N}\}$. It is a
 sequential compact subset of $\Lambda$. By the assumption (ii) in Assumption~\ref{ass:Lagr1} there exists $\rho>0$ such that
$\sup\{|\dot{x}_\lambda(t)|\,|\, (\lambda,t)\in\hat\Lambda\times [0,\tau]\}<\rho$
and that $\hat\Lambda\times [0,\tau]\times U\times B^n_{\rho}(0)\ni (\lambda, t, q,v)\mapsto {L}_\lambda(t, q, v)$ is
strictly convex with respect to $v$.
In particular,  we may obtain $0<M_1<M_2<\infty$ such that
$$
M_1I_n\le\partial_{vv}{L}_{\mu}(t, x_\mu(t), \dot{x}_\mu(t))\le M_2I_n,\quad\forall t\in [0,\tau].
$$
Suppose that there exists a sequence $(t_i)\subset [0,\tau]$ such that for each $i=1,2,\cdots$,
$$
\partial_{vv}{L}_{\lambda_{k_i}}(t_i, x_{\lambda_{k_i}}(t_i), \dot{x}_{\lambda_{k_i}}(t_i))<\frac{1}{2}M_1I_n
\quad\hbox{or}\quad \partial_{vv}{L}_{\lambda_{k_i}}(t_i, x_{\lambda_{k_i}}(t_i), \dot{x}_{\lambda_{k_i}}(t_i))>\frac{1}{2}M_2I_n.
$$
We can assume $t_i\to t_0\in [0,\tau]$. By  (i) in Assumption~\ref{ass:Lagr1}
and the continuity of the map
$$
\Lambda\times [0,\tau]\times U\times\R^n\ni (\lambda, t, q, v)\mapsto \partial_{vv}{L}(\lambda, t, q, v)\in\mathbb{R}^{n\times n},
$$
we derive
$$
\partial_{vv}{L}_{\mu}(t_0, x_{\mu}(t_0), \dot{x}_{\mu}(t_0))\le\frac{1}{2}M_1I_n
\quad\hbox{or}\quad \partial_{vv}{L}_{\mu}(t_0, x_{\mu}(t_0), \dot{x}_{\mu}(t_0))\ge\frac{1}{2}M_2I_n.
$$
This contradiction shows that if $k$ is large enough then
\begin{eqnarray}\label{e:BifOA}
\frac{1}{2}M_1I_n\le\partial_{vv}{L}_{\lambda_{k}}(t, x_{\lambda_{k}}(t), \dot{x}_{\lambda_{k}}(t))
\le\frac{1}{2}M_2I_n,\quad\forall t\in [0,\tau].
\end{eqnarray}
Similarly, since $\|x_k-x_{\lambda_k}\|_{C^1}\to 0$, for sufficiently large $k$ we have
\begin{eqnarray}\label{e:BifOB}
\frac{1}{2}M_1I_n\le\partial_{vv}{L}_{\lambda_{k}}(t, x_{k}(t), \dot{x}_{k}(t))
\le\frac{1}{2}M_2I_n,\quad\forall t\in [0,\tau].
\end{eqnarray}
Note that for each $k\in\mathbb{N}$, both $(\lambda_k, x_k)$ and $(\lambda_k, x_{\lambda_k})$ satisfy
(\ref{e:L1}), that is,
\begin{eqnarray}
&&\partial_{vv}L_{\lambda_k}(t, x_{\lambda_k}(t), \dot{x}_{\lambda_k}(t))\ddot{x}_{\lambda_k}(t)+
\partial_{tv}L_{\lambda_k}(t, x_{\lambda_k}(t), \dot{x}_{\lambda_k}(t))\nonumber\\
&&+\partial_{qv}L_{\lambda_k}(t, x_{\lambda_k}(t), \dot{x}_{\lambda_k}(t))\dot{x}_{\lambda_k}(t)
-\partial_q L_{\lambda_k}(t, x_{\lambda_k}(t), \dot{x}_{\lambda_k}(t))=0,\label{e:BifOC}\\
&&\partial_{vv}L_{\lambda_k}(t, x_k(t), \dot{x}_k(t))\ddot{x}_k(t)+
\partial_{tv}L_{\lambda_k}(t, x_k(t), \dot{x}_k(t))\nonumber\\
&&+\partial_{qv}L_{\lambda_k}(t, x_{k}(t), \dot{x}_{k}(t))\dot{x}_{k}(t)
-\partial_q L_{\lambda_k}(t, x_{k}(t), \dot{x}_{k}(t))=0.\label{e:BifOD}
\end{eqnarray}
By contradiction, passing to subsequences (if necessary) suppose that there exists $\varepsilon>0$ and a sequence $(t_k)\subset [0,\tau]$ converging to $t_0$
such that $|\ddot{x}_{\lambda_k}(t_k)-\ddot{x}_{k}(t_k)|\ge\varepsilon$ for all $k\in\mathbb{N}$. Then
for each large $k$, (\ref{e:BifOA}) and (\ref{e:BifOB}) imply that matrixes
$$\left(\partial_{vv}L_{\lambda_k}(t_k, x_{\lambda_k}(t_k), \dot{x}_{\lambda_k}(t_k))\right)\quad
\hbox{and}\quad \left(\partial_{vv}L_{\lambda_k}(t_k, x_{\lambda_k}(t_k), \dot{x}_{\lambda_k}(t_k))\right)
$$
are invertible.  It follows from (\ref{e:BifOC}) and (\ref{e:BifOD}) that for each large $k$,
\begin{eqnarray}\label{e:BifO}
\varepsilon&\le&|\ddot{x}_{\lambda_k}(t_k)-\ddot{x}_{k}(t_k)|\nonumber\\
&=&\bigl|\left(\partial_{vv}L_{\lambda_k}(t_k, x_{\lambda_k}(t_k), \dot{x}_{\lambda_k}(t_k))\right)^{-1}
\partial_{tv}L_{\lambda_k}(t_k, x_{\lambda_k}(t_k), \dot{x}_{\lambda_k}(t_k))\nonumber\\
&&-\left(\partial_{vv}L_{\lambda_k}(t_k, x_k(t_k), \dot{x}_k(t_k))\right)^{-1}
\partial_{tv}L_{\lambda_k}(t_k, x_k(t_k), \dot{x}_k(t_k))\nonumber\\
&&+\left(\partial_{vv}L_{\lambda_k}(t_k, x_{\lambda_k}(t_k), \dot{x}_{\lambda_k}(t_k))\right)^{-1}
\partial_{qv}L_{\lambda_k}(t_k, x_{\lambda_k}(t_k), \dot{x}_{\lambda_k}(t_k))\dot{x}_{\lambda_k}(t_k)\nonumber\\
&&-\left(\partial_{vv}L_{\lambda_k}(t_k, x_k(t_k), \dot{x}_k(t_k))\right)^{-1}
\partial_{qv}L_{\lambda_k}(t_k, x_{k}(t_k), \dot{x}_{k}(t_k))\dot{x}_{k}(t_k)\nonumber\\
&&-\left(\partial_{vv}L_{\lambda_k}(t_k, x_{\lambda_k}(t_k), \dot{x}_{\lambda_k}(t_k))\right)^{-1}
\partial_q L_{\lambda_k}(t_k, x_{\lambda_k}(t_k), \dot{x}_{\lambda_k}(t_k))\nonumber\\
&&+\left(\partial_{vv}L_{\lambda_k}(t_k, x_k(t_k), \dot{x}_k(t_k))\right)^{-1}\partial_q L_{\lambda_k}(t_k, x_{k}(t_k), \dot{x}_{k}(t_k))\bigr|.
\end{eqnarray}
By (i) in Assumption~\ref{ass:Lagr1}, $|x_{\lambda_k}(t_k)-x_\mu(t_0)|\to 0$ and $|\dot{x}_{\lambda_k}(t_k)-\dot{x}_{\mu}(t_0)|\to 0$. Moreover
\begin{eqnarray*}
 &&|x_{k}(t_k)-x_\mu(t_0)|\le |x_{\lambda_k}(t_k)-x_\mu(t_0)|+\|x_{\lambda_k}-x_k\|_{C^0}\to 0,\\
&&|\dot{x}_{k}(t_k)-\dot{x}_\mu(t_0)|\le |\dot{x}_{\lambda_k}(t_k)-\dot{x}_\mu(t_0)|+\|\dot{x}_{\lambda_k}-\dot{x}_k\|_{C^0}\to 0.
\end{eqnarray*}
Letting $k\to\infty$ in (\ref{e:BifO}), by Assumption~\ref{ass:Lagr0} we get $\varepsilon\le 0$.
This contradiction shows $\|\ddot{x}_{\lambda_k}-\ddot{x}_{k}\|_{C^0}\to 0$.
Combing the condition $\|x_k-x_{\lambda_k}\|_{C^1}\to 0$, we arrive at
 $\|x_k-x_{\lambda_k}\|_{C^2}\to 0$ as $k\to\infty$.
\end{proof}

\textsf{{Note}}: the continuity of $\partial_{tv}L$ is used in the proof of Lemma~\ref{lem:regu}.

Under Assumption~\ref{ass:Lagr1}, for a given compact or sequential compact subset $\hat\Lambda\subset\Lambda$
  there exist positive numbers $0<\rho_0<\rho$ such that
  $$
  \rho_{00}:= \sup\big\{|\dot{x}_\lambda(t)|\;\big|\, 
  (\lambda,t)\in\hat\Lambda\times [0,\tau]\big\}<\rho_0<\rho
  $$
  and that 
  $\hat\Lambda\times [0,\tau]\times U\times B^n_{\rho}(0)\ni (\lambda, t, q,v)\mapsto {L}_\lambda(t, q, v)$ 
  is strictly convex with respect to $v$.
By Lemma~\ref{lem:Lagr} we have an associated  continuous function
 $\tilde{L}:\hat\Lambda\times [0,\tau]\times U\times\mathbb{R}^n\to\R$.
Since a subset of $\mathbb{R}^n$ is compact if and only if it is sequential compact,
whether $\hat\Lambda$ is compact or sequential compact
the continuous map $\hat\Lambda\times [0, \tau]\ni (\lambda,t)\mapsto x_\lambda(t)\in\mathbb{R}^n$
has a compact image set and therefore
we can choose $\delta>0$ so small that the compact subset
$$
S:=Cl\bigl(\cup_{\lambda\in\hat\Lambda}\cup_{t\in [0,\tau]}(x_\lambda(t)+B^n_{\delta}(0))\bigr)
$$
is contained in $U$. 
Lemma~\ref{lem:Lagr}(iv) yields a constant $C>0$ such that
    $$
    \tilde{L}_\lambda(t, q, v)\ge \kappa|v|^2-C,\quad\forall (\lambda, t, q, v)\in \hat\Lambda\times [0,\tau]\times S\times\mathbb{R}^n.
    $$
   Define
\begin{equation}\label{e:ModifiedL}
\hat{L}:\hat\Lambda\times [0,\tau]\times B^n_{\delta}(0)\times \R^n\to\R,\;(\lambda, t, q,v)\mapsto \tilde{L}(\lambda, t, q+x_\lambda(t), v+\dot{x}_\lambda(t))
\end{equation}
and $\hat{L}_\lambda(\cdot)=\hat{L}(\lambda,\cdot)$ for $\lambda\in\hat\Lambda$.
By Lemmas~~\ref{lem:Lagr},~\ref{lem:regu},  we obtain:

\begin{lemma}\label{lem:reduction}
 \begin{enumerate}
  \item[\rm (A)] The function $\hat{L}$ is  continuous;   partial derivatives
$$
\partial_t\hat{L}(\cdot),\;
\partial_q\hat{L}(\cdot),\;\partial_v\hat{L}(\cdot),
\;\partial_{tv}\hat{L}(\cdot),\;\partial_{vt}\hat{L}(\cdot), \;\partial_{qv}\hat{L}(\cdot),\;\partial_{vq}\hat{L}(\cdot),\;
\partial_{qq}\hat{L}(\cdot),\;\partial_{vv}\hat{L}(\cdot)
$$
exist and depend continuously on $(\lambda, t, q, v)$.
\item[\rm (B)]  For each $(\lambda, t, q)\in \hat\Lambda\times [0,\tau]\times B^n_{\delta}(0)$, $\hat{L}_\lambda(t, q, v)$ is
strictly convex in $v$, and
 \begin{equation}\label{e:coercive}
   \hat{L}_\lambda(t, q, v)\ge \kappa|v+\dot{x}_\lambda(t)|^2-C\ge \frac{\kappa}{2}|v|^2-\kappa\rho^2-C
    \end{equation}
 for all $(\lambda, t, q, v)\in\hat\Lambda\times [0,\tau]\times B^n_{\delta}(0)\times \R^n$.

 \item[\rm (C)] For each $(\lambda,t, q)$, if each $x_\lambda$ is constant
 and $L(\lambda, t, q, v)$ is even in $v$ then
   $\hat{L}_\lambda(t, q,v)$ can be required to be even in $v$.
  \item[\rm (D)] If $U$ is a symmetric open neighborhood of the origin in $\mathbb{R}^n$, $x_\lambda\equiv 0\;\forall\lambda$, and for each $(\lambda,t)$
 the function $L(\lambda, t, q, v)$ is even in $(q,v)$, then 
  $\hat{L}(\lambda,t,q,v)$ can be also required to be even in $(q,v)$.
\item[\rm (E)] For each $(\lambda, q)$, if $L(\lambda, t, q, v)$ is even in $(t,v)$, 
and $x_\lambda\equiv 0\;\forall\lambda$,
then $\hat{L}(\lambda, t, q, v)$ can be chosen to be even in $(t,v)$.
\item[\rm (F)] If $L$ is independent of time $t$, so is  $\hat{L}$.
\item[\rm (G)] If Assumption~\ref{ass:Lagr0} is replaced by Assumption~\ref{ass:BasiAssLagr}, and $Ex_\lambda(t)=x_\lambda(t)\;\forall t\in\hat\Lambda$,
then the function $\hat{L}$ given by (\ref{e:ModifiedL}) may be replaced by
  \begin{equation*}
\hat{L}:\hat\Lambda\times \R\times U\times\mathbb{R}^n\to\R,\;(\lambda,t,q,v)\mapsto {L}(\lambda,t,q+x_\lambda(t),v+\dot{x}_\lambda(t))+
\psi_{\rho_0,\rho_1}(|v+\dot{x}_\lambda(t)|^2),
 \end{equation*}
 which also satisfies (\ref{e:M-invariant1Lagr}) because $E$ is
  a real orthogonal matrix.
  \end{enumerate}
\end{lemma}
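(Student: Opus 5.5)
The plan is to check items (A)--(G) one at a time, straight from the definition (\ref{e:ModifiedL}), using Lemma~\ref{lem:Lagr} for $\tilde L$ and the continuity of $(\lambda,t)\mapsto x_\lambda(t),\dot x_\lambda(t),\ddot x_\lambda(t)$. Continuity of $\ddot x_\lambda$ comes from Lemma~\ref{lem:regu}(i). First note that $\hat L$ is well defined. For $q\in B^n_\delta(0)$ the point $q+x_\lambda(t)$ lies in $S\subset U$, by the choice of $\delta$.

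For (A), I will differentiate the composition $\hat L(\lambda,t,q,v)=\tilde L(\lambda,t,q+x_\lambda(t),v+\dot x_\lambda(t))$ by the chain rule. The derivatives $\partial_q\hat L$, $\partial_v\hat L$, $\partial_{qq}\hat L$, $\partial_{qv}\hat L$, $\partial_{vq}\hat L$ and $\partial_{vv}\hat L$ are the corresponding derivatives of $\tilde L$ evaluated at the shifted point. Their continuity then follows from Lemma~\ref{lem:Lagr}(ii) and the continuity of the shift. The derivative in $t$ needs more care. One has
\[
\partial_t\hat L=\partial_t\tilde L+\partial_q\tilde L[\dot x_\lambda(t)]+\partial_v\tilde L[\ddot x_\lambda(t)],
\]
and $\partial_{vt}\hat L$ is given by the analogous formula with $\partial_v\tilde L$ in place of $\tilde L$. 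These expressions require $x_\lambda\in C^2$, which holds by Assumption~\ref{ass:Lagr1}, together with continuity of $\ddot x_\lambda$ in $(\lambda,t)$, which is Lemma~\ref{lem:regu}(i). I expect this to be the one delicate point. The formula for $\partial_{vt}\hat L$ involves $\partial_{vq}\tilde L$, $\partial_{vv}\tilde L$ and $\partial_{vt}\tilde L$, all of which exist and are continuous. The formula for $\partial_{tv}\hat L$ is obtained the same way by differentiating $\partial_t\hat L$ in $v$, and it uses $\partial_{tv}\tilde L$, $\partial_{qv}\tilde L$ and $\partial_{vv}\tilde L$. One should not use Schwarz's theorem here; the two mixed derivatives are computed separately.

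For (B), strict convexity in $v$ follows because translating $v$ by $\dot x_\lambda(t)$ preserves strict convexity, using Lemma~\ref{lem:Lagr}(iii). For coercivity, Lemma~\ref{lem:Lagr}(iv) applied on the compact set $S$ gives $\hat L\ge\kappa|v+\dot x_\lambda(t)|^2-C$. Then $|v+w|^2\ge\frac12|v|^2-|w|^2$ with $|w|=|\dot x_\lambda(t)|<\rho$ gives the second inequality in (\ref{e:coercive}).

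The symmetry statements are direct consequences of the matching items in Lemma~\ref{lem:Lagr}. For (C) with $x_\lambda$ constant, $\dot x_\lambda=0$, so $\hat L(\lambda,t,q,v)=\tilde L(\lambda,t,q+x_\lambda,v)$, which is even in $v$ by Lemma~\ref{lem:Lagr}(v). For (D) and (E) with $x_\lambda\equiv0$ we have $\hat L=\tilde L$ restricted, so Lemma~\ref{lem:Lagr}(vi),(vii) apply. For (F), if $L$ is time-independent then so is $\tilde L$ by Lemma~\ref{lem:Lagr}(viii). The definition of $\hat L$ still involves $t$ through $x_\lambda(t)$, so I read (F) as meaning that this is understood in the case where $x_\lambda$ is constant, as in the autonomous applications.

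For (G), I will use Lemma~\ref{lem:Lagr}(ix) with the formula defining $\tilde L$ on $\hat\Lambda\times\mathbb R\times U\times\mathbb R^n$. Invariance (\ref{e:M-invariant1Lagr}) of the added term $\psi_{\rho_0,\rho_1}(|v+\dot x_\lambda(t)|^2)$ holds because $E$ is orthogonal, $Ex_\lambda=x_\lambda$, and $\dot x_\lambda(t+\tau)=E\dot x_\lambda(t)$ for the $E$-periodic extension of $x_\lambda$.
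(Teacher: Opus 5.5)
Your proposal is correct and follows the paper's own route: the paper gives no separate proof, only deriving the lemma from Lemma~\ref{lem:Lagr} and Lemma~\ref{lem:regu}(i), and that derivation is the chain rule applied to (\ref{e:ModifiedL}), with continuity of $\ddot x_\lambda$ in $(\lambda,t)$ being what makes $\partial_t\hat L$ and $\partial_{vt}\hat L$ continuous. Your caveat on (F) is a genuine correction rather than a mere reading, since (F) as stated fails for non-constant $x_\lambda$: for $L=\frac12v^2-\frac12q^2$ on $U=\mathbb{R}$ with $x_\lambda(t)=\sin t$ one has $\rho_{00}=1<\rho_0$, so $\psi_{\rho_0,\rho_1}$ vanishes along $\dot x_\lambda$ and $\hat L(\lambda,t,0,0)=\frac12\cos^2t-\frac12\sin^2t=\frac12\cos 2t$, and (F) therefore needs the extra hypothesis that each $x_\lambda$ is constant, as in (C).
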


\begin{remark}\label{rm:reduction}
{\rm For a given positive number $\rho_0>0$,  replacing 
 $\tilde{L}^\ast$ and $\iota$ by $\hat{L}$ and $\delta$ in the proof of Lemma~\ref{lem:Gener-modif}
we may obtain a  continuous function $\check{L}:\hat\Lambda\times [0, \tau]\times B^n_{3\delta/4}(0)\times\mathbb{R}^n\to\R$
 satisfying (L1)-(L6) in Lemma~\ref{lem:Gener-modif} with $\tilde{L}^\ast=\hat{L}$ and $\iota=\delta$,
 and Lemma~\ref{lem:Gener-modif}(L0) without $\partial_t\check{L}(\cdot)$.
Because of Remark~\ref{rm:funct-analy}, as in the proofs of Theorems~\ref{th:bif-nessLagrGener},~\ref{th:bif-existLagrGener},~\ref{th:bif-suffLagrGener}
in Section~\ref{sec:LagrBound} we may obtain the corresponding versions of these theorems under weaker Assumptions~\ref{ass:Lagr0},~\ref{ass:Lagr1}.
Similarly, when $M$ is an open subset in $\mathbb{R}^n$ the conditions in Theorem~\ref{th:MorseBif} may be weakened suitably.
}
\end{remark}

\section{Proofs of Theorems~\ref{th:bif-nessLagrGener},~\ref{th:bif-existLagrGener},~\ref{th:bif-suffLagrGener} and \ref{th:MorseBif}}\label{sec:LagrBound}
\setcounter{equation}{0}

\subsection{Proofs of Theorems~\ref{th:bif-nessLagrGener},~\ref{th:bif-existLagrGener},~\ref{th:bif-suffLagrGener}}\label{sec:LagrBound.1}

\subsubsection{Reduction to Euclidean spaces}\label{sec:LagrBound.1.1}

Since $\gamma_\mu(0)\ne\gamma_\mu(\tau)$, we may choose the $C^6$ Riemannian metric $g$ on $M$ so that
$S_0$ (resp. $S_1$) is totally geodesic near $\gamma_\mu(0)$ (resp. $\gamma_\mu(\tau)$).
(Indeed, by the definition of submanifolds there exists a coordinate chart $(U,\varphi)$ around
$\gamma_\mu(0)$ (resp. $\gamma_\mu(\tau)$) on $M$ such that $\varphi(S_0\cap U)=\varphi(U)\cap V_0$
(resp. $\varphi(S_1\cap U)=\varphi(U)\cap V_1$) for some linear subspace $V_0$ (resp. $V_1$) in $\mathbb{R}^n$.
Then extending the pullback of the standard metric on $\mathbb{R}^n$ to $U$ yields a required metric.)
There exists a fibrewise convex open neighborhood $\mathcal{U}(0_{TM})$ of the zero section of $TM$
such that the exponential map of $g$ gives rise to $C^5$ immersion
\begin{equation}\label{e:exp}
\mathbb{F}:\mathcal{U}(0_{TM})\to M\times M,\;(q,v)\mapsto (q,\exp_q(v)),
\end{equation}
(cf.~Appendix~\ref{app:Exp}). By (\ref{e:App.2}),
$d\mathbb{F}(q,0_q):T_{(q,0_q)}\to T_{(q,q)}(M\times M)=T_qM\times T_qM$ is an isomorphism for each $q\in M$.
Since $\mathbb{F}$ is injective on the closed subset $0_{TM}\subset TM$, it follows from Exercise~7 in
\cite[page 41]{Hir} that $\mathbb{F}|_{\mathcal{W}(0_{TM})}$ is a $C^5$ embedding of some smaller open neighborhood
$\mathcal{W}(0_{TM})\subset \mathcal{U}(0_{TM})$ of $0_{TM}$.
Note that $\mathbb{F}(0_{TM})$ is equal to the diagonal $\Delta_M$ in $M\times M$, and that $\gamma_\mu([0, \tau])$ is compact.
We may choose  a number $\iota>0$ such that
\begin{description}
\item[($\clubsuit$)] the closure $\bar{\bf U}_{3\iota}(\gamma_\mu([0, \tau]))$ of
${\bf U}_{3\iota}(\gamma_\mu([0, \tau])):=\{p\in M\,|\, d_g(p, \gamma_\mu([0, \tau]))< 3\iota\}$ is
a compact neighborhood of $\gamma_\mu([0, \tau])$ in $M$, and
$\bar{\bf U}_{3\iota}(\gamma_\mu([0, \tau]))\times\bar{\bf U}_{3\iota}(\gamma_\mu([0, \tau]))$
is contained in the image of $\mathbb{F}|_{\mathcal{W}(0_{TM})}$;
\item[($\spadesuit$)] $\{(q,v)\in TM\,|\,q\in \bar{\bf U}_{3\iota}(\gamma_\mu([0, \tau])),\;|v|_q\le 3\iota\}\subset \mathcal{W}(0_{TM})$.
\end{description}
Then $3\iota$ is less than the injectivity
radius of $g$ at each point on $\bar{\bf U}_{3\iota}(\gamma_\mu([0, \tau]))$.
Let us take a path  $\overline{\gamma}\in C^7([0,\tau];M)$  such that
 \begin{eqnarray}\label{e:gamma1}
\overline{\gamma}(0)=\gamma_\mu(0),\quad \overline{\gamma}(\tau)=\gamma_\mu(\tau),\quad\hbox{and}\quad
{\rm dist}_g(\gamma_\mu(t), \overline{\gamma}(t))<\iota\;\forall t\in
[0,\tau].
\end{eqnarray}
We first assume:
\begin{eqnarray}\label{e:gamma2}
d_g(\gamma_\lambda(t), \overline{\gamma}(t))<\iota,\quad\forall (\lambda,t)\in
\Lambda\times[0,\tau].
\end{eqnarray}
(For cases of Theorems~\ref{th:bif-nessLagrGener},~\ref{th:bif-suffLagrGener},
by contradiction we may use nets to prove that (\ref{e:gamma2}) 
is satisfied after shrinking $\Lambda$ toward $\mu$.)
Then (\ref{e:gamma1}) and (\ref{e:gamma2}) imply
\begin{eqnarray}\label{e:gamma3}
d_g(\gamma_\lambda(t), \gamma_\mu([0,\tau]))\le
d_g(\gamma_\lambda(t), \gamma_\mu(t))<2\iota,\quad\forall (\lambda,t)\in
\Lambda\times[0,\tau].
\end{eqnarray}
Using a unit orthogonal parallel $C^5$ frame field along 
$\overline{\gamma}$,   $[0,\tau]\ni t\mapsto (e_1(t),\cdots, e_n(t))$, we get a $C^5$ map
 \begin{eqnarray}\label{e:Lagr4}
\phi_{\overline{\gamma}}:[0,\tau]\times B^n_{2\iota}(0)\to
M,\;(t,x)\mapsto\exp_{\overline{\gamma}(t)}\Big(\sum^n_{i=1}x_i
e_i(t)\Big)
 \end{eqnarray}
 (Note that the tangent map $d\phi_{\overline{\gamma}}:T([0,\tau]\times B^n_{2\iota}(0))\to
TM$ is $C^4$.)
By Step 1 in \cite[\S4]{Lu4} there exist two linear subspaces
of $\R^n$, $V_0$ and $V_1$, such that $v\in V_0$ (resp. $v\in V_1$) if and only if
$\sum^n_{k=1}v_ke_k(0)\in T_{\gamma(0)}S_0$ (resp. $\sum^n_{k=1}v_ke_k(\tau)\in T_{\gamma(0)}S_1$).
By \cite[Theorem~4.2]{PiTa01},
$C^{1}_{S_0\times S_1}([0,\tau]; M)$ is a $C^4$ Banach manifold;
and it follows from \cite[Theorem~4.3]{PiTa01} that the map
  \begin{eqnarray}\label{e:chart5.1}
\Phi_{\overline{\gamma}}:C^1_{V_0\times V_1}([0,\tau]; B^n_{2\iota}(0))
 \to C^{1}_{S_0\times S_1}([0,\tau]; M)
\end{eqnarray}
defined by $\Phi_{\overline{\gamma}}(\xi)(t)=\phi_{\overline{\gamma}}(t,\xi(t))$
gives a $C^2$ coordinate chart around $\overline{\gamma}$ on $C^{1}_{S_0\times S_1}([0,\tau]; M)$,
where
$$
C^k_{V_0\times V_1}([0,\tau]; B^n_{2\iota}(0))=\{\xi\in
C^k([0,\tau]; B^n_{2\iota}(0)) \,|\,\xi(0)\in V_0,\;\xi(\tau)\in V_1\}\quad\hbox{with  $k\in\mathbb{N}\cup\{0\}$}.
$$
 Moreover, it is clear that
$$
\Phi_{\overline{\gamma}}\left(C^1_{V_0\times V_1}([0,\tau]; B^n_{2\iota}(0))\right)=
\big\{\gamma\in C^1_{S_0\times S_1}([0,\tau], M)\,\big|\, \sup_t{\rm dist}_g(\gamma(t),\overline{\gamma}(t))<2\iota\big\}.
$$
(\textsf{{Note}}: $\Phi_{\overline{\gamma}}$ also defines an at least $C^1$ map from $C^2_{V_0\times V_1}([0,\tau]; B^n_{2\iota}(0))$ to
$C^{2}_{S_0\times S_1}([0,\tau]; M)$.)

By  (\ref{e:gamma2}), for each $\lambda\in\Lambda$ there exists a unique map
 ${\bf u}_\lambda:[0,\tau]\to B^n_{\iota}(0)$  such that
$$
\gamma_\lambda(t)=\phi_{\overline{\gamma}}(t,{\bf u}_\lambda(t))=\exp_{\overline{\gamma}(t)}\Big(\sum^n_{i=1}{\bf u}_\lambda^i(t)
e_i(t)\Big),\quad t\in [0,\tau].
$$
Clearly,  ${\bf u}_\lambda$ satisfies the first assertion of the following lemma, 
whose proof will be given in Appendix~\ref{app:Exp}.

\begin{lemma}\label{lem:twoCont}
 ${\bf u}_\mu(0)=0={\bf u}_\mu(\tau)$, ${\bf u}_\lambda\in C^2_{V_0\times V_1}([0,\tau]; B^n_{\iota}(0))$, and
$$
 (\lambda,t)\mapsto {\bf u}_\lambda(t)\quad\hbox{and}\quad (\lambda,t)\mapsto \dot{\bf u}_\lambda(t)
 $$
 are continuous as maps from  $\Lambda\times [0,\tau]$ to $\R^n$.
 \end{lemma}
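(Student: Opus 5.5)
The plan is to write ${\bf u}_\lambda$ explicitly through the inverse of the embedding $\mathbb{F}|_{\mathcal{W}(0_{TM})}$ and the parallel frame, and then to read off regularity and continuity from the regularity of that inverse.

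First, the boundary values. Since $\overline{\gamma}(0)=\gamma_\mu(0)$ and $\overline{\gamma}(\tau)=\gamma_\mu(\tau)$, and $\exp_{\overline{\gamma}(t)}$ is injective on the ball of radius $3\iota$ (by ($\spadesuit$), $3\iota$ is below the injectivity radius), uniqueness gives ${\bf u}_\mu(0)=0={\bf u}_\mu(\tau)$.

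Next, the explicit formula. Let $\Psi:=(\mathbb{F}|_{\mathcal{W}(0_{TM})})^{-1}$. It is a $C^5$ map from the open image, which contains $\bar{\bf U}_{3\iota}\times\bar{\bf U}_{3\iota}$ by ($\clubsuit$), onto $\mathcal{W}(0_{TM})$. By (\ref{e:gamma2}) and (\ref{e:gamma3}), the pair $(\overline{\gamma}(t),\gamma_\lambda(t))$ lies in this set, and we may write
$$
\sum_i {\bf u}^i_\lambda(t)e_i(t)=\mathrm{pr}_2\circ\Psi\bigl(\overline{\gamma}(t),\gamma_\lambda(t)\bigr),\qquad {\bf u}^i_\lambda(t)=g\bigl(\mathrm{pr}_2\Psi(\overline{\gamma}(t),\gamma_\lambda(t)),e_i(t)\bigr),
$$
where the frame is orthonormal. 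The right-hand side is a composition of $C^5$ maps with $\overline{\gamma}$ and $e_i$ (both $C^5$ or better) and with $\gamma_\lambda\in C^2$. Hence ${\bf u}_\lambda\in C^2$.

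For the boundary conditions ${\bf u}_\lambda(0)\in V_0$ and ${\bf u}_\lambda(\tau)\in V_1$, we use the choice of $g$ that makes $S_0$ totally geodesic near $\gamma_\mu(0)$, which is the reason for the hypothesis $\gamma_\mu(0)\neq\gamma_\mu(\tau)$. The points $\overline{\gamma}(0)=\gamma_\mu(0)$ and $\gamma_\lambda(0)$ both lie in $S_0$ and are $\iota$-close, so the unique short geodesic joining them stays in $S_0$. Its initial vector therefore lies in $T_{\overline{\gamma}(0)}S_0$, so ${\bf u}_\lambda(0)\in V_0$. The same argument at $\tau$ gives ${\bf u}_\lambda(\tau)\in V_1$. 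This step needs care: one has to make sure $\iota$ was chosen within the totally geodesic neighbourhood, and this is the main point where I would shrink $\iota$ if necessary.

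Finally, continuity in $(\lambda,t)$. The map $(\lambda,t)\mapsto {\bf u}_\lambda(t)$ is continuous as a composition of the continuous map $(\lambda,t)\mapsto\gamma_\lambda(t)$ (Assumption~\ref{ass:LagrGenerB}) with $C^5$ maps. Differentiating in $t$ by the chain rule,
$$
\dot{\bf u}^i_\lambda(t)=\partial_t\bigl[\,\cdots\bigr]=A_i\bigl(t,\gamma_\lambda(t)\bigr)+B_i\bigl(t,\gamma_\lambda(t)\bigr)[\dot\gamma_\lambda(t)],
$$
where $A_i$ and $B_i$ are continuous. They are built from $d\Psi$, $\dot{\overline{\gamma}}$, $\dot e_i$ and $g$, all of class at least $C^4$. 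The continuity of $(\lambda,t)\mapsto\dot\gamma_\lambda(t)\in TM$ then gives continuity of $\dot{\bf u}_\lambda$. Equivalently, one can differentiate $\gamma_\lambda(t)=\phi_{\overline{\gamma}}(t,{\bf u}_\lambda(t))$ and solve for $\dot{\bf u}_\lambda$ using the invertibility of $\partial_x\phi_{\overline{\gamma}}$, which holds because $\phi_{\overline{\gamma}}(t,\cdot)$ is a diffeomorphism onto its image on $B^n_{2\iota}(0)$.
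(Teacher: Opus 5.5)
Your proposal is correct and follows essentially the paper's proof. Both arguments invert $\mathbb{F}|_{\mathcal{W}(0_{TM})}$ at $(\overline{\gamma}(t),\gamma_\lambda(t))$, recover ${\bf u}^j_\lambda(t)$ by pairing with the orthonormal parallel frame via $g$, and get continuity of $\dot{\bf u}_\lambda$ by differentiating this identity in $t$; the paper does that last step in a local chart using the transition matrix $(A_{ij}(t))$ between $e_i(t)$ and $\partial/\partial x^j$, whereas you apply the chain rule directly. Your total-geodesic argument for $({\bf u}_\lambda(0),{\bf u}_\lambda(\tau))\in V_0\times V_1$ fills in a step the paper just calls clear, since it amounts to the stated image property of the chart $\Phi_{\overline{\gamma}}$.
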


Define $L^{\ast}:\Lambda\times [0,\tau]\times B^n_{2\iota}(0)\times\R^n\to\R$ by
\begin{eqnarray}\label{e:3.15Gener}
 L^{\ast}(\lambda, t, q,  v)=L^{\ast}_\lambda(t, q,  v)
=L_\lambda\left(t, \phi_{\overline{\gamma}}(t, q),
D_t\phi_{\overline{\gamma}}(t, q)+ D_q\phi_{\overline{\gamma}}(t, q)[v]\right)
\end{eqnarray}
Since $\phi_{\overline{\gamma}}$ is $C^5$, by Assumption~\ref{ass:Lagr6}, $L^\ast$
is $C^2$ with respect to $(t,q,v)$ and strictly convex with respect to $v$, and
all its partial derivatives also depend continuously on $(\lambda, t, q, v)$.
 Moreover, ${\bf u}_\lambda$ solves the following boundary problem:
\begin{eqnarray*}
&&\frac{d}{dt}\big(\partial_vL^\ast_\lambda(t, x(t), \dot{x}(t))\big)-\partial_q{L}^\ast_\lambda(t, x(t), \dot{x}(t))=0,\\
&&\left.\begin{array}{ll}
x\in C^2([0,\tau]; B^n_{\iota}(0)),\;(x(0), x(\tau))\in V_0\times V_1\quad\hbox{and}\quad\\
\partial_v{L}^\ast_\lambda(0, x(0), \dot{x}(0))[v_0]=\partial_v{L}^\ast_\lambda(\tau, x(\tau), \dot{x}(\tau))[v_1]\\
\qquad\forall (v_0,v_1)\in V_0\times V_1.
\end{array}\right\}
\end{eqnarray*}

By Lemmas~\ref{lem:regu},~\ref{lem:twoCont} we directly obtain:

\begin{lemma}\label{lem:twoCont+}
 $\Lambda\times [0,\tau]\ni(\lambda,t)\mapsto \ddot{\bf u}_\lambda(t)\in\R^n$
is continuous.
 \end{lemma}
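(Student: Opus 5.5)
The plan is to apply Lemma~\ref{lem:regu}(i) to the reduced Euclidean problem. Take $U=B^n_{2\iota}(0)$, the Lagrangian $L^\ast$ of (\ref{e:3.15Gener}), and the family of paths $x_\lambda={\bf u}_\lambda$. By Lemma~\ref{lem:regu}(i), $(\lambda,t)\mapsto\ddot{x}_\lambda(t)$ is continuous once Assumption~\ref{ass:Lagr1} holds, so the whole task is to check that assumption for $(L^\ast,{\bf u}_\lambda)$.

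\emph{Regularity of $L^\ast$.} Since $\phi_{\overline{\gamma}}$ is $C^5$ and $L$ satisfies Assumption~\ref{ass:Lagr6}, the composition formula (\ref{e:3.15Gener}) shows that $L^\ast$ is $C^2$ in $(t,q,v)$. The partial derivatives listed in Assumption~\ref{ass:Lagr0} are obtained from those of $L^\alpha$ in a bundle chart by the chain rule. Each is therefore a polynomial expression in derivatives of $L^\alpha$, evaluated at continuous arguments, with coefficients built from derivatives of $\phi_{\overline{\gamma}}$ up to order three. Hence these partial derivatives depend continuously on $(\lambda,t,q,v)$, including $\partial_{tv}L^\ast$, which is needed in Lemma~\ref{lem:regu}.

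\emph{Convexity of $L^\ast$ in $v$.} The map $v\mapsto D_t\phi_{\overline{\gamma}}(t,q)+D_q\phi_{\overline{\gamma}}(t,q)[v]$ is affine. Its linear part $D_q\phi_{\overline{\gamma}}(t,q)$ is invertible, because $\exp_{\overline{\gamma}(t)}$ is a diffeomorphism on the ball of radius $3\iota$ by ($\clubsuit$)--($\spadesuit$) and the frame $e_i(t)$ is a basis. It follows that
$$\partial_{vv}L^\ast=(D_q\phi_{\overline{\gamma}})^{\top}\,\partial_{vv}L\,(D_q\phi_{\overline{\gamma}})$$
is positive definite. So $L^\ast$ is strictly convex in $v$ on all of $\R^n$, and in particular on any ball $B^n_\rho(0)$.

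\emph{Assumption~\ref{ass:Lagr1} for ${\bf u}_\lambda$.} Each ${\bf u}_\lambda$ is $C^2$ and solves the Euler--Lagrange equation of $L^\ast_\lambda$, as recorded just before this lemma. By Lemma~\ref{lem:twoCont}, $(\lambda,t)\mapsto{\bf u}_\lambda(t)$ and $(\lambda,t)\mapsto\dot{\bf u}_\lambda(t)$ are continuous. It remains to find, for each compact or sequentially compact $\hat\Lambda\subset\Lambda$, a $\rho>0$ with $\sup_{\hat\Lambda\times[0,\tau]}|\dot{\bf u}_\lambda(t)|<\rho$.
\begin{itemize}
\item For compact $\hat\Lambda$, the set $\hat\Lambda\times[0,\tau]$ is compact, so the continuous function $|\dot{\bf u}_\lambda(t)|$ attains a maximum there.
\item For sequentially compact $\hat\Lambda$, argue by contradiction. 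Suppose there are points $(\lambda_k,t_k)$ with $|\dot{\bf u}_{\lambda_k}(t_k)|\to\infty$. Pass to a subsequence with $\lambda_k\to\lambda_0\in\hat\Lambda$ and $t_k\to t_0$; continuity then gives $\dot{\bf u}_{\lambda_k}(t_k)\to\dot{\bf u}_{\lambda_0}(t_0)$, a contradiction.
\end{itemize}
With the bound in hand, strict convexity on $B^n_\rho(0)$ is already known from the previous step.

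The only mildly delicate points are the invertibility of $D_q\phi_{\overline{\gamma}}$, which rests on the choice of $\iota$, and the uniform bound in the sequentially compact case. Once all hypotheses of Assumption~\ref{ass:Lagr1} are verified, Lemma~\ref{lem:regu}(i) gives directly that $\Lambda\times[0,\tau]\ni(\lambda,t)\mapsto\ddot{\bf u}_\lambda(t)\in\R^n$ is continuous.
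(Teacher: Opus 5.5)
Your proposal is correct and follows the paper's route: the paper obtains Lemma~\ref{lem:twoCont+} directly from Lemma~\ref{lem:regu}(i), applied to $L^\ast$ with $x_\lambda={\bf u}_\lambda$, using Lemma~\ref{lem:twoCont} for the continuity of $(\lambda,t)\mapsto{\bf u}_\lambda(t)$ and $(\lambda,t)\mapsto\dot{\bf u}_\lambda(t)$. Your checks of Assumptions~\ref{ass:Lagr0} and~\ref{ass:Lagr1} for $L^\ast$ (chain-rule regularity, positive definiteness via invertibility of $D_q\phi_{\overline{\gamma}}$, and the uniform bound on $\dot{\bf u}_\lambda$ over compact or sequentially compact $\hat\Lambda$) simply make explicit what the paper leaves implicit.
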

({\it Note}: This result is necessary for us to derive that $\partial_{vt}\tilde L^\ast$ is continuous in Proposition~\ref{th:bif-LagrGenerEu}.)

Define $\tilde L^\ast:\Lambda\times [0,\tau]\times B^n_{\iota}(0)\times\R^n\to\R$ by
\begin{equation}\label{e:ModifiedLEu}
\tilde L^\ast(\lambda, t, q,v)=\tilde L^\ast_{\lambda}(t, q,v)= {L}^\ast(\lambda, t, q+ {\bf u}_\lambda(t), v+\dot{\bf u}_\lambda(t)).
\end{equation}
Then
\begin{eqnarray*}
\partial_t\tilde L^\ast(\lambda, t, q,v)&=& \partial_t{L}^\ast(\lambda, t, q+ {\bf u}_\lambda(t), v+\dot{\bf u}_\lambda(t))\\
 &&+\partial_q{L}^\ast(\lambda, t, q+ {\bf u}_\lambda(t), v+\dot{\bf u}_\lambda(t))\dot{\bf u}_\lambda(t)\\
&&+\partial_v{L}^\ast(\lambda, t, q+ {\bf u}_\lambda(t), v+\dot{\bf u}_\lambda(t))\ddot{\bf u}_\lambda(t),\\
\partial_q\tilde L^\ast(\lambda, t, q,v)&=& \partial_q{L}^\ast(\lambda, t, q+ {\bf u}_\lambda(t), v+\dot{\bf u}_\lambda(t))\\
\partial_v\tilde L^\ast(\lambda, t, q,v)&=& \partial_v{L}^\ast(\lambda, t, q+ {\bf u}_\lambda(t), v+\dot{\bf u}_\lambda(t)).
\end{eqnarray*}
  By these and Lemmas~\ref{lem:twoCont},~\ref{lem:twoCont+} 
  it is not hard to see that $\tilde L^\ast$ satisfies  the following:

\begin{proposition}\label{th:bif-LagrGenerEu}
\begin{enumerate}
   \item[\rm (a)] $\tilde L^\ast$ is  continuous, and the following  partial derivatives
$$
\partial_t\tilde L^\ast(\cdot),\;\partial_q\tilde L^\ast(\cdot),\;\partial_v\tilde L^\ast(\cdot),
\;\partial_{tv}\tilde L^\ast(\cdot),\;\partial_{vt}\tilde L^\ast(\cdot), \;\partial_{qv}\tilde L^\ast(\cdot),\;\partial_{vq}\tilde L^\ast(\cdot),\;
\partial_{qq}\tilde L^\ast(\cdot),\;\partial_{vv}\tilde L^\ast(\cdot)
$$
exist and depend continuously on $(\lambda, t, q, v)$.
  \item[\rm (b)] For each $(\lambda, t, q)\in \Lambda\times [0,\tau]\times B^n_{\iota}(0)$, $\tilde L^\ast_\lambda(t, q, v)$ is
strictly convex in $v$.
  \end{enumerate}
  \end{proposition}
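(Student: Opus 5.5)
The plan is to verify (a) and (b) directly from the definition (\ref{e:ModifiedLEu}), $\tilde L^\ast(\lambda,t,q,v)=L^\ast(\lambda,t,q+{\bf u}_\lambda(t),v+\dot{\bf u}_\lambda(t))$. I will combine three ingredients: the regularity of $L^\ast$ (which is $C^2$ in $(t,q,v)$, strictly convex in $v$, with all partial derivatives up to order two continuous in $(\lambda,t,q,v)$), the continuity of $(\lambda,t)\mapsto{\bf u}_\lambda(t),\dot{\bf u}_\lambda(t)$ from Lemma~\ref{lem:twoCont}, and the continuity of $(\lambda,t)\mapsto\ddot{\bf u}_\lambda(t)$ from Lemma~\ref{lem:twoCont+}. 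The argument must only evaluate $L^\ast$ at legitimate points. Since $|{\bf u}_\lambda(t)|<\iota$ and $|q|<\iota$, we have $q+{\bf u}_\lambda(t)\in B^n_{2\iota}(0)$, which is the domain of $L^\ast$. Throughout I will write $\Theta(\lambda,t,q,v):=(\lambda,t,q+{\bf u}_\lambda(t),v+\dot{\bf u}_\lambda(t))$. This map is continuous from $\Lambda\times[0,\tau]\times B^n_\iota(0)\times\R^n$ into the domain of $L^\ast$.

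First, $\tilde L^\ast=L^\ast\circ\Theta$ is continuous as a composition of continuous maps. Since ${\bf u}_\lambda\in C^2$ for each fixed $\lambda$, the chain rule yields the three first-order formulas displayed before the proposition. From them:
\begin{itemize}
\item $\partial_q\tilde L^\ast=(\partial_qL^\ast)\circ\Theta$ and $\partial_v\tilde L^\ast=(\partial_vL^\ast)\circ\Theta$, so these are continuous.
\item $\partial_t\tilde L^\ast=(\partial_tL^\ast)\circ\Theta+(\partial_qL^\ast)\circ\Theta\,[\dot{\bf u}_\lambda(t)]+(\partial_vL^\ast)\circ\Theta\,[\ddot{\bf u}_\lambda(t)]$. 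This is continuous by Lemmas~\ref{lem:twoCont} and~\ref{lem:twoCont+}.
\end{itemize}

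Next, differentiate once more. Each of $\partial_{qq}\tilde L^\ast$, $\partial_{qv}\tilde L^\ast$, $\partial_{vq}\tilde L^\ast$ and $\partial_{vv}\tilde L^\ast$ is the corresponding derivative of $L^\ast$ composed with $\Theta$, since the shift does not depend on $(q,v)$; hence each is continuous. For the mixed $t$-derivatives I will differentiate $\partial_v\tilde L^\ast$ in $t$:
\[
\partial_{tv}\tilde L^\ast=(\partial_{tv}L^\ast)\circ\Theta+(\partial_{qv}L^\ast)\circ\Theta\,[\dot{\bf u}_\lambda]+(\partial_{vv}L^\ast)\circ\Theta\,[\ddot{\bf u}_\lambda].
\]
This is continuous, again because $\ddot{\bf u}_\lambda(t)$ depends continuously on $(\lambda,t)$. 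The same expression, with the order of differentiation reversed, gives $\partial_{vt}\tilde L^\ast$. One can also get it by differentiating the formula for $\partial_t\tilde L^\ast$ in $v$: every term there is $C^1$ in $v$, because $L^\ast$ is $C^2$ in $(t,q,v)$. Equality of mixed partials for $L^\ast$ then shows that the two expressions agree.

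The main obstacle is to make sure that only first derivatives of ${\bf u}_\lambda$ enter $\partial_q$ and $\partial_v$, while the second derivative enters exactly at $\partial_t$, $\partial_{tv}$ and $\partial_{vt}$. This is precisely why Lemma~\ref{lem:twoCont+} is needed. Note that no $\partial_{tt}$ is required, so third derivatives of ${\bf u}_\lambda$ never appear.

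Finally, for (b) I will use $\partial_{vv}\tilde L^\ast(\lambda,t,q,v)=\partial_{vv}L^\ast(\Theta(\lambda,t,q,v))$. This is positive definite by the strict convexity of $L^\ast$ in $v$ on $B^n_{2\iota}(0)\times\R^n$. Equivalently, $v\mapsto\tilde L^\ast_\lambda(t,q,v)$ is a translate of a strictly convex function, and therefore strictly convex.
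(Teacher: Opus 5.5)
Your proposal is correct and follows the paper's own argument. The paper likewise writes down the chain-rule formulas for $\partial_t\tilde L^\ast$, $\partial_q\tilde L^\ast$ and $\partial_v\tilde L^\ast$, then deduces (a) and (b) from the regularity and strict convexity of $L^\ast$ together with Lemmas~\ref{lem:twoCont} and~\ref{lem:twoCont+}; the latter lemma is needed exactly for the continuity of the mixed $t$--$v$ derivatives. You additionally write out the second-order computations and the domain check $q+{\bf u}_\lambda(t)\in B^n_{2\iota}(0)$, which the paper leaves to the reader.
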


Clearly, $\tilde L^\ast$ satisfies Assumption~\ref{ass:Lagr0}, and Assumption~\ref{ass:Lagr1} with $x_\lambda\equiv 0\;\forall\lambda$.

\begin{remark}\label{rm:bif-LagrGenerEu}
{\rm Actually, for our next arguments in this section it suffices that $\tilde{L}^\ast$ satisfies (a) and the following weaker condition:
\begin{enumerate}
 \item[\rm (b')]  $\tilde L^\ast(\lambda, t, q, v)$ is
 convex in $v$, and for any compact or sequential compact subset $\hat\Lambda\subset\Lambda$ there exists $\rho>0$ such that
 $\hat\Lambda\times [0,\tau]\times B^n_{\iota}(0)\times B^n_{\rho}(0)\ni (\lambda, t, q,v)\mapsto \tilde L^\ast(\lambda, t, q, v)$ is
strictly convex with respect to $v$. 
 \end{enumerate}
This means: In Assumption~\ref{ass:Lagr6} we may only require that
 $L$ is fiberwise convex; but in Assumption~\ref{ass:LagrGenerB}
we need to add the condition: \textsf{for any compact or sequential compact subset $\hat\Lambda\subset\Lambda$ there exist $0<\rho_0<\rho$
such that $\sup\{|\dot{\gamma}_\lambda(t)|_g\,|\,(\lambda,t)\in\hat\Lambda\times [0,\tau]\}<\rho_0$ and
 $L$ is fiberwise strictly convex in $(\lambda,t,q, v)\in\hat\Lambda\times [0,\tau]\times TM\,|\, |v|_g<\rho\}$.}
 }
\end{remark}

  The  condition (a)  in Proposition~\ref{th:bif-LagrGenerEu} assures that each functional
\begin{equation}\label{e:brakefunctGener*}
\tilde{\mathcal{E}}^\ast_\lambda: C^1_{V_0\times V_1}([0,\tau]; B^n_{\iota}(0))\to\R,\;x\mapsto\int^1_0\tilde{L}^\ast_\lambda(t, x(t), \dot{x}(t))dt
\end{equation}
is $C^2$, and satisfies
\begin{equation}\label{e:twofunctional}
\tilde{\cal E}^\ast_{\lambda}(x)={\cal E}_{\lambda}\left(\Phi_{\overline{\gamma}}(x+{\bf u}_\lambda)\right)\;\forall x\in C^1_{V_0\times V_1}([0,\tau]; B^n_{\iota}(0))\quad\hbox{and}\quad d\tilde{\mathcal{E}}^\ast_\lambda(0)=0.
\end{equation}
Hence for each $\lambda\in\Lambda$,
 $x\in C^1_{V_0\times V_1}([0,\tau]; B^n_{\iota}(0))$ satisfies
 $d\tilde{\mathcal{E}}^\ast_\lambda(x)=0$ if and only if $\gamma:=\Phi_{\overline{\gamma}}(x+{\bf u}_\lambda)$ satisfies
$d{\mathcal{E}}_\lambda(\gamma)=0$; and in this case %it holds that
$m^-(\tilde{\mathcal{E}}^\ast_\lambda, x)=m^-({\mathcal{E}}_\lambda, \gamma)$ and
$m^0(\tilde{\mathcal{E}}^\ast_\lambda, x)=m^0({\mathcal{E}}_\lambda, \gamma)$.
In particular, we have 
\begin{equation}\label{e:twoMorse}
m^-(\tilde{\mathcal{E}}^\ast_\lambda, 0)=m^-(\mathcal{E}_\lambda, \gamma_\lambda)\quad
\hbox{and}\quad
m^0(\tilde{\mathcal{E}}^\ast_\lambda, 0)=m^0(\mathcal{E}_\lambda, \gamma_\lambda).
\end{equation}

By \cite[Proposition~4.2]{BuGiHi}
 the critical points of $\tilde{\mathcal{E}}^\ast_\lambda$ 
 correspond to the solutions of the following boundary problem:
\begin{eqnarray}\label{e:LagrGenerEu}
&&\frac{d}{dt}\big(\partial_v\tilde L^\ast_\lambda(t, x(t), \dot{x}(t))\big)-
\partial_q\tilde{L}^\ast_\lambda(t, x(t), \dot{x}(t))=0,\\
&&\left.\begin{array}{ll}
x\in C^2([0,\tau]; B^n_{\iota}(0)),\;(x(0), x(\tau))\in V_0\times V_1\quad\hbox{and}\quad\\
\partial_v\tilde{L}^\ast_\lambda(0, x(0), \dot{x}(0))[v_0]=0\quad\forall v_0\in V_0,\\
   \partial_v\tilde{L}^\ast_\lambda(\tau, x(\tau), \dot{x}(\tau))[v_1]=0\quad\forall v_1\in V_1.
\end{array}\right\}\label{e:LagrGenerBEu}
\end{eqnarray}

Let $W^{1,2}_{V_0\times V_1}([0,\tau]; B^n_{\iota}(0))=\{\xi\in W^{1,2}([0,\tau];
B^n_{\iota}(0))\,|\,(\xi(0),\xi(\tau))\in V_0\times V_1 \}$.
The following three theorems may be, respectively, viewed as corresponding results of Theorems~\ref{th:bif-nessLagrGener},~\ref{th:bif-existLagrGener},
~\ref{th:bif-suffLagrGener} provided that $\tilde{L}^\ast$ satisfies (a) and (b) in Proposition~\ref{th:bif-LagrGenerEu}.

\begin{theorem}\label{th:bif-nessLagrGenerEu}
\begin{enumerate}
\item[\rm (I)]{\rm (\textsf{Necessary condition}):}
  Suppose that $(\mu, 0)\in\Lambda\times C^1_{V_0\times V_1}([0,\tau]; B^n_{\iota}(0))$  is a
   bifurcation point along sequences of the problem (\ref{e:LagrGenerEu})--(\ref{e:LagrGenerBEu})
  with respect to the trivial branch $\{(\lambda,0)\,|\,\lambda\in\Lambda\}$ in $\Lambda\times C^1_{V_0\times V_1}([0,\tau]; B^n_{\iota}(0))$.
  Then $m^0(\tilde{\mathcal{E}}^\ast_{\mu}, 0)>0$.

\item[\rm (II)]{\rm (\textsf{Sufficient condition}):}
Suppose that $\Lambda$ is first countable and that there exist two sequences in  $\Lambda$ converging to $\mu$, $(\lambda_k^-)$ and
$(\lambda_k^+)$,  such that one of the following conditions is satisfied:
 \begin{enumerate}
 \item[\rm (II.1)] For each $k\in\mathbb{N}$, either $0$  is not an isolated critical point of $\tilde{\mathcal{E}}^\ast_{\lambda^+_k}$,
 or $0$ is not an isolated critical point of $\tilde{\mathcal{E}}^\ast_{\lambda^-_k}$,
 or $0$  is an isolated critical point of $\tilde{\mathcal{E}}^\ast_{\lambda^+_k}$ and $\tilde{\mathcal{E}}^\ast_{\lambda^-_k}$ and
  $C_m(\tilde{\mathcal{E}}^\ast_{\lambda^+_k}, 0;{\bf K})$ and $C_m(\tilde{\mathcal{E}}^\ast_{\lambda^-_k}, 0;{\bf K})$ are not isomorphic for some Abel group ${\bf K}$ and some $m\in\mathbb{Z}$.
\item[\rm (II.2)] For each $k\in\mathbb{N}$, there exists $\lambda\in\{\lambda^+_k, \lambda^-_k\}$ such that
$0$  is an either non-isolated or homological visible critical point of
$\tilde{\mathcal{E}}^\ast_{\lambda}$ , and
$$[m^-(\tilde{\mathcal{E}}^\ast_{\lambda_k^-}, 0),
m^-(\tilde{\mathcal{E}}^\ast_{\lambda_k^-}, 0)+
m^0(\tilde{\mathcal{E}}^\ast_{\lambda_k^-}, 0)]\cap[m^-(\tilde{\mathcal{E}}^\ast_{\lambda_k^+}, 0),
m^-(\tilde{\mathcal{E}}^\ast_{\lambda_k^+}, 0)+m^0(\tilde{\mathcal{E}}^\ast_{\lambda_k^+}, 0)]=\emptyset.
$$
\item[\rm (II.3)]  $[m^-(\tilde{\mathcal{E}}^\ast_{\lambda_k^-}, 0),
m^-(\tilde{\mathcal{E}}^\ast_{\lambda_k^-}, 0)+
m^0(\tilde{\mathcal{E}}^\ast_{\lambda_k^-}, 0)]\cap[m^-(\tilde{\mathcal{E}}^\ast_{\lambda_k^+}, 0),
m^-(\tilde{\mathcal{E}}^\ast_{\lambda_k^+}, 0)+m^0(\tilde{\mathcal{E}}^\ast_{\lambda_k^+}, 0)]=\emptyset$,
and either $m^0(\tilde{\mathcal{E}}^\ast_{\lambda_k^-}, 0)=0$ or $m^0(\tilde{\mathcal{E}}^\ast_{\lambda_k^+}, 0)=0$
for each $k\in\mathbb{N}$.
 \end{enumerate}
   Then there exists a sequence $\{(\lambda_k,x_k)\}_{k\ge 1}$ in  $\hat\Lambda\times C^2([0,\tau], \R^n)$
   converging to  $(\mu, 0)$ such that each $x_k\ne 0$  is a  solution of the problem (\ref{e:LagrGenerEu})--(\ref{e:LagrGenerBEu})
    with $\lambda=\lambda_k$, $k=1,2,\cdots$,
      where  $\hat{\Lambda}=\{\mu,\lambda^+_k, \lambda^-_k\,|\,k\in\mathbb{N}\}$.
  \end{enumerate}
  \end{theorem}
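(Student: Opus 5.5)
The plan is to reduce Theorem~\ref{th:bif-nessLagrGenerEu} to the abstract bifurcation theorems for families of $C^1$ functionals on Banach spaces densely embedded in Hilbert spaces from \cite{Lu8, Lu10}; the trivial branch is $0$. Since $\Lambda$ is first countable and we work along sequences, it suffices to treat the sequentially compact set $\hat\Lambda=\{\mu,\lambda^\pm_k\}$ in (II), and in (I) the set $\{\mu,\lambda_k\}$ built from the bifurcating sequence. On such a set Proposition~\ref{th:bif-LagrGenerEu} shows that $\tilde L^\ast$ satisfies Assumptions~\ref{ass:Lagr0} and \ref{ass:Lagr1} with $x_\lambda\equiv0$. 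Applying Lemma~\ref{lem:Lagr} (through the construction (\ref{e:ModifiedL}) and Lemma~\ref{lem:reduction}, or more precisely the further modification of Remark~\ref{rm:reduction} giving $\check L$ on $B^n_{3\delta/4}(0)$), we replace $\tilde L^\ast$ by a Lagrangian that agrees with it for $|v|<\rho_0$, is strictly convex in $v$, and has quadratic growth in $v$. For $x$ near $0$ in $C^1$ nothing changes, because $|\dot x|<\rho_0$ there. So critical points of the modified functional near $0$ in $C^1$ are exactly the solutions of (\ref{e:LagrGenerEu})--(\ref{e:LagrGenerBEu}) near $0$, and the Morse index and nullity at $0$ are unchanged.

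Next we set $H=W^{1,2}_{V_0\times V_1}([0,\tau];\mathbb{R}^n)$ and $X=C^1_{V_0\times V_1}([0,\tau];\mathbb{R}^n)$, with $X$ dense in $H$. The modified functional $\check{\mathcal E}_\lambda$, defined on a small ball of $H$ (or on $X$), has the structure required by the abstract theory. Its gradient restricted to $X$ is $C^1$, and the Hessian at $x$ splits as $P_\lambda(x)+Q_\lambda(x)$. Here $P$ is uniformly positive definite, because of the uniform bounds on $\partial_{vv}$ as in (\ref{e:BifOA}), and $Q$ is compact, because the lower-order terms in $q$ embed compactly. Both depend continuously on $(\lambda,x)$ in the required topologies, and this uses the continuity of $\partial_{tv}$ and $\partial_{qv}$ from Proposition~\ref{th:bif-LagrGenerEu}(a). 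Critical points lying in $X$ are $C^2$ solutions of the boundary problem by \cite[Proposition~4.2]{BuGiHi}. The boundary conditions $\partial_v L[v_0]=0$ arise as natural conditions because $V_0,V_1$ are linear.

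With these properties in hand:

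(I) We argue by contradiction, assuming $m^0(\tilde{\mathcal{E}}^\ast_{\mu},0)=0$. Then $0$ is a nondegenerate critical point. The abstract implicit-function/splitting-lemma argument gives a neighborhood of $(\mu,0)$ in $\hat\Lambda\times X$ containing no nontrivial critical points. This contradicts the existence of the sequence $(\lambda_k,x_k)\to(\mu,0)$ with $x_k\ne0$.

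(II) We invoke the abstract sufficient-condition theorem of \cite{Lu8, Lu10}, whose hypotheses (II.1)--(II.3) are stated in terms of critical groups and index intervals. Its conclusion gives $x_k\neq0$ critical for $\lambda_k\in\hat\Lambda$ with $x_k\to0$ in $X$, hence in $C^1$. Lemma~\ref{lem:regu}(ii) then upgrades the convergence to $C^2$.

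The main obstacle is verifying the abstract hypotheses uniformly in $\lambda$ on $\hat\Lambda$ when $\Lambda$ is only a topological space. Specifically, we need continuity of $(\lambda,x)\mapsto \nabla\check{\mathcal E}_\lambda(x)$ into $X$ and the uniform positivity and continuity of $P_\lambda(x)$. I would verify these by Nemytskii-operator estimates in coordinates, using sequential compactness of $\hat\Lambda\times[0,\tau]\times \bar B$ to obtain uniform continuity of the second derivatives of $\check L$. This compactness is exactly why the modification of Lemma~\ref{lem:Lagr} and Remark~\ref{rm:reduction} is needed.
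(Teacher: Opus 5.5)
Your overall architecture matches the paper's. You pass to the compact set $\hat\Lambda$, replace $\tilde L^\ast$ by the $\check L$ of Lemma~\ref{lem:Gener-modif}, and apply Lu's abstract theorems on the pair $H={\bf H}_{V_0\times V_1}\supset X={\bf X}_{V_0\times V_1}$.

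Part (I) is fine in spirit; the paper uses \cite[Theorem~3.1]{Lu8} on $H$, fed by Proposition~\ref{prop:funct-analy}(i),(v)--(vii). A plain implicit-function argument is not available on $H$, because $\check{\mathcal{E}}_\lambda$ is only $C^{2-0}$ there and $P_\lambda(\zeta)$ is in general only strongly continuous in $\zeta$. If you run the argument on $X$ instead, you must check two things. First, $B_\mu(0)$ must be invertible on $X$; this follows from Proposition~\ref{prop:funct-analy}(viii). Second, $(\lambda,x)\mapsto A_\lambda'(x)\in\mathscr{L}(X)$ must be jointly norm-continuous at $(\mu,0)$; the paper never establishes this.

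The genuine gap is in (II): you treat the passage between $H$, where the abstract theory lives, and $C^1$, where the hypotheses and conclusion are stated, as automatic.

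\emph{Conclusion side.} The abstract theorem (\cite[Theorem~C.4]{Lu11}, as used in proving Theorem~\ref{th:bif-existLagrGenerEu}) produces nontrivial critical points $x_k\to0$ only in $\mathcal{U}\subset W^{1,2}$, not in $X$. Lemma~\ref{lem:regu}(ii) needs $C^1$-convergence as input, so it cannot close this step. You also cite regularity only for critical points already in $X$, whereas regularity of $W^{1,2}$-critical points is Proposition~\ref{prop:funct-analy}(ii). The missing ingredient is Proposition~\ref{prop:solutionLagr}: for each $\epsilon>0$ there is $\varepsilon>0$, independent of $\lambda\in\hat\Lambda$, such that every critical point with $\|x\|_{1,2}<\varepsilon$ satisfies $\|x\|_{C^2}<\epsilon$. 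This is a real estimate, proved in three steps:
\begin{enumerate}
\item use the gradient formula (\ref{e:4.14}) to control $c_i(\lambda,x)-c_i(\lambda,0)$;
\item use the uniform convexity (L2) of Lemma~\ref{lem:Gener-modif} to bound $|\dot x|$ pointwise;
\item use the Euler--Lagrange equation and the continuity of $\partial_{vt}\check L$ to bound $\ddot x$.
\end{enumerate}

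\emph{Hypothesis side.} (II.1)--(II.2) concern isolatedness and critical groups of $\tilde{\mathcal{E}}^\ast_\lambda$ on the Banach space $C^1$, while the abstract theorem needs them for $\check{\mathcal{E}}_\lambda$ on $\mathcal{U}$. Isolatedness transfers from $X$ to $H$ only via the same Proposition~\ref{prop:solutionLagr}. The critical groups require $C_\ast(\check{\mathcal{E}}_\lambda|_{\mathcal{U}^X},0;{\bf K})\cong C_\ast(\check{\mathcal{E}}_\lambda,0;{\bf K})$. This comes from \cite[Corollary~2.8]{JM}, using the regularity properties in Proposition~\ref{prop:funct-analy}, together with (\ref{e:twofunctAgree}).

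These two bridges are the actual obstacles. The continuity of $(\lambda,x)\mapsto A_\lambda(x)$ that you flag is routine (Proposition~\ref{prop:continA}). Also, uniform positivity of $P_\lambda(\zeta)$ for arbitrary $\zeta\in\mathcal{U}$ comes from the global bound (L2), not from (\ref{e:BifOA}), which controls $\partial_{vv}$ only along the trivial branch.
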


\begin{theorem}[\textsf{Existence for bifurcations}]\label{th:bif-existLagrGenerEu+}
Let $\Lambda$ be connected.  For $\lambda^-, \lambda^+\in\Lambda$
suppose that
one of the following conditions is satisfied:
 \begin{enumerate}
 \item[\rm (i)] Either $0$  is not an isolated critical point of $\tilde{\mathcal{E}}^\ast_{\lambda^+}$,
 or $0$ is not an isolated critical point of $\tilde{\mathcal{E}}^\ast_{\lambda^-}$,
 or $0$  is an isolated critical point of $\tilde{\mathcal{E}}^\ast_{\lambda^+}$ and $\tilde{\mathcal{E}}^\ast_{\lambda^-}$ and
  $C_m(\tilde{\mathcal{E}}^\ast_{\lambda^+}, 0;{\bf K})$ and
   $C_m(\tilde{\mathcal{E}}^\ast_{\lambda^-}, 0;{\bf K})$ 
   are not isomorphic for some Abel group ${\bf K}$ and some $m\in\mathbb{Z}$.

\item[\rm (ii)] $[m^-(\tilde{\mathcal{E}}^\ast_{\lambda^-}, 0),
m^-(\tilde{\mathcal{E}}^\ast_{\lambda^-}, 0)+ m^0(\tilde{\mathcal{E}}^\ast_{\lambda^-}, 0)]\cap[m^-(\tilde{\mathcal{E}}^\ast_{\lambda^+}, 0),
m^-(\tilde{\mathcal{E}}^\ast_{\lambda^+}, 0)+m^0(\tilde{\mathcal{E}}^\ast_{\lambda^+}, 0)]=\emptyset$,
and there exists $\lambda\in\{\lambda^+, \lambda^-\}$ such that $0$  is an either non-isolated or homological visible critical point of
$\mathcal{E}^\ast_{\lambda}$.

\item[\rm (iii)] $[m^-(\tilde{\mathcal{E}}^\ast_{\lambda^-}, 0),
m^-(\tilde{\mathcal{E}}^\ast_{\lambda^-}, 0)+
m^0(\tilde{\mathcal{E}}^\ast_{\lambda^-}, 0)]\cap[m^-(\tilde{\mathcal{E}}^\ast_{\lambda^+}, 0),
m^-(\tilde{\mathcal{E}}^\ast_{\lambda^+}, 0)+m^0(\tilde{\mathcal{E}}^\ast_{\lambda^+}, 0)]=\emptyset$,
and either $m^0(\mathcal{E}^\ast_{\lambda^+}, 0)=0$ or $m^0(\mathcal{E}^\ast_{\lambda^-}, 0)=0$.
 \end{enumerate}
Then for any path $\alpha:[0,1]\to\Lambda$ connecting $\lambda^+$ to $\lambda^-$ there exists
  a sequence $(t_k)\subset[0,1]$ converging to some $\bar{t}\in [0,1]$, and
   a nonzero solution $x_k$ of the problem (\ref{e:LagrGener})--(\ref{e:LagrGenerB})
  with $\lambda=\alpha(t_k)$ for each $k\in\mathbb{N}$ such that $\|x_k\|_{C^2([0,\tau];\mathbb{R}^n)}\to 0$
  as $k\to\infty$.
 Moreover,  $\alpha(\bar{t})$ is not equal to $\lambda^+$ (resp. $\lambda^-$) if $
 m^0(\tilde{\mathcal{E}}^\ast_{\lambda^+}, 0)=0$ (resp. $m^0(\tilde{\mathcal{E}}^\ast_{\lambda^-}, 0)=0$).
 \end{theorem}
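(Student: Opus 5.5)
The proof of Theorem~\ref{th:bif-existLagrGenerEu+} reduces to an abstract existence result for a one-parameter family of $C^2$ functionals with the trivial critical point $0$. I would first pull everything back along the path. Set $\hat\Lambda:=\alpha([0,1])$, which is compact, and consider the family $\tilde{\mathcal{E}}^\ast_{\alpha(s)}$, $s\in[0,1]$. By Proposition~\ref{th:bif-LagrGenerEu}, $\tilde L^\ast_{\alpha(s)}$ satisfies Assumption~\ref{ass:Lagr0}, and it satisfies Assumption~\ref{ass:Lagr1} with $x_\lambda\equiv0$ and the compact parameter space $[0,1]$. Lemma~\ref{lem:Lagr} then lets me replace $\tilde L^\ast$ by a Lagrangian that agrees with it near $v=0$ and has the quadratic growth (\ref{e:coercive}). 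The modified action functionals $\mathcal{F}_s$ on the Hilbert space $H:=W^{1,2}_{V_0\times V_1}([0,\tau];\mathbb{R}^n)$, localized to a small ball $B^n_{\iota}(0)$, then fit the framework of \cite{Lu8,Lu10}. In that framework:
\begin{itemize}
\item $\mathcal{F}_s$ is $C^1$ and twice G\^ateaux differentiable;
\item the gradient has the form $\nabla\mathcal{F}_s(x)=P_s(x)x+Q_s(x)$, with $P_s$ uniformly positive definite modulo a compact operator and $Q_s$ compact;
\item the dependence on $s$ is continuous in the required sense, following the construction in Appendix A of \cite{Lu10}.
\end{itemize}
Critical points of $\mathcal{F}_s$ near $0$ in $H$ are $C^2$ solutions of (\ref{e:LagrGenerEu})--(\ref{e:LagrGenerBEu}). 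Their Morse indices and nullities at $0$ agree with $m^-(\tilde{\mathcal{E}}^\ast_{\alpha(s)},0)$ and $m^0(\tilde{\mathcal{E}}^\ast_{\alpha(s)},0)$, and for isolated critical points the critical groups agree as well, because $C^1$ is dense in $W^{1,2}$ (cf. \cite{Lu9}).

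Next I would argue by contradiction. Suppose no sequence as in the statement exists. Then, by compactness of $[0,1]$, there is $\varepsilon>0$ such that for every $s\in[0,1]$, $0$ is the only critical point of $\mathcal{F}_s$ in $\{\|x\|_{C^1}<\varepsilon\}$. The $C^1$ closeness can be upgraded to $C^2$ by Lemma~\ref{lem:regu}(ii), and $C^1$-small is equivalent to $W^{1,2}$-small for solutions by the usual bootstrapping. Stability of critical groups under continuous deformation, as in \cite{Lu8,Lu10}, then shows that $s\mapsto C_\ast(\mathcal{F}_s,0;\mathbf{K})$ is locally constant, hence constant on $[0,1]$. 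This directly contradicts condition (i), since nonisolatedness is also excluded under the contradiction hypothesis.

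For conditions (ii) and (iii) I would use the shifting theorem. The critical groups $C_q(\mathcal{F}_s,0)$ vanish for $q\notin[m^-,m^-+m^0]$. For a nondegenerate point they are nontrivial precisely in degree $m^-$. In case (iii), the point with $m^0=0$ therefore has critical groups concentrated in its Morse index, which lies outside the other interval. In case (ii), homological visibility supplies a nonzero group in a degree inside one interval, and that degree lies outside the other interval. Either way the groups at the two ends are not isomorphic, and we are back in case (i).

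Finally, the "moreover" clause follows from the implicit function theorem. If $m^0(\tilde{\mathcal{E}}^\ast_{\lambda^+},0)=0$, the derivative $\mathcal{F}''_{s}(0)$ is invertible at $s=0$. A parametrized Morse lemma, or the splitting lemma with continuity in $s$, then gives a neighborhood of $s=0$ and of $0$ containing no nontrivial critical points, so the limit $\bar t$ cannot be the endpoint $0$. The main obstacle is verifying the continuity hypotheses on $s\mapsto\mathcal{F}_s$ needed for the stability of critical groups. This is exactly where the continuity of $\partial_{vt}\tilde L^\ast$ (Lemma~\ref{lem:twoCont+}) and the uniform coercivity on the compact set $\hat\Lambda$ are used, and I would follow Appendix A of \cite{Lu10} verbatim for it.
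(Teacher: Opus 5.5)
Your overall architecture matches the paper's, and your reduction of (ii) and (iii) to non-isomorphic critical groups via the shifting theorem is correct. The paper packages the argument as Theorem~\ref{th:bif-existLagrGenerEu}(II), i.e.\ \cite[Theorem~C.5]{Lu11}, applied to $\check{\mathcal{E}}_\lambda$ and transferred back via (\ref{e:twofunctAgree}), (\ref{e:MorseindexEu}) and Proposition~\ref{prop:solutionLagr}.

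\textbf{Main gap.} The step that is supposed to land you in the abstract framework fails as written. Lemma~\ref{lem:Lagr} only adds $\psi_{\rho_0,\rho_1}(|v|^2)$ to $\tilde L^\ast$. This yields strict convexity and the \emph{lower} bound (\ref{e:coercive}), but leaves the large-$|v|$ growth of $\tilde L^\ast$ untouched, and localizing in $q$ does not change that. The action then need not even be finite on $W^{1,2}$, let alone $C^{2-0}$ with $B_\lambda(\zeta)=P_\lambda(\zeta)+Q_\lambda(\zeta)$ as in Proposition~\ref{prop:funct-analy}(v)--(vii). What you need are the \emph{upper} bounds (L3), (L5), (L6) of Lemma~\ref{lem:Gener-modif}. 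The paper obtains them by cutting $\tilde L^{\ast\ast}$ off to a constant for large $|v|$ via $\vartheta\Gamma(\tilde L^{\ast\ast}/\vartheta)$ and then adding $\Xi(|v|^2)$. This makes $\check L$ exactly quadratic in $v$ outside a ball while (L1), (L2) persist.

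\textbf{Critical groups.} Identifying the critical groups of $\tilde{\mathcal{E}}^\ast_\lambda$ on $C^1$ with those of $\check{\mathcal{E}}_\lambda$ on $W^{1,2}$ does not follow from density alone. The paper uses \cite[Corollary~2.8]{JM}, whose hypotheses rest on Proposition~\ref{prop:funct-analy}(iv),(viii). It transfers isolatedness via Proposition~\ref{prop:solutionLagr}. That result is a uniform-in-$\lambda$ $W^{1,2}\to C^2$ estimate derived from the gradient formula (\ref{e:4.14}), not routine bootstrapping.

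\textbf{The ``moreover'' clause.} The implicit function theorem is not directly available here. $\nabla\check{\mathcal{E}}_\lambda$ is only G\^ateaux differentiable on ${\bf H}_{V_0\times V_1}$, and it depends on $\lambda$ merely continuously. Instead, observe that $(\alpha(\bar t),0)$ is a bifurcation point along sequences, so Theorem~\ref{th:bif-nessLagrGenerEu}(I) forces $m^0(\tilde{\mathcal{E}}^\ast_{\alpha(\bar t)},0)>0$. This gives $\alpha(\bar t)\ne\lambda^+$ whenever $m^0(\tilde{\mathcal{E}}^\ast_{\lambda^+},0)=0$, which is stronger than your $\bar t\ne 0$. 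The difference matters if the path passes through $\lambda^+$ again.
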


  \begin{theorem}[\textsf{Alternative bifurcations of Rabinowitz's type}]\label{th:bif-suffLagrGenerEu}
    Let $\Lambda$ be a real interval and  $\mu\in{\rm Int}(\Lambda)$. Suppose that $m^0(\tilde{\mathcal{E}}^\ast_{\mu}, 0)>0$,
   and that $m^0(\tilde{\mathcal{E}}^\ast_{\lambda}, 0)=0$  for each $\lambda\in\Lambda\setminus\{\mu\}$ near $\mu$, and
  $m^-(\tilde{\mathcal{E}}^\ast_{\lambda}, 0)$ take, respectively, values $m^-(\tilde{\mathcal{E}}^\ast_{\mu}, 0)$ and
  $m^-(\tilde{\mathcal{E}}^\ast_{\mu}, 0)+ m^0(\tilde{\mathcal{E}}^\ast_{\mu}, 0)$
 as $\lambda\in\Lambda$ varies in two deleted half neighborhoods  of $\mu$.
Then  one of the following alternatives occurs:
\begin{enumerate}
\item[\rm (i)] The problem (\ref{e:LagrGenerEu})--(\ref{e:LagrGenerBEu})
 with $\lambda=\mu$ has a sequence of solutions, $x_k\ne 0$, $k=1,2,\cdots$,
which converges to $0$ in $C^2([0,\tau], \R^n)$.

\item[\rm (ii)]  For every $\lambda\in\Lambda\setminus\{\mu\}$ near $\mu$ there is a  solution $y_\lambda\ne 0$ of
(\ref{e:LagrGenerEu})--(\ref{e:LagrGenerBEu}) with parameter value $\lambda$,
 such that   $y_\lambda$  converges to zero in  $C^2([0,\tau], \R^n)$ as $\lambda\to \mu$.

\item[\rm (iii)] For a given neighborhood $\mathfrak{W}$ of $0\in C^1_{V_0\times V_1}([0,\tau]; B^n_{\iota}(0))$,
there is a one-sided neighborhood $\Lambda^0$ of $\mu$ such that
for any $\lambda\in\Lambda^0\setminus\{\mu\}$, the problem (\ref{e:LagrGenerEu})--(\ref{e:LagrGenerBEu}) with parameter value $\lambda$
has at least two distinct solutions in  $\mathfrak{W}$, $y_\lambda^1\ne 0$ and $y_\lambda^2\ne 0$,
which can also be required to satisfy $\tilde{\mathcal{E}}^\ast_\lambda(y^1_\lambda)\ne \tilde{\mathcal{E}}^\ast_\lambda(y^2_\lambda)$
provided that $m^0(\tilde{\mathcal{E}}^\ast_{\mu}, 0)>1$ and the problem
(\ref{e:LagrGenerEu})--(\ref{e:LagrGenerBEu}) with parameter value $\lambda$
has only finitely many solutions in $\mathfrak{W}$.
\end{enumerate}
\end{theorem}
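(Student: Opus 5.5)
The statement to prove is Theorem~\ref{th:bif-suffLagrGenerEu}, the Euclidean version of the Rabinowitz-type alternative. My plan is to derive it from the abstract alternative bifurcation theorem for families of $C^1$ functionals on Hilbert spaces whose gradients are only Gâteaux differentiable at the trivial critical point, namely the splitting-lemma-based result of \cite{Lu8, Lu10}. The abstract theorem cannot be applied directly to $\tilde{\mathcal{E}}^\ast_\lambda$ on $W^{1,2}_{V_0\times V_1}$, because that functional is not $C^1$ there: $\tilde L^\ast$ has no growth control in $v$. So the first step is a modification.

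\textbf{Step 1: modify the Lagrangian.} Choose a compact interval $\hat\Lambda=[\mu-\epsilon,\mu+\epsilon]\subset\Lambda$. Apply Lemma~\ref{lem:Lagr} and the construction of Remark~\ref{rm:reduction}, i.e.\ the analogue of Lemma~\ref{lem:Gener-modif}, to $\tilde L^\ast$ with $x_\lambda\equiv 0$. This produces $\check L$ on $\hat\Lambda\times[0,\tau]\times B^n_{3\iota/4}(0)\times\R^n$ with the following properties:
\begin{itemize}
\item $\check L$ agrees with $\tilde L^\ast$ for small $|q|$ and $|v|$;
\item it is strictly convex in $v$;
\item it grows exactly quadratically in $v$ for large $|v|$;
\item it has the $C^2$-type regularity with continuous dependence on $\lambda$ listed in Proposition~\ref{th:bif-LagrGenerEu}(a).
\end{itemize}

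\textbf{Step 2: verify the abstract hypotheses.} Consider the functional $\check{\mathcal E}_\lambda(x)=\int_0^\tau\check L_\lambda(t,x,\dot x)\,dt$ on a small ball in the Hilbert space $H=W^{1,2}_{V_0\times V_1}([0,\tau];\R^n)$. By the standard estimates (as in \cite{Lu9, Lu10}), $\check{\mathcal E}_\lambda$ is $C^1$, and its gradient is Gâteaux differentiable with derivative $B_\lambda(x)=P_\lambda(x)+Q_\lambda(x)$. Here $P_\lambda(x)$ is positive definite, uniformly in $(\lambda,x)$, coming from $\partial_{vv}\check L$, and $Q_\lambda(x)$ is compact, coming from the terms involving $\partial_{qv}$ and $\partial_{qq}$. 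Both depend continuously on $(\lambda,x)$ in the sense the abstract theorem requires.

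Critical points of $\check{\mathcal E}_\lambda$ near $0$ are $C^2$ solutions of (\ref{e:LagrGenerEu})--(\ref{e:LagrGenerBEu}) with $\check L$ in place of $\tilde L^\ast$, by \cite[Proposition~4.2]{BuGiHi}. Moreover, the Morse index and nullity of $\check{\mathcal E}_\lambda$ at $0$ equal those of $\tilde{\mathcal E}^\ast_\lambda$, since the two Lagrangians agree near the zero section.

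Once these hypotheses hold, the abstract theorem gives the three alternatives in the $W^{1,2}$-topology. The assumptions on $m^0$ and on $m^-$ jumping by $m^0(\tilde{\mathcal{E}}^\ast_{\mu},0)$ across $\mu$ are exactly the hypotheses it needs. The energy-distinctness assertion in (iii) comes from the corresponding part of the abstract result when $m^0>1$ and there are only finitely many solutions.

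\textbf{Step 3: upgrade the topology.} The main obstacle is passing from $W^{1,2}$-small solutions of the modified problem to $C^2$-small solutions of the original one.
\begin{itemize}
\item If $x_k\to0$ in $W^{1,2}$ solves the Euler--Lagrange equation for $\check L_{\lambda_k}$, then $x_k\to0$ in $C^0$ by (\ref{e:C-L}).
\item Using the Euler--Lagrange equation in integrated form, together with strict convexity and invertibility of $\partial_v\check L$ in $v$, I would show $\dot x_k\to0$ uniformly. The argument is that $\partial_v\check L_{\lambda_k}(t,x_k,\dot x_k)$ equals a constant plus an integral of a term bounded in $L^1$, and it converges to $\partial_v\check L_{\mu}(t,0,0)$. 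Uniform continuity of $\partial_v\check L$ on compacts and the quadratic growth then give $\|x_k\|_{C^1}\to0$.
\item Lemma~\ref{lem:regu}(ii) then upgrades this to $C^2$ convergence.
\item For small $C^1$ norm, $\check L=\tilde L^\ast$ along these curves, so they solve the original problem.
\end{itemize}
For alternative (iii), a $C^1$-neighborhood $\mathfrak W$ is not a $W^{1,2}$-neighborhood. I would handle this by the same compactness argument, applied by contradiction along sequences $\lambda\to\mu$: solutions lying in small $W^{1,2}$-balls are eventually inside $\mathfrak W$, after shrinking the one-sided neighborhood $\Lambda^0$.
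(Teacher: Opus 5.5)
Your Steps~1 and~3 agree with the paper. The modification is Lemma~\ref{lem:Gener-modif}, and your integrated Euler--Lagrange estimate for $\|x\|_{C^1}$, followed by Lemma~\ref{lem:regu}(ii), is in substance Proposition~\ref{prop:solutionLagr}.

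The gap is in Step~2. You apply the abstract Rabinowitz alternative on $H=W^{1,2}_{V_0\times V_1}$ alone, using only that $\check{\mathcal E}_\lambda$ is $C^1$ with G\^ateaux differentiable gradient $B_\lambda=P_\lambda+Q_\lambda$. On $H$, however, $\check{\mathcal E}_\lambda$ is only $C^{2-0}$. Since $\partial_{vv}\check L$ depends on $v$, the functional is in general not twice Fr\'echet differentiable there, and $\zeta\mapsto P_\lambda(\zeta)$ is in general only strongly continuous (Proposition~\ref{prop:funct-analy}(vi)). That structure is enough for the necessary condition, which is why the paper takes $H=X$ only in the proof of Theorem~\ref{th:bif-nessLagrGenerEu}(I).

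For the present statement the paper applies the Rabinowitz-type theorem to the triple $(\mathcal U,\mathcal U^X,\{\check{\mathcal E}_\lambda\})$ with $X={\bf X}_{V_0\times V_1}\subset C^1$. Its hypotheses are exactly what your proposal never mentions:
\begin{itemize}
\item $A_\lambda=\nabla\check{\mathcal E}_\lambda|_{\mathcal U^X}$ is $C^1$ into $X$, so $\check{\mathcal E}_\lambda|_{\mathcal U^X}$ is $C^2$ and $B_\lambda$ is norm-continuous on $\mathcal U^X$;
\item the regularity properties in Proposition~\ref{prop:funct-analy}(viii);
\item joint continuity of $(\lambda,x)\mapsto A_\lambda(x)$ into $X$ (Proposition~\ref{prop:continA}).
\end{itemize}
This $X$-structure makes the Lyapunov--Schmidt reduced function on $\ker B_\mu(0)$ twice differentiable. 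The assumed jump of $m^-$ then says its Hessian at $0$ is positive definite on one side of $\mu$ and negative definite on the other. Hence $0$ is a strict local minimum, resp.\ maximum, which is the input of Rabinowitz's finite-dimensional argument. Your sentence that the $m^0$/$m^-$ assumptions are ``exactly the hypotheses it needs'' skips this conversion, and with $X=H$ the smoothness hypotheses fail.

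You can repair this in one of two ways. The first is to work in the pair $(W^{1,2},C^1)$ and verify Proposition~\ref{prop:funct-analy}(iv),(viii) and Proposition~\ref{prop:continA}, as the paper does. Alternatives (ii) and (iii) then come out directly in the $C^1$ topology, the $C^1$-neighbourhood $\mathfrak W$ in (iii) causes no difficulty, and Proposition~\ref{prop:solutionLagr} upgrades everything to $C^2$. The second is a genuine Hilbert-space substitute, which needs three ingredients:
\begin{itemize}
\item a parameterized splitting theorem for $C^{2-0}$ families;
\item the resulting shifting isomorphism $C_q(\check{\mathcal E}_\lambda,0)\cong C_{q-m^-(\check{\mathcal E}_\mu,0)}(\check{\mathcal E}^\circ_\lambda,0)$ for the reduced function $\check{\mathcal E}^\circ_\lambda$;
\item the fact that for a $C^1$ function on $\R^m$ with isolated critical point $0$, critical groups $\delta_{q,0}{\bf K}$, resp.\ $\delta_{q,m}{\bf K}$, force a strict local minimum, resp.\ maximum.
\end{itemize}

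Two smaller points. First, equality of the Morse index and nullity of $\check{\mathcal E}_\lambda$ on $W^{1,2}$ with those of $\tilde{\mathcal E}^\ast_\lambda$ on $C^1$ needs density and $\ker B_\lambda(0)\subset C^1$, not just agreement of the Lagrangians near the zero section. Second, in (iii) no contradiction argument is needed. The $\lambda$-uniform $\varepsilon$ of Proposition~\ref{prop:solutionLagr} lets you choose a $W^{1,2}$-ball all of whose critical points lie in $\mathfrak W$.
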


Theorems~\ref{th:bif-nessLagrGener},~\ref{th:bif-suffLagrGener} are  derived from
Theorems~\ref{th:bif-nessLagrGenerEu},~\ref{th:bif-suffLagrGenerEu}, respectively.
We first admit them and postpone their proof to Section~\ref{sec:LagrBound.2}.
Theorem~\ref{th:bif-existLagrGenerEu+} can only lead to Theorem~\ref{th:bif-existLagrGener}
under the assumption (\ref{e:gamma2}). We shall directly prove Theorem~\ref{th:bif-existLagrGener}
in Section~\ref{sec:LagrBound.1.4}.

\subsubsection{Proofs of Theorems~\ref{th:bif-nessLagrGener},~\ref{th:bif-suffLagrGener}}\label{sec:LagrBound.1.2}

\begin{proof}[\bf Proof of Theorem~\ref{th:bif-nessLagrGener}]
\begin{description}
\item[(I)] By the assumption there exists a sequence in $\Lambda\times C^{1}_{S_0\times S_1}([0,\tau]; M)$
  converging to $(\mu,\gamma_\mu)$, $\{(\lambda_k, \gamma^k)\}_{k\ge 1}$,
  such that each $\gamma^k\ne\gamma_{\lambda_k}$
  is a solution of (\ref{e:LagrGener})--(\ref{e:LagrGenerB}) with $\lambda=\lambda_k$, $k=1,2,\cdots$.
After removing the finite terms (if necessary) we may assume that all $\gamma^k$ are contained in the image of the chart
$\Phi_{\overline{\gamma}}$ in (\ref{e:chart5.1}). Then for each $k\in\mathbb{N}$ there exists a unique
${\bf u}^k\in C^2_{V_0\times V_1}([0,\tau]; B^n_{\iota}(0))$ such that
$\Phi_{\overline{\gamma}}({\bf u}^k)=\gamma^k$.
Since $\gamma_{\lambda_k}\ne\gamma^k$, $d\mathcal{E}_{\lambda_k}(\gamma_{\lambda_k})=0$ and $d\mathcal{E}_{\lambda_k}(\gamma^k)=0$,
we obtain  ${\bf u}^k\ne {\bf u}_{\lambda_k}$, and $d\tilde{\mathcal{E}}_{\lambda_k}({\bf u}_{\lambda_k})=0$
and $d\tilde{\mathcal{E}}_{\lambda_k}({\bf u}^k)=0$.
Recall that we have assumed $M\subset\mathbb{R}^N$.
Assumption~\ref{ass:LagrGenerB} implies that $\gamma_\lambda-\gamma_\mu\to 0$ in $C^1([0,\tau];\mathbb{R}^N)$ as $\lambda\to\mu$.
Moreover, $\gamma^k\to\gamma_\mu$ in $C^{1}_{S_0\times S_1}([0,\tau]; M)\subset C^1([0,\tau];\mathbb{R}^N)$
as $k\to\infty$. Therefore $\|\gamma^k-\gamma_{\lambda_k}\|_{C^1([0,\tau];\mathbb{R}^N)}\to 0$ as $k\to\infty$.
This implies that $\|{\bf u}^k- {\bf u}_{\lambda_k}\|_{C^1([0,\tau];\mathbb{R}^n)}\to 0$ as $k\to\infty$.
In particular, there exists an integer $k_0>0$ such that $\|{\bf u}^k- {\bf u}_{\lambda_k}\|_{C^1([0,\tau];\mathbb{R}^n)}<\iota$
for all $k\ge k_0$. Since ${\bf u}^k=({\bf u}^k- {\bf u}_{\lambda_k}) + {\bf u}_{\lambda_k}$, by the arguments below (\ref{e:brakefunctGener*})
we get $d\tilde{\mathcal{E}}^\ast_{\lambda_k}({\bf u}^k-{\bf u}_{\lambda_k})=0$ for all $k\ge k_0$.
These show that $(\mu, 0)\in\Lambda\times C^1_{V_0\times V_1}([0,\tau]; B^n_{\iota}(0))$  is a  bifurcation point
along sequences of the problem
(\ref{e:LagrGenerEu})--(\ref{e:LagrGenerBEu})
in $\Lambda\times C^1_{V_0\times V_1}([0,\tau]; B^n_{\iota}(0))$ with respect to the trivial branch $\{(\lambda,0)\,|\,\lambda\in\Lambda\}$.
Then Theorem~\ref{th:bif-nessLagrGenerEu}(I) concludes $m^0(\tilde{\mathcal{E}}^\ast_{\mu}, 0)>0$,
and therefore $m^0({\mathcal{E}}_{\mu}, \gamma_\mu)>0$ by  (\ref{e:twoMorse}).

\item[(II)] Follow the above notations. By the assumption and (\ref{e:twoMorse})  we get that for all $k\in\mathbb{N}$,
\begin{eqnarray*}
&&[m^-(\tilde{\mathcal{E}}^\ast_{\lambda_k^-}, 0),
m^-(\tilde{\mathcal{E}}^\ast_{\lambda_k^-}, 0)+
m^0(\tilde{\mathcal{E}}^\ast_{\lambda_k^-}, 0)]\cap[m^-(\tilde{\mathcal{E}}^\ast_{\lambda_k^+}, 0),
m^-(\tilde{\mathcal{E}}^\ast_{\lambda_k^+}, 0)+m^0(\tilde{\mathcal{E}}^\ast_{\lambda_k^+}, 0)]\\
&&=[m^-_\tau(\mathcal{E}_{\lambda_k^-}, \gamma_{\lambda_k^-}), m^-_\tau(\mathcal{E}_{\lambda_k^-}, \gamma_{\lambda_k^-})+ m^0_\tau(\mathcal{E}_{\lambda_k^-}, \gamma_{\lambda_k^-})]\\
&&\hspace{30mm}\cap[m^-_\tau(\mathcal{E}_{\lambda_k^+}, \gamma_{\lambda_k^+}), m^-_\tau(\mathcal{E}_{\lambda_k^+}, \gamma_{\lambda_k^+})+m^0_\tau(\mathcal{E}_{\lambda_k^+}, \gamma_{\lambda_k^+})]=\emptyset
\end{eqnarray*}
 and either $m^0(\tilde{\mathcal{E}}^\ast_{\lambda_k^-}, 0)=m^0_\tau(\mathcal{E}_{\lambda_k^+}, \gamma_{\lambda_k^-})=0$ or $m^0(\tilde{\mathcal{E}}^\ast_{\lambda_k^+}, 0)=m^0_\tau(\mathcal{E}_{\lambda_k^+}, \gamma_{\lambda_k^+})=0$.
By Theorem~\ref{th:bif-nessLagrGenerEu}(II)
we have a sequence $\{(\lambda_k, {\bf v}^k)\}_{k\ge 1}\subset
\{\mu,\lambda^+_k, \lambda^-_k\,|\,k\in\mathbb{N}\}\times C^{2}_{V_0\times V_1}([0,\tau]; B^n_{\iota}(0))$
 such that
$\lambda_k\to\mu$ and $0<\|{\bf v}^k\|_{C^2}\to 0$,
   and  that each ${\bf v}^k$  is a solution of (\ref{e:LagrGenerEu})--(\ref{e:LagrGenerBEu}) with $\lambda=\lambda_k$, $k=1,2,\cdots$.
Therefore for $k$ large enough, $\gamma^k:=\Phi_{\overline{\gamma}}({\bf v^k}+{\bf u}_{\lambda_k})$
defined by $\Phi_{\overline{\gamma}}({\bf v^k}+{\bf u}_{\lambda_k})(t)=\phi_{\overline{\gamma}}(t,{\bf v^k}(t)+{\bf u}_{\lambda_k}(t))$
is a solution of (\ref{e:LagrGener})--(\ref{e:LagrGenerB}) with $\lambda=\lambda_k$,
$\gamma^k\ne \gamma_{\lambda_k}$, and as $k\to\infty$ we have
$\gamma^k\to\gamma_\mu$ in $C^{2}_{S_0\times S_1}([0,\tau]; \mathbb{R}^N)$
because $\Phi_{\overline{\gamma}}$ is also a $C^1$ map from $C^2_{V_0\times V_1}([0,\tau]; B^n_{2\iota}(0))$ to
$C^{2}_{S_0\times S_1}([0,\tau]; M)$ as noted below (\ref{e:chart5.1}).
 Theorem~\ref{th:bif-nessLagrGener}(II) is proved.
\end{description}
\end{proof}

\begin{proof}[\bf Proof of Theorem~\ref{th:bif-suffLagrGener}]
Follow the above notations. By the assumption and (\ref{e:twoMorse}),
we obtain that $m^0(\tilde{\mathcal{E}}^\ast_{\mu}, 0)\ne 0$,
   and that $m^0(\tilde{\mathcal{E}}^\ast_{\lambda}, 0)=0$ 
    for each $\lambda\in\Lambda\setminus\{\mu\}$ near $\mu$, and
  $m^-(\tilde{\mathcal{E}}^\ast_{\lambda}, 0)$ take, respectively, 
  values $m^-(\tilde{\mathcal{E}}^\ast_{\mu}, 0)$ and
  $m^-(\tilde{\mathcal{E}}^\ast_{\mu}, 0)+ m^0(\tilde{\mathcal{E}}^\ast_{\mu}, 0)$
 as $\lambda\in\Lambda$ varies in two deleted half neighborhoods  of $\mu$.
Therefore one of the conclusions (i)-(iii) in Theorem~\ref{th:bif-suffLagrGenerEu} occurs.

Let $(x_k)$ be as in (i) in Theorem~\ref{th:bif-suffLagrGenerEu}.
Since $\|x_k\|_{C^2}\to 0$, we can choose $k_0>0$ such that $\|x_k\|_{C^2}<\iota$ for $k\ge k_0$.
Then for each $k\ge k_0$, $\gamma^k:=\Phi_{\overline{\gamma}}(x_k+{\bf u}_{\mu})\ne\gamma_{\mu}$
is a solution of (\ref{e:LagrGener})--(\ref{e:LagrGenerB}) with $\lambda=\mu$, and as above we may deduce that
 $\gamma^k\to\gamma_\mu$ in $C^{2}_{S_0\times S_1}([0,\tau]; \mathbb{R}^N)$ as $k\to\infty$.
This is, (i) of Theorem~\ref{th:bif-suffLagrGener} occurs.

For $y_\lambda\ne 0$ in (ii) in Theorem~\ref{th:bif-suffLagrGenerEu}, we can shrink $\Lambda$ toward $\mu$
so that $\|y_\lambda\|_{C^2}<\iota$ for all $\lambda\in\Lambda$. Then
$\alpha_\lambda:=\Phi_{\overline{\gamma}}(y_\lambda+{\bf u}_{\lambda})\ne\gamma_{\lambda}$
is a  solution  of (\ref{e:LagrGener})--(\ref{e:LagrGenerB}) with parameter value $\lambda$,
 and  $\alpha_\lambda-\gamma_\lambda$
 converges to zero in  $C^2([0,\tau], \R^N)$ as $\lambda\to \mu$.
 Namely, (ii) of Theorem~\ref{th:bif-suffLagrGener} occurs.

For a given neighborhood $\mathcal{W}$ of $\gamma_\mu$ in $C^1([0,\tau], M)$,
let us choose a neighborhood $\mathfrak{W}$ of $0\in C^1_{V_0\times V_1}([0,\tau]; B^n_{\iota}(0))$
such that $\Phi_{\overline{\gamma}}\left({\bf u}_\mu+\mathfrak{W}\right)\subset\mathcal{W}$.
Let $\Lambda^0$, $y_\lambda^1\ne 0$ and $y_\lambda^2\ne 0$ be as in (iii) in Theorem~\ref{th:bif-suffLagrGenerEu}.
Put $\gamma^i_\lambda:=\Phi_{\overline{\gamma}}(y^i_\lambda+{\bf u}_{\lambda})\ne\gamma_{\lambda}$, $i=1,2$.
Both sit in $\mathcal{W}$ and are distinct solutions  of (\ref{e:LagrGener})--(\ref{e:LagrGenerB}) with parameter value $\lambda$.
Suppose that $m^0(\mathcal{E}_{\mu}, \gamma_\mu)=m^0(\tilde{\mathcal{E}}^\ast_{\mu}, 0)>1$ and
(\ref{e:LagrGener})--(\ref{e:LagrGenerB}) with parameter value $\lambda$
has only finitely many distinct solutions in $\mathcal{W}$.
Then the problem (\ref{e:LagrGenerEu})--(\ref{e:LagrGenerBEu}) with parameter value $\lambda$
has only finitely many solutions in $\mathfrak{W}$ as well.
In this case (iii) in Theorem~\ref{th:bif-suffLagrGenerEu} concludes that
the above $y_\lambda^1\ne 0$ and $y_\lambda^2\ne 0$
are chosen to satisfies $\tilde{\mathcal{E}}^\ast_\lambda(y^1_\lambda)\ne \tilde{\mathcal{E}}^\ast_\lambda(y^2_\lambda)$,
which implies $\mathcal{E}_{\lambda}(\gamma_\lambda^1)\ne \mathcal{E}_{\lambda}(\gamma_\lambda^2)$.
Hence (iii) in Theorem~\ref{th:bif-suffLagrGenerEu} occurs.
\end{proof}

\subsubsection{Proofs of Theorems~\ref{th:bif-nessLagrGenerEu},~\ref{th:bif-existLagrGenerEu+},~\ref{th:bif-suffLagrGenerEu}}\label{sec:LagrBound.2}

We need to make modifications for the Lagrangian $\tilde L^\ast$ in (\ref{e:ModifiedLEu}).

\begin{lemma}\label{lem:Gener-modif}
Given a positive number $\rho_0>0$ and a subset $\hat\Lambda\subset\Lambda$ which is either compact or sequential compact,
there exists a  continuous function $\check{L}:\hat\Lambda\times [0, \tau]\times B^n_{3\iota/4}(0)\times\mathbb{R}^n\to\R$
 satisfying the following properties  for some constants $\check\kappa>0$ and $0<\check{c}<\check{C}$:
\begin{enumerate}
 \item[\rm (L0)] The following partial derivatives
 $$
\partial_t\check{L}(\cdot),\;\partial_q\check{L}(\cdot),\;\partial_v\check{L}(\cdot),
\;\partial_{tv}\check{L}(\cdot),\;\partial_{vt}\check{L}(\cdot), \;\partial_{qv}\check{L}(\cdot),\;\partial_{vq}\check{L}(\cdot),\;
\partial_{qq}\check{L}(\cdot),\;\partial_{vv}\check{L}(\cdot)
$$
exist and depend continuously on $(\lambda, t, q, v)$. (These are all used in the proof of Proposition~\ref{prop:solutionLagr}.)

\item[\rm (L1)] $\check{L}$ and $\tilde{L}^\ast$ are equal in $\hat\Lambda\times [0,\tau]\times B^n_{3\iota/4}(0)\times B^n_{\rho_0}(0)$.

\item[\rm (L2)] $\partial_{vv}\check{L}_\lambda(t, q,v)\ge\check{c}I_n,\quad \forall (\lambda, t, q,v)\in \hat\Lambda\times [0,\tau]\times B^n_{3\iota/4}(0)\times\mathbb{R}^n$.

\item[\rm (L3)] $\bigl| \frac{\partial^2}{\partial q_i\partial
q_j}\check{L}_\lambda(t, q,v)\bigr|\le \check{C}(1+ |v|^2),\quad \bigl|
\frac{\partial^2}{\partial q_i\partial v_j}\check{L}_\lambda(t, q,v)\bigr|\le \check{C}(1+
|v|),\quad\hbox{and}\\
 \bigl| \frac{\partial^2}{\partial v_i\partial v_j}\check{L}_\lambda(t, q,v)\bigr|\le \check{C},
 \quad\forall (\lambda, t, q,v)\in\hat\Lambda\times [0,\tau]\times B^n_{3\iota/4}(0)\times\mathbb{R}^n$.
 
\item[\rm (L4)]  $\check{L}(\lambda,t, q, v)\ge \check{\kappa}|v|^2-\check{C}$
for all $(\lambda, t, q, v)\in\hat\Lambda\times [0,\tau]\times B^n_{3\iota/4}(0)\times\R^{n}$.
\item[\rm (L5)] $|\partial_{q}\check{L}(\lambda,t,q,v)|\le \check{C}(1+ |v|^2)$ and $|\partial_{v}\check{L}(\lambda,t,q,v)|\le \check{C}(1+ |v|)$
for all $(\lambda, t, q, v)\in \hat\Lambda\times [0, \tau]\times B^n_{3\iota/4}(0)\times\R^{n}$.
\item[\rm (L6)] $|\check{L}_\lambda(t, q,v)|\le \check{C}(1+|v|^2)$ for all $(\lambda, t, q,v)\in \hat\Lambda\times[0, \tau]\times {B}^n_{3\iota/4}(0)\times\mathbb{R}^n$.
 \end{enumerate}
\end{lemma}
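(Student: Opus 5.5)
The plan is to build $\check L$ in two stages: first cut off the dependence on large velocities by freezing $\tilde L^\ast$ inside a compact velocity ball, then add a convex correction that is quadratic at infinity, as in Lemma~\ref{lem:Fin2.1} and Lemma~\ref{lem:Lagr}. Throughout we work on $K:=\hat\Lambda\times[0,\tau]\times\bar B^n_{3\iota/4}(0)$. Since $\hat\Lambda$ is compact or sequentially compact and $\bar B^n_{3\iota/4}(0)\subset B^n_\iota(0)$, every continuous function of $(\lambda,t,q,v)$ is bounded on $K\times\bar B^n_R(0)$ for each $R$. This compactness is the only source of the constants.

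\emph{First stage.} Fix $\rho_1>\rho_0$, say $\rho_1=2\rho_0$ (if $\rho_0$ is small, first enlarge it; (L1) only gets stronger). The first attempt is to interpolate $\tilde L^\ast$ with its second-order Taylor polynomial in $v$ at $v=0$:
$$
T_\lambda(t,q,v)=\tilde L^\ast_\lambda(t,q,0)+\partial_v\tilde L^\ast_\lambda(t,q,0)[v]+\tfrac12\partial_{vv}\tilde L^\ast_\lambda(t,q,0)[v,v].
$$
Take a smooth cutoff $\chi(|v|^2)$ equal to $1$ on $[0,\rho_0^2]$ and to $0$ on $[\rho_1^2,\infty)$, and set $L_1=\chi\tilde L^\ast+(1-\chi)T$. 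This gives (L1). For $|v|\ge\rho_1$ the function $L_1=T$ is quadratic in $v$ with coefficients whose $q$- and $t$-derivatives up to the order needed in (L0) are continuous; here Proposition~\ref{th:bif-LagrGenerEu}(a) supplies $\partial_{tv}$, $\partial_{qv}$, $\partial_{qq}$, $\partial_{vv}$. The growth bounds (L3), (L5), (L6) then hold outside $\bar B_{\rho_1}$ by inspection, and inside by compactness.

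One caveat: $\partial_{qq}T$ involves $\partial_{qqvv}\tilde L^\ast$, which need not exist. To avoid this, the fallback is to use instead
$$
L_1=\chi\tilde L^\ast+(1-\chi)\bigl(\tilde L^\ast_\lambda(t,q,0)+\partial_v\tilde L^\ast_\lambda(t,q,0)[v]\bigr),
$$
an affine-in-$v$ extension. It only requires $\partial_q\partial_v$, $\partial_{qq}$ and $\partial_{tv}$ of $\tilde L^\ast$, which exist by (a). It is linear in $v$ at infinity, so it satisfies (L3), (L5), (L6) trivially.

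\emph{Second stage, convexity and coercivity.} On $K\times(\bar B_{\rho_1}\setminus B_{\rho_0})$, $\partial_{vv}L_1$ is bounded, say $\ge -A$, while on $K\times\bar B_{\rho_0}$ it equals $\partial_{vv}\tilde L^\ast\ge 2c_0 I_n>0$ by strict convexity (b) and compactness. Set $\check L=L_1+\psi(|v|^2)$, where $\psi=\psi_{\rho_0,\rho_1'}$ is the function of Lemma~\ref{lem:Fin2.1}(i), vanishing on $[0,\rho_0^2]$, convex with $\psi'>0$ beyond $\rho_0^2$, and affine $\kappa s+\varrho_0$ for large $s$. The Hessian computation in the proof of Lemma~\ref{lem:Lagr} gives
$$
\partial_{vv}\check L\ge\partial_{vv}L_1+2\psi'(|v|^2)I_n .
$$
We need $2\psi'>A+c_0$ on the annulus $\rho_0\le|v|\le\rho_1$. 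Since $\psi'(\rho_0^2)=0$, there is a thin shell just outside $|v|=\rho_0$ where $\psi'$ is tiny. This is where the main obstacle lies. The fix is to choose $\chi$ so that $\chi\equiv1$ on a slightly larger ball $B_{\rho_0'}$ with $\rho_0<\rho_0'$. There $\partial_{vv}L_1=\partial_{vv}\tilde L^\ast\ge 2c_0$ holds by compactness and continuity on $\bar B_{\rho_0'}$ (for $\rho_0'$ close to $\rho_0$, using strict convexity near $\rho_0$ from Remark~\ref{rm:bif-LagrGenerEu}(b')). We then start $\psi$ at $\rho_0$ and make it steep enough by $|v|=\rho_0'$ by scaling, taking $M\psi$ with $M$ large. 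Then (L2) holds with $\check c=\min(c_0,\,2M\kappa-A)$. Beyond $\rho_1$, $\partial_{vv}L_1=0$ and $\partial_{vv}\check L\ge 2M\kappa I_n$, which also gives the upper bound in (L3).

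\emph{Remaining properties.} For (L4), argue as in Lemma~\ref{lem:Lagr}(iv): $M\psi(|v|^2)\ge M\kappa|v|^2-C$, while $L_1$ grows at most linearly at infinity, so $\check L\ge\check\kappa|v|^2-\check C$. (L0) holds because $\psi$ depends only on $v$, is smooth, and the $\chi$-terms are products of smooth functions of $v$ with the allowed derivatives of $\tilde L^\ast$. Finally, enlarge $\check C$ so that it dominates all the compactness constants simultaneously.
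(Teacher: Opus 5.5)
\textbf{Gap: the affine-in-$v$ fallback does not give (L0).} You rightly saw that the quadratic Taylor polynomial would need $\partial_{qqvv}\tilde L^\ast$. The affine extension has the same defect one order lower. Wherever $1-\chi\neq 0$,
\[
\partial_{q_i}\partial_{q_k}\Bigl(\partial_v\tilde L^\ast_\lambda(t,q,0)[v]\Bigr)=\sum_{j}\partial_{q_i}\partial_{q_k}\partial_{v_j}\tilde L^\ast_\lambda(t,q,0)\,v_j ,
\]
and this is a third-order derivative of $\tilde L^\ast$. Proposition~\ref{th:bif-LagrGenerEu}(a) supplies derivatives only up to order two. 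In general $\tilde L^\ast$ is only $C^2$ in $(t,q,v)$, since $L$ is only $C^2$ in Assumption~\ref{ass:Lagr6}. So $\partial_{qq}L_1$, and hence $\partial_{qq}\check L$, need not exist. Your claim that the affine extension ``only requires $\partial_q\partial_v$, $\partial_{qq}$ and $\partial_{tv}$'' is therefore wrong. This matters: $\partial_{qq}\check L$ enters $B_\lambda(\zeta)$ in (\ref{e:gradient4Lagr+}) and the bound (L3).

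\textbf{Repair.} Extend by something independent of $v$, for example $L_1=\chi\tilde L^\ast+(1-\chi)\tilde L^\ast_\lambda(t,q,0)$, or simply $L_1=\chi\tilde L^\ast$. Since the $\chi$-factors depend on $v$ alone, every derivative listed in (L0) then involves only derivatives of $\tilde L^\ast$ of order at most two. For $|v|\ge\rho_1$, $L_1$ is bounded and $v$-independent, so (L3), (L5), (L6) are immediate there. Your second stage then gives (L2) and (L4) as you describe: take $\chi\equiv1$ on $B_{\rho_0'}$ and add $M\psi(|v|^2)$ with $M$ large. One small correction: on the annulus the relevant lower bound is $2M\psi'(\rho_0'^2)-A$, not $2M\kappa-A$.

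\textbf{Comparison with the paper.} With this fix your argument is a valid variant of the paper's proof. The paper first adds $\psi_{\rho_0,\rho_1}(|v|^2)$ to make $\tilde L^{\ast\ast}$ coercive. It then truncates \emph{values} via $\vartheta\Gamma(\tilde L^{\ast\ast}/\vartheta)$, which leaves the function unchanged on $|v|<\rho$ and constant for large $|v|$. Finally it adds $\Xi(|v|^2)$, whose slope $\Upsilon$ is already fully attained on $|v|\ge\rho$. The set where convexity is damaged therefore lies where $\Xi'=\Upsilon$, so the thin-shell problem you handle with $\rho_0'$ and the scaling $M$ never arises. Your velocity cutoff, in exchange, avoids the preliminary coercivity step and the $\Gamma$-truncation.
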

\begin{proof}[\bf Proof]
\textsf{Step 1}. Fix a positive number $\rho_1>\rho_0$.
As in the proof of Lemma~\ref{lem:Lagr},
we may choose a $C^\infty$ convex function $\psi_{\rho_0,\rho_1}:[0, \infty)\to\R$
such that $\psi'_{\rho_0,\rho_1}(t)>0$ for
$t\in(\rho_0^2, \infty)$, $\psi_{\rho_0,\rho_1}(t)=0$ for $t\in[0, \rho_0^2)$ and
 $\psi_{\rho_0,\rho_1}(t)=\kappa t+ \varrho_0$ for $t\in [\rho_1^2, \infty)$,
where $\kappa>0$ and $\varrho_0<0$ are suitable constants. Define
$\tilde L^{\ast\ast}:\Lambda\times [0,\tau]\times B^n_{\iota}(0)\times\R^n\to\R$ by
\begin{equation}\label{e:tildeLtwo}
\tilde{L}^{\ast\ast}(\lambda,t,q,v)=\tilde{L}^\ast(\lambda,t,q,v)+ \psi_{\rho_0,\rho_1}(|v|^2).
 \end{equation}
It possess the same properties as $\check{L}$ in (L0) and also satisfies
\begin{equation}\label{e:tildeLtwo1}
\tilde{L}^{\ast\ast}(\lambda,t,q,v)=\tilde{L}^\ast(\lambda,t,q,v),\quad\forall (\lambda,t,q,v)\in\Lambda\times [0,\tau]\times B^n_{\iota}(0)\times B^n_{\rho_0}(0).
 \end{equation}
Since the closure of $B^n_{3\iota/4}(0)$ is a compact subset in $B^n_{\iota}(0)$,
and $\hat\Lambda$ is either compact or sequential compact,
 by Lemma~\ref{lem:Lagr} (or the proof of Lemma~\ref{lem:Lagr}(iv))
 there exists a constant $C'>0$ such that
    \begin{equation}\label{e:tildeLtwo2}
   \tilde{L}^{\ast\ast}(\lambda,t,q,v)\ge \kappa|v|^2-C',\quad\forall (\lambda, t, q, v)\in \hat\Lambda\times [0,\tau]\times \overline{B^n_{3\iota/4}(0)}\times\mathbb{R}^n.
    \end{equation}

\textsf{Step 2}. 
Take a smooth function $\Gamma:\R\to\R$ such that $\Gamma(s)=s$ for $s\le 1$ and 
that $\Gamma(s)$ is constant for $s\ge 2$.
Fix positive numbers $\rho>\rho_1$ and $\vartheta$ such that
$$
\vartheta\ge\max\{\tilde{L}^{\ast\ast}_\lambda(q, v)\,|\, (\lambda, q, v)\in \hat\Lambda\times [0,\tau]\times \overline{B^n_{3\iota/4}(0)}\times \bar{B}^n_\rho(0)\}.
$$
Define $\tilde{L}^{\ast\ast\ast}:\hat\Lambda\times [0,\tau]\times \overline{B^n_{3\iota/4}(0)}\times\mathbb{R}^n\to\R$ by
$\tilde{L}^{\ast\ast\ast}=\vartheta\Gamma(\tilde{L}^{\ast\ast}/\vartheta)$. 
Then the choice of $\vartheta$ implies
 \begin{equation}\label{e:tildeLtwo3}
 \tilde{L}^{\ast\ast\ast}(\lambda,t,q,v)=\tilde{L}^{\ast\ast}(\lambda,t,q,v),\quad\forall (\lambda,t,q,v)
  \in \hat\Lambda\times [0,\tau]\times \overline{B^n_{3\iota/4}(0)}\times B^n_\rho(0).
    \end{equation}
By (\ref{e:tildeLtwo2}), $\tilde{L}^{\ast\ast\ast}$
 is equal to  a constant $C''$ outside $\hat\Lambda\times [0,\tau]\times \overline{B^n_{3\iota/4}(0)}\times B^n_R(0)$ for a large $R>\rho$.
 Because of this fact and
   \begin{eqnarray}\label{e:tildeLtwo4}
\partial_{vv}\tilde{L}^{\ast\ast}(\lambda,t,q,v)[u,u]&=&\partial_{vv}\tilde{L}^{\ast}(\lambda,t,q,v)[u,u]+ 2\psi'_{\rho_0,\rho_1}(|v|^2)|u|^2\nonumber\\
&&+4\psi''_{\rho_0,\rho_1}(|v|^2)\bigl(({v},{u})_{\mathbb{R}^{n}}\bigr)^2
 \end{eqnarray}
  for each $(\lambda, t, q, v)\in \hat\Lambda\times [0,\tau]\times B^n_{\iota}(0)\times\R^n$, there exist positive constants $\Upsilon$ and $C_0'$ such that
\begin{eqnarray}
  &&\partial_{vv}\tilde{L}^{\ast\ast\ast}_\lambda(t, q,v)[u,u]\ge -\Upsilon |u|^2\quad\forall (\lambda,t, q,v,u)\in
  \hat\Lambda\times [0,\tau]\times \overline{B^n_{3\iota/4}(0)}\times\R^{n}, \label{e:positive1}\\
 && \tilde{L}^{\ast\ast\ast}(\lambda,t, q, v)\ge \kappa|v|^2-C'_0,\quad\forall
 (\lambda, t, q, v)\in\hat\Lambda\times [0,\tau]\times \overline{B^n_{3\iota/4}(0)}\times\R^{n}.
  \label{e:positive2}
\end{eqnarray}
 Choose a smooth function $\Xi:[0, \infty)\to\R$ such that:
\begin{center}
$\Xi'\ge 0$,
  $\Xi$ is convex on $[\rho_0^2, \infty)$,  vanishes in $[0, \rho_0^2)$, and is equal
to\\ the affine function $\Upsilon s+ \Theta$ on $[\rho^2, \infty)$, where $\Theta<0$ is a suitable constant.
\end{center}
(See \cite[Lemma~2.1]{Lu4} or \cite[\S5]{AbF}).
Define
\begin{equation}\label{e:tildeLtwo5}
\check{L}: \hat\Lambda\times [0,\tau]\times \overline{B^n_{3\iota/4}(0)}\times\R^{n}\to\R,\;(\lambda,t, q,v)\mapsto\tilde{L}^{\ast\ast\ast}(\lambda,t, q,v)+\Xi(|v|^2).
\end{equation}
Since $\tilde{L}^{\ast\ast\ast}=\vartheta\Gamma(\tilde{L}^{\ast\ast}/\vartheta)$, 
 it clearly satisfies (L0) by (\ref{e:tildeLtwo}).
(L1) may follow from (\ref{e:tildeLtwo1}), (\ref{e:tildeLtwo3})
 and the fact that $\Xi$ vanishes in $[0, \rho_0^2)$.
 (\ref{e:positive2}) leads to (L4) because $\Xi\ge 0$.

 Let us prove that $\check{L}$ satisfies (L2) and (L3).
\begin{enumerate}
\item[$\bullet$] 
 If $|v|<\rho$, by (\ref{e:tildeLtwo3}) $\check{L}(\lambda, t, q,v)=
 \tilde{L}^{\ast\ast\ast}(\lambda, t, q,v)+ \Xi(|v|^2)=
\tilde{L}^{\ast\ast}(\lambda, t, q,v)+ \Xi(|v|^2)$ and so
\begin{eqnarray*}
\partial_{vv}\check{L}(\lambda, t, q,v)[u,u]&=&\partial_{vv}\tilde{L}^{\ast\ast}(\lambda, t, q,v)[u,u]+ \partial_{vv}(\Xi(|v|^2))[u,u]\\
&=&\partial_{vv}\tilde{L}^{\ast}(\lambda,t,q,v)[u,u]+ 2\psi'_{\rho_0,\rho_1}(|v|^2)|u|^2\\
&&+4\psi''_{\rho_0,\rho_1}(|v|^2)\bigl(({v},{u})_{\mathbb{R}^{n}}\bigr)^2+ 2\psi'_{\rho_0,\rho_1}(|v|^2)|u|^2\\
&&+2\Xi'(|v|^2)|u|^2+4\Xi''(|v|^2)\bigl(({v},{u})_{\mathbb{R}^{n}}\bigr)^2
\end{eqnarray*}
because of (\ref{e:tildeLtwo4}) and the equality
$\partial_{vv}(\Xi(|v|^2))[u,u]=2\Xi'(|v|^2)|u|^2+4\Xi''(|v|^2)\bigl(({v},{u})_{\mathbb{R}^{n}}\bigr)^2$ for all $u\in\R^n$.
Recall that $\psi'_{\rho_0,\rho_1}\ge 0$, $\psi''_{\rho_0,\rho_1}\ge 0$, $\Xi'\ge 0$ and $\Xi''\ge 0$.
Since both $\partial_{vv}\check{L}(\lambda, t, q,v)$ and $\partial_{vv}\tilde{L}^{\ast\ast}(\lambda, t, q,v)$
depend continuously on $(\lambda, t, q, v)$ we deduce
\begin{equation}\label{e:positive3}
\partial_{vv}\check{L}(\lambda, t, q,v)\ge\partial_{vv}\tilde{L}^\ast(\lambda, t, q,v),\quad\forall (\lambda,t,q,v)
  \in \hat\Lambda\times [0,\tau]\times \overline{B^n_{3\iota/4}(0)}\times \overline{B^n_\rho(0)}.
\end{equation}

\item[$\bullet$] 
 If $|v|\ge\rho$ then (\ref{e:positive1}) may lead to
\begin{eqnarray}\label{e:tildeLtwo5+}
\partial_{vv}\check{L}(\lambda,t, q,v)[u,u]&=&\partial_{vv}\tilde{L}^{\ast\ast\ast}(\lambda,t, q,v)[u,u]+2\Xi'(|v|^2)|u|^2\nonumber\\
&\ge& \Upsilon|u|^2,\quad\forall u\in\R^n.
\end{eqnarray}
By Proposition~\ref{th:bif-LagrGenerEu}(c),  $\tilde L^\ast_\lambda(t, q, v)$ is
strictly convex in $v$. (L2) may follow from (\ref{e:tildeLtwo5+}) and (\ref{e:positive3}) because
 $\hat\Lambda\times [0,\tau]\times \overline{B^n_{3\iota/4}(0)}\times \overline{B^n_\rho(0)}$ is either compact or sequential compact.
Using the same reason we obtain that $\check{L}$ satisfies (L3) because
$$
\check{L}(\lambda,t, q,v)=\tilde{L}^{\ast\ast\ast}(\lambda, t, q,v)+\Xi(|v|^2)=C''+ \Upsilon |v|^2+ \Theta\quad\forall|v|>R.
$$
\end{enumerate}

Finally, since $\partial_{q}\check{L}(\lambda,t,0,0)$ and $\partial_{v}\check{L}(\lambda,t,0,0)$ are bounded
using the Taylor formula (L5) and (L6) easily follows from (L3). 
\end{proof}

 Consider the Banach subspace
 $$
 {\bf X}_{V_0\times V_1}:=\left\{\xi\in C^{1}([0,\tau]; \mathbb{R}^n)\,|\, (\xi(0),\xi(\tau))\in V_0\times V_1\right\}
 $$
 of  $C^1([0,\tau],\mathbb{R}^n)$, and
the Hilbert subspace
$$
{\bf H}_{V_0\times V_1}:=\left\{\xi\in W^{1,2}([0,\tau]; \mathbb{R}^n)\,|\, (\xi(0),\xi(\tau))\in V_0\times V_1\right\}
$$
of $W^{1,2}([0,\tau]; \mathbb{R}^n)$.  The spaces ${\bf H}_{V_0\times V_1}$ and ${\bf X}_{V_0\times V_1}$ have the following open subsets
\begin{eqnarray*}
\mathcal{U}:&=&W^{1,2}_{V_0\times V_1}\left([0,\tau]; B^n_{\iota/2}(0)\right)=\left\{\xi\in W^{1,2}\left([0,\tau];
B^n_{\iota/2}(0)\right)\,\big|\,(\xi(0),\xi(\tau))\in V_0\times V_1 \right\},\\
\mathcal{U}^X:&=&\mathcal{U}\cap{\bf X}_{V_0\times V_1}=C^1_{V_0\times V_1}([0,\tau]; B^n_{\iota/2}(0))
\end{eqnarray*}
respectively. Define a family of functionals $\check{\mathcal{E}}_\lambda:\mathcal{U}\to\R$ given by
\begin{equation}\label{e:checkEu}
\check{\mathcal{E}}_{\lambda}(x)=\int^{\tau}_0\check{L}_\lambda(t, x(t),\dot{x}(t))dt,\quad\lambda\in\hat\Lambda.
\end{equation}
Since  $\tilde{\mathcal{E}}^\ast_\lambda$ is defined on $C^1_{V_0\times V_1}([0,\tau]; B^n_{\iota}(0))$ and
$\mathcal{U}^X=C^1_{V_0\times V_1}([0,\tau]; B^n_{\iota/2}(0))$ is an open neighborhood  of
$0\in C^1_{V_0\times V_1}([0,\tau]; B^n_{\iota}(0))$, by (L1) in Lemma~\ref{lem:Gener-modif} we obtain
\begin{equation}\label{e:twofunctAgree}
\tilde{\mathcal{E}}^\ast_\lambda=\check{\mathcal{E}}_{\lambda}|_{\mathcal{U}^X}\quad\hbox{in}\quad
\{x\in\mathcal{U}^X\,|\,\|x\|_{C^1}<\rho_0\}\subset\mathcal{U}^X,
\end{equation}
and therefore  the following (\ref{e:MorseindexEu}).

\begin{proposition}\label{prop:funct-analy}
\begin{enumerate}
\item[\rm (i)] Each $\check{\mathcal{E}}_{\lambda}$ is
$C^{2-0}$ and twice G\^ateaux-differentiable, and $d\check{\mathcal{E}}_{\lambda}(0)=0$ and
 \begin{equation}\label{e:MorseindexEu}
 m^\star(\tilde{\mathcal{E}}^\ast_{\lambda}, 0)=m^\star(\check{\mathcal{E}}_{\lambda}|_{\mathcal{U}^X},0)=
m^\star(\check{\mathcal{E}}_{\lambda},0),\quad\star=-,0.
\end{equation}
\item[\rm (ii)] Each critical point of $\check{\mathcal{E}}_\lambda$ sits in
$C^2\left([0,\tau]; B^n_{\iota/2}(0)\right)\cap\mathcal{U}^X$, and satisfies
the boundary problem:
\begin{eqnarray}\label{e:LagrGenerEuTwo}
\left.\begin{array}{ll}
\frac{d}{dt}\big(\partial_v\check{L}_\lambda(t, x(t), \dot{x}(t))\big)-\partial_q \check{L}_\lambda(t, x(t), \dot{x}(t))=0,\\
(x(0), x(\tau))\in V_0\times V_1,\\
\partial_v\check{L}_\lambda(0, x(0), \dot{x}(0))[v_0]=0\quad\forall v_0\in V_0,\\
\partial_v\check{L}_\lambda(\tau, x(\tau), \dot{x}(\tau))[v_1]=0\quad\forall v_1\in V_1.
\end{array}\right\}
\end{eqnarray}

\item[\rm (iii)]  The gradient of $\check{\mathcal{E}}_{\lambda}$ at $x\in\mathcal{U}$, denoted by
 $\nabla\check{\mathcal{E}}_{\lambda}(x)$, is given by
\begin{eqnarray}\label{e:4.14}
\nabla\check{\mathcal{E}}_{\lambda}(x)(t)&=&e^t\int^t_0\left[
e^{-2s}\int^s_0e^{r}f_{\lambda,x}(r)dr\right]ds  + c_1(\lambda,x)e^t+
c_2(\lambda,x)e^{-t}\nonumber\\
&&\qquad +\int^t_0 \partial_v \check{L}_\lambda(s, x(s),\dot{x}(s))ds ,
\end{eqnarray}
where $c_1(\lambda,x), c_2(\lambda,x)\in\R^n$ are suitable constant vectors and %$f_\lambda(t)$ is
\begin{eqnarray}\label{e:4.13}
 f_{\lambda,x}(t)= - \partial_q \check{L}_\lambda(t,x(t),\dot{x}(t))+ \int^t_0
\partial_v \check{L}_\lambda(s, x(s),\dot{x}(s))ds.
\end{eqnarray}
\item[\rm (iv)] $\nabla\check{\mathcal{E}}_{\lambda}$ restricts to a $C^1$ map $A_\lambda$
from $\mathcal{U}^X$ to ${\bf X}_{V_0\times V_1}$.
\item[\rm (v)] $\nabla\check{\mathcal{E}}_{\lambda}$
 has the G\^ateaux derivative  $B_\lambda(\zeta)\in{\mathscr{L}}_s({\bf H}_{V_0\times V_1})$ at $\zeta\in\mathcal{U}$ given by
 \begin{eqnarray}\label{e:gradient4Lagr+}
 (B_\lambda(\zeta)\xi,\eta)_{1,2}
   = \int_0^{\tau} \Bigl(\!\! \!\!\!&&\!\!\!\!\!\partial_{vv}
     \check{L}_{{\lambda}}\bigl(t, \zeta(t),\dot{\zeta}(t)\bigr)
\bigl[\dot{\xi}(t), \dot{\eta}(t)\bigr]
+ \partial_{qv}
  \check{L}_{{\lambda}}\bigl(t, \zeta(t), \dot{\zeta}(t)\bigr)
\bigl[\xi(t), \dot{\eta}(t)\bigr]\nonumber \\
&& + \partial_{vq}
  \check{L}_{{\lambda}}\bigl(t,\zeta(t),\dot{\zeta}(t)\bigr)
\bigl[\dot{\xi}(t), \eta(t)\bigr] \nonumber\\
&&+  \partial_{qq} \check{L}_{{\lambda}}\bigl(t,\zeta(t),
\dot{\zeta}(t)\bigr) \bigl[\xi(t), \eta(t)\bigr]\Bigr) \, dt
\end{eqnarray}
for any  $\xi,\eta\in {\bf H}_{V_0\times V_1}$.
$B_\lambda(\zeta)$  is a self-adjoint Fredholm operator and
  has a decomposition
${B}_{\lambda}(\zeta)=\textsl{{P}}_{{\lambda}}(\zeta)+\textsl{{Q}}_{{\lambda}}(\zeta)$, where
$\textsl{P}_{{\lambda}}(\zeta)\in{\mathscr{L}}_s({\bf H}_{V_0\times V_1})$ is a positive
definitive linear operator defined by
\begin{eqnarray}\label{e:3.17}
(\textsl{P}_{{\lambda}}(\zeta)\xi, \eta)_{1,2}
   = \int_0^\tau \big(\partial_{vv}\check{L}_{{\lambda}}\bigl(t,\zeta(t),\dot\zeta(t)\bigr)
[\dot\xi(t), \dot\eta(t)]+ \bigl(\xi(t), \eta(t)\bigr)_{\R^n}\big)
\, dt,
\end{eqnarray}
and $\textsl{Q}_{{\lambda}}(\zeta)\in\check{\mathscr{L}}_s({\bf H}_{V_0\times V_1})$ 
is a compact self-adjoint linear operator.
Moreover, (L2) in Lemma~\ref{lem:Gener-modif} implies that  $(\textsl{P}_{{\lambda}}(\zeta)\xi, \xi)_{1,2}\ge\min\{\check{c},1\}\|\xi\|_{1,2}^2$
for all $\zeta\in \mathcal{U}$ and $\xi\in{\bf H}_{V_0\times V_1}$.
 \item[\rm (vi)] If $(\lambda_k)\subset\hat\Lambda$ and $(\zeta_k)\subset\mathcal{U}$ converge to $\mu\in\hat\Lambda$ and $0$, respectively,
then $\|\textsl{P}_{{\lambda_k}}(\zeta_k)\xi-\textsl{P}_{{\mu}}(0)\xi\|_{1,2}\to 0$ for each $\xi\in {\bf H}_{V_0\times V_1}$.
\item[\rm (vii)] $\mathcal{U}\ni\zeta\mapsto \textsl{Q}_{{\lambda}}(\zeta)\in\mathscr{L}_s({\bf H}_{V_0\times V_1})$
is uniformly continuous at $0\in\mathcal{U}$ with respect to $\lambda\in\hat\Lambda$ and
$\|\textsl{Q}_{\lambda_k}(0)-\textsl{Q}_{\mu}(0)\|\to 0$ as $(\lambda_k)\subset\Lambda$  converges to $\mu\in\hat\Lambda$.

\item[\rm (viii)] 
$\{\xi\in {\bf H}_{V_0\times V_1}\mid  B_\lambda(0)\xi=s\xi,\, s\le 0\}\subset{\bf X}_{V_0\times V_1}$ and 
$$
\{\xi\in {\bf H}_{V_0\times V_1}\mid  B_\lambda(0)\xi\in {\bf X}_{V_0\times V_1}\}\subset {\bf X}_{V_0\times V_1}.
$$
\end{enumerate}
\end{proposition}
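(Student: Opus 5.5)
The plan is to follow the standard scheme for Lagrangian action functionals on Sobolev spaces, as in \cite{Lu4,Lu9,BuGiHi}, using the growth properties (L0)--(L6) of $\check{L}$ from Lemma~\ref{lem:Gener-modif}. The items will be proved in the order (iii), (i), (ii), (iv), (v)--(vii), (viii).

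First, by (L5)--(L6) the functional $\check{\mathcal{E}}_\lambda$ is well defined and Fr\'echet $C^1$ on $\mathcal{U}$, with
\[
d\check{\mathcal{E}}_\lambda(x)[\xi]=\int_0^\tau\big(\partial_q\check{L}_\lambda[\xi]+\partial_v\check{L}_\lambda[\dot\xi]\big)dt .
\]
The estimates (L3) show that $d\check{\mathcal{E}}_\lambda$ is Lipschitz on bounded sets, which is the $C^{2-0}$ property. They also show that $d\check{\mathcal{E}}_\lambda$ is G\^ateaux differentiable, with derivative given by the bilinear form (\ref{e:gradient4Lagr+}). To obtain (iii), I solve $(\nabla\check{\mathcal{E}}_\lambda(x),\xi)_{1,2}=d\check{\mathcal{E}}_\lambda(x)[\xi]$. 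Writing $y=\nabla\check{\mathcal{E}}_\lambda(x)$, this means $-\ddot y+y=\partial_q\check{L}-\frac{d}{dt}\partial_v\check{L}$ in the weak sense, with natural boundary conditions encoding $V_0\times V_1$. Variation of constants then gives (\ref{e:4.14})--(\ref{e:4.13}), and $c_1,c_2$ are fixed by the boundary conditions.

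For (i), $d\check{\mathcal{E}}_\lambda(0)=0$ holds because $0$ is critical for $\tilde{\mathcal{E}}^\ast_\lambda$ and the two functionals agree near $0$ by (\ref{e:twofunctAgree}). The index and nullity identity (\ref{e:MorseindexEu}) follows from two facts. The first is density of ${\bf X}_{V_0\times V_1}$ in ${\bf H}_{V_0\times V_1}$ together with continuity of the form $B_\lambda(0)$ on ${\bf H}$, which makes the Morse indices coincide. The second is that the kernel and negative eigenvectors lie in ${\bf X}$, which is (viii) and so makes the nullities coincide. For (ii), a critical point $x$ satisfies the integrated Euler--Lagrange equation $\partial_v\check{L}_\lambda(t,x,\dot x)=c+\int_0^t\partial_q\check{L}_\lambda$ a.e. By (L2) the map $v\mapsto\partial_v\check{L}_\lambda(t,q,v)$ is a global $C^1$ diffeomorphism, so the implicit function theorem makes $\dot x$ continuous, hence $C^1$, and $x\in C^2$. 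Integrating by parts then yields the boundary conditions in (\ref{e:LagrGenerEuTwo}).

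For (iv), formula (\ref{e:4.14}) expresses $A_\lambda(x)$ through $x\mapsto \partial_q\check{L}(\cdot,x,\dot x)$ and $x\mapsto\partial_v\check{L}(\cdot,x,\dot x)$ followed by bounded linear integral operators. On $C^1$ these Nemytskii maps are $C^1$ because the second derivatives in (L0) are continuous. For (v), let $\textsl{P}_\lambda$ be as in (\ref{e:3.17}) and set $\textsl{Q}_\lambda=B_\lambda-\textsl{P}_\lambda$. Every term of $\textsl{Q}_\lambda$ involves at most one derivative together with $\xi$ or $\eta$ undifferentiated, or adds $-(\xi,\eta)_{\R^n}$. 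By the compact embedding $W^{1,2}\hookrightarrow C^0$ and the bounds in (L3), $\textsl{Q}_\lambda(\zeta)$ is compact. Positivity of $\textsl{P}_\lambda$ is immediate from (L2), and the Fredholm property follows since $B_\lambda$ is positive plus compact.

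The main obstacle is (vi)--(vii), because $\zeta_k\to0$ only in $W^{1,2}$, so $\dot\zeta_k$ converges only in $L^2$ and possibly not pointwise uniformly. For (vi) I plan to pass to a subsequence with $\dot\zeta_k\to0$ a.e. Since $\partial_{vv}\check{L}$ is continuous and bounded by (L3), dominated convergence gives $\partial_{vv}\check{L}_{\lambda_k}(t,\zeta_k,\dot\zeta_k)\dot\xi\to\partial_{vv}\check{L}_\mu(t,0,0)\dot\xi$ in $L^2$, and a subsequence argument upgrades this to the full sequence. This is strong convergence only, which is exactly what (vi) asserts. For (vii), the terms of $\textsl{Q}$ carry at least one undifferentiated factor, controlled in $C^0$ by $\|\cdot\|_{1,2}$ through (\ref{e:C-L}). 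So it suffices to show
\[
\|\partial_{qv}\check{L}(\zeta)-\partial_{qv}\check{L}(0)\|_{L^2}\to0\quad\hbox{and}\quad\|\partial_{qq}\check{L}(\zeta)-\partial_{qq}\check{L}(0)\|_{L^1}\to0
\]
uniformly in $\lambda\in\hat\Lambda$. I will get this from uniform continuity of these coefficients on the compact set $\hat\Lambda\times[0,\tau]\times\bar B\times\bar B_R$, combined with the growth bounds of (L3) on the set $\{|\dot\zeta|>R\}$, whose measure is small by Chebyshev's inequality. Finally, (viii) is the regularity statement for the linear Jacobi problem $B_\lambda(0)\xi=s\xi+\eta$ with $\eta\in{\bf X}$. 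Here $s\le0$ keeps the leading coefficient $(1-s)\partial_{vv}\check{L}+\ldots$ positive, so the same integrated-equation bootstrap as in (ii) gives $\xi\in C^1$.
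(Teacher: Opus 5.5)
Your proposal is correct. It differs from the paper mainly in two ways: you reconstruct what the paper cites, and you argue (viii) differently.

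\textbf{Items (i)--(v).} The paper obtains (i) and (iii)--(v) from \cite{Lu4}, \cite{Lu5}, \cite{Lu1}, and (ii) from \cite[Theorem~4.5]{BuGiHi}. You derive them directly: Lipschitz bounds from (L3), variation of constants, du Bois-Reymond together with global invertibility of $v\mapsto\partial_v\check L_\lambda(t,q,v)$, and density plus (viii) for the second equality in (\ref{e:MorseindexEu}). The first equality there is just (\ref{e:twofunctAgree}).

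\textbf{Items (vi)--(vii).} The paper appeals to the parametrized Nemytskii continuity result \cite[Prop.~B.9]{Lu10}. Your a.e.-subsequence/dominated-convergence argument and your Chebyshev splitting amount to a proof of that result in this setting. Two points need care when you write this out.
\begin{itemize}
\item Put the splitting threshold at a \emph{small} $r$. On $\{|\dot\zeta|\le r\}$, uniform continuity on $\hat\Lambda\times[0,\tau]\times\bar B\times\bar B_r$ makes the coefficient differences small; this uses the (sequential) compactness of $\hat\Lambda$. On the rest, $|\{|\dot\zeta|>r\}|\le\|\dot\zeta\|_2^2/r^2$ together with the growth bounds in (L3) gives control.
\item The second claim of (vii), $\|\textsl{Q}_{\lambda_k}(0)-\textsl{Q}_\mu(0)\|\to0$, still needs its own (easy) dominated-convergence step.
\end{itemize}

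\textbf{Item (viii).} The paper proceeds in three steps:
\begin{itemize}
\item it introduces the quadratic functional $\mathfrak F$ and gets regularity of $\ker B_\lambda(0)$ from Duistermaat;
\item it reduces $s<0$ to $s=0$ by replacing $\mathfrak P$ with $\mathfrak P-sI_n$;
\item it proves the second inclusion by differentiating the explicit gradient formula (\ref{e:4.14}) for $\mathfrak F$.
\end{itemize}
You instead apply du Bois-Reymond once to $B_\lambda(0)\xi-s\xi=\eta\in{\bf X}_{V_0\times V_1}$. This gives, almost everywhere,
$(\mathfrak P(t)-sI_n)\dot\xi+\mathfrak Q(t)\xi-\dot\eta=c+\int_0^t\big(\mathfrak Q^\top\dot\xi+(\mathfrak R-sI_n)\xi-\eta\big)\,dr$.
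This handles both inclusions at once and does not need (iii). It uses only test functions in $W^{1,2}_0$, so it carries over equally well to the ${\rm Graph}(E)$ boundary condition of Section~\ref{sec:LagrPPerio1}.

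One correction: the leading coefficient is $\partial_{vv}\check L_\lambda(t,0,0)-sI_n$, not $(1-s)\partial_{vv}\check L$, because the $s$-term comes from the $W^{1,2}$ inner product. Positivity for $s\le0$ is unaffected.
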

\begin{proof}[\bf Proof]
(i) is obtained by  \cite[\S4]{Lu4} or \cite[\S3]{Lu5} and \cite{Lu1}.
(ii) follows from \cite[Theorem~4.5]{BuGiHi} because of conditions (L0), (L2) and (L4)-(L6) in Lemma~\ref{lem:Gener-modif}.
(iii) is obtained by  (4.13) and (4.14) in \cite{Lu4}.
(iv) and (v) are proved in \cite[\S4]{Lu4}.

In order to prove (vi), by (\ref{e:3.17}) we have
\begin{eqnarray*}
\|[\textsl{P}_{{\lambda_k}}(\zeta_k)-\textsl{P}_{{\mu}}(0)]\xi\|^2_{1,2}
  \le \int_0^\tau \big|[\partial_{vv}\check{L}_{{\lambda_k}}\bigl(t,\zeta_k(t),\dot\zeta_k(t)\bigr)-
  \partial_{vv}\check{L}_{\mu}\bigl(t, 0,0\bigr)]
\dot\xi(t)\big|^2_{\mathbb{R}^n}\, dt.
\end{eqnarray*}
Note that $\|\zeta_k\|_{1,2}\to 0$ implies $\|\zeta_k\|_{C^0}\to 0$.
Since $(\lambda, t, x,v)\mapsto\partial_{vv}\check{L}_{{\lambda}}(t, x,v)$ is continuous, by
the third inequality in (L3) in Lemma~\ref{lem:Gener-modif} we may apply \cite[Prop.~B.9]{Lu10} (\cite[Prop.~C.1]{Lu9}) to
$$
f(t,\eta;\lambda)=\partial_{vv}\check{L}(\lambda, t, \zeta_k(t),\dot\zeta_k(t))\eta
$$
 to get that
$$
\int_0^\tau \big|[\partial_{vv}\check{L}_{{\lambda_k}}\bigl(t,\zeta_k(t),\dot\zeta_k(t)\bigr)-
\partial_{vv}\check{L}_{{\mu}}\bigl(t, 0, 0\bigr)]
\dot\xi(t)\big|^2_{\mathbb{R}^n}\, dt\to 0.
$$
Moreover, the Lebesgue dominated convergence theorem also leads to
$$
\int_0^\tau \big|[\partial_{vv}\check{L}_{{\lambda_k}}\bigl(t, 0, 0\bigr)-
\partial_{vv}\check{L}_{{\mu}}\bigl(t, 0, 0\bigr)]
\dot\xi(t)\big|^2_{\mathbb{R}^n}\, dt\to 0.
$$
Hence $\|[\textsl{P}_{{\lambda_k}}(\zeta_k)-\textsl{P}_{\mu}(0)]\xi\|_{1,2}\to 0$.

Next, let us prove (vii).
Write $\textsl{Q}_\lambda(\zeta):=\textsl{Q}_{\lambda,1}(\zeta)+ \textsl{Q}_{\lambda,2}(\zeta)+
\textsl{Q}_{\lambda,3}(\zeta)$, where
\begin{eqnarray*}
&&(\textsl{Q}_{\lambda,1}(\zeta)\xi,  \eta)_{1,2}
   = \int_0^{\tau}\partial_{vq}   \check{L}_\lambda\big(t, \zeta(t),\dot{\zeta}(t)\big)
[\dot{\xi}(t), \eta(t)]dt,\\
 &&(\textsl{Q}_{\lambda,2}(\zeta)\xi,
  \eta)_{1,2}= \int_0^{\tau}\partial_{qv}\check{L}_\lambda\big(t, \zeta(t), \dot{\zeta}(t)\big)
[\xi(t), \dot{\eta}(t)]dt, \\
&&(\textsl{Q}_{\lambda,3}(\zeta)\xi,
  \eta)_{1,2} = \int_0^{\tau} \Bigl(\partial_{qq}\check{L}_\lambda\big(t, \zeta(t), \dot{\zeta}(t)\big) [\xi(t), \eta(t)]-
  \bigl(\xi(t), \eta(t)\bigr)_{\mathbb{R}^n}\Bigr) \, dt.
\end{eqnarray*}
As above the first claim follows from (L3) in Lemma~\ref{lem:Gener-modif} and \cite[Prop.~B.9]{Lu10} (\cite[Prop.~C.1]{Lu9}) directly.

In order to prove the second claim, as in the proof of \cite[page 571]{Lu1} we have
\begin{eqnarray*}
&&\|\textsl{Q}_{\lambda_k,1}(0)-\textsl{Q}_{\mu,1}(0)\|_{\mathscr{L}({\bf H})}\\
&\le&2(e^{\tau}+ 1)\left(\int^{\tau}_{0}\left|\partial_{vq}
  \check{L}_{\lambda_k}(s, 0, 0)-\partial_{vq}
  \check{L}_\mu(s, 0, 0)\right|^2  ds\right)^{1/2}.
\end{eqnarray*}
Because of the second inequality in ({\bf L2}), it follows from the Lebesgue dominated convergence theorem
that $\|\textsl{Q}_{\lambda_k,1}(0)-\textsl{Q}_{\mu,1}(0)\|_{\mathscr{L}({\bf H})}\to 0$.
Observe that
$(\textsl{Q}_{\lambda,2}(\zeta)\xi, \eta)_{1,2}
   =\bigr(\xi,  (\textsl{Q}_{\lambda,1}(\zeta))^\ast\eta\bigl)_{1,2}$.
Hence $\|\textsl{Q}_{\lambda_k,2}(0)-\textsl{Q}_{\mu,2}(0)\|_{\mathscr{L}({\bf H})}\to 0$.
Finally, it is easy to deduce that 
\begin{eqnarray*}
\|\textsl{Q}_{\lambda_k,3}(0)-\textsl{Q}_{\mu,3}(0)\|^2_{\mathscr{L}({\bf H})}\le
  \int_0^{\tau} \bigl|\partial_{qq}\check{L}_{\lambda_k}\big(t, 0, 0\big)
-\partial_{qq}\check{L}_\mu\big(t, 0, 0\big)\bigr|^2\, dt.
\end{eqnarray*}
By the Lebesgue dominated convergence theorem the right side converges to zero.
Then $\|\textsl{Q}_{\lambda_k,3}(0)-\textsl{Q}_{\mu,3}(0)\|_{\mathscr{L}({\bf H})}\to 0$ and therefore
 $\|\textsl{Q}_{\lambda_k}(0)-\textsl{Q}_{\mu}(0)\|\to 0$.

Finally, we prove (viii). Below \cite[(4.17)]{Lu4} we concluded that
these can be proved by almost repeating the arguments in \cite[\S3]{Lu1}.
The case $s<0$ in the first claim was proved in \cite[p.178]{Du}.
Here we present an improvement of the method from \cite[\S3]{Lu1}, 
which can be effectively applied to the case considered in Section~\ref{sec:LagrPPerio1}.
 Define 
\begin{align*}
&\mathfrak{P}(t)=\partial_{vv}\check{L}_{{\lambda}}\bigl(t, 0, 0\bigr), \quad
\mathfrak{Q}(t)=D_{{q}{v}}\check{L}_\lambda\big(t,0,0\big), \quad
\mathfrak{R}(t)=D_{{q}{q}}\check{L}_\lambda\big(t,0,0\big), \\
&\mathfrak{L}(t,\tilde{y},\tilde{v})=\frac{1}{2}\mathfrak{P}(t)\tilde{v}\cdot\tilde{v}+
\mathfrak{Q}(t)\tilde{y}\cdot\tilde{v}+\frac{1}{2}\mathfrak{R}(t)\tilde{y}\cdot\tilde{y},
\end{align*}
which are~$C^2$, and a functional~$\mathfrak{F}$ on~${\bf H}_{V_0\times V_1}$ by
$$
\mathfrak{F}(\tilde{y})=\int\limits_{0}^{\tau}
\mathfrak{L}\big(t,\tilde{y}(t),\dot{\tilde{y}}(t)\big)dt,\quad\forall\tilde{y}\in{\bf H}_{V_0\times V_1}. 
$$
Then $\mathfrak{F}$  is $C^{2}$-smooth on ${\bf H}_{V_0\times V_1}$ (and hence
on ${\bf X}_{V_0\times V_1}$),  and $\tilde{y}=0\in{\bf H}_{V_0\times V_1}$ 
 is a critical point of $\mathfrak{F}$.   It is also easily checked that
 \begin{align}\label{e:3.17+}
d\mathfrak{F}(\tilde{y})[\tilde{z}]=
d^{2}\mathfrak{F}(0)(\tilde{y}, \tilde{z})=(B_\lambda(0)\tilde{y}, \tilde{z})_{1,2},\quad
\forall \tilde{y}, \tilde{z}\in {\bf H}_{V_0\times V_1}.
\end{align}
 Therefore, $\tilde{y} \in \mathbf{H}_{V_0 \times V_1}$ belongs to $\ker B_\lambda(0)$~if and only if~$\tilde{y}$ is a critical point of~$\mathfrak{F}$, which means it satisfies (in the distributional sense) the differential equation given in the first line of the boundary problem: 
$$
\left.\begin{array}{l}
\frac{d}{dt}\big(\partial_v\mathfrak{L}(t, x(t), \dot{x}(t))\big)-\partial_q \mathfrak{L}(t, x(t), \dot{x}(t))=0,\\
(x(0), x(\tau))\in V_0\times V_1,\\
\partial_v\mathfrak{L}(0, x(0), \dot{x}(0))[v_0]=0\quad\forall v_0\in V_0,\\
\partial_v\mathfrak{L}(\tau, x(\tau), \dot{x}(\tau))[v_1]=0\quad\forall v_1\in V_1.
\end{array}\right\}
$$
Of course, $\tilde{y}$ is automatically $C^2$ and hence satisfies the above boundary problem in the classical sense (cf.~\cite[p.~178]{Du}).
 In particular, $\tilde{y}\in {\bf X}_{V_0\times V_1}$. 
 (As shown in \cite[p.~178]{Du}, such an argument is also applicable to the case considered in Section~\ref{sec:LagrPPerio1}.)
 
 The case where $s<0$ in the first claim can be reduced the case $s=0$.
 Assume that $\xi\in {\bf H}_{V_0\times V_1}$ satisfies $B_\lambda(0)\xi=s\xi$ for some $s<0$,
 and let the functional $\mathfrak{F}$ and the Lagrangian $\mathfrak{L}$ be as above.
 Define a new  Lagrangian~$\widehat{\mathfrak{L}}$ and the corresponding functional
 ~$\widehat{\mathfrak{F}}$ on~${\bf H}_{V_0\times V_1}$ by 
 \begin{align*}
\widehat{\mathfrak{L}}(t,\tilde{y},\tilde{v})&= \mathfrak{L}(t,\tilde{y},\tilde{v})-\frac{s}{2}(\tilde{v}, \tilde{v})_{1,2},\\
\widehat{\mathfrak{F}}(\tilde{y})&=\int\limits_{0}^{\tau}
\widehat{\mathfrak{L}}\big(t,\tilde{y}(t),\dot{\tilde{y}}(t)\big)dt,\quad\forall\tilde{y}\in{\bf H}_{V_0\times V_1}. 
\end{align*}
As above, we have $d\widehat{\mathfrak{F}}(\tilde{y})[\tilde{z}]=
(\widehat{B}_\lambda(0)\tilde{y}, \tilde{z})_{1,2}$ for all $\tilde{y}, \tilde{z}\in {\bf H}_{V_0\times V_1}$.
Let ${\rm I}_{{\bf H}_{V_0\times V_1}}$ be the identity operator on ${\bf H}_{V_0\times V_1}$,
and let $I_n$ be the identity matrix of order $n$. 
 It is easily checked that $\widehat{B}_\lambda(0)={B}_\lambda(0)-s{\rm I}_{{\bf H}_{V_0\times V_1}}$ and that
$$
 \widehat{\mathfrak{L}}(t,\tilde{y},\tilde{v})=\frac{1}{2}\widehat{\mathfrak{P}}(t)\tilde{v}\cdot\tilde{v}+
\widehat{\mathfrak{Q}}(t)\tilde{y}\cdot\tilde{v}+\frac{1}{2}\widehat{\mathfrak{R}}(t)\tilde{y}\cdot\tilde{y},
 $$
 where $\widehat{\mathfrak{P}}(t)={\mathfrak{P}}(t)-sI_n$, $\widehat{\mathfrak{Q}}(t)=
 {\mathfrak{Q}}(t)$ and $\widehat{\mathfrak{R}}(t)={\mathfrak{R}}(t)$.
 The key is that all $\widehat{\mathfrak{P}}(t)$ are positive definite because $s<0$.
 Therefore, by the above conclusion for $s=0$, we obtain $\ker~\widehat{B}_\lambda(0)\subset
 {\bf X}_{V_0\times V_1}$ and hence $\xi\in  {\bf X}_{V_0\times V_1}$.
 
  What remains is to prove the second inclusion (relation).
Let $\xi\in {\bf H}_{V_0\times V_1}$ be such that $\zeta:= B_\lambda(0)\xi\in {\bf X}_{V_0\times V_1}$.
By (\ref{e:3.17+}), $\nabla\mathfrak{F}(\xi)=B_\lambda(0)\xi=\zeta$.
 Applying the conclusions in (iii) to $\mathfrak{F}$, we arrive at
 \begin{eqnarray}\label{e:3.17++}
\zeta(t)=\nabla\mathfrak{F}(\xi)(t)&=&e^t\int^t_0\left[
e^{-2s}\int^s_0e^{r}\hat{f}_{\lambda,\xi}(r)dr\right]ds  + c_1(\lambda,\xi)e^t+
c_2(\lambda,\xi)e^{-t}\nonumber\\
&&\qquad +\int^t_0 \partial_v \mathfrak{L}(s, \xi(s),\dot{\xi}(s))ds ,
\end{eqnarray}
where $c_1(\lambda,\xi), c_2(\lambda,\xi)\in\R^n$ are suitable constant vectors and 
\begin{eqnarray*}
 \hat{f}_{\lambda,\xi}(t)= - \partial_q \mathfrak{L}(t,\xi(t),\dot{\xi}(t))+ \int^t_0
\partial_v \mathfrak{L}(s, \xi(s),\dot{\xi}(s))ds.
\end{eqnarray*}
 Differentiating both sides of equation (\ref{e:3.17++}) with respect to $t$ yields
   \begin{eqnarray*}
\dot{\zeta}(t)&=&e^t\int^t_0\left[
e^{-2s}\int^s_0e^{r}\hat{f}_{\lambda,\xi}(r)dr\right]ds+ e^{-t}\int^t_0e^{r}\hat{f}_{\lambda,\xi}(r)dr    \nonumber\\
&&\qquad + c_1(\lambda,\xi)e^t
-c_2(\lambda,\xi)e^{-t}+ \partial_v \mathfrak{L}(t, \xi(t),\dot{\xi}(t)).
\end{eqnarray*}
Since $\dot{\zeta}(t)$ is continuous, this equation implies that 
 $\partial_v \mathfrak{L}(t, \xi(t),\dot{\xi}(t))$ is continuous with respect to $t$.
But $\partial_v\mathfrak{L}(t,\xi(t),\dot{\xi}(t))=\mathfrak{P}(t)\dot{\xi}(t)+\mathfrak{Q}(t)\xi(t)$,
and so
$$
\dot{\xi}(t)=(\mathfrak{P}(t))^{-1}\partial_v\mathfrak{L}(t,\xi(t),\dot{\xi}(t))-
(\mathfrak{P}(t))^{-1}\mathfrak{Q}(t)\xi(t).
$$
It follows that $\dot{\xi}(t)$ is continuous in $t$ and hence $\xi\in {\bf X}_{V_0\times V_1}$.
The proof of (viii) is complete.
\end{proof}

In order to apply our abstract theory in \cite{Lu8, Lu10, Lu10} to the family of functionals in
(\ref{e:checkEu}) we also need two results.

\begin{proposition}\label{prop:continA}
Both maps $\hat\Lambda\times \mathcal{U}^X\ni (\lambda, x)\mapsto
\check{\mathcal{E}}_{\lambda}(x)\in\R$ and
$\hat\Lambda\times \mathcal{U}^X\ni (\lambda, x)\mapsto A_\lambda(x)\in{\bf X}_{V_0\times V_1}$ are continuous.
\end{proposition}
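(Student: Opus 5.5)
We will prove joint continuity by sequences: take $(\lambda_k,x_k)\to(\lambda_0,x_0)$ in $\hat\Lambda\times\mathcal{U}^X$, so $\lambda_k\to\lambda_0$ and $\|x_k-x_0\|_{C^1}\to 0$, and show that $\check{\mathcal{E}}_{\lambda_k}(x_k)\to\check{\mathcal{E}}_{\lambda_0}(x_0)$ and $\|A_{\lambda_k}(x_k)-A_{\lambda_0}(x_0)\|_{C^1}\to 0$. When $\hat\Lambda$ is compact but not sequentially compact we repeat the argument with nets; nothing changes, because all estimates are uniform. The key observation is that the set $K:=\{(t,x_k(t),\dot x_k(t)),(t,x_0(t),\dot x_0(t))\,|\,t\in[0,\tau],k\in\mathbb{N}\}$ lies in a compact subset of $[0,\tau]\times \overline{B^n_{\iota/2}(0)}\times\mathbb{R}^n$. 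Hence $\hat\Lambda_0:=\{\lambda_0,\lambda_k\}$ times this compact set is (sequentially) compact, and $\check L$, $\partial_q\check L$, $\partial_v\check L$ are uniformly continuous there by (L0).

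For the functional this gives
\[
\sup_t|\check L_{\lambda_k}(t,x_k(t),\dot x_k(t))-\check L_{\lambda_0}(t,x_0(t),\dot x_0(t))|\to 0 .
\]
We argue by contradiction: take $t_k\to t_*$ realizing a gap $\ge\varepsilon$ and pass to the limit using the continuity of $\check L$ in all variables. Integrating over $[0,\tau]$ then yields the first claim.

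For $A_\lambda$ we use the explicit gradient formula (\ref{e:4.14})--(\ref{e:4.13}) from Proposition~\ref{prop:funct-analy}(iii). By the same contradiction argument,
\[
\partial_q\check L_{\lambda_k}(\cdot,x_k,\dot x_k)\to\partial_q\check L_{\lambda_0}(\cdot,x_0,\dot x_0)
\]
uniformly on $[0,\tau]$, and likewise for $\partial_v\check L$. Hence $f_{\lambda_k,x_k}\to f_{\lambda_0,x_0}$ in $C^0$, and the primitive $\int_0^t\partial_v\check L\,ds$ converges in $C^1$. The double-integral term $e^t\int_0^t[e^{-2s}\int_0^se^rf\,dr]ds$ is a bounded linear map $C^0\to C^1$, so it converges in $C^1$ as well.

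The remaining point, which I expect to be the main obstacle, is the convergence of the constants $c_1(\lambda,x),c_2(\lambda,x)$. These are determined by requiring $\nabla\check{\mathcal{E}}_\lambda(x)\in{\bf H}_{V_0\times V_1}$ to represent $d\check{\mathcal{E}}_\lambda(x)$, i.e.\ by the natural boundary conditions of the ODE $-\ddot y+y=\ldots$ that defines the gradient. Following \cite[\S4]{Lu4}, they solve a linear system
\[
\mathcal{M}\,(c_1,c_2)^\top=\text{(linear expressions in boundary values of the integral terms)},
\]
where $\mathcal{M}$ is a fixed invertible matrix depending only on $\tau,V_0,V_1$ and not on $(\lambda,x)$. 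Its invertibility follows from uniqueness of the Riesz representative, i.e.\ from the fact that $-\ddot y+y=0$ with those boundary conditions has only the trivial solution. Consequently $(c_1,c_2)$ depends linearly and continuously on $(f_{\lambda,x},\partial_v\check L_\lambda(\cdot,x,\dot x))\in C^0\times C^0$, hence converges. Combining the pieces gives $A_{\lambda_k}(x_k)\to A_{\lambda_0}(x_0)$ in ${\bf X}_{V_0\times V_1}$.
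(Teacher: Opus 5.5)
Your proof is correct. It differs from the paper's mainly in how you show the constants $c_1(\lambda,x),c_2(\lambda,x)$ depend continuously on $(\lambda,x)$, and also in how you treat the functional.

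\textbf{The constants.} The paper first proves that $(\lambda,x)\mapsto\nabla\check{\mathcal{E}}_\lambda(x)$ is continuous into ${\bf H}_{V_0\times V_1}$. It bounds $\|\nabla\check{\mathcal{E}}_{\lambda_1}(x)-\nabla\check{\mathcal{E}}_{\lambda_2}(y)\|_{1,2}$ by the $L^2$-norms of the differences of $\partial_q\check L$ and $\partial_v\check L$ along the two curves. It then reads off $c_1,c_2$ from the pointwise identities (\ref{e:4.17})--(\ref{e:4.18}). These express $2c_1e^t$ and $2c_2e^{-t}$ through $\nabla\check{\mathcal{E}}_\lambda(x)$, its $t$-derivative and the explicit integral terms. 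The derivative converges only in $L^2$, so one integrates over $[0,\tau]$. Only after that does the paper conclude $C^1$-convergence from (\ref{e:4.14}) and (\ref{e:4.16}).

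You instead identify $(c_1,c_2)$ as the solution of a fixed linear system. It comes from the essential conditions $(y(0),y(\tau))\in V_0\times V_1$ and the natural conditions on $z=y-\int_0^t\partial_v\check L$, namely $\dot z(0)\in V_0^\perp$ and $\dot z(\tau)\in V_1^\perp$. That gives $2n$ equations in $2n$ unknowns, and the system is injective by uniqueness of the Riesz representative. Hence the constants are a fixed linear image of boundary values of the integral terms and converge.

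\textbf{The functional.} The paper proves the stronger continuity on $\hat\Lambda\times\mathcal{U}$ in the $W^{1,2}$ topology, via (L6), \cite[Prop.~B.9]{Lu10} and dominated convergence. Your uniform-convergence argument gives, and only needs, continuity on $\hat\Lambda\times\mathcal{U}^X$.

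\textbf{Trade-offs.} Your route is more elementary and makes transparent why the constants behave well. The paper's route has the advantage that (\ref{e:4.17})--(\ref{e:4.18}) hold regardless of the boundary condition. So the same proof is reused verbatim for ${\bf H}_{E_{\overline{\gamma}}}$ and the even periodic setting, and the same device reappears in Step~1 of Proposition~\ref{prop:solutionLagr}.

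\textbf{A small correction.} Nets are needed when $\hat\Lambda$ may fail to be first countable, not when it fails to be sequentially compact. Your contradiction argument, run with subnets, works for arbitrary $\hat\Lambda$, since it only uses continuity of $\check L$, $\partial_q\check L$, $\partial_v\check L$ at the limit point.
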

\begin{proof}[\bf Proof]
\textsf{{Step 1}}({\it Prove that $\hat\Lambda\times \mathcal{U}\ni (\lambda, x)\mapsto
\check{\mathcal{E}}_{\lambda}(x)\in\R$ is continuous, and therefore obtain the first claim}).
   Indeed, for any two points $(\lambda, x)$ and $(\lambda_0, x_0)$ in $\hat\Lambda\times \mathcal{U}$ we can write
\begin{eqnarray*}
\check{\mathcal{E}}_{\lambda}(x)-\check{\mathcal{E}}_{\lambda_0}(x_0)&=&
\left[\int^{\tau}_0\check{L}_\lambda(t, x(t),\dot{x}(t))dt-
\int^{\tau}_0\check{L}_{\lambda}(t, x_0(t),\dot{x}_0(t))dt\right]\\
&&+\left[\int^{\tau}_0\check{L}_\lambda(t, x_0(t),\dot{x}_0(t))dt-
\int^{\tau}_0\check{L}_{\lambda_0}(t, x_0(t),\dot{x}_0(t))dt\right].
\end{eqnarray*}
As $(\lambda, x)\to (\lambda_0, x_0)$, we derive from (L6) in Lemma~\ref{lem:Gener-modif}  and \cite[Prop.B.9]{Lu10}
or \cite[Proposition C.1]{Lu9} (resp. (L6) in Lemma~\ref{lem:Gener-modif}
 and the Lebesgue dominated convergence theorem)  that the first (resp. second) bracket on the right side converges to the zero.

\textsf{{Step 2}}({\it Prove that $\hat\Lambda\times \mathcal{U}^X\ni (\lambda, x)\mapsto
A_\lambda(x)\in{\bf X}_{V_0\times V_1}$ is continuous}).
By  \cite[(4.14)]{Lu4} we have
\begin{eqnarray}\label{e:4.16}
\frac{d}{dt}\nabla\check{\mathcal{E}}_{\lambda}(x)(t)&=&
e^t\int^t_0\left[ e^{-2s}\int^s_0e^{r}f_{\lambda,x}(r)dr\right]ds
+e^{-t}\int^t_0e^{r}f_{\lambda,x}(r)dr
\nonumber\\
&&\quad + c_1(\lambda,x)e^t -c_2(\lambda,x)e^{-t}+ \partial_{v} \check{L}_\lambda\left(t, x(t),\dot{x}(t)\right).
\end{eqnarray}
This and (\ref{e:4.14}) lead to
\begin{eqnarray}\label{e:4.17}
 2c_1(\lambda,x)e^t&=&-2e^t\int^t_0\left[e^{-2s}\int^s_0e^{r}f_{\lambda,x}(r)dr\right]ds
 -e^{-t}\int^t_0e^{r}f_{\lambda,x}(r)dr \nonumber\\
&&-\int^t_0 \partial_v \check{L}_\lambda(s, x(s),\dot{x}(s))ds- \partial_{v} \check{L}_\lambda\left(t, x(t),\dot{x}(t)\right)\nonumber \\
&& +\nabla\check{\mathcal{E}}_{\lambda}(x)(t)+ \frac{d}{dt}\nabla\check{\mathcal{E}}_{\lambda}(x)(t)
\end{eqnarray}
and
\begin{eqnarray}\label{e:4.18}
 2c_2(\lambda,x)e^{-t}&=& e^{-t}\int^t_0e^{r}f_{\lambda,x}(r)dr
-\int^t_0 \partial_v \check{L}_\lambda(s, x(s),\dot{x}(s))ds\nonumber\\
&&+ \partial_{v} \check{L}_\lambda\left(t, x(t),\dot{x}(t)\right)
+\nabla\check{\mathcal{E}}_{\lambda}(x)(t)-\frac{d}{dt}\nabla\check{\mathcal{E}}_{\lambda}(x)(t).
\end{eqnarray}
Moreover, since
\begin{eqnarray*}
&&d\check{\mathcal{E}}_{\lambda_1}(x)[\xi]-d\check{\mathcal{E}}_{\lambda_2}(y)[\xi]\\
&=&\int_0^{\tau}  \big( \partial_{q}\check{L}(\lambda_1, t, x(t),\dot{x}(t))-\partial_{q}\check{L}(\lambda_2, t, y(t),\dot{y}(t))\big)\cdot\xi(t)dt\\
&&+ \int_0^{\tau}\big(\partial_{v}\check{L}\big(\lambda_1, t, x(t),\dot{x}(t))-\partial_{v}\check{L}(\lambda_2, t, y(t),\dot{y}(t))\big)\cdot\dot{\xi}(t)
\big) \, dt,
\end{eqnarray*}
we have 
\begin{eqnarray*}
&&\|\nabla\check{\mathcal{E}}_{\lambda_1}(x)-\nabla\check{\mathcal{E}}_{\lambda_2}(y)\|_{1,2}\\
&\le&\left(\int_0^{\tau}  \left| \partial_{q}\check{L}(\lambda_1, t, x(t),\dot{x}(t))-\partial_{q}\check{L}(\lambda_2, t, y(t),\dot{y}(t))\right|^2dt\right)^{1/2}\\
&&+ \left(\int_0^{\tau}\left|\partial_{v}\check{L}(\lambda_1, t, x(t),\dot{x}(t))-\partial_{v}\check{L}(\lambda_2, t, y(t),\dot{y}(t))\right|^2dt\right)^{1/2}.
\end{eqnarray*}
Fix a point $(\lambda_1, x)\in\hat\Lambda\times\mathcal{U}^X$. Then $\{(\lambda_1, t, x(t), \dot{x}(t))\,|\, t\in [0,\tau]\}$
is a compact subset of $\hat\Lambda\times [0,\tau]\times B^n_{\iota/2}(0)\times\R^n$. Since
$\partial_{q}\check{L}$ and $\partial_{v}\check{L}$ are uniformly continuous in any compact neighborhood of this compact subset we deduce:
If $(\lambda_2, y)\in\hat\Lambda\times\mathcal{U}^X$ converges to $(\lambda_1, x)$ in $\hat\Lambda\times\mathcal{U}^X$, then
$$
\|\nabla\check{\mathcal{E}}_{\lambda_1}(x)-
\nabla\check{\mathcal{E}}_{\lambda_2}(y)\|_{1,2}\to 0\quad\hbox{and so}\quad
\|\nabla\check{\mathcal{E}}_{\lambda_1}(x)-\nabla\check{\mathcal{E}}_{\lambda_2}(y)\|_{C^0}\to 0.
$$
This fact, (\ref{e:4.13}) and (\ref{e:4.17})--(\ref{e:4.18}) imply that
$c_1(\lambda,x)$ and $c_2(\lambda,x)$ are continuous in $\hat\Lambda\times\mathcal{U}^X$.
From the latter claim, (\ref{e:4.14})--(\ref{e:4.13}) and (\ref{e:4.16}), it easily follows that as
 $(\lambda_2, y)\in\hat\Lambda\times\mathcal{U}^X$ converges to $(\lambda_1, x)$ in $\hat\Lambda\times\mathcal{U}^X$,
$$
\left\|\frac{d}{dt}\nabla\check{\mathcal{E}}_{\lambda_1}(x)-\frac{d}{dt}\nabla\check{\mathcal{E}}_{\lambda_2}(y)\right\|_{C^0}\to 0
$$
and hence $\|\nabla\check{\mathcal{E}}_{\lambda_1}(x)-\nabla\check{\mathcal{E}}_{\lambda_2}(y)\|_{C^1}\to 0$.
\end{proof}

\begin{proposition}\label{prop:solutionLagr}
For any given $\epsilon>0$ there exists $\varepsilon>0$ such that
if a critical point $x$ of $\check{\mathcal{E}}_\lambda$ satisfies
$\|x\|_{1,2}<\varepsilon$ then $\|x\|_{C^2}<\epsilon$. (\textsf{Note}: $\varepsilon$ is independent of $\lambda\in\hat\Lambda$.)
Consequently, if $0\in \mathcal{U}^X$ is an isolated critical point of $\check{\mathcal{E}}_{\lambda}|_{\mathcal{U}^X}$
then $0\in \mathcal{U}$ is also an isolated critical point of $\check{\mathcal{E}}_{\lambda}$.
\end{proposition}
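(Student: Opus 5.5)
The plan is to argue by contradiction with a compactness argument, using the Euler--Lagrange equation (\ref{e:LagrGenerEuTwo}) to bootstrap from $W^{1,2}$-smallness to $C^2$-smallness. Suppose the claim fails. Then there are $\epsilon>0$, a sequence $(\lambda_k)\subset\hat\Lambda$, and critical points $x_k$ of $\check{\mathcal{E}}_{\lambda_k}$ such that $\|x_k\|_{1,2}\to0$ but $\|x_k\|_{C^2}\ge\epsilon$. Since $\hat\Lambda$ is compact or sequentially compact, after passing to a subsequence (or a subnet, turned into a sequence by the usual argument) we may assume $\lambda_k\to\mu\in\hat\Lambda$. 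By Proposition~\ref{prop:funct-analy}(ii), each $x_k$ is $C^2$ and solves (\ref{e:LagrGenerEuTwo}) with $\lambda=\lambda_k$. By (\ref{e:C-L}), $\|x_k\|_{C^0}\to0$. The task is to upgrade this to $\|\dot x_k\|_{C^0}\to0$ and then $\|\ddot x_k\|_{C^0}\to0$, which gives the contradiction.

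The first step is a uniform $C^1$ bound and then $C^1$ convergence.

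1. Integrating the equation gives
\[
\partial_v\check L_{\lambda_k}(t,x_k,\dot x_k)=\partial_v\check L_{\lambda_k}(0,x_k(0),\dot x_k(0))+\int_0^t\partial_q\check L_{\lambda_k}(s,x_k,\dot x_k)\,ds .
\]
By (L5) the integral is bounded by $\check C(\tau+\|x_k\|_{1,2}^2)$, so it is uniformly bounded.
2. By (L2), $v\mapsto\partial_v\check L_\lambda(t,q,v)$ is uniformly strongly monotone:
\[
\bigl(\partial_v\check L(v)-\partial_v\check L(w)\bigr)\cdot(v-w)\ge\check c|v-w|^2 .
\]
Hence $|\dot x_k(t)-\dot x_k(s)|$ is controlled by the oscillation of $\partial_v\check L_{\lambda_k}(\cdot,x_k,\dot x_k)$. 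That oscillation is bounded by the integral term plus the $t$-variation of $\partial_v\check L$, which is uniformly bounded by (L0), (L5) and compactness.
3. Because $\|\dot x_k\|_{L^2}\to0$, there is a time $s_k$ with $|\dot x_k(s_k)|\le\|\dot x_k\|_{L^2}/\sqrt\tau\to0$. Combined with step 2, this makes $\|\dot x_k\|_{C^0}$ uniformly bounded.
4. With that bound, the integrand $\partial_q\check L$ is uniformly bounded, so $\partial_v\check L_{\lambda_k}(t,x_k,\dot x_k)$ is equi-Lipschitz in $t$ up to a term that is uniformly continuous in $t$. By the strong monotonicity, $(\dot x_k)$ is equicontinuous.
5. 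Arzel\`a--Ascoli then gives a uniformly convergent subsequence of $\dot x_k$. Its limit must vanish because $\dot x_k\to0$ in $L^2$. Therefore $\|x_k\|_{C^1}\to0$.

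The second step upgrades to $C^2$. This is exactly Lemma~\ref{lem:regu}(ii), applied to $\check L$ with the trivial branch $x_\lambda\equiv0$ (which solves the equation because $d\check{\mathcal E}_\lambda(0)=0$). We check Assumption~\ref{ass:Lagr1} for $\check L$ on $\hat\Lambda$: continuity of the needed partial derivatives is (L0), and strict convexity in $v$ is (L2). Lemma~\ref{lem:regu}(ii) then gives $\|x_k\|_{C^2}\to0$, contradicting $\|x_k\|_{C^2}\ge\epsilon$. Alternatively, one can solve explicitly for $\ddot x_k$ as in its proof and pass to the limit using the uniform continuity of the coefficients near $\{(\mu,t,0,0)\}$.

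For the consequence, suppose $0$ is isolated in $\mathcal U^X$ for $\check{\mathcal{E}}_\lambda|_{\mathcal U^X}$ but not isolated in $\mathcal U$. Then there are critical points $x_j\ne0$ of $\check{\mathcal E}_\lambda$ with $\|x_j\|_{1,2}\to0$. By (ii) of Proposition~\ref{prop:funct-analy} they lie in $\mathcal U^X$, and by the first part $\|x_j\|_{C^1}\to0$. Since critical points of $\check{\mathcal E}_\lambda$ that lie in $\mathcal U^X$ are critical for the restriction, this contradicts isolatedness in $\mathcal U^X$.

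The main obstacle is the first step: getting the uniform $C^0$ bound on $\dot x_k$ from only $W^{1,2}$-smallness. It relies on the quadratic growth bounds (L5) together with the uniform monotonicity (L2), and some care is needed with the $t$-dependence of $\partial_v\check L$. I would handle that $t$-dependence using the continuity of $\partial_{tv}\check L$ from (L0) on the compact set where $|v|$ is bounded.
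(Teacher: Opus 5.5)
Your argument is correct, but it obtains the key $C^1$ estimate by a different route from the paper.

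\textbf{The paper's route.} The paper proceeds directly and quantitatively.
\begin{enumerate}
\item It uses the explicit representation (\ref{e:4.14}) of $\nabla\check{\mathcal{E}}_\lambda$ and its $t$-derivative (\ref{e:4.16}).
\item It expresses the constants $c_1(\lambda,x),c_2(\lambda,x)$ through (\ref{e:4.17})--(\ref{e:4.18}).
\item From (L5) and the Nemytskii-operator continuity result \cite[Prop.~B.9]{Lu10}, it shows that $c_i(\lambda,x)-c_i(\lambda,0)\to 0$. It deduces that $\sup_t|\partial_v\check L_\lambda(t,x,\dot x)-\partial_v\check L_\lambda(t,0,0)|\to 0$ uniformly in $\lambda$ as $\|x\|_{1,2}\to 0$.
\item Strong monotonicity from (L2) then gives $\|\dot x\|_{C^0}\to 0$.
\end{enumerate}

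\textbf{Your route.} You argue by contradiction and compactness. You use only the integrated (du Bois-Reymond) form of the Euler--Lagrange equation, (L2), (L5) and Arzel\`a--Ascoli, and the $L^2$-smallness of $\dot x_k$ identifies the limit as zero.

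\textbf{Comparison.} Your version is more elementary and self-contained. It needs neither the gradient formula nor the Nemytskii lemma, and it does not depend on the particular boundary condition. The cost is that it is non-quantitative, whereas the paper's argument gives an explicit modulus in terms of $L^1/L^2$ distances along $x$ versus along $0$. Your final $C^2$ step, in its ``alternative'' form (solving for $\ddot x_k$), is exactly the paper's Step~3. Your deduction of the isolatedness consequence is also fine.

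\textbf{Two points of care.}
\begin{itemize}
\item In your Step~2, the base-point term $|\partial_v\check L(s,x_k(s),w)-\partial_v\check L(t,x_k(t),w)|$ is bounded by (L5) only for bounded $w$. It also involves the $q$-variation, not just the $t$-variation. So the bound works only after you anchor at $s=s_k$, as you do in Step~3.
\item The remark that a subnet can be ``turned into a sequence'' is false when $\hat\Lambda$ is compact but not sequentially compact. This is harmless, because none of your steps actually needs $\lambda_k\to\mu$. What you need is uniform continuity of $\partial_v\check L$, $\partial_q\check L$ and $\partial_{vt}\check L$ in $(t,q,v)$ on $[0,\tau]\times\overline{B^n_{\iota/2}(0)}\times\overline{B^n_R(0)}$, uniformly in $\lambda\in\hat\Lambda$. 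That holds under either compactness hypothesis, by a finite-cover argument in one case and a sequential one in the other. So finish with the direct formula for $\ddot x_k$ rather than Lemma~\ref{lem:regu}(ii), which is stated for convergent sequences $\lambda_k\to\mu$.
\end{itemize}
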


By Proposition~\ref{prop:funct-analy}(ii),
$$
{\rm Crit}(\check{\mathcal{E}}):=\{(\lambda, x)\in\hat\Lambda\times\mathcal{U}\,|\, d\check{\mathcal{E}}_\lambda(x)=0\}\subset
\hat\Lambda\times C^2\big([0,\tau]; B^n_{\iota/2}(0)\big)\cap\mathcal{U}^X.
$$
Proposition~\ref{prop:solutionLagr} claims that $\hat\Lambda\times\mathcal{U}$ and
$\hat\Lambda\times C^2\big([0,\tau]; B^n_{\iota/2}(0)\big)\cap\mathcal{U}^X$ induce the equivalence topologies on
${\rm Crit}(\check{\mathcal{E}})$.

\begin{proof}[\bf Proof of Proposition~\ref{prop:solutionLagr}]
The second claim  may follow from the first one by contradiction.
Let us prove the first one. By Proposition~\ref{prop:funct-analy}(ii), $x$ is $C^2$.
Let $c_1(\lambda, x)$ and $c_2(\lambda,x)$ be given by (\ref{e:4.17}) and (\ref{e:4.18}), respectively.

\textsf{Step 1} ({\it Prove that both $|c_1(\lambda, x)-c_1(\lambda, 0)|$ and
$|c_2(\lambda, x)-c_2(\lambda, 0)|$   uniformly converge to zero 
in $\lambda\in\hat\Lambda$ as $\|x\|_{1,2}\to 0$}).

Since $\nabla\check{\mathcal{E}}_{\lambda}(x)(t)\equiv 0$ and $\frac{d}{dt}\nabla\check{\mathcal{E}}_{\lambda}(x)(t)\equiv 0$,
by (\ref{e:4.17}) for any $t\in [0,\tau]$ we have
\begin{eqnarray*}
&&2|c_1(\lambda,x)-c_1(\lambda,0)|\le 2|c_1(\lambda,x)e^t-c_1(\lambda,0)e^t|\\
&\le&2e^t\int^t_0\left[e^{-2s}\int^s_0e^{r}|f_{\lambda,x}(r)-f_{\lambda,0}(r)|dr\right]ds
 +e^{-t}\int^t_0e^{r}|f_{\lambda,x}(r)-f_{\lambda,0}(r)|dr \nonumber\\
&&+\int^t_0 |\partial_v \check{L}_\lambda(s, x(s),\dot{x}(s))-\check{L}_\lambda(s, 0, 0)|ds
+|\partial_{v} \check{L}_\lambda\left(t, x(t),\dot{x}(t)\right)-\partial_{v} \check{L}_\lambda\left(t, 0,0\right)|\\
&\le&
2e^{2\tau}\tau\int^\tau_0|f_{\lambda,x}(r)-f_{\lambda,0}(r)|dr
 +\int^\tau_0|f_{\lambda,x}(r)-f_{\lambda,0}(r)|dr \nonumber\\
&&+\int^\tau_0 |\partial_v \check{L}_\lambda(s, x(s),\dot{x}(s))-\check{L}_\lambda(s, 0,0)|ds
+|\partial_{v} \check{L}_\lambda\left(t, x(t),\dot{x}(t)\right)-\partial_{v} \check{L}_\lambda\left(t, 0,0\right)|.
\end{eqnarray*}
Integrating this inequality over $[0,\tau]$ yields
\begin{eqnarray*}
&&2\tau |c_1(\lambda,x)-c_1(\lambda,0)|\le 2\int^\tau_0|c_1(\lambda,x)e^t-c_1(\lambda,0)e^t|dt\\
&\le&
2e^{2\tau}\tau^2\int^\tau_0|f_{\lambda,x}(r)-f_{\lambda,0}(r)|dr
 +\tau\int^\tau_0|f_{\lambda,x}(r)-f_{\lambda,0}(r)|dr \nonumber\\
&&+(\tau+1)\int^\tau_0 |\partial_v \check{L}_\lambda(s, x(s),\dot{x}(s))-\partial_v\check{L}_\lambda(s, 0, 0)|ds,
\end{eqnarray*}
that is,
\begin{eqnarray}\label{e:4.17two}
 |c_1(\lambda,x)-c_1(\lambda,0)|&\le&
(e^{2\tau}\tau+1)\int^\tau_0|f_{\lambda,x}(r)-f_{\lambda,0}(r)|dr\nonumber\\
 &&+\frac{(\tau+1)}{2\tau}\int^\tau_0 |\partial_v \check{L}_\lambda(s, x(s),\dot{x}(s))-\partial_v\check{L}_\lambda(s, 0, 0)|ds.
\end{eqnarray}
Moreover, (\ref{e:4.13}) leads to
\begin{eqnarray}\label{e:4.13two}
\int^\tau_0|f_{\lambda,x}(t)-f_{\lambda,0}(t)|dt&\le&
 \int^\tau_0|\partial_q \check{L}_\lambda(t,x(t),\dot{x}(t))-\partial_q \check{L}_\lambda(t,0, 0)|dt\nonumber\\
 &&+\tau\int^\tau_0|\partial_v \check{L}_\lambda(t,x(t),\dot{x}(t))-\partial_v \check{L}_\lambda(t,0, 0)|dt.
 \end{eqnarray}
From this and (\ref{e:4.17two}) we derive
\begin{eqnarray}\label{e:4.17two+}
 &&|c_1(\lambda,x)-c_1(\lambda,0)|\le(e^{2\tau}\tau+1)\int^\tau_0|\partial_q \check{L}_\lambda(t,x(t),\dot{x}(t))-\partial_q \check{L}_\lambda(t,0, 0)|dt\nonumber\\
 &&\qquad+\left((e^{2\tau}\tau+1)\tau+ \frac{\tau+1}{2\tau}\right)
\int^\tau_0 |\partial_v \check{L}_\lambda(s, x(s),\dot{x}(s))-\check{L}_\lambda(s, 0, 0)|ds.
\end{eqnarray}

Similarly, by (\ref{e:4.18}) we obtain
\begin{eqnarray*}
 &&2|c_2(\lambda,x)-c_2(\lambda,0)|\le  \int^t_0e^{r}|f_{\lambda,x}(r)-f_{\lambda,0}(r)|dr\\
 &&\qquad +e^t\int^t_0 |\partial_v \check{L}_\lambda(s, x(s),\dot{x}(s))-
 \partial_v \check{L}_\lambda(s, 0, 0)|ds
+ e^t|\partial_{v} \check{L}_\lambda\left(t, x(t),\dot{x}(t)\right)-\partial_v \check{L}_\lambda(t, 0,0)|.
\end{eqnarray*}
Integrating this inequality over $[0,\tau]$ and using (\ref{e:4.13two}) lead to
\begin{eqnarray}\label{e:4.18two}
 2\tau|c_2(\lambda,x)-c_2(\lambda,0)|\le  e^\tau\int^\tau_0|\partial_q \check{L}_\lambda(t,x(t),\dot{x}(t))-\partial_q \check{L}_\lambda(t,0, 0)|dt\nonumber\\
+e^\tau(2\tau+1)\int^\tau_0|\partial_{v} \check{L}_\lambda\left(t, x(t),\dot{x}(t)\right)-\partial_v \check{L}_\lambda(t, 0,0)|dt.
\end{eqnarray}

Note that  (L5) of Lemma~\ref{lem:Gener-modif} and
 \cite[Proposition~B.9]{Lu10} (\cite[Proposition~C.1]{Lu9}) imply that
 \begin{eqnarray*}
 &&\int^\tau_0\big|\big[
\partial_q \check{L}_{\lambda}\bigl(s,x(s),\dot{x}(s)\bigr)-\partial_q \check{L}_{\lambda}\bigl(s, 0, 0\bigr)\big]\big|ds
 \to 0\quad\hbox{and}\\
 && \int^\tau_0\big|\big[
\partial_v \check{L}_{\lambda}\bigl(s,x(s),\dot{x}(s)\bigr)-\partial_v \check{L}_{\lambda}\bigl(s, 0, 0\bigr)\big]\big|ds\\
&&\le\sqrt{\tau}\Big(\int^\tau_0\Big|\big[
\partial_v \check{L}_{\lambda}\bigl(s,x(s),\dot{x}(s)\bigr)-\partial_v \check{L}_{\lambda}\bigl(s, 0, 0\bigr)\big]\big|^2ds\Big)^{1/2}
 \to 0
 \end{eqnarray*}
 uniformly in $\lambda\in\hat\Lambda$ as $\|x\|_{1,2}\to 0$.
The required claim follows from this and (\ref{e:4.17two+})-(\ref{e:4.18two}).

\textsf{Step 2}({\it Prove that for any given $\nu'>0$ there exists $\varepsilon'>0$
such that $\nabla\check{\mathcal{E}}_{\lambda}(x)=0$ and $\|x\|_{1,2}\le\varepsilon'$ imply $\|x\|_{C^1}<\nu'$}).

Since $\nabla\check{\mathcal{E}}_{\lambda}(x)=0$ and $\nabla\check{\mathcal{E}}_{\lambda}(0)=0$, by (\ref{e:4.16}) we have
\begin{eqnarray*}
0&=&e^t\int^t_0\left[ e^{-2s}\int^s_0e^{r}f_{\lambda,x}(r)dr\right]ds
+e^{-t}\int^t_0e^{r}f_{\lambda,x}(r)dr
\nonumber\\
&&\quad + c_1(\lambda,x)e^t -c_2(\lambda,x)e^{-t}+ \partial_{v} \check{L}_\lambda\left(t, x(t),\dot{x}(t)\right).
\end{eqnarray*}
and
\begin{eqnarray*}
0&=&
e^t\int^t_0\left[ e^{-2s}\int^s_0e^{r}f_{\lambda,0}(r)dr\right]ds
+e^{-t}\int^t_0e^{r}f_{\lambda,0}(r)dr
\nonumber\\
&&\quad + c_1(\lambda, 0)e^t -c_2(\lambda, 0)e^{-t}+ \partial_{v} \check{L}_\lambda\left(t, 0, 0\right).
\end{eqnarray*}
For each $0\le t\le\tau$, the former minus the latter gives rise to
\begin{eqnarray*}
&&|\partial_{v} \check{L}_\lambda\left(t, x(t),\dot{x}(t)\right)-\partial_{v} \check{L}_\lambda\left(t, 0, 0\right)|\le
e^t|c_1(\lambda,x)-c_1(\lambda,0)| + e^{-t}|c_2(\lambda,x)-c_2(\lambda,0)|\\
&&\qquad+e^t\int^t_0\left[ e^{-2s}\int^s_0e^{r}|f_{\lambda,x}(r)-f_{\lambda,0}(r)|dr\right]ds
+e^{-t}\int^t_0e^{r}|f_{\lambda,x}(r)-f_{\lambda,0}(r)|dr\\
&&\le e^\tau|c_1(\lambda,x)-c_1(\lambda,0)| + |c_2(\lambda,x)-c_2(\lambda,0)|
+e^\tau(\tau+1)\int^\tau_0|f_{\lambda,x}(r)-f_{\lambda,0}(r)|dr
\end{eqnarray*}
and (by (\ref{e:4.13two})) hence
\begin{eqnarray}\label{e:4.13two++}
&&|\partial_{v} \check{L}_\lambda\left(t, x(t),\dot{x}(t)\right)-
\partial_{v} \check{L}_\lambda\left(t, 0, 0\right)|\le
e^\tau|c_1(\lambda,x)-c_1(\lambda,0)| +|c_2(\lambda,x)-c_2(\lambda,0)|\nonumber\\
&&\qquad +e^\tau(\tau+1)\int^\tau_0|\partial_q \check{L}_\lambda(t,x(t),\dot{x}(t))-\partial_q \check{L}_\lambda(t,0, 0)|dt\nonumber\\
 &&\qquad +e^\tau(\tau+1)\tau\int^\tau_0|\partial_v \check{L}_\lambda(t,x(t),\dot{x}(t))-\partial_v \check{L}_\lambda(t,0, 0)|dt.
 \end{eqnarray}

Note that (L3) in Lemma~\ref{lem:Gener-modif} and  the mean value theorem of integrals may lead to
 \begin{eqnarray*}
 \check{c}|v|^2\le\int^1_0\left(\partial_{vv}\check{L}_\lambda(t, q, sv)[v], v\right)_{\mathbb{R}^n}ds
 =\left(\partial_v\check{L}_\lambda(t, q, v)-\partial_v\check{L}_\lambda(t, q, 0), v\right)_{\mathbb{R}^n}
  \end{eqnarray*}
 and so
  $\check{c}|v|\le \left|\partial_v\check{L}_\lambda(t, q, v)-\partial_v\check{L}_\lambda(t, q, 0)\right|$
  for any $(\lambda, t, q,v)\in \hat\Lambda\times [0,\tau]\times B^n_{3\iota/4}(0)\times\mathbb{R}^n$. In particular, for all $t\in [0,\tau]$ we have
 \begin{eqnarray*}
 \check{c}|\dot{x}(t)|&\le& \left|\partial_v\check{L}_\lambda(t, x(t), \dot{x}(t))-\partial_v\check{L}_\lambda(t, x(t), 0)\right|\nonumber\\
 &\le& \left|\partial_v\check{L}_\lambda(t, x(t), \dot{x}(t))-\partial_v\check{L}_\lambda(t, 0, 0)\right|
 +\left|\partial_v\check{L}_\lambda(t, x(t), 0)-\partial_v\check{L}_\lambda(t, 0, 0)\right|.
 \end{eqnarray*}
By this and (\ref{e:4.13two++}) we arrive at
\begin{eqnarray}\label{e:lv2.1two}
\check{c}|\dot{x}(t)|&\le& e^\tau|c_1(\lambda,x)-c_1(\lambda,0)| +|c_2(\lambda,x)-c_2(\lambda,0)|+\left|\partial_v\check{L}_\lambda(t, x(t), 0)-\partial_v\check{L}_\lambda(t, 0, 0)\right|\nonumber\\
&&+e^\tau(\tau+1)\int^\tau_0|\partial_q \check{L}_\lambda(t,x(t),\dot{x}(t))-\partial_q \check{L}_\lambda(t,0, 0)|dt\nonumber\\
 &&+e^\tau(\tau+1)\tau\int^\tau_0|\partial_v \check{L}_\lambda(t,x(t),\dot{x}(t))-\partial_v \check{L}_\lambda(t,0, 0)|dt.
 \end{eqnarray}
 Since $\|x\|_{C^0}\le (\sqrt{\tau}+1/\sqrt{\tau})\|x\|_{1,2}$,
as in the final proof of Step 1, the required claim may follow from 
(\ref{e:lv2.1two}) and  the conclusion in Step 1.

\textsf{Step 3}({\it Complete the proof for the first claim}).
Note that $x$ satisfies
\begin{eqnarray}\label{e:P-EL1}
0&=&\frac{d}{dt}\big(\partial_v\check{L}_\lambda(t, x(t), \dot{x}(t))\big)-\partial_q\check{L}_\lambda(t, x(t), \dot{x}(t))\nonumber\\
&=&\partial_{vv}\check{L}_\lambda(t, x(t), \dot{x}(t))\ddot{x}(t)+\partial_{vq}\check{L}_\lambda(t, x(t), \dot{x}(t))\dot{x}(t)
+\partial_{vt}\check{L}_\lambda(t, x(t), \dot{x}(t))\nonumber\\
&&-\partial_q\check{L}_\lambda(t, x(t), \dot{x}(t)).
\end{eqnarray}
In particular, taking $x=0$ we get
\begin{eqnarray}\label{e:P-EL2}
0=\partial_{vt}\check{L}_\lambda(t, 0, 0)-\partial_q\check{L}_\lambda(t, 0, 0).
\end{eqnarray}
(\ref{e:P-EL1}) minus (\ref{e:P-EL2}) gives rise to
\begin{eqnarray}\label{e:P-EL3}
0&=&\partial_{vv}\check{L}_\lambda(t,  x(t), \dot{x}(t))\ddot{x}(t)+\partial_{vq}\check{L}_\lambda(t,  x(t), \dot{x}(t))\dot{x}(t)
\nonumber\\
&&+\partial_{vt}\check{L}_\lambda(t, x(t), \dot{x}(t))-\partial_{vt}\check{L}_\lambda(t, 0, 0)\nonumber\\
&&-\partial_{q}\check{L}_\lambda(t, x(t), \dot{x}(t))+\partial_q\check{L}_\lambda(t, 0, 0).
\end{eqnarray}
Since (L2) of Lemma~\ref{lem:Gener-modif} implies
$|[\partial_{vv}\check{L}_\lambda(t,  x(t), \dot{x}(t))]^{-1}\xi|\le\frac{1}{\check{c}}|\xi|\;\forall \xi\in{\mathbb{R}^n}$,  (\ref{e:P-EL3}) and (L3) in Lemma~\ref{lem:Gener-modif} lead to
\begin{eqnarray}\label{e:P-EL4-}
|\ddot{x}(t)|&\le&
\frac{1}{\check{c}}|\partial_{vq}\check{L}_\lambda(t,  x(t), \dot{x}(t))|\cdot|\dot{x}(t)|
+\frac{1}{\check{c}}|\partial_{vt}\check{L}_\lambda(t, x(t), \dot{x}(t))-\partial_{vt}\check{L}_\lambda(t, 0, 0)|\nonumber\\
&&+\frac{1}{\check{c}}|\partial_{q}\check{L}_\lambda(t, x(t), \dot{x}(t))-\partial_q\check{L}_\lambda(t, 0, 0)|\nonumber\\
&\le&\frac{\check{C}}{\check{c}}(1+|\dot{x}(t)|)|\cdot|\dot{x}(t)|
+\frac{1}{\check{c}}|\partial_{vt}\check{L}_\lambda(t, x(t), \dot{x}(t))-\partial_{vt}\check{L}_\lambda(t, 0, 0)|\nonumber\\
&&+\frac{1}{\check{c}}|\partial_{q}\check{L}_\lambda(t, x(t), \dot{x}(t))-\partial_q\check{L}_\lambda(t, 0, 0)|.
\end{eqnarray}
Recall that $\hat\Lambda$ is either compact or sequential compact and that
$\partial_{q}\check{L}_\lambda(t,q,v)$ and $\partial_{vt}\check{L}_{\lambda}(t,q,v)$
is continuous in $(\lambda, t, q, v)$ by (L0) in Lemma~\ref{lem:Gener-modif}.
The desired claim easily follows from (\ref{e:P-EL4-}) and the result in Step 2.
\end{proof}

\begin{remark}\label{rm:funct-analy}
{\rm  \begin{enumerate}
\item[\rm (i)] The existence and continuity of the  partial derivative $\partial_t\check{L}(\cdot)$ in (L0) of Lemma~\ref{lem:Gener-modif}
are not used in the proofs of Propositions~\ref{prop:funct-analy},~\ref{prop:continA},~\ref{prop:solutionLagr}.
\item[\rm (ii)] The existence and continuity of the  partial derivatives $\partial_{tv}\check{L}(\cdot)$ and $\partial_{vt}\check{L}(\cdot)$ in (L0)
of Lemma~\ref{lem:Gener-modif} are not used in the proofs of Propositions~\ref{prop:funct-analy},~\ref{prop:continA};
but they are necessary for the proof of Proposition~\ref{prop:solutionLagr}.
\end{enumerate}
}
\end{remark}

\begin{proof}[\bf Proof of Theorem~\ref{th:bif-nessLagrGenerEu}(I)]
  Since there exist a sequence $\{(\lambda_k,x_k)\}_{k\ge 1}\subset\Lambda\times C^1_{V_0\times V_1}([0,\tau]; B^n_{\iota}(0))$
   converging to $(\mu,0)$ such that each $x_k$ is a nonzero solution of
   the problem (\ref{e:LagrGenerEu})--(\ref{e:LagrGenerBEu}) with $\lambda=\lambda_k$, i.e.,
   $\nabla\tilde{\mathcal{E}}^\ast_{\lambda_k}(x_k)=0$, $k=1,2,\cdots$,
    by (L1) in Lemma~\ref{lem:Gener-modif} with $\hat\Lambda=\{\mu,\lambda_k\,|\,k\in\mathbb{N}\}$ we deduce that $\nabla\check{\mathcal{E}}_{\lambda_k}(x_k)=0$ for $k$ large enough.
   Therefore $(\mu, 0)$  is a  bifurcation point along sequence of
$\nabla\check{\mathcal{E}}_{\lambda}(x)=0$ in $\hat\Lambda\times\mathcal{U}$.
By (i) and (v)-(vii) of Proposition~\ref{prop:funct-analy}
we see that the conditions of \cite[Theorem~3.1]{Lu8} (\cite[Theorem~C.6]{Lu11}) are satisfied with
$\mathcal{F}_\lambda= {\check{\mathcal{E}}}_{\lambda}$ and $H=X={\bf H}_{V_0\times V_1}$,  $U=\mathcal{U}$ and $\lambda^\ast=\mu$.
Then  $m^0_\tau(\check{\mathcal{E}}_{\mu}, 0)>0$ and so
$m^0(\tilde{\mathcal{E}}^\ast_\lambda, 0)>0$ by  (\ref{e:MorseindexEu}).
 \end{proof}

\begin{proof}[\bf Proof of Theorem~\ref{th:bif-suffLagrGenerEu}]
Since $\Lambda$ is a real interval and  $\mu\in{\rm Int}(\Lambda)$ we can take a small $\varepsilon>0$
so that $\hat\Lambda:=[\mu-\varepsilon,\mu+\varepsilon]\subset\Lambda$.
Propositions~\ref{prop:funct-analy},~\ref{prop:continA}
shows that $(\mathcal{U}, \mathcal{U}^X, \{\check{\mathcal{E}}_{\lambda}\,|\,\lambda\in\hat\Lambda\})$ satisfies the conditions
in \cite[Theorem~3.6]{Lu9}  with $\lambda^\ast=\mu$ except for the condition (f).
The latter may follow from  (\ref{e:MorseindexEu}) and the assumption, that is,
    $m^0(\check{\mathcal{E}}_{\mu}, 0)\ne 0$ and
   $m^0(\check{\mathcal{E}}_{\lambda}, 0)=0$ for all $\lambda\in\hat\Lambda\setminus\{\mu\}$ near $\mu$,
 and  $m^-(\check{\mathcal{E}}_{\lambda}, 0)$ take, respectively, values $m^-(\check{\mathcal{E}}_{\mu}, 0)$ and
  $m^-(\check{\mathcal{E}}_{\mu}, 0)+ m^0(\check{\mathcal{E}}_{\mu}, 0)$
 as $\lambda\in\hat\Lambda$ varies in two deleted half neighborhoods  of $\mu$.
 Therefore from \cite[Theorem~C.7]{Lu11} (\cite[Theorem~3.6]{Lu10})  we deduce that  one of the following  occurs:
\begin{enumerate}
\item[\rm (i)]  $\nabla\check{\mathcal{E}}_{\mu}$  has a sequence of nontrivial zero points converging to $0$ in $\mathcal{U}$.

\item[\rm (ii)]  For every $\lambda\in\hat\Lambda\setminus\{\mu\}$ near $\mu$,
$A_{\lambda}$ has a zero point  $y_\lambda\ne 0$, which  converge to zero in  $\mathcal{U}^X$ as $\lambda\to \mu$.

\item[\rm (iii)] For a given neighborhood $\mathfrak{M}$ of $0\in \mathcal{U}^X$, there is
a one-sided neighborhood $\Lambda^0$ of $\mu$ in
$\hat\Lambda$ (therefore in $\Lambda$) such that
for any $\lambda\in\Lambda^0\setminus\{\mu\}$,  $A_{\lambda}$ has
at least two distinct nontrivial zero points in $\mathfrak{M}$,  $y_\lambda^1$ and $y_\lambda^2$,
which can also be required to satisfy  $\check{\mathcal{E}}_{\lambda}(y^1_\lambda)\ne \check{\mathcal{E}}_{\lambda}(y^2_\lambda)$
provided that $m^0(\check{\mathcal{E}}_{\mu}, 0)>1$ and $A_{\lambda}$ has only
finitely many nontrivial zero points in $\mathfrak{M}$.
\end{enumerate}
 As above the required results may follow from Proposition~\ref{prop:solutionLagr} and (L0) in Lemma~\ref{lem:Gener-modif}.
\end{proof}

To prove Theorem~\ref{th:bif-nessLagrGenerEu}(II) and Theorem~\ref{th:bif-existLagrGenerEu+},
noting that because of Propositions~\ref{prop:funct-analy},~\ref{prop:continA},
(specially Proposition~\ref{prop:funct-analy}(iv) implies that
$\check{\mathcal{E}}_\lambda|_{\mathcal{U}^X}\in C^2(\mathcal{U}^X, \mathbb{R})$
  and $B_\lambda$ is continuous as a map from $\mathcal{U}^X$ to $\mathscr{L}_s({\bf H}_{V_0\times V_1})$
because $A_\lambda'=B_\lambda$),
we may, respectively, apply \cite[Theorem~C.4]{Lu11} and \cite[Theorem~C.5]{Lu11} to
$(\mathcal{U}, \mathcal{U}^X,  \{\check{\mathcal{E}}_{\lambda}\,|\, \lambda\in\hat\Lambda\})$ to obtain (I) and (II) of the following theorem.

\begin{theorem}\label{th:bif-existLagrGenerEu}
\begin{enumerate}
\item[\rm (I)]{\rm (\textsf{Sufficient condition}):}
  Suppose that $\hat\Lambda$ is first countable and that there exist two sequences in  $\Lambda$ converging to $\mu$, $(\lambda_k^-)$ and
$(\lambda_k^+)$,  such that one of the following conditions is satisfied:
 \begin{enumerate}
 \item[\rm (I.1)] For each $k\in\mathbb{N}$,
 either $0$  is not an isolated critical point of $\check{\mathcal{E}}_{\lambda^+_k}$,
 or $0$ is not an isolated critical point of $\check{\mathcal{E}}_{\lambda^-_k}$,
 or $0$  is an isolated critical point of $\check{\mathcal{E}}_{\lambda^+_k}$ and $\check{\mathcal{E}}_{\lambda^-_k}$ and
  $C_m(\check{\mathcal{E}}_{\lambda^+_k}, 0;{\bf K})$ and $C_m(\check{\mathcal{E}}_{\lambda^-_k}, 0;{\bf K})$ are not isomorphic for some Abel group ${\bf K}$ and some $m\in\mathbb{Z}$.  Moreover,  in the third case, ``$C_m(\check{\mathcal{E}}_{\lambda^+_k}, 0;{\bf K})$ and
  $C_m(\check{\mathcal{E}}_{\lambda^-_k}, 0;{\bf K})$''  may be replaced by
   ``$C_\ast(\check{\mathcal{E}}_{\lambda^+_k}|_{\mathcal{U}^X}, 0;{\bf K})$ and
   $C_\ast(\check{\mathcal{E}}_{\lambda^-_k}|_{\mathcal{U}^X}, 0;{\bf K})$''.
 \item[\rm (I.2)]
 For each $k\in\mathbb{N}$,
 $[m^-(\check{\mathcal{E}}_{\lambda^+_k}, 0), m^-(\check{\mathcal{E}}_{\lambda^+_k}, 0)+
m^0(\check{\mathcal{E}}_{\lambda^+_k}, 0)]\cap[m^-(\check{\mathcal{E}}_{\lambda^-_k}, 0),
m^-(\check{\mathcal{E}}_{\lambda^-_k}, 0)+m^0(\check{\mathcal{E}}_{\lambda^-_k}, 0)]=\emptyset$,
and there exists $\lambda\in\{\lambda^+_k, \lambda^-_k\}$ such that  $0$  is an either non-isolated or homological visible critical point of
$\check{\mathcal{E}}_{\lambda}$. Moreover, $\check{\mathcal{E}}_{\lambda}$ can be replaced by $\check{\mathcal{E}}_{\lambda}|_{\mathcal{U}^X}$
in the second condition.
\item[\rm (I.3)] For each $k\in\mathbb{N}$,
$[m^-(\check{\mathcal{E}}_{\lambda^+_k}, 0), m^-(\check{\mathcal{E}}_{\lambda^+_k}, 0)+
m^0(\check{\mathcal{E}}_{\lambda^+_k}, 0)]\cap[m^-(\check{\mathcal{E}}_{\lambda^-_k}, 0),
m^-(\check{\mathcal{E}}_{\lambda^-_k}, 0)+m^0(\check{\mathcal{E}}_{\lambda^-_k}, 0)]=\emptyset$,
and either $m^0(\check{\mathcal{E}}_{\lambda^+_k}, 0)=0$ or $m^0(\check{\mathcal{E}}_{\lambda^-_k}, 0)=0$.
 \end{enumerate}
   Then there exists a sequence $\{(\lambda_k, x_k)\}_{k\ge 1}$ in  $\check\Lambda\times (\mathcal{U}\cap C^2([0,\tau], \R^n))$
   such that $\lambda_k\to\mu$, $0<\|x_k\|_{C^2}\to 0$ and $\nabla\check{\mathcal{E}}_{\lambda}(x_k)=0$ for $k=1,2,\cdots$,
   where  $\check{\Lambda}=\{\mu,\lambda^+_k, \lambda^-_k\,|\,k\in\mathbb{N}\}$.
  In particular,  $(\mu, 0)$  is a  bifurcation point of the problem $\nabla\check{\mathcal{E}}_{\lambda}(x)=0$
  in  $\check\Lambda\times C^2([0,\tau], \R^n)$  with respect to the branch $\{(\lambda, 0)\,|\,\lambda\in\check\Lambda\}$
  (and so $\{(\lambda, 0)\,|\,\lambda\in\hat\Lambda\}$).

\item[\rm (II)]{\rm (\textsf{Existence for bifurcations}):}
For $\lambda^-, \lambda^+$ in a path-connected component of $\hat\Lambda$
suppose that one of the following conditions is satisfied:
 \begin{enumerate}
 \item[\rm (II.1)] Either $0$  is not an isolated critical point of $\check{\mathcal{E}}_{\lambda^+}$,
 or $0$ is not an isolated critical point of $\check{\mathcal{E}}_{\lambda^-}$,
 or $0$  is an isolated critical point of $\check{\mathcal{E}}_{\lambda^+}$ and $\check{\mathcal{E}}_{\lambda^-}$ and
  $C_m(\check{\mathcal{E}}_{\lambda^+}, 0;{\bf K})$ and $C_m(\check{\mathcal{E}}_{\lambda^-}, 0;{\bf K})$ are not isomorphic for some Abel group ${\bf K}$ and some $m\in\mathbb{Z}$.  Moreover, in the final case, ``$C_m(\check{\mathcal{E}}_{\lambda^+}, 0;{\bf K})$ and
  $C_m(\check{\mathcal{E}}_{\lambda^-}, 0;{\bf K})$''  may be replaced by
   ``$C_\ast(\check{\mathcal{E}}_{\lambda^+}|_{\mathcal{U}^X}, 0;{\bf K})$ and
   $C_\ast(\check{\mathcal{E}}_{\lambda^-}|_{\mathcal{U}^X}, 0;{\bf K})$''.

\item[\rm (II.2)] $[m^-(\check{\mathcal{E}}_{\lambda^+}, 0), m^-(\check{\mathcal{E}}_{\lambda^+}, 0)+
m^0(\check{\mathcal{E}}_{\lambda^+}, 0)]\cap[m^-(\check{\mathcal{E}}_{\lambda^-}, 0),
m^-(\check{\mathcal{E}}_{\lambda^-}, 0)+m^0(\check{\mathcal{E}}_{\lambda^-}, 0)]=\emptyset$,
and  there exists $\lambda\in\{\lambda^+, \lambda^-\}$ such that $0$  is an either non-isolated or homological visible critical point of
$\check{\mathcal{E}}_{\lambda}$. Moreover, in the second condition, $\check{\mathcal{E}}_{\lambda}$ can be replaced by $\check{\mathcal{E}}_{\lambda}|_{\mathcal{U}^X}$.

\item[\rm (II.3)] $[m^-(\check{\mathcal{E}}_{\lambda^+}, 0), m^-(\check{\mathcal{E}}_{\lambda^+}, 0)+
m^0(\check{\mathcal{E}}_{\lambda^+}, 0)]\cap[m^-(\check{\mathcal{E}}_{\lambda^-}, 0),
m^-(\check{\mathcal{E}}_{\lambda^-}, 0)+m^0(\check{\mathcal{E}}_{\lambda^-}, 0)]=\emptyset$,
and either $m^0(\check{\mathcal{E}}_{\lambda^+}, 0)=0$ or $m^0(\check{\mathcal{E}}_{\lambda^-}, 0)=0$.
 \end{enumerate}
 Then for any path $\alpha:[0,1]\to\hat\Lambda$ connecting $\lambda^+$ to $\lambda^-$
 there exists a sequence $\{(t_k, x_k)\}_{k\ge 1}$  in $[0, 1]\times \mathcal{U}$
   converging to $(\bar{t}, 0)$ for some $\bar{t}\in [0,1]$, such that each $x_k$
  is a nonzero solution of $\nabla\check{\mathcal{E}}_{\alpha(t_k)}(x)=0$, $k=1,2,\cdots$.
 (In fact, $\|x_k\|_{C^2}\to 0$ by Proposition~\ref{prop:solutionLagr}.)
  Moreover, $\alpha(\bar{t})$ is not equal to $\lambda^+$ (resp. $\lambda^-$) if $
 m^0(\check{\mathcal{E}}_{\lambda^+}, 0)=0$ (resp. $m^0(\check{\mathcal{E}}_{\lambda^-}, 0)=0$).
  \end{enumerate}
  \end{theorem}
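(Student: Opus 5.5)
The proof will be a direct application of the abstract bifurcation theorems \cite[Theorem~C.4]{Lu11} (for (I)) and \cite[Theorem~C.5]{Lu11} (for (II)) to the triple $(\mathcal{U}, \mathcal{U}^X, \{\check{\mathcal{E}}_{\lambda}\,|\,\lambda\in\hat\Lambda\})$, with $H={\bf H}_{V_0\times V_1}$ and $X={\bf X}_{V_0\times V_1}$. The work consists of checking that this triple satisfies the hypotheses of those theorems, and then upgrading the $W^{1,2}$-convergence they produce to $C^2$-convergence.

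\textbf{Structural hypotheses.} By Proposition~\ref{prop:funct-analy}(i), each $\check{\mathcal{E}}_\lambda$ is $C^{2-0}$ on $\mathcal{U}$, twice G\^ateaux differentiable, and has $0$ as a critical point. By (iv), the gradient restricts to a $C^1$ map $A_\lambda:\mathcal{U}^X\to X$. Since $A_\lambda'=B_\lambda$, it follows that $\check{\mathcal{E}}_\lambda|_{\mathcal{U}^X}\in C^2$ and that $B_\lambda:\mathcal{U}^X\to\mathscr{L}_s(H)$ is continuous. Proposition~\ref{prop:funct-analy}(v) gives the splitting $B_\lambda=P_\lambda+Q_\lambda$, with $P_\lambda$ uniformly positive definite (coercivity constant $\min\{\check c,1\}$) and $Q_\lambda$ compact. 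Parts (vi) and (vii) give the required continuity of $P$ and $Q$ at $(\mu,0)$, uniformly in $\lambda$. Part (viii) gives the regularity condition: eigenvectors of $B_\lambda(0)$ with nonpositive eigenvalue lie in $X$, and $B_\lambda(0)\xi\in X$ implies $\xi\in X$.

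\textbf{Parameter continuity.} Proposition~\ref{prop:continA} supplies the continuity of $(\lambda,x)\mapsto\check{\mathcal{E}}_\lambda(x)$ and of $(\lambda,x)\mapsto A_\lambda(x)$ on $\hat\Lambda\times\mathcal{U}^X$. The remaining hypothesis is the equivalence of topologies on the critical set. By Proposition~\ref{prop:solutionLagr}, critical points that are small in $W^{1,2}$ are small in $C^2$, uniformly in $\lambda$. The same proposition shows that isolatedness of $0$ for $\check{\mathcal{E}}_\lambda|_{\mathcal{U}^X}$ implies isolatedness in $\mathcal{U}$. Together with the abstract equality of critical groups $C_\ast(\check{\mathcal{E}}_\lambda,0)\cong C_\ast(\check{\mathcal{E}}_\lambda|_{\mathcal{U}^X},0)$ from \cite{Lu11}, this justifies the ``Moreover'' replacements in (I.1), (I.2), (II.1), (II.2).

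\textbf{Conclusions.} For (I), the hypotheses (I.1)--(I.3) are exactly the abstract ones of \cite[Theorem~C.4]{Lu11}. That theorem yields nontrivial zeros $x_k$ of $\nabla\check{\mathcal{E}}_{\lambda_k}$ with $\lambda_k\in\check\Lambda$, $\lambda_k\to\mu$ and $\|x_k\|_{1,2}\to0$; Proposition~\ref{prop:solutionLagr} then gives $\|x_k\|_{C^2}\to0$. For (II), restricting the family to $\alpha([0,1])$, a compact subset of $\hat\Lambda$, \cite[Theorem~C.5]{Lu11} produces $(t_k,x_k)\to(\bar t,0)$ with the stated property, again with $C^2$-convergence. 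The claim $\alpha(\bar t)\ne\lambda^\pm$ when $m^0=0$ there follows from the necessity part \cite[Theorem~3.1]{Lu8}, as in the proof of Theorem~\ref{th:bif-nessLagrGenerEu}(I).

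\textbf{Main obstacle.} The delicate step is making sure the hypotheses of \cite{Lu11} concerning the pair $(H,X)$ are met exactly. In particular, the uniform continuity of $Q_\lambda(\zeta)$ at $0$ and the regularity condition (viii) are needed with respect to the Hilbert topology, while $C^2$-smoothness is only available on $X$. I expect these to be covered by Proposition~\ref{prop:funct-analy}(vii),(viii), but that matching is the point to check carefully.
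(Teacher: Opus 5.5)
Your proposal is correct and follows the paper's proof: it applies \cite[Theorem~C.4]{Lu11} and \cite[Theorem~C.5]{Lu11} to $(\mathcal{U},\mathcal{U}^X,\{\check{\mathcal{E}}_\lambda\,|\,\lambda\in\hat\Lambda\})$ after checking the hypotheses via Propositions~\ref{prop:funct-analy} and~\ref{prop:continA}, and then upgrades to $C^2$-convergence with Proposition~\ref{prop:solutionLagr}. The only small differences are these: the paper gets $C_\ast(\check{\mathcal{E}}_\lambda|_{\mathcal{U}^X},0)\cong C_\ast(\check{\mathcal{E}}_\lambda,0)$ from \cite[Corollary~2.8]{JM}, and it handles (I.1) by passing to subsequences (non-isolated versus isolated with non-isomorphic critical groups) before invoking \cite[Theorem~C.4(A)]{Lu11}; your derivation of $\alpha(\bar t)\ne\lambda^\pm$ from the necessity theorem \cite[Theorem~3.1]{Lu8} is a valid alternative to reading it off Theorem~C.5.
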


\begin{proof}[\bf Proof]
\textsf{Step 1}({\it Prove} (I)).
Because of Propositions~\ref{prop:funct-analy},~\ref{prop:continA},
(specially Proposition~\ref{prop:funct-analy}(iv) implies that
$\check{\mathcal{E}}_\lambda|_{\mathcal{U}^X}\in C^2(\mathcal{U}^X, \mathbb{R})$
  and $B_\lambda$ is continuous as a map from $\mathcal{U}^X$ to $\mathscr{L}_s({\bf H}_{V_0\times V_1})$
since $A_\lambda'=B_\lambda$),
for the case (I.2) [resp. (I.3)] we  apply \cite[Theorem~C.4(B.1),(B.2)]{Lu11}
(resp. \cite[Theorem~C.4(B.3)]{Lu11}) to obtain:
\begin{enumerate}
 \item[\rm ($\ast$)] There exists a sequence $\{(\lambda_k,x_k)\}_{k\ge 1}\subset\hat\Lambda\times \mathcal{U}\setminus\{(\mu,0)\}$
converging to $(\mu, 0)$  such that $x_k\ne 0$ and $\nabla\check{\mathcal{E}}_{\lambda}(x_k)=0$ for $k=1,2,\cdots$.
  \end{enumerate}

For the case (I.1), if $\{\check{\mathcal{E}}_{\lambda^+_k}\}_{k\ge 1}$ or $\{\check{\mathcal{E}}_{\lambda^-_k}\}_{k\ge 1}$
has a subsequence such that each term of it has $0$ as a non-isolated critical point,
then we have ($\ast$) naturally.
Otherwise, for each sufficiently large integer $l$, $0$  is an isolated critical point of $\check{\mathcal{E}}_{\lambda^+_l}$ and $\check{\mathcal{E}}_{\lambda^-_l}$ and  $C_{m_l}(\check{\mathcal{E}}_{\lambda^+_l}, 0;{\bf K}_l)$ and 
$C_{m_l}(\check{\mathcal{E}}_{\lambda^-_l}, 0;{\bf K}_l)$ are not isomorphic 
for some Abel group ${\bf K}_l$ and some $m_l\in\mathbb{Z}$, which implies 
 that for each such an integer $l$,
$0$  is an isolated critical point of $\check{\mathcal{E}}_{\lambda^+_l}|_{\mathcal{U}^X}$ and
$\check{\mathcal{E}}_{\lambda^-_l}|_{\mathcal{U}^X}$ and  $C_{m_l}(\check{\mathcal{E}}_{\lambda^+_l}|_{\mathcal{U}^X}, 0;{\bf K}_l)$ and $C_{m_l}(\check{\mathcal{E}}_{\lambda^-_l}|_{\mathcal{U}^X}, 0;{\bf K}_l)$ are not isomorphic
because by Proposition~\ref{prop:funct-analy} we may use \cite[Corollary~2.8]{JM} to deduce that
$C_{m_l}(\check{\mathcal{E}}_{\lambda^+_l}|_{\mathcal{U}^X}, 0;{\bf K}_l)\cong
C_{m_l}(\check{\mathcal{E}}_{\lambda^+_l}, 0;{\bf K}_l)$ and $C_{m_l}(\check{\mathcal{E}}_{\lambda^-_l}|_{\mathcal{U}^X}, 0;{\bf K}_l)\cong
C_{m_l}(\check{\mathcal{E}}_{\lambda^-_l}, 0;{\bf K}_l)$.
Then we may use \cite[Theorem~C.4(A)]{Lu11} to get a contradiction provided that ($\ast$) is not true.

In summary, we get ($\ast$) in this case, and therefore the desired statements by Proposition~\ref{prop:solutionLagr}.

\textsf{Step 2}({\it Prove} (II)). Applying \cite[Theorem~C.5]{Lu11} to $(\mathcal{U}, \mathcal{U}^X,  \{\check{\mathcal{E}}_{\lambda}\,|\, \lambda\in\hat\Lambda\})$ a similar proof to that of Step 1 yields the required results.
\end{proof}

\begin{proof}[\bf Proof of Theorem~\ref{th:bif-nessLagrGenerEu}(II)]
Suppose that (II.1) is satisfied. If $\{\tilde{\mathcal{E}}^\ast_{\lambda_k^+}\}_{k\ge 1}$ or $\{\tilde{\mathcal{E}}^\ast_{\lambda_k^-}\}_{k\ge 1}$
has a subsequence such that each term of it has $0$ as a non-isolated critical point,
then the required result may follow from (\ref{e:twofunctAgree}) and Proposition~\ref{prop:solutionLagr}.
Otherwise, for each sufficiently large integer $l$, $0$  is an isolated critical point of $\tilde{\mathcal{E}}^\ast_{\lambda_l^+}$ and
$\tilde{\mathcal{E}}^\ast_{\lambda_l^-}$ and  $C_{m_l}(\tilde{\mathcal{E}}^\ast_{\lambda_l^+}, 0;{\bf K}_l)$ and $C_{m_l}(\tilde{\mathcal{E}}^\ast_{\lambda_l^-}, 0;{\bf K}_l)$ are not isomorphic for some Abel group ${\bf K}_l$ and some $m_l\in\mathbb{Z}$, which implies by (\ref{e:twofunctAgree}) that
$0$  is an isolated critical point of $\check{\mathcal{E}}_{\lambda^+_l}|_{\mathcal{U}^X}$ and
$\check{\mathcal{E}}_{\lambda^-_l}|_{\mathcal{U}^X}$ and  $C_{m_l}(\check{\mathcal{E}}_{\lambda^+_l}|_{\mathcal{U}^X}, 0;{\bf K}_l)$ and $C_{m_l}(\check{\mathcal{E}}_{\lambda^-_l}|_{\mathcal{U}^X}, 0;{\bf K}_l)$ are not isomorphic.
That is, the condition (I.1) in Theorem~\ref{th:bif-existLagrGenerEu} is satisfied.
Hence (\ref{e:twofunctAgree}) and the conclusion in Theorem~\ref{th:bif-existLagrGenerEu}(I) lead to the required result.

Next, let (II.2) be satisfied. If the assumptions in the second sentence in last paragraph are true we are done.
Otherwise, for each sufficiently large integer $l$, $0$  is an isolated critical point of $\tilde{\mathcal{E}}^\ast_{\lambda_l^+}$ and
$\tilde{\mathcal{E}}^\ast_{\lambda_l^-}$ and either $C_{m_l}(\tilde{\mathcal{E}}^\ast_{\lambda^+}, 0;{\bf K}_l)\ne 0$
for some Abel group ${\bf K}_l$ and some $m_l\in\mathbb{Z}$ or
and $C_{n_l}(\tilde{\mathcal{E}}^\ast_{\lambda_l^-}, 0;{\bf K}'_l)\ne 0$  for some Abel group ${\bf K}'_l$ and some $n_l\in\mathbb{Z}$.
Therefore  by (\ref{e:twofunctAgree}) $0$  is an isolated critical point of $\check{\mathcal{E}}_{\lambda^+_l}|_{\mathcal{U}^X}$ and
$\check{\mathcal{E}}_{\lambda^-_l}|_{\mathcal{U}^X}$ and either  $C_{m_l}(\check{\mathcal{E}}_{\lambda^+_l}|_{\mathcal{U}^X}, 0;{\bf K}_l)\ne 0$ or $C_{n_l}(\check{\mathcal{E}}_{\lambda^-_l}|_{\mathcal{U}^X}, 0;{\bf K}'_l)\ne 0$. These mean that
the condition (I.2) in Theorem~\ref{th:bif-existLagrGenerEu} is satisfied. As above, Theorem~\ref{th:bif-existLagrGenerEu}(I) and (\ref{e:twofunctAgree}) yield the desired conclusions.

For the case (I.3), by (\ref{e:MorseindexEu}) we see that the condition (I.3) in Theorem~\ref{th:bif-existLagrGenerEu} is satisfied.
The required statements are derived as above.
 \end{proof}

\begin{proof}[\bf Proof of Theorem~\ref{th:bif-existLagrGenerEu+}]
For the case (i) in Theorem~\ref{th:bif-existLagrGenerEu+}. The first two cases easily follow from
Proposition~\ref{prop:solutionLagr} and (\ref{e:twofunctAgree}).
For the third case, $0$  is also an isolated critical point of $\check{\mathcal{E}}_{\lambda^+}$
and $\check{\mathcal{E}}_{\lambda^-}$ by Proposition~\ref{prop:solutionLagr}  and (\ref{e:twofunctAgree}),
and $C_m(\check{\mathcal{E}}_{\lambda^+}|_{\mathcal{U}^X}, 0;{\bf K})$ and $C_m(\check{\mathcal{E}}_{\lambda^-}|_{\mathcal{U}^X}, 0;{\bf K})$ are not isomorphic.  Theorem~\ref{th:bif-existLagrGenerEu}(II) leads to the required results.

For the case (ii) in Theorem~\ref{th:bif-existLagrGenerEu+}, by (\ref{e:twofunctAgree})
 ``$0$  is an either non-isolated or homological visible critical point of
$\mathcal{E}^\ast_{\lambda}$'' is equivalent to
``$0$  is an either non-isolated or homological visible critical point of
$\check{\mathcal{E}}_{\lambda^+}|_{\mathcal{U}^X}$''.
Because of these and (\ref{e:MorseindexEu}), Theorem~\ref{th:bif-existLagrGenerEu}(II) yields the required results.

The case (iii) in Theorem~\ref{th:bif-existLagrGenerEu+} follows from (\ref{e:MorseindexEu}) and Theorem~\ref{th:bif-existLagrGenerEu}(II).
 \end{proof}

\subsubsection{Proof of Theorem~\ref{th:bif-existLagrGener}}\label{sec:LagrBound.1.4}

By contradiction suppose that there exists a path $\alpha:[0,1]\to\Lambda$ connecting $\lambda^+$ to $\lambda^-$ such that
each point $(\alpha(s), \gamma_{\alpha(s)})$, $s\in [0,1]$,  is not a  bifurcation point  of the problem (\ref{e:LagrGener})--(\ref{e:LagrGenerB}) in
 $\alpha([0,1])\times C^2_{S_0\times S_1}([0,\tau]; M)$  with respect to the branch $\{(\lambda, \gamma_\lambda)\,|\,\lambda\in\alpha([0,1])\}$.
  Then for some small $\epsilon>0$ we have:
\begin{equation}\label{e:contrad}
\left.\begin{array}{ll}
&\gamma\in C^{2}_{S_0\times S_1}([0,\tau]; M)\;\hbox{satisfies
$\|\gamma-\gamma_{\alpha(s)}\|_{C^2([0,\tau];\mathbb{R}^N)}\le\epsilon$}\\
&\hbox{and (\ref{e:LagrGener})--(\ref{e:LagrGenerB}) with $\lambda=\alpha(s)$ for some $s\in [0,1]$}\\
&\Longrightarrow\;\gamma=\gamma_\lambda\;\hbox{for some $\lambda\in\alpha([0,1])$.}
\end{array}\right\}
\end{equation}

Fix a point $\mu\in\alpha([0,1])$. Let $\overline{\gamma}$ be as in (\ref{e:gamma1}).
We have a compact neighborhood $\hat\Lambda$ of $\mu$ in $\alpha([0,1])$ such that
(\ref{e:gamma2}) is satisfied for all $(\lambda,t)\in\hat\Lambda\times [0,\tau]$, i.e.,
\begin{eqnarray}\label{e:gamma2contrad}
{\rm dist}_g(\gamma_\lambda(t), \overline{\gamma}(t))<\iota,\quad\forall (\lambda,t)\in
\hat\Lambda\times[0,\tau].
\end{eqnarray}
Then the reduction in Section~\ref{sec:LagrBound.1.1} is valid after we use $\hat\Lambda$ to replace $\Lambda$ therein.
Therefore for the functionals in (\ref{e:brakefunctGener*}), (\ref{e:contrad}) implies that for some $\bar{\epsilon}>0$ we have
\begin{equation}\label{e:contrad1}
\left.\begin{array}{ll}
&x\in C^1_{V_0\times V_1}([0,\tau]; B^n_{\iota}(0))\;\hbox{satisfies $\|x\|_{C^2([0,\tau];\mathbb{R}^N)}\le \bar{\epsilon}$}\\
&\hbox{and $d\tilde{\mathcal{E}}^\ast_\lambda(x)=0$ for some $\lambda\in\hat\Lambda$}\quad\Longrightarrow\;x=0.
\end{array}\right\}
\end{equation}
It follows from this fact, (\ref{e:twofunctAgree}) and Proposition~\ref{prop:solutionLagr} that
there exists $\hat{\epsilon}>0$ such that
\begin{equation}\label{e:contrad2}
\left.\begin{array}{ll}
&x\in \mathcal{U}\;\hbox{satisfies $\|x\|_{1,2}\le \hat{\epsilon}$ and}\\
&\hbox{$d\check{\mathcal{E}}_{\lambda}(x)=0$ for some $\lambda\in\hat\Lambda$}\quad\Longrightarrow\;x=0.
\end{array}\right\}
\end{equation}
Since we can shrink $\hat{\epsilon}>0$ so small that the ball
$\bar{B}_{\hat{\epsilon}}({\bf H}_{V_0\times V_1}):=\{\xi\in {\bf H}_{V_0\times V_1}\,|\, \|\xi\|_{1,2}\le\hat{\epsilon}\}$
is contained in $\mathcal{U}$, (\ref{e:contrad2}) means that $0\in\mathcal{U}$
is a unique critical point of $\check{\mathcal{E}}_{\lambda}$ in $\bar{B}_{\hat{\epsilon}}({\bf H}_{V_0\times V_1})$
 for each $\lambda\in\hat\Lambda$.

Take $\bar{s}\in [0, 1]$ such that $\alpha(\bar{s})=\mu$. We have a connected compact neighborhood $N(\bar{s})$ of $\bar{s}$ in $[0, 1]$
such that $\alpha(N(\bar{s}))\subset\hat\Lambda$.
Because of (\ref{e:contrad2}), as in the proof of \cite[Theorem~C.5]{Lu11} we have a correspondent result of \cite[(C.17)]{Lu11}
and therefore  obtain that for any Abel group ${\bf K}$ and any $s, s'\in N(\bar{s})$,
\begin{eqnarray}\label{e:Spli.2.4.5Lu10}
C_q(\check{\mathcal{E}}_{\alpha(s)}, 0;{\bf K})\cong C_q(\check{\mathcal{E}}_{\alpha(s')}, 0;{\bf K}),\;\;\forall q\in\mathbb{N}\cup\{0\}.
\end{eqnarray}
As in the previous proofs we may use \cite[Corollary~2.8]{JM} to deduce that
\begin{eqnarray*}
C_{q}(\check{\mathcal{E}}_{\lambda}|_{\mathcal{U}^X}, 0;{\bf K})\cong
C_{q}(\check{\mathcal{E}}_{\lambda}, 0;{\bf K}),\quad\forall\lambda\in\hat\Lambda.
\end{eqnarray*}
This, (\ref{e:twofunctAgree}) and (\ref{e:twofunctional}) lead to
\begin{eqnarray*}
C_{q}(\check{\mathcal{E}}_{\lambda}|_{\mathcal{U}^X}, 0;{\bf K})\cong
C_q(\tilde{\cal E}^\ast_{\lambda}, 0;{\bf K})\cong C_q({\cal E}_{\lambda},\gamma_\lambda ;{\bf K}),\quad\forall\lambda\in\hat\Lambda.
\end{eqnarray*}
Combining this with equation (\ref{e:Spli.2.4.5Lu10}), we obtain
\begin{eqnarray}\label{e:Spli.2.4.5Lu10+}
C_q({\cal E}_{\alpha(s)},\gamma_{\alpha(s)};{\bf K})\cong C_q({\mathcal{E}}_{\alpha(s')}, \gamma_{\alpha(s')};{\bf K}),\;\;\forall s, s'\in N(\bar{s}),\;\forall q\in\mathbb{N}\cup\{0\}.
\end{eqnarray}
Because the point $\mu\in\alpha([0,1])$ is arbitrary,  (\ref{e:Spli.2.4.5Lu10+}) implies
\begin{eqnarray}\label{e:Spli.2.4.5Lu10++}
C_q({\cal E}_{\lambda},\gamma_{\lambda};{\bf K})\cong C_q({\mathcal{E}}_{\lambda'}, \gamma_{\lambda'};{\bf K}),\;\;\forall \lambda, \lambda'\in \alpha([0,1]),\;\forall q\in\mathbb{N}\cup\{0\}.
\end{eqnarray}
Almost repeating the arguments below (C.18) in the proof of \cite[Theorem~C.5]{Lu11} we may see that
(\ref{e:Spli.2.4.5Lu10++}) contradicts to each of the conditions (i)-(iii) in Theorem~\ref{th:bif-existLagrGener}.
Hence the assumption above (\ref{e:contrad}) is not true. Theorem~\ref{th:bif-existLagrGener} is proved.
\hfill$\Box$

\subsection{Proof of Theorem~\ref{th:MorseBif}}

We first admit the following version of the Morse index theorem due to Duistermaat \cite{Du},
which can directly lead to the Morse index theorem  in Finsler geometry
(\cite[Corollary~9.7]{Lu12}).

\begin{theorem}\label{th:DustMorse}
Let the functionals $\mathcal{L}_{S_0,s}$ be as in (\ref{e:LagrFinEnergyPQ0}). Then
 $$
 m^-(\mathcal{L}_{S_0,\lambda}, \gamma_\lambda)=\sum_{0<s<\lambda}m^0(\mathcal{L}_{S_0,s}, \gamma_s),\quad\forall\lambda\in (0,\tau].
 $$
\end{theorem}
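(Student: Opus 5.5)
The plan is the classical Morse–Jacobi argument, carried out in the Euclidean reduction of Section~\ref{sec:LagrBound.1.1}. First I would move everything into coordinates along $\gamma$. Let $\overline\gamma=\gamma$ and use the frame chart $\phi_{\overline\gamma}$ of (\ref{e:Lagr4}), with a metric making $S_0$ totally geodesic near $\gamma(0)$. Then $D^2\mathcal{L}_{S_0,s}(\gamma_s)$ becomes the quadratic form
$$
I_s(\xi,\xi)=\int_0^s\bigl(\mathfrak{P}\dot\xi\cdot\dot\xi+2\mathfrak{Q}\xi\cdot\dot\xi+\mathfrak{R}\xi\cdot\xi\bigr)dt
$$
on $H_s=\{\xi\in W^{1,2}([0,s];\R^n)\mid \xi(0)\in V_0,\ \xi(s)=0\}$. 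Here $\mathfrak{P}(t)$ is positive definite and continuous, by Assumption~\ref{ass:LagrGenerC}. By the $\mathbf{X}$-regularity of Proposition~\ref{prop:funct-analy}(viii), the kernel of $I_s$ consists exactly of the $C^2$ Jacobi fields $J$ with $J(0)\in V_0$, $J(s)=0$ and $\mathfrak{P}\dot J(0)+\mathfrak{Q}^\top J(0)\perp V_0$. Write $i(s)=m^-(\mathcal{L}_{S_0,s},\gamma_s)$ and $\nu(s)=m^0(\mathcal{L}_{S_0,s},\gamma_s)$. Both are finite because $I_s$ is a positive operator plus a compact one.

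The argument then runs in five steps.

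\textbf{Step 1 (monotonicity and injectivity).} Extension by zero gives an isometric inclusion $H_s\hookrightarrow H_{s'}$ for $s<s'$, and it preserves $I$. Hence $i$ is nondecreasing. Moreover, a Jacobi field in $\ker I_s$, extended by zero, is never a Jacobi field on $[0,s']$. Indeed, $J(s)=0$ together with $\dot J(s)=0$ would force $J\equiv0$ by ODE uniqueness.

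\textbf{Step 2 (small $s$).} For small $s$ I expect $I_s>0$, so $i(s)=\nu(s)=0$. The Poincaré inequality $\|\xi\|_{L^2}\le s\|\dot\xi\|_{L^2}$ on $H_s$ (valid since $\xi(s)=0$) lets the $\mathfrak{P}$-term dominate the others.

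\textbf{Step 3 (rescaling and semicontinuity).} I rescale $t=s\theta$ to put all the forms on the fixed space $\{\eta\in W^{1,2}([0,1])\mid \eta(0)\in V_0,\ \eta(1)=0\}$. There they form a norm-continuous family $P_s+Q_s$, just as in Proposition~\ref{prop:funct-analy}(v)--(vii). Standard spectral perturbation then gives, for $\varepsilon>0$ small,
$$
i(s)\le i(s\pm\varepsilon)\le i(s)+\nu(s).
$$
This also gives left continuity $i(s-\varepsilon)=i(s)$. For that I would take a negative subspace of $H_s$ and contract it by $\eta\mapsto\eta(\cdot\, s/(s-\varepsilon))$; it stays negative for small $\varepsilon$ by continuity. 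I would get finiteness of the focal set, which is Theorem~\ref{th:MorseBif}(i), from the same step together with Step 4: every jump of $i$ is at least $\nu(s)$, $i$ is bounded on $[0,\tau]$, and near each $s$ the nullity vanishes off $s$.

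\textbf{Step 4 (the jump $i(s+\varepsilon)\ge i(s)+\nu(s)$).} This is the step I expect to be the main obstacle. Let $N\subset H_s$ be a maximal negative space and $K=\ker I_s$, both extended by zero to $[0,s+\varepsilon]$. On $N\oplus K$ the form $I_{s+\varepsilon}$ is negative semidefinite, and degenerate exactly along $K$. To make it strictly negative, I pick a $\nu(s)$-dimensional space $W$ of bump functions supported in $(s-\delta,s+\delta)$ with
$$
I_{s+\varepsilon}(\tilde J,w)=-\mathfrak{P}(s)\dot J(s)\cdot w(s),
$$
which is obtained by integrating by parts. The map $J\mapsto\mathfrak{P}(s)\dot J(s)$ is injective by Step 1, so $W$ can be chosen to pair nondegenerately with $K$. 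Replacing $\tilde J$ by $\tilde J+\sigma w_J$ with small $\sigma>0$, where $w_J\in W$ is the bump paired with $J$ under this nondegenerate pairing, gives a strictly negative space of dimension $i(s)+\nu(s)$. Cross terms with $N$ are controlled because $\delta$ is small and $N$ is finite dimensional.

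\textbf{Step 5 (summation).} Combining Steps 1--4: $i$ vanishes near $0$, is constant between focal points, is left continuous, and jumps by exactly $\nu(s)$ immediately after each focal point $s$. Summing the jumps over $0<s<\lambda$ yields the formula. The endpoint $s=\lambda$ itself contributes nothing, by left continuity.
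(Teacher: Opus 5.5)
Your proof is correct, but it takes a genuinely different route from the paper.

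\textbf{The paper's route.} The paper does not argue variationally. After the same reduction to $M=\mathbb{R}^n$ with $S_0$ linear, it passes to the Hamiltonian system via the Legendre transform. It then quotes Duistermaat's Proposition~4.5 with $U=S_0\times S_0^\perp$ (the tangent space of the conormal bundle) and $V=\{0\}\times\mathbb{R}^n$. This gives $m^-(\mathcal{L}_{S_0,\lambda},\gamma_\lambda)=\sum_{0<s<\lambda}\dim\bigl(U\cap\Phi(0,s)^{-1}(V)\bigr)$, where $\Phi(0,t)$ is the fundamental solution of the linearized Hamiltonian system. Finally it identifies each intersection dimension with $m^0(\mathcal{L}_{S_0,s},\gamma_s)$ by sending initial data to Jacobi fields. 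That route is short and inherits the generality of Duistermaat's theorem (general Lagrangian boundary conditions, an intersection-count interpretation), but its core is a black box.

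\textbf{Your route.} Yours is the classical Morse--Jacobi proof: nesting $H_s\hookrightarrow H_{s'}$ by extension by zero, semicontinuity after rescaling to a fixed interval, and a jump lemma. It needs only the Legendre condition on $\mathfrak{P}$ and is self-contained. It also produces the finiteness of focal points (Theorem~\ref{th:MorseBif}(i)) along the way, instead of deducing it from the formula. Its price is that it uses the Dirichlet condition at the moving endpoint in an essential way. So it does not transfer verbatim to a moving end constrained to a submanifold.

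\textbf{Small points in Step~4 and Step~3.}
\begin{itemize}
\item Integration by parts actually gives $I_{s+\varepsilon}(\tilde J,w)=+\mathfrak{P}(s)\dot J(s)\cdot w(s)$. The sign is immaterial.
\item The smallness of $\delta$ does not make the cross terms $I_{s+\varepsilon}(n,w_J)$, $n\in N$, small. The reason is that $\dot w_J$ is of order $1/\delta$ on a set of length $\delta$; for $n$ of class $C^1$ near $s$ the term tends to $\mathfrak{P}(s)\dot n(s)\cdot w_J(s)$, not to $0$.
\item What controls these cross terms is the smallness of $\sigma$. Since $I\le -a\|n\|^2$ on $N$, one has $2\sigma|I(n,w_J)|\le \frac{a}{2}\|n\|^2+O(\sigma^2)|J|^2$. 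This is dominated by the gain of order $-\sigma|J|^2$.
\item Alternatively, choose $N\subset H_{s-\delta'}$ (possible by left continuity) and take $\delta<\delta'$, so the supports are disjoint.
\item In Step~3, left continuity needs no contraction argument. It is immediate from monotonicity together with lower semicontinuity of the index.
\end{itemize}
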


Theorem~\ref{th:DustMorse} implies that there  exist only finitely many  $\mu\in (0,\tau)$
such that $m^0(\mathcal{L}_{S_0,\mu}, \gamma_\mu)\ne 0$, and hence the conclusion (i) follows.

\begin{proof}[\bf Proof of Theorem~\ref{th:MorseBif}]
In the arguments above (\ref{e:ModifiedLEu}) taking $\gamma_\mu=\gamma$ and $S_1=\{\gamma(\tau)\}$
 we have a unique map
$$
{\bf u}\in C^1_{V_0\times \{0\}}([0,\tau]; B^n_{\iota}(0))\cap C^3([0,\tau]; B^n_{\iota}(0))
$$
  such that ${\bf u}(0)={\bf u}(\tau)=0$ and $\gamma(t)=\phi_{\overline{\gamma}}(t,{\bf u}(t))$ for all $t\in [0,\tau]$.
For $\lambda\in (0,\tau]$ let
$$
W^{1,2}_{V_0\times\{{\bf u}(\lambda)\}}([0, \lambda]; B^n_{2\iota}(0))=
\big\{u\in W^{1,2}_{V_0\times\{{\bf u}(\lambda)\}}([0, \lambda]; \mathbb{R}^n)\,\big|\, u([0,\lambda])\subset B^n_{2\iota}(0))\big\},
$$
which contains $C^1_{V_0\times\{{\bf u}(\lambda)\}}([0, \lambda]; B^n_{2\iota}(0))$ as a dense subset.
 Using \cite[Theorems~4.2,~4.3]{PiTa01}, as before we deduce that
$$
W^{1,2}_{S_0\times\{\gamma(\lambda)\}}([0, \lambda]; M):=\big\{\alpha\in W^{1,2}([0, \lambda]; M)\,\big|\,
\alpha(0)\in S_0, \alpha(\lambda)=\gamma(\lambda)\big\}
$$
is a $C^4$ Hilbert manifold and obtain a $C^2$ chart
\begin{eqnarray*}
\Phi_{\lambda}:W^{1,2}_{V_0\times\{0\}}([0, \lambda]; B^n_{\iota}(0))\to W^{1,2}_{S_0\times\{\gamma(\lambda)\}}([0, \lambda]; M)
\end{eqnarray*}
given by $\Phi_{\lambda}(\xi)(t)=\phi_{\overline{\gamma}}(t, {\bf u}_\lambda(t)+\xi(t))\;\forall t\in [0, \lambda]$ for each $\lambda\in (0,\tau]$,
where ${\bf u}_\lambda:={\bf u}|_{[0,\lambda]}$. Then
$\Phi_\lambda(0)=\gamma_\lambda:=\gamma|_{[0,\lambda]}$.
Note that $\Phi_{\lambda}:C^1_{V_0\times\{0\}}([0, \lambda]; B^n_{\iota}(0))\to C^{1}_{S_0\times\{\gamma(\lambda)\}}([0, \lambda]; M)$
is also a $C^2$ chart by the $\omega$-Lemma (cf. \cite{Moo17}).
Clearly, the Banach space isomorphism
$$
\Gamma_\lambda:W^{1,2}_{V_0\times\{0\}}([0, 1]; \mathbb{R}^n)\to W^{1,2}_{V_0\times\{0\}}([0,\lambda]; \mathbb{R}^n),\;
\xi\mapsto \xi(\lambda^{-1}\cdot)
$$
maps $W^{1,2}_{V_0\times\{0\}}([0,1]; B^n_{\iota}(0))$ onto $W^{1,2}_{V_0\times\{0\}}([0,\lambda]; B^n_{\iota}(0))$.
Put
$$
{\bf L}_\lambda:W^{1,2}_{V_0\times\{0\}}([0,1]; B^n_{\iota}(0))\to\mathbb{R},\;\xi\mapsto \mathcal{L}_{S_0,\lambda}\circ\Phi_\lambda\circ\Gamma_\lambda(\xi).
$$
It is easy to check that
\begin{eqnarray}\label{e:Lagr3.15Gener}
{\bf L}_\lambda(\xi)&=&\int^\lambda_0L\big(t,\Phi_{\lambda}(\Gamma_\lambda(\xi))(t),
\frac{d}{dt}\Phi_{\lambda}(\Gamma_\lambda(\xi))(t)\big)dt\nonumber\\
&=&\int^1_0\lambda L\big(\lambda s,\Phi_{\lambda}(\Gamma_\lambda(\xi))(\lambda s),
\frac{d}{dt}\Phi_{\lambda}(\Gamma_\lambda(\xi))(t)\Big|_{t=\lambda s}\big)ds.
\end{eqnarray}
Note that $\Phi_{\lambda}(\Gamma_\lambda(\xi))(t)=\phi_{\overline{\gamma}}(t,
{\bf u}_\lambda(t)+\Gamma_\lambda(\xi)(t))=\phi_{\overline{\gamma}}(t,{\bf u}_\lambda(t)+\xi(t/\lambda))$
for all $t\in [0, \lambda]$. We have
$$
\frac{d}{dt}\Phi_{\lambda}(\Gamma_\lambda(\xi))(t)=
D_1\phi_{\overline{\gamma}}(t, {\bf u}_\lambda(t)+\xi(t/\lambda))+ D_2\phi_{\overline{\gamma}}(t, {\bf u}_\lambda(t)+\xi(t/\lambda))[\dot{\bf u}_\lambda(t)+\frac{1}{\lambda}\dot{\xi}(t/\lambda)].
$$
It follows that for any $s\in [0,1]$,
\begin{eqnarray*}
&&\Phi_{\lambda}(\Gamma_\lambda(\xi))(\lambda s)=\phi_{\overline{\gamma}}(\lambda s,{\bf u}(\lambda s)+\xi(s)),\\
&&\frac{d}{dt}\Phi_{\lambda}(\Gamma_\lambda(\xi))(t)\Big|_{t=\lambda s}=
D_1\phi_{\overline{\gamma}}(\lambda s, {\bf u}(\lambda s)+\xi(s))+ D_2\phi_{\overline{\gamma}}(\lambda s,
{\bf u}(\lambda s)+\xi(s))[\dot{\bf u}(\lambda s)+\frac{1}{\lambda}\dot{\xi}(s)].
\end{eqnarray*}
Define  $\hat{L}:(0, \tau)\times [0, 1]\times  B^n_{\iota}(0)\times\R^n\to\R$  by
\begin{eqnarray}\label{e:Lagr3.15Gener*}
&&\hat L(\lambda, s, q,  v)=\hat L_\lambda(s, q,  v)\nonumber\\
&=&\lambda L\big(\lambda s, \phi_{\overline{\gamma}}(\lambda s, {\bf u}(\lambda s)+q),
D_1\phi_{\overline{\gamma}}(\lambda s, {\bf u}(\lambda s)+q)+
\frac{1}{\lambda}D_2\phi_{\overline{\gamma}}(\lambda s, {\bf u}(\lambda s)+ q)[v+\lambda\dot{\bf u}(\lambda s)]\big).\nonumber\\
\end{eqnarray}
Since both ${\bf u}$ and $L$ are $C^3$, $\hat{L}$ is $C^2$ and  fiberwise strictly convex. Clearly, (\ref{e:Lagr3.15Gener}) becomes
\begin{eqnarray}\label{e:Lagr3.15Gener**}
{\bf L}_\lambda(\xi)=\int^1_0\hat{L}_\lambda\left(t,\xi(t), \dot{\xi}(t)\right)dt.
\end{eqnarray}
Let $\gamma_\lambda:=\gamma|_{[0,\lambda]}$. Then $\Phi_\lambda\circ\Gamma_\lambda(0)=\gamma_\lambda$ and
\begin{eqnarray*}
&&d{\bf L}_\lambda(0)=d\mathcal{L}_{S_0,\lambda}(\gamma_\lambda)\circ d(\Phi_\lambda\circ\Gamma_\lambda)(0)=0,\\
&&d^2{\bf L}_\lambda(0)[\xi,\eta]=d^2\mathcal{L}_{S_0,\lambda}(\gamma_\lambda)\big[d\Phi_\lambda(0)[\Gamma_\lambda(\xi)],
d\Phi_\lambda(0)[\Gamma_\lambda(\eta)]\big],\quad\forall \xi,\eta\in W^{1,2}_{V_0\times\{0\}}([0,1]; \mathbb{R}^n).
\end{eqnarray*}
Let
$m^-({\bf L}_\lambda, 0)$ and $m^0({\bf L}_\lambda, 0)$ be  the Morse index and nullity of ${\bf L}_\lambda$ at $0$,
respectively. Then
\begin{equation}\label{e:DustMorse*}
m^-({\bf L}_\lambda, 0)=m^-(\mathcal{L}_{S_0,\lambda}, \gamma_\lambda)\quad\hbox{and}\quad
m^0({\bf L}_\lambda, 0)=m^0(\mathcal{L}_{S_0,\lambda}, \gamma_\lambda)
\end{equation}
because
$d\Phi_\lambda(0)\circ\Gamma_\lambda\big(C^1_{V_0\times\{0\}}([0,1]; \mathbb{R}^n)\big)$ is equal to
$$
C^1_{S_0\times\{\gamma_\lambda(\lambda)\}}(\gamma_\lambda^\ast TM):=\left\{X\in C^1(\gamma_\lambda^\ast TM)\,\big|\, X(0)\in
T_{\gamma_\lambda(0)}S_0=T_{\gamma(0)}S_0,\, X(\lambda)=0\right\}.
$$

 \begin{claim}~\label{cl:DustMorse}
  For $\mu\in (0, \tau]$,  $\mu$ is a bifurcation instant for $(S_0, \gamma)$
 if and only if  $(\mu, 0)\in (0, \tau]\times C^1_{V_0\times \{0\}}([0, 1]; B^n_{\iota}(0))$
 is a  bifurcation point of the problem
 \begin{eqnarray}\label{e:LagrGenerEuDust}
&&\frac{d}{dt}\big(\partial_v\hat{L}_\lambda(t, x(t), \dot{x}(t))\big)-\partial_x \hat{L}_\lambda(t, x(t), \dot{x}(t))=0,\\
&&\left.\begin{array}{ll}
x\in C^2([0, 1]; B^n_{\iota}(0)),\;(x(0), x(1))\in V_0\times \{0\}\quad\hbox{and}\quad\\
\partial_v\hat{L}_\lambda(0, x(0), \dot{x}(0))[v_0]=0\;\forall v_0\in V_0.\label{e:LagrGenerBEuDust}
\end{array}\right\}
\end{eqnarray}
 with respect to the trivial branch $\{(\lambda,0)\,|\, \lambda\in (0, \tau]\}$
in $(0, \tau]\times C^1_{V_0\times \{0\}}([0, 1]; B^n_{\iota}(0))$.
 \end{claim}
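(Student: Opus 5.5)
The plan is to make the correspondence between the two kinds of solutions completely explicit, and then to compare convergence in the two settings. Fix $\lambda\in(0,\tau]$. If $x\in C^1_{V_0\times\{0\}}([0,1];B^n_\iota(0))$ solves (\ref{e:LagrGenerEuDust})--(\ref{e:LagrGenerBEuDust}), then $x$ is a critical point of ${\bf L}_\lambda$ by (\ref{e:Lagr3.15Gener**}) and \cite[Proposition~4.2]{BuGiHi}. Put $\alpha:=\Phi_\lambda\circ\Gamma_\lambda(x)$, that is, $\alpha(t)=\phi_{\overline{\gamma}}(t,{\bf u}(t)+x(t/\lambda))$ for $t\in[0,\lambda]$. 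Since $\Phi_\lambda\circ\Gamma_\lambda$ is a chart and ${\bf L}_\lambda=\mathcal{L}_{S_0,\lambda}\circ\Phi_\lambda\circ\Gamma_\lambda$, the curve $\alpha$ is a critical point of $\mathcal{L}_{S_0,\lambda}$ on $C^1_{S_0\times\{\gamma(\lambda)\}}([0,\lambda];M)$. It is therefore an Euler--Lagrange curve of $L$ emanating perpendicularly from $S_0$ with $\alpha(\lambda)=\gamma(\lambda)$. Moreover $\alpha\ne\gamma|_{[0,\lambda]}$ if and only if $x\ne0$.

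For the converse, take any such curve $\alpha:[0,\lambda]\to M$ with $\alpha(0)\in S_0$, $\alpha(\lambda)=\gamma(\lambda)$, and $\sup_t d_g(\alpha(t),\overline{\gamma}(t))<\iota$ (this holds once $\alpha$ is $C^0$-close to $\gamma|_{[0,\lambda]}$). Then $\alpha=\phi_{\overline{\gamma}}(\cdot,{\bf u}+\xi)$ for a unique $\xi$, and we set $x:=\Gamma_\lambda^{-1}\xi=\xi(\lambda\,\cdot)$. By the same chart argument, run in reverse, $x$ solves (\ref{e:LagrGenerEuDust})--(\ref{e:LagrGenerBEuDust}). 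One point needs checking here: $x$ must take values in $B^n_\iota(0)$, which holds for $\alpha$ close enough to $\gamma$.

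The main step, and the only real obstacle, is to compare the two notions of convergence when $\lambda_k=t_k\to\mu$:
\[
\|\alpha_k-\gamma|_{[0,t_k]}\|_{C^1([0,t_k];\mathbb{R}^N)}\to0\quad\Longleftrightarrow\quad \|x_k\|_{C^1([0,1];\mathbb{R}^n)}\to0 .
\]
The difficulty is that the domains $[0,t_k]$ vary. I will handle this using the formulas already computed above for $\Phi_\lambda(\Gamma_\lambda(\xi))(\lambda s)$ and its derivative. First, $\phi_{\overline{\gamma}}$ is $C^5$ with $D_2\phi_{\overline{\gamma}}$ invertible on the relevant compact set. Second, $\lambda_k$ stays in a compact subset of $(0,\tau]$ because $\mu>0$, so the factors $\lambda_k^{\pm1}$ are uniformly bounded. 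Third, ${\bf u}$ is uniformly continuous.

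For the direction ($\Leftarrow$): if $\|x_k\|_{C^1}\to0$, the mean value theorem applied to $\phi_{\overline{\gamma}}$ and $D\phi_{\overline{\gamma}}$ gives, uniformly in $s\in[0,1]$, that $\alpha_k(\lambda_k s)-\gamma(\lambda_k s)\to0$. In the same way, $\dot\alpha_k(\lambda_k s)-\dot\gamma(\lambda_k s)\to0$, using $\dot x_k(s)/\lambda_k\to0$. This is exactly $C^1$ convergence on $[0,t_k]$.

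For the direction ($\Rightarrow$): write $\xi_k(t)=\pi(t,\alpha_k(t))-{\bf u}(t)$, where $\pi(t,\cdot)$ is the local inverse of $\phi_{\overline{\gamma}}(t,\cdot)$, which is $C^4$ and in particular $C^1$ in $(t,p)$. Then $\|\xi_k\|_{C^1([0,t_k])}\to0$, and rescaling by $\lambda_k$ yields $\|x_k\|_{C^1([0,1])}\to0$.

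With this equivalence in hand, the two directions of the claim follow at once. Suppose $\mu$ is a bifurcation instant, with data $(t_k,\gamma^k)$ as in Definition~\ref{def:bifurLagr}. Then the corresponding $x_k\ne0$, all defined for large $k$, converge to $0$ in $C^1$, so $(\mu,0)$ is a bifurcation point of the rescaled problem, since $(0,\tau]$ is first countable. Conversely, suppose $(\mu,0)$ is a bifurcation point of the rescaled problem. Choose a sequence $(\lambda_k,x_k)\to(\mu,0)$ with $x_k\ne0$ solutions, and set $\gamma^k:=\Phi_{\lambda_k}\Gamma_{\lambda_k}(x_k)$. Then $\gamma^k(t_k)=\gamma(t_k)$ with $t_k=\lambda_k$, and (\ref{e:Lgeobifu1})--(\ref{e:Lgeobifu2}) hold. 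The strict positivity required in (\ref{e:Lgeobifu2}) comes from $x_k\ne0$, which gives $\gamma^k\ne\gamma|_{[0,t_k]}$.
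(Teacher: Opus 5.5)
Your proposal is correct and follows essentially the same route as the paper: you translate Euler--Lagrange curves ending at $\gamma(\lambda)$ into solutions of the rescaled problem through the chart $\Phi_\lambda\circ\Gamma_\lambda$ and the identity ${\bf L}_\lambda=\mathcal{L}_{S_0,\lambda}\circ\Phi_\lambda\circ\Gamma_\lambda$, and then compare the two notions of $C^1$ convergence. The paper only asserts in one line that $\|{\bf u}^k\|_{C^1}\to0$ implies $\|{\bf v}^k\|_{C^1}\to0$, and dismisses the converse as ``easily seen''; your explicit treatment of the varying domains $[0,t_k]$, using uniform bounds on $\lambda_k^{\pm1}$ and the $C^1$ local inverse of $\phi_{\overline{\gamma}}(t,\cdot)$, supplies the details the paper omits.
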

 \begin{proof}[\bf Proof]
 By Definition~\ref{def:bifurLagr} a real $\mu\in (0, \tau]$ is a bifurcation instant for $(S_0, \gamma)$
 if and only if  there exists a sequence $(\lambda_k)\subset (0,\tau]$ converging to $\mu$
and a sequence of Euler-Lagrange curves of $L$ emanating perpendicularly from $S_0$, $\gamma^k:[0, \lambda_k]\to M$,   such that
(\ref{e:Lgeobifu1}) and (\ref{e:Lgeobifu2}) are satisfied, i.e.,
$$
\gamma^k(\lambda_k)=\gamma(\lambda_k)\;\hbox{for all $k\in\N$},\quad
0<\|\gamma^k-\gamma|_{[0, \lambda_k]}\|_{C^1([0, \lambda_k],\mathbb{R}^N)}\to 0\;\hbox{ as $k\to\infty$}.
$$
Since $\lambda_k\to \mu$, from the latter we deduce that for each $k$ large enough
each $\gamma^k$ sits in the image of the map
$\Phi_{\lambda_k}:C^1_{V_0\times\{0\}}([0, \lambda_k]; B^n_{\iota}(0))\to C^{1}_{S_0\times\{\gamma(\lambda_k)\}}([0, \lambda_k]; M)$
and therefore  there exists a unique
${\bf u}^k\in C^1_{V_0\times\{0\}}([0, \lambda_k]; B^n_{\iota}(0))$
such that $\Phi_{\lambda_k}({\bf u}^k)=\gamma^k\in C^{1}_{S_0\times\{\gamma(\lambda_k)\}}([0, \lambda_k]; M)$.
Let ${\bf v}^k:=(\Gamma_{\lambda_k})^{-1}({\bf u}^k)\in C^1_{V_0\times \{0\}}([0, 1]; B^n_{\iota}(0))$.
It follows from $\Phi_{\lambda_k}(0)=\gamma|_{[0,\lambda_k]}$ that
$0<\|{\bf u}^k\|_{C^1([0, \lambda_k],\mathbb{R}^n)}\to 0$ and so
$0<\|{\bf v}^k\|_{C^1([0, 1],\mathbb{R}^n)}\to 0$
as $k\to\infty$. Moreover, $\Phi_{\lambda_k}\circ\Gamma_{\lambda_k}({\bf v}^k)=\gamma|_{[0, \lambda_k]}$ implies
 $$
 d{\bf L}_{\lambda_k}({\bf v}^k)=d\mathcal{L}_{S_0,\lambda_k}\big(\Phi_{\lambda_k}\circ\Gamma_{\lambda_k}({\bf v}^k)\big)
\circ d(\Phi_{\lambda_k}\circ\Gamma_{\lambda_k})({\bf v}^k)=0.
$$
These affirm the necessary.

Carefully checking the above arguments it is easily seen that the sufficiency also holds.
 \end{proof}

 Suppose that $\mu\in (0, \tau]$ is a bifurcation instant for $(S_0, \gamma)$.
Then it follows from  Claim~\ref{cl:DustMorse} and  Theorem~\ref{th:bif-nessLagrGener} (or Theorem~\ref{th:bif-nessLagrGenerEu})
that $m^0({\bf L}_{\mu}, 0)>0$ and therefore
$m^0(\mathcal{L}_{S_0, \mu}, \gamma_{\mu})>0$ by (\ref{e:DustMorse*}). That is,
$\mu$ is a $S_0$-focal point along $\gamma$. (ii) is proved.

Finally, let us prove (iii).  Suppose that $\mu\in (0, \tau)$ is a $S_0$-focal point along $\gamma$.
By (i) or Theorem~\ref{th:DustMorse} there only exist finitely many numbers in $(0,\tau)$,
$0<\mu_1<\cdots<\mu_m<\tau$, such that  $m^0(\mathcal{L}_{S_0,\mu_i}, \gamma_{\mu_i})>0$,
$i=1,\cdots,m$. Therefore
$\mu\in\{\mu_1,\cdots,\mu_m\}$.
Then by Theorem~\ref{th:DustMorse} and (\ref{e:DustMorse*})  we obtain that
\begin{eqnarray}\label{e:LagrFin3.19}
&&m^0({\bf L}_{\mu_i}, 0)\ne 0,\; i=1,\cdots,m\quad\hbox{and}\quad
m^0({\bf L}_{\lambda},0)=0\quad\hbox{for}\quad \lambda\in(0,\tau)\setminus\{\mu_1,\cdots,\mu_m\},\nonumber\\
&&m^-({\bf L}_{\lambda},0)=\sum_{0<s<\lambda}m^0({\bf L}_{s},0),\quad\forall\lambda\in (0,\tau].
 \end{eqnarray}
Let $\rho$ be the distance from $\mu$ to the set $\{0,\mu_1,\cdots,\mu_m,\tau\}\setminus\{\mu\}$.
Then for any $0<\epsilon<\rho$ it holds that $m^-({\bf L}_{\mu-\epsilon},0)\ne m^-({\bf L}_{\mu+\epsilon},0)$
and that $m^0({\bf L}_{\mu-\epsilon},0)= m^0({\bf L}_{\mu+\epsilon},0)=0$.
By Theorem~\ref{th:bif-nessLagrGenerEu}(II) and Claim~\ref{cl:DustMorse}
we deduce that  $\mu$ is a bifurcation instant for $(S_0, \gamma)$.
This completes the proof of the first claim in (iii).

It remains to prove others in (iii).
 Note that (\ref{e:LagrFin3.19}) gives rise to
\begin{equation*}
m^-({\bf L}_\lambda, 0)=\left\{\begin{array}{ll}
m^-({\bf L}_{\mu}, 0)\,&\,{\rm for}\;
\lambda<\mu\;\hbox{near $\mu$},\\
m^-({\bf L}_{\mu},0)+m^0({\bf L}_{\mu},0)\,&\,{\rm for}\;
\lambda>\mu\;\hbox{near $\mu$}.
\end{array}\right.
\end{equation*}
Then Theorem~\ref{th:bif-suffLagrGenerEu} may be applicable to $\tilde{\mathcal{E}}^\ast_\lambda={\bf L}_{\lambda}$ and $V_1=\{0\}\subset\mathbb{R}^n$, and therefore
  one of the following alternatives occurs:
\begin{enumerate}
\item[\rm (A)] The problem (\ref{e:LagrGenerEuDust})--(\ref{e:LagrGenerBEuDust})
 with $\lambda=\mu$ has a sequence of distinct solutions, ${\bf v}^k\ne 0$, $k=1,2,\cdots$,
which converges to $0$ in $C^2_{V_0\times\{0\}}([0,1]; B^n_{\iota}(0))$.

\item[\rm (B)] There exists a real $0<\sigma<\min\{\mu,\tau-\mu\}$ such that for every $\lambda\in [\mu-\sigma,\mu+\sigma]\setminus\{\mu\}$
the problem (\ref{e:LagrGenerEuDust})--(\ref{e:LagrGenerBEuDust}) with parameter value $\lambda$
has a  solution ${\bf v}^\lambda\ne 0$  to satisfy  $\|{\bf v}^\lambda\|_{C^2}\to 0$  as $\lambda\to \mu$.

\item[\rm (C)] For a given neighborhood $\mathfrak{W}$ of $0\in C^1_{V_0\times\{0\}}([0,1]; B^n_{\iota}(0))$,
there exists a real $0<\sigma<\min\{\mu,\tau-\mu\}$ such that for any $\lambda\in\Lambda^0\setminus\{\mu\}$, where $\Lambda^0=[\mu-\sigma, \mu]$
or $[\mu, \mu+\sigma]$, the problem
(\ref{e:LagrGenerEuDust})--(\ref{e:LagrGenerBEuDust}) with parameter value $\lambda$
has at least two distinct solutions in  $\mathfrak{W}$, ${\bf v}_1^\lambda\ne 0$ and ${\bf v}_2^\lambda\ne 0$,
which can also be required to satisfy ${\bf L}_{\lambda}({\bf v}_1^\lambda)\ne {\bf L}_{\lambda}({\bf v}_2^\lambda)$
provided that $m^0({\bf L}_{\mu}, 0)>1$ and the problem
(\ref{e:LagrGenerEu})--(\ref{e:LagrGenerBEu}) with parameter value $\lambda$
has only finitely many solutions in $\mathfrak{W}$.
\end{enumerate}

Let us prove that the cases (A) and (B) lead to (iii-1) and (iii-2) in Theorem~\ref{th:MorseBif}, respectively.

\underline{In the case of (A) above}, $C^2$ paths
$$
\alpha_k:[0,\mu]\to M,\;t\mapsto (\Phi_{\mu}\circ\Gamma_\mu)({\bf v}^k)(t)=\phi_{\overline{\gamma}}\big(t,{\bf u}(t)+ {\bf  v}^k(t/\mu)\big),\quad k=1,2,\cdots,
$$
are a sequence distinct $C^2$ Euler-Lagrange curves of $L$ emanating perpendicularly from $S_0$ and ending at $\gamma(\mu)$,
and each of them is not equal to $\gamma|_{[0,\mu]}$.
Moreover, since $\phi_{\overline{\gamma}}:[0,\tau]\times B^n_{2\iota}(0)\to M$
 and ${\bf u}:[0, \tau]\to B^n_{\iota}(0)$ are $C^5$ and $C^3$, respectively,
we have a continuous map
$$
C^2([0,\mu]; [0,\tau]\times B^n_{2\iota}(0))\to C^{2}([0, \mu]; M),\;w\mapsto \phi_{\overline{\gamma}}\circ w
$$
by Exercise 10 on the page 64 of \cite{Hir}, and a $C^\infty$ map
$$
\Theta:C^2([0,1]; B^n_{\iota}(0))\to C^2([0,\mu]; [0,\tau]\times B^n_{2\iota}(0))
$$
given by $\Theta(v)(t)=(t, {\bf u}(t)+v(t/\mu))$. Hence the composition map
$$
C^2([0,1]; B^n_{\iota}(0))\to C^{2}([0, \mu]; M),\;v\mapsto \phi_{\overline{\gamma}}\circ \Theta(v)=(\Phi_{\mu}\circ\Gamma_\mu)(v)
$$
is continuous, and therefore  $\alpha_k=(\Phi_{\mu}\circ\Gamma_\mu)({\bf v}^k)\to\Phi_{\mu}(0)=\gamma|_{[0,\mu]}$ in $C^2([0,\mu],\mathbb{R}^N)$ as $k\to\infty$.

\underline{In the case of (B) above}, for each $\lambda\in [\mu-\sigma,\mu+\sigma]\setminus\{\mu\}$,
$$
\alpha^\lambda:[0,\lambda]\to M,\;t\mapsto (\Phi_{\lambda}\circ\Gamma_\lambda)({\bf v}^\lambda)(t)=\phi_{\overline{\gamma}}\big(t,{\bf u}(t)+ {\bf  v}^\lambda(t/\lambda)\big)
$$
is a $C^2$ Euler-Lagrange curve of $L$ emanating perpendicularly from $S_0$ and ending at $\gamma(\lambda)$,
and not equal to $\gamma|_{[0,\lambda]}$.
We cannot prove the desired claim as above. But noting that we have assumed $M\subset\mathbb{R}^N$,
$\phi_{\overline{\gamma}}$ can be viewed as a $C^5$ map from $[0,\tau]\times B^n_{2\iota}(0)$ to $\mathbb{R}^N$.
A straight computation leads to
 \begin{eqnarray*}
(\alpha^\lambda)'(t)&=&D_1\phi_{\overline{\gamma}}\big(t,{\bf u}(t)+ {\bf  v}^\lambda(t/\lambda)\big)
+D_2\phi_{\overline{\gamma}}\big(t,{\bf u}(t)+ {\bf  v}^\lambda(t/\lambda)\big)
[{\bf u}'(t)+ \frac{1}{\lambda}({\bf  v}^\lambda)'(t/\lambda)],\\
(\alpha^\lambda)''(t)&=&D_1D_1\phi_{\overline{\gamma}}\big(t,{\bf u}(t)+ {\bf  v}^\lambda(t/\lambda)\big)
+D_2D_1\phi_{\overline{\gamma}}\big(t,{\bf u}(t)+ {\bf  v}^\lambda(t/\lambda)\big)
[{\bf u}'(t)+ \frac{1}{\lambda}({\bf  v}^\lambda)'(t/\lambda)]\\
&+&D_2D_1\phi_{\overline{\gamma}}\big(t,{\bf u}(t)+ {\bf  v}^\lambda(t/\lambda)\big)[{\bf u}'(t)+ \frac{1}{\lambda}({\bf  v}^\lambda)'(t/\lambda)]\\
&+&D_2\phi_{\overline{\gamma}}\big(t,{\bf u}(t)+ {\bf  v}^\lambda(t/\lambda)\big)
[{\bf u}''(t)+ \frac{1}{\lambda^2}({\bf  v}^\lambda)''(t/\lambda)]\\
&+&D_2D_2\phi_{\overline{\gamma}}\big(t,{\bf u}(t)+ {\bf  v}^\lambda(t/\lambda)\big)
[{\bf u}'(t)+ \frac{1}{\lambda}({\bf  v}^\lambda)'(t/\lambda), {\bf u}'(t)+ \frac{1}{\lambda}({\bf  v}^\lambda)'(t/\lambda)],
 \end{eqnarray*}
where we denote by $D_1$ and $D_2$ the partial derivatives of $\phi_{\overline{\gamma}}(t, x)$
with respect to the arguments $t$ and $x$, respectively, and in particular, the final term is equal to
$$
\frac{\partial^2}{\partial s_1\partial s_2}
\phi_{\overline{\gamma}}\Big(t,{\bf u}(t)+ {\bf  v}^\lambda(t/\lambda)+
(s_1+s_2)({\bf u}'(t)+ \frac{1}{\lambda}({\bf  v}^\lambda)'(t/\lambda))\Big)\Big|_{s_1=s_2=0}.
$$
Since $(\mu+\sigma)^i\le\lambda^i\le(\mu-\sigma)^i$, $i=1,2$, and $\|v^\lambda\|_{C^2}\to 0$ as $\lambda\to\mu$,
it follows from the above expressions that $\|\alpha_\lambda-\gamma|_{[0,\lambda]}\|_{C^2([0,\lambda],\mathbb{R}^N)}\to 0$ as $\lambda\to\mu$.

(As pointed out below (\ref{e:LagrCurve}) the above Euler-Lagrange curves of $L$, $\alpha_k$, $\alpha^\lambda$ and the following
$\beta^i_\lambda$ are actually $C^3$.)

\underline{Suppose that (iii-1) and (iii-2) in Theorem~\ref{th:MorseBif} do not hold}.
Then the above proofs show that the cases (A) and (B) do not occur.
That is, the case (C) must hold. Let us prove that it implies (iii-3) in Theorem~\ref{th:MorseBif}.
By the proof of the case (B) above we have positive numbers $\delta$
and $\sigma'<\min\{\mu,\tau-\mu\}$ such that
$$
\mathfrak{W}_0:=\big\{{\bf v}\in C^1_{V_0\times\{0\}}([0,1]; B^n_{\iota}(0))\,\big|\, \|{\bf v}\|_{C^1}\le\delta\big\}\subset\mathfrak{W}
$$
and that $C^1$-paths
$$
\alpha_{\lambda,\bf v}:[0,\lambda]\to M,\;t\mapsto \phi_{\overline{\gamma}}\left(t,{\bf u}(t)+ {\bf  v}(t/\lambda)\right)
$$
associated with $(\lambda, {\bf v})\in [\mu-\sigma', \mu+\sigma]\times \{{\bf v}\in C^1_{V_0\times\{0\}}([0,1]; B^n_{\iota}(0))\,|\, \|{\bf v}\|_{C^1}\le\delta\}$ satisfy
$$
\|\alpha_{\lambda,\bf v}-\gamma|_{[0,\lambda]}\|_{C^1}<\epsilon.
$$
 By (C) there exists a positive number $\sigma_0\le\min\{\sigma',\sigma\}$ such that
 the corresponding conclusions in (C) also hold true after
   $\mathfrak{W}$ and $\sigma$ are replaced by $\mathfrak{W}_0$ and $\sigma_0$, respectively.
For $\Lambda^*:=\Lambda^0\cap[\mu-\sigma_0,\mu+\sigma_0]$ and $\lambda\in\Lambda^\ast\setminus\{\mu\}$,
let ${\bf v}_1^\lambda\in\mathfrak{W}_0\setminus\{0\}$
and ${\bf v}_2^\lambda\in\mathfrak{W}_0\setminus\{0\}$ be two distinct solutions of
the problem (\ref{e:LagrGenerEuDust})--(\ref{e:LagrGenerBEuDust}) with parameter value $\lambda$.
Then
$$
\beta^i_\lambda:[0,\lambda]\to M,\;t\mapsto (\Phi_{\lambda}\circ\Gamma_\lambda)({\bf v}_i^\lambda)(t)=\phi_{\overline{\gamma}}\big(t,{\bf u}(t)+ {\bf  v}_i^\lambda(t/\lambda)\big),\quad i=1,2,
$$
are two distinct $C^2$ Euler-Lagrange curves of $L$ emanating perpendicularly from $S_0$ and ending at $\gamma(\lambda)$,
and not equal to $\gamma|_{[0,\lambda]}$. Moreover both satisfy
$\|\beta_i^{\lambda}-\gamma|_{[0,\lambda]}\|_{C^1}<\epsilon$, $i=1,2$.

Suppose further that $m^0(\mathcal{L}_{S_0,\mu}, \gamma_{\mu})=m^0({\bf L}_{\mu},0)>1$.
For $\lambda\in\Lambda^\ast\setminus\{\mu\}$, if there only exist
finitely many distinct $C^2$ Euler-Lagrange curves of $L$ emanating perpendicularly from $S_0$ and ending at $\gamma(\lambda)$,
$\alpha_1,\cdots,\alpha_m$, such that $\|\alpha_i-\gamma|_{[0,\lambda]}\|_{C^1}<\epsilon$, $i=1,\cdots, m$.
then the above arguments imply that
the problem (\ref{e:LagrGenerEuDust})--(\ref{e:LagrGenerBEuDust}) with parameter value $\lambda$
has only finitely many solutions in $\mathfrak{W}_0$. In this case the above
${\bf v}_1^\lambda$ and ${\bf v}_2^\lambda$ can be chosen to satisfy
${\bf L}_{\lambda}({\bf v}_1^\lambda)\ne {\bf L}_{\lambda}({\bf v}_2^\lambda)$, which implies (\ref{e:diffEnergy}).
\end{proof}

\begin{proof}[\bf Proof of Theorem~\ref{th:DustMorse}]
 Follow the first paragraph in the proof of Theorem~\ref{th:MorseBif}.
We have a $C^2$ chart
\begin{eqnarray*}
\Psi_{\lambda}:C^1_{V_0\times\{{\bf u}(\lambda)\}}([0, \lambda]; B^n_{2\iota}(0))\to C^{1}_{S_0\times\{\gamma(\lambda)\}}([0, \lambda]; M)
\end{eqnarray*}
given by $\Psi_{\lambda}(\xi)(t)=\phi_{\overline{\gamma}}(t, \xi(t))\;\forall t\in [0, \lambda]$ for each $\lambda\in (0,\tau]$.
 Then $\Psi_\lambda({\bf u}_\lambda(t))=\gamma_\lambda$, where ${\bf u}_\lambda:={\bf u}|_{[0,\lambda]}$
 and $\gamma_\lambda=\gamma|_{[0,\lambda]}$.
  Define $\tilde L: [0,\tau]\times B^n_{2\iota}(0)\times\R^n\to\R$ by
\begin{equation*}
\tilde L(t, q,  v)=\tilde L(t, q,  v)=L\left(t, \phi_{\overline{\gamma}}(t, q),
D_t\phi_{\overline{\gamma}}(t, q)+ D_q\phi_{\overline{\gamma}}(t, q)[v]\right).
\end{equation*}
By Assumption~\ref{ass:LagrGenerC},  $\tilde L$ is $C^3$
and  $\tilde{L}(t, q, v)$ is strictly convex in $v$ for each $(t, q)\in  [0,\tau]\times B^n_{2\iota}(0)$.
Therefore for each $\lambda\in (0, \tau]$ the functional
\begin{equation*}
C^1_{V_0\times\{{\bf u}(\lambda)\}}([0,\lambda]; B^n_{2\iota}(0))\ni x\mapsto
\tilde{\mathcal{E}}_{V_0,\lambda}(x)=\int^\lambda_0\tilde{L}(t, x(t), \dot{x}(t))dt\in\R
\end{equation*}
 is $C^2$, and satisfies $d\tilde{\mathcal{E}}_{V_0,\lambda}({\bf u}_\lambda)=0$ and
$$
\tilde{\cal E}_{V_0,\lambda}(\xi)={\cal L}_{S_0,\lambda}\left(\Psi_{\overline{\gamma}}(\xi)\right)\quad
\hbox{for all}\; \xi\in C^1_{V_0\times\{{\bf u}(\lambda)\}}([0,\lambda]; B^n_{2\iota}(0)).
$$
With the same reasoning as for (\ref{e:DustMorse*}) these yield
\begin{equation}\label{e:DustMorse}
m^-(\tilde{\mathcal{E}}_{V_0,\lambda}, {\bf u}_\lambda)=m^-(\mathcal{L}_{S_0,\lambda}, \gamma_\lambda)\quad
\hbox{and}\quad m^0(\tilde{\mathcal{E}}_{V_0,\lambda}, {\bf u}_\lambda)=m^0(\mathcal{L}_{S_0,\lambda}, \gamma_\lambda).
\end{equation}

Therefore \textsf{from now on we may assume that $M=\R^n$ and $S_0$ is a linear subspace  in $\R^n$.}
Then for  $0<\lambda\le\tau$ and $y,z\in C^1_{S_0\times\{0\}}([0,\lambda];\R^{n})$,
\begin{equation}\label{e:Linquadform}
D^2\mathcal{L}_{S_0,\lambda}(\gamma_\lambda)[y,z]=\int^\lambda_0\left[(\textsf{P}\dot{y}+
\textsf{Q} y)\cdot\dot{z}+\textsf{ Q}^\top\dot{y}\cdot z+ \textsf{R} y\cdot z\right]dt,
\end{equation}
where $\textsf{P}(t)=\partial_{vv}L\left(t, \gamma(t),\dot{\gamma}(t)\right)$,
$\textsf{Q}(t)=\partial_{xv}L\left(t, \gamma(t), \dot{\gamma}(t)\right)$ and
$\textsf{R}(t)=\partial_{xx}L\left(t, \gamma(t), \dot{\gamma}(t)\right)$.

Let $v=v(t,x,\xi)$ be the solution of $\xi=\partial_{v}L(t,x,v)$. Define
$$
H(t,x,\xi)=\langle v(t,x,\xi),\xi\rangle_{\R^n}-L(t, x, v(t,x,\xi)).
$$
Writing $x(t)=\gamma(t)$ and $\xi(t)=\partial_{v}L(t,x(t),v(t))$, we get that
(\ref{e:LagrCurve}) is equivalent to
\begin{equation}\label{e:HamCurve1}
\left.\begin{array}{ll}
&\frac{d}{dt}x(t)=\partial_{\xi} H(t,x(t),\xi(t)),\\
&\frac{d}{dt}\xi(t)=-\partial_x H(t, x(t), \xi(t))
\end{array}\right\}
\end{equation}
with boundary condition
\begin{equation}\label{e:HamCurve2}
\left.\begin{array}{ll}
&(x(0), x(\tau))\in S_0\times\{q\}\subset \R^n\times \R^n,\\
&(\xi(0),-\xi(\tau))\in S_0^\bot\times\R^n
\end{array}\right\}
\end{equation}
since $(T_{(x(0), x(\tau))}(S_0\times\{q\})^\bot=(S_0\times\{0\})^\bot=S_0^\bot\times\R^n$.
Note that $T^\ast\R^n=\R^n\times\R^n$. The natural projection $\pi:T^\ast M\to M$
becomes the projection from $\R^n\times\R^n$ onto the first factor $\R^n$.
The co-normal bundle of $S_0$,
i.e., $N^\ast S_0=\{(x,\xi)\in T^\ast M\,|\, x\in S_0, \langle\xi, v\rangle=0\;\forall v\in T_{x}S_0\}$,
becomes $S_0\times S_0^\bot$ and therefore its tangent space $U=S_0\times S_0^\bot$. Moreover
the vertical space $V$ is equal to $\{0\}\times\R^n\subset\R^n\times\R^n$.
By \cite[Proposition~4.5]{Du} with $\rho=U\times V$ we have
\begin{equation}\label{e:DuProp4.5}
m^-(D^2\mathcal{L}_{S_0,\lambda}(\gamma_\lambda))=\sum_{0<s<\lambda}\dim (U\cap\Phi(0,s)^{-1}(V)),
\end{equation}
where  $\Phi(0,t)$, according to (1.19) and (1.20)  in \cite{Du}, is the fundamental matrix solution of
\begin{equation}\label{e:LinHamCurve1}
\left(\begin{array}{c}
             \dot{x}(t)\\
             \dot{y}(t) \\
           \end{array}
         \right)= A(0,t)
         \left(\begin{array}{c}
             {x}(t)\\
             {y}(t) \\
           \end{array}
         \right)
\end{equation}
with
\begin{eqnarray*}
A(0,t)=\left(\begin{array}{cc}
            D^2_{x\xi}H(t,x(t),\xi(t)) & D^2_{\xi}H(t,x(t),\xi(t)) \\
             -D^2_{xx}H(t,x(t),\xi(t))& -D^2_{\xi x}H(t,x(t),\xi(t)) \\
           \end{array}
         \right)
\end{eqnarray*}
with $\xi(t)=\partial_{v}L(t,\gamma(t),\dot{\gamma}(t))$. Note that \cite[(1.13)-(1.14)]{Du} with $\mu=0$ corresponds to
the Jacobi equation of the functional $\mathcal{L}_{S_0,\tau}$, namely, the following linearized problem of (\ref{e:LagrCurve})
\begin{equation}\label{e:LinLagrCurve}
\left.\begin{array}{ll}
&\frac{d}{dt}\Big(\textsf{Q}(t)\cdot{x}(t)+ \textsf{P}(t)\cdot\dot{x}(t)\Big)=
\textsf{R}(t)\cdot x(t)+ \textsf{Q}^\top(t)\cdot\dot{x}(t),\\
&x(0)\in S_0,\;x(\tau)=0\quad\hbox{and}\quad
\textsf{Q}(0)\cdot{x}(0)+ \textsf{P}(0)\cdot\dot{x}(0)\in S_0^\bot
\end{array}\right\}
\end{equation}
whose solution space is equal to the kernel of $D^2\mathcal{L}_{S_0,\tau}(\gamma)$.
It was claimed in \cite[page 179]{Du} that (\ref{e:LinLagrCurve}) is equivalent to
(\ref{e:LinHamCurve1}) plus
with boundary condition
\begin{equation}\label{e:HamCurve23}
\left.\begin{array}{ll}
&(x(0), x(\tau))\in S_0\times\{0\}\subset \R^n\times \R^n,\\
&y(0)\in S_0^\bot.
\end{array}\right\}
\end{equation}
According to the deduction from \cite[(1.13)-(1.14)]{Du} with $\mu=0$ to \cite[(1.19)-(1.21)]{Du} with $\mu=0$,
(\ref{e:LinHamCurve1}) was obtained by putting
 $y(t):=\textsf{Q}(t)\cdot{x}(t)+ \textsf{P}(t)\cdot\dot{x}(t)$ in (\ref{e:LinLagrCurve}).
Hence (\ref{e:LinHamCurve1}) is exactly
\begin{equation}\label{e:LinHamCurve3}
\left(\begin{array}{c}
             \dot{x}(t)\\
             \dot{y}(t) \\
           \end{array}
         \right)=
    \left(\begin{array}{cc}
             -[\textsf{P}(t)]^{-1}\textsf{Q}(t) & [\textsf{P}(t)]^{-1} \\
             \textsf{R}(t)-[\textsf{Q}(t)]^\top[\textsf{P}(t)]^{-1}\textsf{Q}(t)& [\textsf{Q}(t)]^\top[\textsf{P}(t)]^{-1} \\
           \end{array}
         \right)
         \left(\begin{array}{c}
             {x}(t)\\
             {y}(t) \\
           \end{array}
         \right),
\end{equation}
that is, \begin{eqnarray*}
\left(\begin{array}{cc}
             -[\textsf{P}(t)]^{-1}\textsf{Q}(t) & [\textsf{P}(t)]^{-1} \\
             \textsf{R}(t)-[\textsf{Q}(t)]^\top[\textsf{P}(t)]^{-1}\textsf{Q}(t)& [\textsf{Q}(t)]^\top[\textsf{P}(t)]^{-1} \\
           \end{array}
         \right)=A(0,t)
\end{eqnarray*}
with $\xi(t)=\partial_{v}L(t,\gamma(t),\dot{\gamma}(t))$.
(Hence $\Phi(0,t)=d\phi^t(\gamma(0),\partial_vL(t,\gamma(0),\dot\gamma(0))):\R^{2n}\to\R^{2n}$,
where  $\phi^t$ is the flow of the Hamiltonian system (\ref{e:HamCurve1}); see
\cite[page 192]{Du}.)

Note that the map sending $(\bar{x},\bar{y})\in U\cap\Phi(0,\tau)^{-1}(V)$ to $x$, where
$(x(t),y(t))=\Phi(0,t)(\bar{x},\bar{y})$ with $0\le t\le\tau$, is a linear isomorphism
between $U\cap\Phi(0,\tau)^{-1}(V)$ and the space of solutions of (\ref{e:LinLagrCurve}). We obtain
$$
\dim (U\cap\Phi(0,\tau)^{-1}(V))=m^0(D^2\mathcal{L}_{S_0,\lambda}(\gamma_\tau))
$$
and so $\dim (U\cap\Phi(0, s)^{-1}(V))=m^0(D^2\mathcal{L}_{S_0, s}(\gamma_s))$ since
 $\tau$ may be replaced by any $0<\lambda\le\tau$. The desired conclusion
  follows from these and (\ref{e:DuProp4.5}).
\end{proof}

\section{Proofs of Theorems~\ref{th:bif-nessLagr*},~\ref{th:bif-existLagr*},~\ref{th:bif-suffLagr*}}\label{sec:LagrPPerio1}
\setcounter{equation}{0}

The proofs are completely similar to those of Section~\ref{sec:LagrBound}. We only outline main procedures.

\subsection{Reduction to Euclidean spaces}\label{sec:LagrPPerio1.1}

As in Section~\ref{sec:LagrBound.1.1}, we have  a positive number $\iota$ such that $3\iota$ is less than the injectivity
radius of $g$ at each point on $\gamma_\mu([0, \tau])$, and a path
$\overline{\gamma}\in C^{1}_{\mathbb{I}_g}([0, \tau]; M)\cap C^7([0,\tau];M)$  such that (\ref{e:gamma1}) is satisfied.
Then the injectivity radius of $g$ at each point on
$\overline{\gamma}([0,\tau])$ is at least $2\iota$. 
Then we assume that (\ref{e:gamma2}) is satisfied.
(For cases of Theorems~\ref{th:bif-nessLagr*},~\ref{th:bif-suffLagr*},
it is naturally satisfied after shrinking $\Lambda$ toward $\mu$.)

As in \cite[\S3]{Lu5}, starting with a unit orthogonal
 frame at $T_{\gamma_\mu(0)}M$ and using the parallel
transport along $\overline{\gamma}$ with respect to the Levi-Civita
connection of the Riemannian metric $g$ we get a unit orthogonal
parallel $C^5$ frame field $[0,\tau]\to \overline{\gamma}^\ast TM,\;t\mapsto
(e_1(t),\cdots, e_n(t))$.
 Note that  there exists a unique orthogonal matrix
 $E_{\overline{\gamma}}$ such that
 $(e_1(\tau),\cdots, e_n(\tau))=(\mathbb{I}_{g\ast}e_1(0),\cdots,
\mathbb{I}_{g\ast}e_n(0))E_{\overline{\gamma}}$.
Let $B^n_{2\iota}(0):=\{x\in\R^n\,|\, |x|<2\iota\}$ and $\exp$ denote
 the exponential map of $g$. Then
 \begin{eqnarray}\label{e:Lagr4II}
\phi_{\overline{\gamma}}:[0,\tau]\times B^n_{2\iota}(0)\to
M,\;(t,x)\mapsto\exp_{\overline{\gamma}(t)}\Big(\sum^n_{i=1}x_i
e_i(t)\Big)
 \end{eqnarray}
 is a $C^5$ map and  satisfies 
\begin{eqnarray*}
\phi_{{\overline{\gamma}}}(\tau,x)=\mathbb{I}_g\left(\phi_{\overline{\gamma}}(0, (E_{\overline{\gamma}}
x^\top)^\top)\right)\quad\hbox{and}\quad
d\phi_{\overline{\gamma}}(\tau,x)[(1,v)]=d\phi_{\overline{\gamma}}(0,
(E_{\overline{\gamma}} x^\top)^\top)[(1, (E_{\overline{\gamma}} v^\top)^\top)]
\end{eqnarray*}
for any $(t,x, v)\in [0, \tau]\times B^n_{2\iota}(0)\times\mathbb{R}^n$.
(Note that the tangent map $d\phi_{\overline{\gamma}}:T([0,\tau]\times B^n_{2\iota}(0))\to
TM$ is $C^4$.)
Consider the Hilbert subspace
\begin{equation}\label{e:Rot-space1}
W^{1,2}_{E_{\overline{\gamma}}}([0,\tau];\R^{n}):=\{u\in W^{1,2}([0,\tau];\mathbb{R}^{n})\,|\, u(\tau)=E_{\overline{\gamma}}u(0)\}
\end{equation}
of  $W^{1,2}([0,\tau];\mathbb{R}^{2n})$  equipped with $W^{1,2}$-inner product (\ref{e:innerP2}),
and Banach spaces
\begin{eqnarray}\label{e:Rot-space2}
C^i_{E_{\overline{\gamma}}}([0,\tau];\R^{n})=\{u\in C^i([0,\tau];\R^{n})\,|\,u(\tau)=E_{\overline{\gamma}}u(0)\}
\end{eqnarray}
 with the induced norm $\|\cdot\|_{C^i}$ from $C^i([0,\tau],\mathbb{R}^n)$ for $i\in\N\cup\{0\}$.
 By \cite[Theorem~4.3]{PiTa01}, we get a $C^2$ coordinate chart around $\overline{\gamma}$ on
 the $C^4$ Banach manifold $C^{1}_{\mathbb{I}_g}([0, \tau]; M)$,
\begin{eqnarray}\label{e:Lagr5}
\Phi_{\overline{\gamma}}:C^1_{{E}_{\overline{\gamma}}}([0,\tau],B^n_{2\iota}(0))=\{\xi\in
C^1_{{E}_{\overline{\gamma}}}([0,\tau],\mathbb{R}^n) \,|\,\|\xi\|_{C^0}<2\iota\} \to C^{1}_{\mathbb{I}_g}([0, \tau]; M)
\end{eqnarray}
given by $\Phi_{\overline{\gamma}}(\xi)(t)=\phi_{\overline{\gamma}}(t,\xi(t))$,  and
$$
d\Phi_{\overline{\gamma}}(0): C^1_{{E}_{\overline{\gamma}}}([0,\tau],\mathbb{R}^n)\to C^1_{\mathbb{I}_g}(\overline{\gamma}^\ast TM),\;
\xi\mapsto\sum^n_{j=1}\xi_je_j.
$$
By (\ref{e:gamma1}) and (\ref{e:gamma2}), for each $\lambda\in\Lambda$ there exists a unique map
 ${\bf u}_\lambda:[0,\tau]\to B^n_{\iota}(0)$  such that
$$
\gamma_\lambda(t)=\phi_{\overline{\gamma}}(t,{\bf u}_\lambda(t))=\exp_{\overline{\gamma}(t)}\Big(\sum^n_{i=1}{\bf u}_\lambda^i
e_i(t)\Big),\quad t\in [0,\tau].
$$
As in the proofs of Lemma~\ref{lem:twoCont},~\ref{lem:twoCont+} we have:

\begin{lemma}\label{lem:PeriCont}
 ${\bf u}_\lambda\in C^2([0,\tau]; B^n_{\iota}(0))$,
${\bf u}_\mu(0)=0={\bf u}_\mu(\tau)$ (and so ${\bf u}_\lambda\in C^1_{{E}_{\overline{\gamma}}}([0,\tau],B^n_{\iota}(0))$) and
$$
 (\lambda,t)\mapsto {\bf u}_\lambda(t),\quad (\lambda,t)\mapsto \dot{\bf u}_\lambda(t)\quad\hbox{and}\quad
 (\lambda,t)\mapsto \ddot{\bf u}_\lambda(t)
 $$
 are continuous as maps from  $\Lambda\times [0,\tau]$ to $\R^n$.
 \end{lemma}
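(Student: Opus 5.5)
The plan is to follow the proof of Lemma~\ref{lem:twoCont} (deferred to Appendix~\ref{app:Exp}) almost verbatim, and then add a Lemma~\ref{lem:regu}-type argument for $\ddot{\bf u}_\lambda$. The starting point is that $\phi_{\overline{\gamma}}$ in (\ref{e:Lagr4II}) is a $C^5$ map. For each fixed $t$, $x\mapsto\phi_{\overline{\gamma}}(t,x)$ is a diffeomorphism of $B^n_{2\iota}(0)$ onto the geodesic ball of radius $2\iota$ about $\overline{\gamma}(t)$, because $2\iota$ is below the injectivity radius. The combined map
$$(t,x)\mapsto (t,\phi_{\overline{\gamma}}(t,x))$$
is therefore a $C^5$ diffeomorphism onto an open neighbourhood of the graph of $\overline{\gamma}$ in $[0,\tau]\times M$, with a $C^5$ inverse $(t,p)\mapsto (t,\Psi(t,p))$ and $\Psi(t,p)=\big(g(\exp^{-1}_{\overline{\gamma}(t)}p, e_i(t))\big)_{i}$.

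By (\ref{e:gamma2}) we then have ${\bf u}_\lambda(t)=\Psi(t,\gamma_\lambda(t))$. Since $\gamma_\lambda\in C^2$ and $\Psi\in C^5$, it follows that ${\bf u}_\lambda\in C^2([0,\tau];B^n_\iota(0))$. The chain rule gives
$$\dot{\bf u}_\lambda(t)=\partial_t\Psi(t,\gamma_\lambda(t))+\partial_p\Psi(t,\gamma_\lambda(t))[\dot\gamma_\lambda(t)].$$
Continuity of $(\lambda,t)\mapsto{\bf u}_\lambda(t)$ and $(\lambda,t)\mapsto\dot{\bf u}_\lambda(t)$ then follows from the continuity of $(\lambda,t)\mapsto\gamma_\lambda(t),\dot\gamma_\lambda(t)$ in Assumption~\ref{ass:Lagr7}, read in a $C^2$ chart. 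Because $\overline{\gamma}(0)=\gamma_\mu(0)$ and $\overline{\gamma}(\tau)=\gamma_\mu(\tau)$ by (\ref{e:gamma1}), we get ${\bf u}_\mu(0)=0={\bf u}_\mu(\tau)$.

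The boundary relation uses the identity $\phi_{\overline{\gamma}}(\tau,x)=\mathbb{I}_g(\phi_{\overline{\gamma}}(0,E_{\overline{\gamma}}x))$, which is stated just after (\ref{e:Lagr4II}). Combining it with $\gamma_\lambda(\tau)=\mathbb{I}_g(\gamma_\lambda(0))$ gives
$$\phi_{\overline{\gamma}}(0,E_{\overline{\gamma}}{\bf u}_\lambda(\tau))=\phi_{\overline{\gamma}}(0,{\bf u}_\lambda(0)).$$
Both arguments lie in $B^n_{2\iota}(0)$, since $|E_{\overline{\gamma}}x|=|x|$ and $\mathbb{I}_g$ is an isometry, and $\phi_{\overline{\gamma}}(0,\cdot)$ is injective there. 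This yields the coordinate relation between ${\bf u}_\lambda(0)$ and ${\bf u}_\lambda(\tau)$, that is, ${\bf u}_\lambda\in C^1_{E_{\overline{\gamma}}}([0,\tau],B^n_\iota(0))$. I will check carefully that the direction of $E_{\overline{\gamma}}$ here agrees with the convention in (\ref{e:Rot-space1}), transposing if necessary.

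For $\ddot{\bf u}_\lambda$ I will argue as for Lemma~\ref{lem:twoCont+}. Define $L^\ast_\lambda$ by pulling back $L_\lambda$ through $\phi_{\overline{\gamma}}$ exactly as in (\ref{e:3.15Gener}). Then $L^\ast$ is $C^2$ in $(t,q,v)$, strictly convex in $v$, and all its partial derivatives are continuous in $(\lambda,t,q,v)$. Moreover ${\bf u}_\lambda$ solves the Euler--Lagrange equation of $L^\ast_\lambda$. Because $\partial_{vv}L^\ast$ is invertible, Lemma~\ref{lem:regu}(i) expresses $\ddot{\bf u}_\lambda(t)$ as a continuous function of $(\lambda,t,{\bf u}_\lambda(t),\dot{\bf u}_\lambda(t))$. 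Together with the continuity already proved for ${\bf u}_\lambda$ and $\dot{\bf u}_\lambda$, this gives continuity of $(\lambda,t)\mapsto\ddot{\bf u}_\lambda(t)$. The strict convexity hypothesis (ii) of Assumption~\ref{ass:Lagr1} holds on compact parameter sets, since $L$ is fibrewise strictly convex everywhere.

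The main obstacle I expect is regularity bookkeeping rather than any new idea. The inverse $\Psi$ must be genuinely $C^5$ jointly in $(t,p)$; this requires applying the inverse function theorem to $(t,x)\mapsto(t,\phi_{\overline{\gamma}}(t,x))$ on the compact set $[0,\tau]\times\overline{B^n_{2\iota}(0)}$ and using injectivity. One must also check that ${\bf u}_\lambda$ takes values in $B^n_\iota(0)$, which follows from $d_g(\gamma_\lambda(t),\overline{\gamma}(t))<\iota$.
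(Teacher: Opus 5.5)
Your proposal is correct and is essentially the paper's argument. The paper proves this lemma by repeating the proofs of Lemma~\ref{lem:twoCont} (inverting $\mathbb{F}$ near the zero section, pairing with the parallel frame, then differentiating in $t$) and Lemma~\ref{lem:twoCont+} (Lemma~\ref{lem:regu}(i) applied to the pulled-back Lagrangian $L^\ast$); your chain rule through the $C^5$ inverse $\Psi$ is a slightly cleaner form of the Appendix's coordinate computation. Your derivation of the twisted boundary condition for every $\lambda$, not just $\lambda=\mu$, is more careful than the paper's ``and so'', and your transpose worry is justified: $\phi_{\overline{\gamma}}(\tau,x)=\mathbb{I}_g(\phi_{\overline{\gamma}}(0,E_{\overline{\gamma}}x))$ gives $E_{\overline{\gamma}}{\bf u}_\lambda(\tau)={\bf u}_\lambda(0)$, i.e.\ ${\bf u}_\lambda(\tau)=E_{\overline{\gamma}}^\top{\bf u}_\lambda(0)$, so (\ref{e:Rot-space1})--(\ref{e:Rot-space2}) and the chart (\ref{e:Lagr5}) should be read with $E_{\overline{\gamma}}^\top$ in place of $E_{\overline{\gamma}}$, a harmless convention slip in the paper.
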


Let $\tilde L^\ast:\Lambda\times [0,\tau]\times B^n_{\iota}(0)\times\R^n\to\R$ be given by
(\ref{e:ModifiedLEu}). It satisfies Proposition~\ref{th:bif-LagrGenerEu}.
Each functional
\begin{equation}\label{e:PerbrakefunctGener*}
\tilde{\mathscr{E}}^\ast_\lambda: C^1_{{E}_{\overline{\gamma}}}([0,\tau],B^n_{\iota}(0))\to\R,\;x\mapsto\int^1_0\tilde{L}^\ast_\lambda(t, x(t), \dot{x}(t))dt
\end{equation}
is $C^2$, and satisfies
\begin{equation}\label{e:Pertwofunctional}
\tilde{\mathscr{E}}^\ast_{\lambda}(x)=\mathscr{E}_{\lambda}\left(\Phi_{\overline{\gamma}}(x+{\bf u}_\lambda)\right)\;\forall x\in
C^1_{{E}_{\overline{\gamma}}}([0,\tau],B^n_{\iota}(0))\quad\hbox{and}\quad d\tilde{\mathscr{E}}^\ast_\lambda(0)=0.
\end{equation}
Hence for each $\lambda\in\Lambda$,
 $x\in C^1_{{E}_{\overline{\gamma}}}([0,\tau],B^n_{\iota}(0))$ satisfies
 $d\tilde{\mathscr{E}}^\ast_\lambda(x)=0$ if and only if $\gamma:=\Phi_{\overline{\gamma}}(x+{\bf u}_\lambda)$ satisfies
$d{\mathscr{E}}_\lambda(\gamma)=0$; and in this case $\gamma$ and $x$ 
have the same Morse indexes and nullities.
In particular,  for each $\lambda\in\Lambda$, it holds that
\begin{equation}\label{e:PertwoMorse}
m^-(\tilde{\mathscr{E}}^\ast_\lambda, 0)=m^-(\mathscr{E}_\lambda, \gamma_\lambda)\quad
\hbox{and}\quad
m^0(\tilde{\mathscr{E}}^\ast_\lambda, 0)=m^0(\mathscr{E}_\lambda, \gamma_\lambda).
\end{equation}
The critical points of $\tilde{\mathscr{E}}^\ast_\lambda$ 
correspond to the solutions of the following boundary problem:
\begin{eqnarray}\label{e:PLagrGenerEu}
&&\frac{d}{dt}\big(\partial_v\tilde L^\ast_\lambda(t, x(t), \dot{x}(t))\big)-\partial_q\tilde{L}^\ast_\lambda(t, x(t), \dot{x}(t))=0,\\
&&\left.\begin{array}{ll}
x\in C^2_{{E}_{\overline{\gamma}}}([0,\tau],B^n_{\iota}(0))\quad\hbox{and}\quad\\
({E}_{\overline{\gamma}}^\top)^{-1}\partial_v\tilde{L}^\ast_\lambda(0, x(0), \dot{x}(0))=\partial_v\tilde{L}^\ast_\lambda(\tau, x(\tau), \dot{x}(\tau))
\label{e:PLagrGenerBEu}
\end{array}\right\}
\end{eqnarray}
(\cite[Proposition~4.2]{BuGiHi}). Corresponding to Theorems~\ref{th:bif-nessLagrGenerEu},~\ref{th:bif-existLagrGenerEu+},~\ref{th:bif-suffLagrGenerEu},
 we have the following three theorems,  which also hold true provided that $\tilde{L}^\ast$ satisfies (a) in Proposition~\ref{th:bif-LagrGenerEu} and the weaker
 (b') in Remark~\ref{rm:bif-LagrGenerEu} as noted in Remark~\ref{rm:bif-LagrGenerEu}.

\begin{theorem}\label{th:Pbif-nessLagrGenerEu}
\begin{enumerate}
\item[\rm (I)]{\rm (\textsf{Necessary condition}):}
  Suppose that $(\mu, 0)\in\Lambda\times C^1_{{E}_{\overline{\gamma}}}([0,\tau],B^n_{\iota}(0))$  is a
   bifurcation point along sequences of the problem (\ref{e:PLagrGenerEu})--(\ref{e:PLagrGenerBEu})
  with respect to the trivial branch $\{(\lambda,0)\,|\,\lambda\in\Lambda\}$ in $\Lambda\times C^1_{{E}_{\overline{\gamma}}}([0,\tau],B^n_{\iota}(0))$.
  Then $m^0(\tilde{\mathscr{E}}^\ast_{\mu}, 0)>0$.

\item[\rm (II)]{\rm (\textsf{Sufficient condition}):}
Suppose that $\Lambda$ is first countable and that there exist two sequences in  $\Lambda$ converging to $\mu$, $(\lambda_k^-)$ and
$(\lambda_k^+)$,  such that one of the following conditions is satisfied:
 \begin{enumerate}
 \item[\rm (II.1)] For each $k\in\mathbb{N}$, either $0$  is not an isolated critical point of $\tilde{\mathscr{E}}^\ast_{\lambda^+_k}$,
 or $0$ is not an isolated critical point of $\tilde{\mathscr{E}}^\ast_{\lambda^-_k}$,
 or $0$  is an isolated critical point of $\tilde{\mathscr{E}}^\ast_{\lambda^+_k}$ and $\tilde{\mathscr{E}}^\ast_{\lambda^-_k}$ and
  $C_m(\tilde{\mathscr{E}}^\ast_{\lambda^+_k}, 0;{\bf K})$ and $C_m(\tilde{\mathscr{E}}^\ast_{\lambda^-_k}, 0;{\bf K})$
  are not isomorphic for some Abel group ${\bf K}$ and some $m\in\mathbb{Z}$.
\item[\rm (II.2)] For each $k\in\mathbb{N}$, there exists $\lambda\in\{\lambda^+_k, \lambda^-_k\}$ such that
$0$  is an either non-isolated or homological visible critical point of
$\tilde{\mathscr{E}}^\ast_{\lambda}$ , and
$$[m^-(\tilde{\mathscr{E}}^\ast_{\lambda_k^-}, 0),
m^-(\tilde{\mathscr{E}}^\ast_{\lambda_k^-}, 0)+
m^0(\tilde{\mathscr{E}}^\ast_{\lambda_k^-}, 0)]\cap[m^-(\tilde{\mathscr{E}}^\ast_{\lambda_k^+}, 0),
m^-(\tilde{\mathscr{E}}^\ast_{\lambda_k^+}, 0)+m^0(\tilde{\mathscr{E}}^\ast_{\lambda_k^+}, 0)]=\emptyset.
$$
\item[\rm (II.3)]  $[m^-(\tilde{\mathscr{E}}^\ast_{\lambda_k^-}, 0),
m^-(\tilde{\mathscr{E}}^\ast_{\lambda_k^-}, 0)+
m^0(\tilde{\mathscr{E}}^\ast_{\lambda_k^-}, 0)]\cap[m^-(\tilde{\mathscr{E}}^\ast_{\lambda_k^+}, 0),
m^-(\tilde{\mathscr{E}}^\ast_{\lambda_k^+}, 0)+m^0(\tilde{\mathscr{E}}^\ast_{\lambda_k^+}, 0)]=\emptyset$,
and either $m^0(\tilde{\mathscr{E}}^\ast_{\lambda_k^-}, 0)=0$ or $m^0(\tilde{\mathscr{E}}^\ast_{\lambda_k^+}, 0)=0$
for each $k\in\mathbb{N}$.
 \end{enumerate}
   Then $(\mu, 0)$  is a  bifurcation point of the problem (\ref{e:PLagrGenerEu})--(\ref{e:PLagrGenerBEu})
  in  $\hat\Lambda\times C^2_{{E}_{\overline{\gamma}}}([0,\tau],B^n_{\iota}(0))$  with respect to the branch $\{(\lambda, 0)\,|\,\lambda\in\hat\Lambda\}$
  (and so $\{(\lambda, 0)\,|\,\lambda\in\Lambda\}$), where
 $\hat{\Lambda}=\{\mu,\lambda^+_k, \lambda^-_k\,|\,k\in\mathbb{N}\}$.
  \end{enumerate}
  \end{theorem}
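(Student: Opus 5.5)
The plan is to repeat the scheme of Section~\ref{sec:LagrBound.2}, with the subspace ${\bf H}_{V_0\times V_1}$ replaced by the Hilbert space $W^{1,2}_{E_{\overline{\gamma}}}([0,\tau];\R^n)$ of (\ref{e:Rot-space1}) and ${\bf X}_{V_0\times V_1}$ replaced by $C^1_{E_{\overline{\gamma}}}([0,\tau];\R^n)$.

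Given a compact (or sequentially compact) $\hat\Lambda$ containing $\mu$ and the relevant sequences, I first apply Lemma~\ref{lem:Gener-modif} verbatim to $\tilde L^\ast$. This yields $\check L$ with (L0)--(L6). One point must be kept: the added terms $\psi_{\rho_0,\rho_1}(|v|^2)$ and $\Xi(|v|^2)$, and the truncation $\vartheta\Gamma(\cdot/\vartheta)$, introduce no boundary coupling. So the functional
\[
\check{\mathscr{E}}_\lambda(x)=\int_0^\tau\check L_\lambda(t,x,\dot x)\,dt
\]
on $\mathcal U=\{\xi\in W^{1,2}_{E_{\overline{\gamma}}}:\|\xi\|_{C^0}<\iota/2\}$ agrees with $\tilde{\mathscr{E}}^\ast_\lambda$ near $0$ in the $C^1$ topology, as in (\ref{e:twofunctAgree}). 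Morse indices and nullities at $0$ coincide, as in (\ref{e:MorseindexEu}).

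Next I re-establish the analogues of Propositions~\ref{prop:funct-analy}, \ref{prop:continA} and \ref{prop:solutionLagr}.
\begin{enumerate}
\item[$\bullet$] Critical points of $\check{\mathscr{E}}_\lambda$ are $C^2$ and solve the Euler--Lagrange equation with the boundary condition (\ref{e:PLagrGenerBEu}); this follows from \cite[Proposition~4.2]{BuGiHi}.
\item[$\bullet$] The gradient admits an explicit representation analogous to (\ref{e:4.14}). Only the two free constants must now be determined from the boundary relation $u(\tau)=E_{\overline{\gamma}}u(0)$ instead of $V_0\times V_1$; this is exactly the situation of \cite[\S3]{Lu5}. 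It gives $A_\lambda\in C^1$ into $C^1_{E_{\overline{\gamma}}}$ and the joint continuity in $(\lambda,x)$.
\item[$\bullet$] The splitting $B_\lambda(\zeta)=P_\lambda(\zeta)+Q_\lambda(\zeta)$, and the continuity properties (vi)--(vii), are proved by the same estimates, which never used the boundary condition.
\item[$\bullet$] For the regularity claim (viii), I use the quadratic-Lagrangian device from its proof. The text there already notes that Duistermaat's argument on \cite[p.~178]{Du} applies to this boundary condition.
\item[$\bullet$] For the a priori estimate of Proposition~\ref{prop:solutionLagr}, I repeat Steps~1--3. The constants $c_i(\lambda,x)$ are recovered from the identities (\ref{e:4.17})--(\ref{e:4.18}) evaluated at $\nabla\check{\mathscr{E}}_\lambda(x)=0$, so those steps carry over unchanged.
\end{enumerate}

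With these in hand, part (I) follows as in the proof of Theorem~\ref{th:bif-nessLagrGenerEu}(I). A bifurcating sequence for (\ref{e:PLagrGenerEu})--(\ref{e:PLagrGenerBEu}) in $C^1$ gives, by (L1), nontrivial zeros of $\nabla\check{\mathscr{E}}_{\lambda_k}$ converging to $0$. Then \cite[Theorem~3.1]{Lu8} yields $m^0(\check{\mathscr{E}}_\mu,0)>0$, hence $m^0(\tilde{\mathscr{E}}^\ast_\mu,0)>0$.

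For part (II), I first prove the analogue of Theorem~\ref{th:bif-existLagrGenerEu}(I) via \cite[Theorem~C.4]{Lu11}, handling the three cases (II.1)--(II.3) as in the proof of Theorem~\ref{th:bif-nessLagrGenerEu}(II). Under (II.1), non-isolated critical points pass directly through (\ref{e:twofunctAgree}). Otherwise I transfer non-isomorphic critical groups between $\check{\mathscr{E}}_\lambda|_{\mathcal U^X}$ and $\check{\mathscr{E}}_\lambda$ by \cite[Corollary~2.8]{JM}, which requires the analogue of (viii). Cases (II.2) and (II.3) are treated likewise, using the equality of indices. The resulting zeros converge to $0$ in $W^{1,2}$, and the analogue of Proposition~\ref{prop:solutionLagr} upgrades this to $C^2$ convergence.

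The main obstacle I expect is the $E_{\overline{\gamma}}$-twisted version of the gradient formula and of (viii). There I must check that the constants $c_1,c_2$ are uniquely solvable from the twisted boundary relation, with continuous dependence on $(\lambda,x)$. I would first try to reduce this to the periodic computation of \cite{Lu5}.
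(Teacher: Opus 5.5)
Your proposal is correct and is essentially the paper's own argument in Section~\ref{sec:LagrPPerio1.2}. You replace ${\bf H}_{V_0\times V_1},{\bf X}_{V_0\times V_1}$ by ${\bf H}_{E_{\overline{\gamma}}},{\bf X}_{E_{\overline{\gamma}}}$, pass from $\tilde L^\ast$ to $\check L$ via Lemma~\ref{lem:Gener-modif}, re-derive Propositions~\ref{prop:funct-analy}--\ref{prop:solutionLagr} (the paper's Propositions~\ref{prop:Pfunct-analy}--\ref{prop:PsolutionLagr}), and conclude with \cite[Theorem~3.1]{Lu8} for (I) and with the analogue of Theorem~\ref{th:bif-existLagrGenerEu} via \cite[Theorem~C.4]{Lu11} for (II).

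The obstacle you anticipate does not actually arise. As the paper observes, ${\bf H}_{E_{\overline{\gamma}}}$ contains $\{u\in W^{1,2}([0,\tau];\mathbb{R}^n)\mid u(0)=u(\tau)=0\}$, so the Riesz gradient automatically has the form (\ref{e:P4.14}). Its constants $c_1,c_2$ are then read off from (\ref{e:4.17})--(\ref{e:4.18}), not solved from the twisted boundary relation. Also, for $W^{1,2}$ critical points of $\check{\mathscr{E}}_\lambda$ the paper relies on the regularity result \cite[Theorem~4.5]{BuGiHi} together with (L2), (L4)--(L6), rather than \cite[Proposition~4.2]{BuGiHi}, which the paper only uses for $C^1$ critical points.
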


\begin{theorem}[\textsf{Existence for bifurcations}]\label{th:Pbif-existLagrGenerEu+}
Let $\Lambda$ be connected.  For $\lambda^-, \lambda^+\in\Lambda$
suppose that
one of the following conditions is satisfied:
 \begin{enumerate}
 \item[\rm (i)] Either $0$  is not an isolated critical point of $\tilde{\mathscr{E}}^\ast_{\lambda^+}$,
 or $0$ is not an isolated critical point of $\tilde{\mathscr{E}}^\ast_{\lambda^-}$,
 or $0$  is an isolated critical point of $\tilde{\mathscr{E}}^\ast_{\lambda^+}$ and $\tilde{\mathscr{E}}^\ast_{\lambda^-}$ and
  $C_m(\tilde{\mathscr{E}}^\ast_{\lambda^+}, 0;{\bf K})$ and $C_m(\tilde{\mathscr{E}}^\ast_{\lambda^-}, 0;{\bf K})$ are not isomorphic for some Abel group ${\bf K}$ and some $m\in\mathbb{Z}$.

\item[\rm (ii)] $[m^-(\tilde{\mathscr{E}}^\ast_{\lambda^-}, 0),
m^-(\tilde{\mathscr{E}}^\ast_{\lambda^-}, 0)+ m^0(\tilde{\mathscr{E}}^\ast_{\lambda^-}, 0)]\cap[m^-(\tilde{\mathscr{E}}^\ast_{\lambda^+}, 0),
m^-(\tilde{\mathscr{E}}^\ast_{\lambda^+}, 0)+m^0(\tilde{\mathscr{E}}^\ast_{\lambda^+}, 0)]=\emptyset$,
and there exists $\lambda\in\{\lambda^+, \lambda^-\}$ such that $0$  is an either non-isolated or homological visible critical point of
$\mathscr{E}^\ast_{\lambda}$.

\item[\rm (iii)] $[m^-(\tilde{\mathscr{E}}^\ast_{\lambda^-}, 0),
m^-(\tilde{\mathscr{E}}^\ast_{\lambda^-}, 0)+
m^0(\tilde{\mathscr{E}}^\ast_{\lambda^-}, 0)]\cap[m^-(\tilde{\mathscr{E}}^\ast_{\lambda^+}, 0),
m^-(\tilde{\mathscr{E}}^\ast_{\lambda^+}, 0)+m^0(\tilde{\mathscr{E}}^\ast_{\lambda^+}, 0)]=\emptyset$,
and either $m^0(\mathscr{E}^\ast_{\lambda^+}, 0)=0$ or $m^0(\mathscr{E}^\ast_{\lambda^-}, 0)=0$.
 \end{enumerate}
Then for any path $\alpha:[0,1]\to\Lambda$ connecting $\lambda^+$ to $\lambda^-$ there exists
  a sequence $(t_k)\subset[0,1]$ converging to some $\bar{t}\in [0,1]$, and
   a nonzero solution $x_k$ of the problem (\ref{e:PLagrGenerEu})--(\ref{e:PLagrGenerBEu})
  with $\lambda=\alpha(t_k)$ for each $k\in\mathbb{N}$ such that $\|x_k\|_{C^2([0,\tau];\mathbb{R}^n)}\to 0$
  as $k\to\infty$.
 Moreover,  $\alpha(\bar{t})$ is not equal to $\lambda^+$ (resp. $\lambda^-$) if $
 m^0(\tilde{\mathscr{E}}^\ast_{\lambda^+}, 0)=0$ (resp. $m^0(\tilde{\mathscr{E}}^\ast_{\lambda^-}, 0)=0$).
 \end{theorem}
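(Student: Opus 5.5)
The plan is to mirror the proof of Theorem~\ref{th:bif-existLagrGenerEu+} in Section~\ref{sec:LagrBound.2}, replacing the boundary data $V_0\times V_1$ by the twisted periodicity condition $u(\tau)=E_{\overline{\gamma}}u(0)$.

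\textbf{Step 1: modify the Lagrangian.} Fix a compact (or sequentially compact) $\hat\Lambda\supset\alpha([0,1])$. We may take $\hat\Lambda=\alpha([0,1])$, which is compact. Apply Lemma~\ref{lem:Gener-modif} to $\tilde L^\ast$ to get $\check L$ satisfying (L0)--(L6). On the Hilbert space ${\bf H}:=W^{1,2}_{E_{\overline{\gamma}}}([0,\tau];\R^n)$ and the Banach space ${\bf X}:=C^1_{E_{\overline{\gamma}}}([0,\tau];\R^n)$, define $\check{\mathscr{E}}_\lambda(x)=\int_0^\tau\check L_\lambda(t,x,\dot x)\,dt$ on $\mathcal{U}=\{x\in{\bf H}\mid \|x\|_{C^0}<\iota/2\}$, with $\mathcal{U}^X=\mathcal{U}\cap{\bf X}$. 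As in (\ref{e:twofunctAgree}), $\check{\mathscr{E}}_\lambda$ agrees with $\tilde{\mathscr{E}}^\ast_\lambda$ near $0$ in $\mathcal{U}^X$. Consequently the Morse indices and nullities at $0$ coincide, and by the Jiang--Marino type result (\cite[Corollary~2.8]{JM}) so do the critical groups, exactly as in (\ref{e:MorseindexEu}).

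\textbf{Step 2: verify the analogues of Propositions~\ref{prop:funct-analy}, \ref{prop:continA} and~\ref{prop:solutionLagr}.} This is the main work. The gradient formula (\ref{e:4.14}) was derived for the boundary condition $V_0\times V_1$, so it must be redone for ${\bf H}$. I would solve $-\ddot z+z=$ (data) with the twisted boundary conditions $z(\tau)=Ez(0)$ and $\dot z(\tau)-E\dot z(0)=$ (a term coming from $\partial_v\check L$). This gives the same structure: explicit integrals plus $c_1e^t+c_2e^{-t}$, with the constants $c_1,c_2\in\R^n$ determined by a linear system that is invertible because $E$ is orthogonal. These are the computations in \cite[\S3]{Lu5}.

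With that formula in hand, the following go through verbatim:
\begin{itemize}
\item the $C^1$ continuity of $A_\lambda$, as in Proposition~\ref{prop:continA};
\item the splitting $B=P+Q$ with properties (vi)--(vii);
\item the $C^2$ a priori estimate of Proposition~\ref{prop:solutionLagr}.
\end{itemize}
The regularity statement (viii) follows by the auxiliary quadratic functional $\mathfrak{F}$ argument, which the paper notes applies to this case (cf.\ \cite[p.~178]{Du}). Critical points solve (\ref{e:PLagrGenerEu})--(\ref{e:PLagrGenerBEu}) by \cite[Proposition~4.2]{BuGiHi}.

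\textbf{Step 3: apply the abstract theorem.} Apply \cite[Theorem~C.5]{Lu11} to $(\mathcal{U},\mathcal{U}^X,\{\check{\mathscr{E}}_\lambda\mid\lambda\in\hat\Lambda\})$, exactly as in Theorem~\ref{th:bif-existLagrGenerEu}(II). This yields $t_k\to\bar t$ and nonzero critical points $x_k\to0$ in ${\bf H}$. Proposition~\ref{prop:solutionLagr}'s analogue upgrades this to $\|x_k\|_{C^2}\to0$. By (L1), once $\|x_k\|_{C^1}<\rho_0$ the $x_k$ solve the original problem (\ref{e:PLagrGenerEu})--(\ref{e:PLagrGenerBEu}).

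The hypotheses (i)--(iii) transfer as in the proof of Theorem~\ref{th:bif-existLagrGenerEu+}:
\begin{itemize}
\item isolatedness transfers via the second claim of Proposition~\ref{prop:solutionLagr};
\item critical groups transfer via \cite{JM};
\item indices transfer via (\ref{e:MorseindexEu}).
\end{itemize}

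\textbf{The claim $\alpha(\bar t)\ne\lambda^\pm$.} This follows from the necessary condition Theorem~\ref{th:Pbif-nessLagrGenerEu}(I) (equivalently \cite[Theorem~3.1]{Lu8}): if $\alpha(\bar t)=\lambda^+$ with $m^0=0$, then $(\lambda^+,0)$ would be a bifurcation point along sequences, which is impossible.

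The main obstacle I expect is Step 2: rederiving the gradient representation and the uniform (in $\lambda$) control of the constants $c_1,c_2$ under the twisted boundary condition.
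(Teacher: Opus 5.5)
Your proposal is correct and follows essentially the paper's route: modify $\tilde L^\ast$ by Lemma~\ref{lem:Gener-modif}, establish the twisted analogues Propositions~\ref{prop:Pfunct-analy}--\ref{prop:PsolutionLagr}, apply \cite[Theorem~C.5]{Lu11} to obtain the analogue of Theorem~\ref{th:bif-existLagrGenerEu}(II), and transfer hypotheses (i)--(iii) exactly as in the proof of Theorem~\ref{th:bif-existLagrGenerEu+}. The only differences are cosmetic, and both are valid: you rederive the gradient formula (\ref{e:P4.14}) by solving the twisted boundary problem, whereas the paper simply notes that ${\bf H}_{E_{\overline{\gamma}}}$ contains all $W^{1,2}$ paths vanishing at $0$ and $\tau$, so the computation of \cite[(4.14)]{Lu4} carries over; and you deduce $\alpha(\bar t)\ne\lambda^\pm$ from the necessary condition Theorem~\ref{th:Pbif-nessLagrGenerEu}(I) rather than reading it off the abstract theorem.
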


  \begin{theorem}[\textsf{Alternative bifurcations of Rabinowitz's type}]\label{th:Pbif-suffLagrGenerEu}
    Let $\Lambda$ be a real interval and  $\mu\in{\rm Int}(\Lambda)$. Suppose that $m^0(\tilde{\mathscr{E}}^\ast_{\mu}, 0)>0$,
   and that $m^0(\tilde{\mathscr{E}}^\ast_{\lambda}, 0)=0$  for each $\lambda\in\Lambda\setminus\{\mu\}$ near $\mu$, and
  $m^-(\tilde{\mathscr{E}}^\ast_{\lambda}, 0)$ take, respectively, values $m^-(\tilde{\mathscr{E}}^\ast_{\mu}, 0)$ and
  $m^-(\tilde{\mathscr{E}}^\ast_{\mu}, 0)+ m^0(\tilde{\mathscr{E}}^\ast_{\mu}, 0)$
 as $\lambda\in\Lambda$ varies in two deleted half neighborhoods  of $\mu$.
Then  one of the following alternatives occurs:
\begin{enumerate}
\item[\rm (i)] The problem (\ref{e:PLagrGenerEu})--(\ref{e:PLagrGenerBEu})
 with $\lambda=\mu$ has a sequence of solutions, $x_k\ne 0$, $k=1,2,\cdots$,
which converges to $0$ in $C^2([0,\tau], \R^n)$.

\item[\rm (ii)]  For every $\lambda\in\Lambda\setminus\{\mu\}$ near $\mu$ there is a  solution $y_\lambda\ne 0$ of
(\ref{e:PLagrGenerEu})--(\ref{e:PLagrGenerBEu}) with parameter value $\lambda$,
 such that   $y_\lambda$  converges to zero in  $C^2([0,\tau], \R^n)$ as $\lambda\to \mu$.

\item[\rm (iii)] For a given neighborhood $\mathfrak{W}$ of $0\in C^1_{{E}_{\overline{\gamma}}}([0,\tau],B^n_{\iota}(0))$,
there is a one-sided neighborhood $\Lambda^0$ of $\mu$ such that
for any $\lambda\in\Lambda^0\setminus\{\mu\}$, the problem (\ref{e:PLagrGenerEu})--(\ref{e:PLagrGenerBEu}) with parameter value $\lambda$
has at least two distinct solutions in  $\mathfrak{W}$, $y_\lambda^1\ne 0$ and $y_\lambda^2\ne 0$,
which can also be required to satisfy $\tilde{\mathscr{E}}^\ast_\lambda(y^1_\lambda)\ne \tilde{\mathscr{E}}^\ast_\lambda(y^2_\lambda)$
provided that $m^0(\tilde{\mathscr{E}}^\ast_{\mu}, 0)>1$ and the problem
(\ref{e:PLagrGenerEu})--(\ref{e:PLagrGenerBEu}) with parameter value $\lambda$
has only finitely many solutions in $\mathfrak{W}$.
\end{enumerate}
\end{theorem}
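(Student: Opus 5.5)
The plan is to repeat verbatim the strategy of Section~\ref{sec:LagrBound.2}, now on the Hilbert space
$$
{\bf H}_{E}:=W^{1,2}_{E_{\overline{\gamma}}}([0,\tau];\R^{n}),\qquad {\bf X}_{E}:=C^1_{E_{\overline{\gamma}}}([0,\tau];\R^{n}),
$$
and then to invoke the same abstract results from \cite{Lu8, Lu9, Lu10, Lu11}. Fix a compact (or sequentially compact) $\hat\Lambda\ni\mu$ and apply Lemma~\ref{lem:Gener-modif} to $\tilde L^\ast$. This gives $\check L$ satisfying (L0)--(L6). One point to check is that the modification preserves the boundary structure. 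The terms added are of the form $\psi_{\rho_0,\rho_1}(|v|^2)$, $\Xi(|v|^2)$ and $\vartheta\Gamma(\cdot/\vartheta)$, and since $E_{\overline{\gamma}}$ is orthogonal, $|E_{\overline{\gamma}}v|=|v|$. No invariance of $\check L$ is actually required, because the boundary condition lives entirely in the function space. Set $\mathcal{U}=\{\xi\in{\bf H}_E : \|\xi\|_{C^0}<\iota/2\}$ and $\mathcal{U}^X=\mathcal{U}\cap{\bf X}_E$, and let $\check{\mathscr{E}}_\lambda(x)=\int_0^\tau\check L_\lambda(t,x,\dot x)\,dt$. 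By (L1), $\check{\mathscr{E}}_\lambda$ agrees with $\tilde{\mathscr{E}}^\ast_\lambda$ on a $C^1$-ball of radius $\rho_0$. Hence the Morse indices and nullities at $0$ coincide, as in (\ref{e:MorseindexEu}), and together with (\ref{e:PertwoMorse}) this transfers all hypotheses to $\check{\mathscr{E}}_\lambda$.

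Next I establish the analogue of Proposition~\ref{prop:funct-analy} for the pair $({\bf H}_E,{\bf X}_E)$. The $C^{2-0}$ regularity, the formula for $B_\lambda(\zeta)$ as in (\ref{e:gradient4Lagr+}), the splitting $P_\lambda+Q_\lambda$ with (vi)--(vii), and the continuity statements of Proposition~\ref{prop:continA} all go through unchanged. Their proofs only use integrability estimates from (L2)--(L6) together with \cite[Prop.~B.9]{Lu10}, and never the boundary condition. The regularity of critical points (ii) and of eigenvectors (viii) follows from \cite[Proposition~4.2]{BuGiHi} and from Duistermaat's argument \cite[p.~178]{Du} applied to the quadratic Lagrangian $\mathfrak{L}$ with boundary condition $x(\tau)=E_{\overline{\gamma}}x(0)$, $(E_{\overline{\gamma}}^\top)^{-1}\partial_v\mathfrak{L}(0)=\partial_v\mathfrak{L}(\tau)$. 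The proof of Proposition~\ref{prop:funct-analy}(viii) was written precisely so that it applies here. The case $s<0$ again reduces to $s=0$ by replacing $\mathfrak P$ with $\mathfrak P-sI_n$.

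The main obstacle is the gradient representation (iii)--(iv) and the uniform $C^2$ estimate of Proposition~\ref{prop:solutionLagr}, since formula (\ref{e:4.14}) was derived for the boundary space $V_0\times V_1$. First I would solve $-\ddot u+u=\cdots$ in ${\bf H}_E$ by the same variation-of-constants ansatz. The constants $c_1,c_2$ are then determined by the twisted conditions $u(\tau)=E_{\overline{\gamma}}u(0)$ together with the natural condition relating $\dot u(0)$ and $\dot u(\tau)$. This gives a $2n\times 2n$ linear system, invertible because $-\frac{d^2}{dt^2}+1$ is coercive on ${\bf H}_E$. As in \cite[\S3]{Lu5}, $c_1,c_2$ become linear, bounded expressions in $f_{\lambda,x}$ and $\partial_v\check L_\lambda$. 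Once this is in place, Steps 1--3 of the proof of Proposition~\ref{prop:solutionLagr} transfer verbatim. Only the explicit constants bounding $|c_i(\lambda,x)-c_i(\lambda,0)|$ change, and they remain uniform in $\lambda\in\hat\Lambda$.

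With these ingredients the conclusions follow as before. Part (I) of Theorem~\ref{th:Pbif-nessLagrGenerEu} comes from \cite[Theorem~C.6]{Lu11}. Part (II) comes from \cite[Theorem~C.4]{Lu11}, combined with \cite[Corollary~2.8]{JM} to compare critical groups on $\mathcal{U}$ and $\mathcal{U}^X$; this yields the analogue of Theorem~\ref{th:bif-existLagrGenerEu}(I). Theorem~\ref{th:Pbif-existLagrGenerEu+} follows from \cite[Theorem~C.5]{Lu11}, and Theorem~\ref{th:Pbif-suffLagrGenerEu} from \cite[Theorem~C.7]{Lu11}. In each case $C^1$ or $W^{1,2}$ convergence is upgraded to $C^2$ convergence via the analogue of Proposition~\ref{prop:solutionLagr}, and (L1) lets us return from $\check{\mathscr{E}}_\lambda$ to $\tilde{\mathscr{E}}^\ast_\lambda$.
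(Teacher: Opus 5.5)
Your proposal is correct and follows essentially the paper's own route. You modify $\tilde L^\ast$ via Lemma~\ref{lem:Gener-modif}, and you check that Propositions~\ref{prop:funct-analy}, \ref{prop:continA} and \ref{prop:solutionLagr} carry over to the pair $({\bf H}_{E_{\overline{\gamma}}},{\bf X}_{E_{\overline{\gamma}}})$, with the gradient keeping the form (\ref{e:P4.14}) and only $c_1,c_2$ fixed by the twisted boundary conditions. You then apply \cite[Theorem~C.7]{Lu11} on a neighborhood $\hat\Lambda=[\mu-\varepsilon,\mu+\varepsilon]$, and return to $\tilde{\mathscr{E}}^\ast_\lambda$ via (L1) and the uniform $C^2$ estimate.

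The explicit computation of $c_1,c_2$ is unnecessary. Step~1 of Proposition~\ref{prop:solutionLagr} bounds $|c_i(\lambda,x)-c_i(\lambda,0)|$ directly from (\ref{e:4.17})--(\ref{e:4.18}) using $\nabla\check{\mathscr{E}}_\lambda(x)=0$, so those estimates transfer with the same constants.
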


As in the proofs of Theorems~\ref{th:bif-nessLagrGener},~\ref{th:bif-suffLagrGener},
Theorems~\ref{th:bif-nessLagr*},~\ref{th:bif-suffLagr*}
are  derived from Theorems~\ref{th:Pbif-nessLagrGenerEu},~\ref{th:Pbif-suffLagrGenerEu}, respectively.

\subsection{Proofs of Theorems~\ref{th:bif-nessLagr*},~\ref{th:bif-existLagr*},~\ref{th:bif-suffLagr*}}\label{sec:LagrPPerio1.2}

Let us write
\begin{eqnarray*}
&&{\bf H}_{E_{\overline{\gamma}}}:=W^{1,2}_{E_{\overline{\gamma}}}([0,\tau];\R^{n})\quad\hbox{and}\quad
 {\bf X}_{E_{\overline{\gamma}}}:=C^1_{E_{\overline{\gamma}}}([0,\tau];\R^{n}),\\
&&\mathcal{U}:=\big\{u\in W^{1,2}\big([0,\tau];
B^n_{\iota/2}(0)\big)\,\big|\,u(\tau)=E_{\overline{\gamma}}u(0) \big\},\\
&&\mathcal{U}^X:=\mathcal{U}\cap{\bf X}_{E_{\overline{\gamma}}}=\big\{u\in C^1\big([0,\tau];
B^n_{\iota/2}(0)\big)\,\big|\,u(\tau)=E_{\overline{\gamma}}u(0) \big\}
\end{eqnarray*}
The latter two sets are open subsets in the spaces ${\bf H}_{E_{\overline{\gamma}}}$ and ${\bf X}_{E_{\overline{\gamma}}}$, respectively.
 Let the continuous function $\check{L}:\hat\Lambda\times [0, \tau]\times B^n_{3\iota/4}(0)\times\mathbb{R}^n\to\R$
be given by Lemma~\ref{lem:Gener-modif}.
 Define a family of functionals $\check{\mathscr{E}}_\lambda:\mathcal{U}\to\R$ given by
\begin{equation}\label{e:PcheckEu}
\check{\mathscr{E}}_{\lambda}(x)=\int^{\tau}_0\check{L}_\lambda(t, x(t),\dot{x}(t))dt,\quad\lambda\in\hat\Lambda.
\end{equation}
Then (L1) in Lemma~\ref{lem:Gener-modif} implies
\begin{equation}\label{e:PtwofunctAgree}
\tilde{\mathscr{E}}^\ast_\lambda=\check{\mathscr{E}}_{\lambda}|_{\mathcal{U}^X}\quad\hbox{in}\quad
\{x\in\mathcal{U}^X\,|\,\|x\|_{C^1}<\rho_0\}\subset\mathcal{U}^X,
\end{equation}
and hence
\begin{equation}\label{e:PMorseindexEu}
 m^\star(\tilde{\mathscr{E}}^\ast_{\lambda}, 0)=m^\star(\check{\mathscr{E}}_{\lambda}|_{\mathcal{U}^X},0)=
m^\star(\check{\mathscr{E}}_{\lambda},0),\quad\star=-,0.
\end{equation}

Since ${\bf H}_{E_{\overline{\gamma}}}$ contains the subspace $\{u\in W^{1,2}([0,\tau];
\mathbb{R}^n)\,|\,u(\tau)=0=u(0)\}$, carefully checking the computation of \cite[(4.14)]{Lu4} it is easily seen that
 replacing $H_V$ by ${\bf H}_{E_{\overline{\gamma}}}$ we also obtain that
the gradient  $\nabla\check{\mathscr{E}}_{\lambda}(x)$ of $\check{\mathscr{E}}_{\lambda}$ at $x\in\mathcal{U}$
 is still given by
\begin{eqnarray}\label{e:P4.14}
\nabla\check{\mathscr{E}}_{\lambda}(x)(t)&=&e^t\int^t_0\left[
e^{-2s}\int^s_0e^{r}f_{\lambda,x}(r)dr\right]ds  + c_1(\lambda,x)e^t+
c_2(\lambda,x)e^{-t}\nonumber\\
&&\qquad +\int^t_0 \partial_v \check{L}_\lambda(s, x(s),\dot{x}(s))ds ,
\end{eqnarray}
where $c_1(\lambda,x), c_2(\lambda,x)\in\R^n$ are suitable constant vectors and $f_\lambda(t)$ is given by (\ref{e:4.13}).

\begin{proposition}\label{prop:Pfunct-analy}
Proposition~\ref{prop:funct-analy} is still effective after making  the following substitutions:
\begin{enumerate}
\item[\rm $\bullet$] The functionals $\tilde{\mathcal{E}}^\ast_{\lambda}$ and
$\check{\mathcal{E}}_{\lambda}$ are changed into
 $\tilde{\mathscr{E}}^\ast_{\lambda}$ and $\check{\mathscr{E}}_{\lambda}$, respectively.
\item[\rm $\bullet$] The spaces ${\bf H}_{V_0\times V_1}$ and ${\bf X}_{V_0\times V_1}$ are changed into
${\bf H}_{E_{\overline{\gamma}}}$ and ${\bf X}_{E_{\overline{\gamma}}}$,  respectively.

\item[\rm $\bullet$] The boundary problem (\ref{e:LagrGenerEuTwo}) is changed into
\begin{eqnarray}\label{e:PLagrGenerEuTwo}
\left.\begin{array}{ll}
\frac{d}{dt}\big(\partial_v\check{L}_\lambda(t, x(t), \dot{x}(t))\big)-\partial_q \check{L}_\lambda(t, x(t), \dot{x}(t))=0,\\
x(\tau)=E_{\overline{\gamma}}x(0).
\end{array}\right\}
\end{eqnarray}
\end{enumerate}
\end{proposition}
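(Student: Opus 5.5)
The plan is to go through the proof of Proposition~\ref{prop:funct-analy} item by item. For each item we isolate the single place where the boundary condition $(\xi(0),\xi(\tau))\in V_0\times V_1$ was used. Everywhere else we simply replace ${\bf H}_{V_0\times V_1}$, ${\bf X}_{V_0\times V_1}$ by ${\bf H}_{E_{\overline{\gamma}}}$, ${\bf X}_{E_{\overline{\gamma}}}$. The Lagrangian $\check L$ is the same function furnished by Lemma~\ref{lem:Gener-modif}, so (L0)--(L6) hold verbatim, and the pointwise estimates carry over unchanged.

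\textbf{Items (i), (iii), (iv).} For (i), the $C^{2-0}$ regularity and twice G\^ateaux differentiability come from (L3)--(L6) and the arguments of \cite[\S4]{Lu4}, \cite{Lu1}. Those arguments never see the boundary condition beyond the requirement that the domain be a closed subspace of $W^{1,2}$. The equality of Morse indices and nullities follows from (\ref{e:PtwofunctAgree}) and density of ${\bf X}_{E_{\overline{\gamma}}}$ in ${\bf H}_{E_{\overline{\gamma}}}$, once the regularity statement (viii) below is available. The gradient formula (iii) is (\ref{e:P4.14}), already noted above: its derivation only tests against functions vanishing at both endpoints, and the constants $c_1,c_2$ absorb the boundary condition. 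Item (iv) then follows from (\ref{e:P4.14}) exactly as in \cite[\S4]{Lu4}, since differentiating (\ref{e:P4.14}) expresses $\frac{d}{dt}\nabla\check{\mathscr{E}}_\lambda(x)$ through continuous data when $x\in\mathcal{U}^X$.

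\textbf{Items (v)--(vii).} The formula (\ref{e:gradient4Lagr+}) and the splitting $P+Q$ are purely integral. Positivity of $P$ uses only (L2). Compactness of $Q$ comes from the compact embedding $W^{1,2}\hookrightarrow C^0$, which holds on the subspace too. The convergence statements (vi) and (vii) are proved pointwise in $t$ via \cite[Prop.~B.9]{Lu10} and dominated convergence, so they transfer word for word. The bound in (vii) on $\|Q_{\lambda_k,1}(0)-Q_{\mu,1}(0)\|$ relied on the explicit gradient representation; we reuse (\ref{e:P4.14}) in its place.

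\textbf{Item (ii).} Critical points are $C^2$ and solve (\ref{e:PLagrGenerEuTwo}) by \cite[Proposition~4.2, Theorem~4.5]{BuGiHi}. Those results treat general boundary conditions $(x(0),x(\tau))\in\Delta$ with $\Delta$ the graph of $E_{\overline{\gamma}}$. They also yield the natural transversality condition $(E_{\overline{\gamma}}^\top)^{-1}\partial_v\check L(0)=\partial_v\check L(\tau)$, which we record alongside the equation.

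\textbf{Item (viii): the main obstacle.} This is the only item whose proof genuinely uses the boundary structure. We use the improved argument given in the proof of Proposition~\ref{prop:funct-analy}. First form the quadratic Lagrangian $\mathfrak{L}$ and the functional $\mathfrak{F}$ on ${\bf H}_{E_{\overline{\gamma}}}$. Then $\ker B_\lambda(0)$ consists of critical points of $\mathfrak{F}$, and these are $C^2$ by Duistermaat's argument \cite[p.~178]{Du}; as remarked there, that argument applies to boundary conditions given by a linear subspace of $\R^{2n}$, including graphs. The case $s<0$ reduces to $s=0$ by replacing $\mathfrak{P}$ with $\mathfrak{P}-sI_n$. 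For the second inclusion we differentiate the representation (\ref{e:P4.14}) written for $\mathfrak{F}$. This shows that $\mathfrak{P}\dot\xi+\mathfrak{Q}\xi$ is continuous, and hence, since $\mathfrak{P}(t)$ is invertible, that $\dot\xi$ is continuous. The key point to verify is that (\ref{e:P4.14}) remains valid for ${\bf H}_{E_{\overline{\gamma}}}$ with $\lambda,x$-dependent constants; we check this by the same computation as \cite[(4.14)]{Lu4}. That computation only uses that the subspace contains $W^{1,2}_0$ and is closed, which holds here.
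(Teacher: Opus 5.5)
Your proposal is correct and follows the paper's own route. The paper gives no separate argument beyond rerunning the proof of Proposition~\ref{prop:funct-analy} with ${\bf H}_{E_{\overline{\gamma}}}$, ${\bf X}_{E_{\overline{\gamma}}}$ in place of ${\bf H}_{V_0\times V_1}$, ${\bf X}_{V_0\times V_1}$, resting on your two observations: (\ref{e:P4.14}) persists because ${\bf H}_{E_{\overline{\gamma}}}\supset W^{1,2}_0$, and the Duistermaat-type argument for (viii) applies to the graph boundary condition. One small correction, shared with the paper's text: for $s<0$ the shift $B_\lambda(0)-s\,{\rm I}$ replaces both $\mathfrak{P}$ by $\mathfrak{P}-sI_n$ and $\mathfrak{R}$ by $\mathfrak{R}-sI_n$ (the $W^{1,2}$ inner product has an $L^2$ part), which is harmless because only the positivity of $\mathfrak{P}-sI_n$ is used.
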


Then repeating proofs of Propositions~\ref{prop:continA},~\ref{prop:solutionLagr} we can obtain:

\begin{proposition}\label{prop:PcontinA}
Both maps $\hat\Lambda\times \mathcal{U}^X\ni (\lambda, x)\mapsto
\check{\mathscr{E}}_{\lambda}(x)\in\R$ and
$\hat\Lambda\times \mathcal{U}^X\ni (\lambda, x)\mapsto A_\lambda(x)\in{\bf X}_{E_{\overline{\gamma}}}$ are continuous.
\end{proposition}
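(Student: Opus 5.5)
The statement to prove is Proposition~\ref{prop:PcontinA}: continuity of $(\lambda,x)\mapsto\check{\mathscr{E}}_\lambda(x)$ and of $(\lambda,x)\mapsto A_\lambda(x)$ on $\hat\Lambda\times\mathcal{U}^X$. I will repeat the two steps of the proof of Proposition~\ref{prop:continA}. The only change is the boundary condition $x(\tau)=E_{\overline{\gamma}}x(0)$. Everything used there depends only on the integrand $\check{L}$, on the growth conditions (L3), (L5), (L6) of Lemma~\ref{lem:Gener-modif}, and on the gradient formula (\ref{e:P4.14}), which has the same form as (\ref{e:4.14}).

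\textbf{Step 1 (the functional).} For $(\lambda,x),(\lambda_0,x_0)\in\hat\Lambda\times\mathcal{U}$ I split $\check{\mathscr{E}}_\lambda(x)-\check{\mathscr{E}}_{\lambda_0}(x_0)$ into two brackets: one where $x$ varies with $\lambda$ fixed, and one where $\lambda$ varies with $x_0$ fixed. The first bracket tends to zero by (L6) and \cite[Prop.~B.9]{Lu10}, since $W^{1,2}$-convergence gives $C^0$-convergence of $x$ and $L^2$-convergence of $\dot x$. The second tends to zero by (L6), continuity of $\check{L}$ in $\lambda$, and dominated convergence. This holds on $\hat\Lambda\times\mathcal{U}$, hence on $\hat\Lambda\times\mathcal{U}^X$, which carries a finer topology.

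\textbf{Step 2 (the gradient).} For $\xi\in{\bf H}_{E_{\overline{\gamma}}}$,
$d\check{\mathscr{E}}_{\lambda_1}(x)[\xi]-d\check{\mathscr{E}}_{\lambda_2}(y)[\xi]$ is the integral of the differences of $\partial_q\check{L}$ against $\xi$ and of $\partial_v\check{L}$ against $\dot\xi$. This gives
\[
\|\nabla\check{\mathscr{E}}_{\lambda_1}(x)-\nabla\check{\mathscr{E}}_{\lambda_2}(y)\|_{1,2}\le \|\Delta\partial_q\check L\|_{L^2}+\|\Delta\partial_v\check L\|_{L^2}.
\]
When $(\lambda_2,y)\to(\lambda_1,x)$ in $\hat\Lambda\times\mathcal{U}^X$, the points $(\lambda_2,t,y(t),\dot y(t))$ stay in a compact neighbourhood of the compact set $\{(\lambda_1,t,x(t),\dot x(t))\}$, where $\partial_q\check L$ and $\partial_v\check L$ are uniformly continuous. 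Hence the right side tends to zero, and the $C^0$-norm of the gradient difference does too.

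To upgrade to $C^1$, I differentiate (\ref{e:P4.14}) to obtain the analogue of (\ref{e:4.16}). Combining the two relations gives the formulas (\ref{e:4.17})--(\ref{e:4.18}) for $c_1(\lambda,x)e^t$ and $c_2(\lambda,x)e^{-t}$. Those formulas involve $\frac{d}{dt}\nabla\check{\mathscr{E}}_\lambda(x)$, whose convergence is not yet known. To avoid this circularity I integrate them over $[0,\tau]$: the derivative term then becomes a difference of endpoint values of $\nabla\check{\mathscr{E}}_\lambda(x)$, which is controlled by the $C^0$ convergence. This yields continuity of $c_1,c_2$ on $\hat\Lambda\times\mathcal{U}^X$. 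The integrals of $f_{\lambda,x}$ and of $\partial_v\check L$ appearing there converge by the same uniform-continuity argument. Substituting back into the differentiated formula gives $C^0$-convergence of $\frac{d}{dt}\nabla\check{\mathscr{E}}$, hence continuity into ${\bf X}_{E_{\overline{\gamma}}}$.

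The step I expect to be the main obstacle is checking that (\ref{e:P4.14}) and its derivative hold for ${\bf H}_{E_{\overline{\gamma}}}$, with the constants $c_1,c_2$ now fixed by the twisted boundary condition. Once the formula is accepted (as stated before Proposition~\ref{prop:Pfunct-analy}), the constants are only ever recovered from the identities (\ref{e:4.17})--(\ref{e:4.18}), which do not see the boundary condition. So the continuity argument goes through verbatim.
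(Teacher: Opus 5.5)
Your proposal is correct and follows the paper's route. The paper proves this proposition by repeating Steps 1--2 of the proof of Proposition~\ref{prop:continA} with ${\bf H}_{E_{\overline{\gamma}}}$ in place of ${\bf H}_{V_0\times V_1}$, using that (\ref{e:P4.14}) has the same form as (\ref{e:4.14}); your integration of (\ref{e:4.17})--(\ref{e:4.18}) over $[0,\tau]$ only makes explicit a step the paper leaves implicit, where the $W^{1,2}$-convergence of the gradients already gives $L^2$-control of $\frac{d}{dt}\nabla\check{\mathscr{E}}_\lambda$ and yields continuity of $c_1,c_2$.
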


\begin{proposition}\label{prop:PsolutionLagr}
For any given $\epsilon>0$ there exists $\varepsilon>0$ such that
if a critical point $x$ of $\check{\mathscr{E}}_\lambda$ satisfies
$\|x\|_{1,2}<\varepsilon$ then $\|x\|_{C^2}<\epsilon$. (\textsf{{Note}}: $\varepsilon$ is independent of $\lambda\in\hat\Lambda$.)
Consequently, if $0\in \mathcal{U}^X$ is an isolated critical point of $\check{\mathscr{E}}_{\lambda}|_{\mathscr{U}^X}$
then $0\in \mathcal{U}$ is also an isolated critical point of $\check{\mathscr{E}}_{\lambda}$.
\end{proposition}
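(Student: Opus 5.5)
The plan is to repeat the three-step argument used for Proposition~\ref{prop:solutionLagr}. The one new ingredient is to check that the explicit gradient formula (\ref{e:P4.14}) and its consequences survive the change of boundary condition from $V_0\times V_1$ to $u(\tau)=E_{\overline{\gamma}}u(0)$. The second claim (isolatedness transfers from $\mathcal{U}^X$ to $\mathcal{U}$) again follows from the first by contradiction. Suppose $0$ were not isolated in $\mathcal{U}$. Then there would be critical points $x_k\ne0$ with $\|x_k\|_{1,2}\to0$. By the first claim $\|x_k\|_{C^2}\to0$, so $x_k\in\mathcal{U}^X$ are critical points of the restriction accumulating at $0$, which is a contradiction. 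So it suffices to prove the first claim.

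\emph{Step 1.} Let $x$ be a critical point with $\|x\|_{1,2}$ small. By Proposition~\ref{prop:Pfunct-analy} (the analogue of Proposition~\ref{prop:funct-analy}(ii)), $x$ is $C^2$ and solves (\ref{e:PLagrGenerEuTwo}). Since $\nabla\check{\mathscr{E}}_\lambda(x)\equiv0$ and $\nabla\check{\mathscr{E}}_\lambda(0)\equiv0$, formula (\ref{e:P4.14}) and its $t$-derivative give the analogues of (\ref{e:4.17})--(\ref{e:4.18}) for $c_1,c_2$. The point to verify is that the derivation of these two identities used only the form of (\ref{e:P4.14}), not the boundary conditions. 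As noted before Proposition~\ref{prop:Pfunct-analy}, the constants $c_i(\lambda,x)$ encode the boundary data, but the difference identities hold pointwise in $t$. Integrating over $[0,\tau]$ exactly as in (\ref{e:4.17two+}) and (\ref{e:4.18two}) bounds $|c_i(\lambda,x)-c_i(\lambda,0)|$ by $L^1$-norms of $\partial_q\check L_\lambda(t,x,\dot x)-\partial_q\check L_\lambda(t,0,0)$ and of the analogous $\partial_v$-difference. By (L5) of Lemma~\ref{lem:Gener-modif} and \cite[Proposition~B.9]{Lu10}, these norms tend to $0$ uniformly in $\lambda\in\hat\Lambda$ as $\|x\|_{1,2}\to0$. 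This uniformity is where compactness (or sequential compactness) of $\hat\Lambda$ enters.

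\emph{Step 2.} Subtract the differentiated gradient identities at $x$ and at $0$ to obtain the pointwise bound (\ref{e:4.13two++}) on $|\partial_v\check L_\lambda(t,x,\dot x)-\partial_v\check L_\lambda(t,0,0)|$. Then use (L2), in the form $\check c|v|\le|\partial_v\check L_\lambda(t,q,v)-\partial_v\check L_\lambda(t,q,0)|$, together with $\|x\|_{C^0}\le(\sqrt\tau+1/\sqrt\tau)\|x\|_{1,2}$ and uniform continuity of $\partial_v\check L$ on compacts. This gives $\|x\|_{C^1}\to0$ uniformly in $\lambda$, as in (\ref{e:lv2.1two}).

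\emph{Step 3.} Write the Euler--Lagrange equation at $x$ and at $0$ and subtract them, as in (\ref{e:P-EL3}). Invert $\partial_{vv}\check L$ using (L2), and bound $\ddot x$ as in (\ref{e:P-EL4-}) via (L3) and the continuity of $\partial_{vt}\check L$ and $\partial_q\check L$ from (L0). Since the $C^1$-smallness from Step 2 holds uniformly, $\|\ddot x\|_{C^0}\to0$ uniformly in $\lambda\in\hat\Lambda$, which gives $\|x\|_{C^2}<\epsilon$.

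The main obstacle I expect is Step 1: confirming that (\ref{e:P4.14}) holds on ${\bf H}_{E_{\overline{\gamma}}}$ with the same kernel structure, and that the identities for $c_1$ and $c_2$ carry over unchanged. I would first recheck the computation of \cite[(4.14)]{Lu4}, which tests against the subspace of functions vanishing at both endpoints; that subspace lies in ${\bf H}_{E_{\overline{\gamma}}}$, so the ODE satisfied by the gradient is unchanged and only the constants differ. Once this is settled, Steps 2 and 3 are verbatim repetitions of the earlier argument.
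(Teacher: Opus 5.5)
Your proposal is correct and follows the paper's route. The paper obtains this proposition by repeating the three-step proof of Proposition~\ref{prop:solutionLagr}, and the only new input is the one you isolate: the gradient formula (\ref{e:P4.14}) still holds on ${\bf H}_{E_{\overline{\gamma}}}$ because this space contains all $W^{1,2}$ functions vanishing at $0$ and $\tau$. The isolatedness claim is then deduced from the first claim by contradiction, exactly as you describe.
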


Having these we can prove
\begin{description}
\item[$\bullet$] Theorem~\ref{th:Pbif-nessLagrGenerEu}(I) with \cite[Theorem~3.1]{Lu8} (\cite[Theorem~C.6]{Lu11})
as in the proof of Theorem~\ref{th:bif-nessLagrGenerEu}(I),
\item[$\bullet$] Theorem~\ref{th:Pbif-suffLagrGenerEu} with
\cite[Theorem~C.7]{Lu11} (\cite[Theorem~3.6]{Lu10})
 as in the proof of Theorem~\ref{th:bif-suffLagrGenerEu},
 \item[$\bullet$] a corresponding result of Theorem~\ref{th:bif-existLagrGenerEu}, from which
Theorem~\ref{th:Pbif-nessLagrGenerEu}(II) and Theorem~\ref{th:Pbif-existLagrGenerEu+} may be derived,

\item[$\bullet$] Theorem~\ref{th:bif-existLagr*} as in the proof of Theorem~\ref{th:bif-existLagrGener} in Section~\ref{sec:LagrBound.1.4}.
 \end{description}

\section{Proofs of Theorems~\ref{th:bif-nessLagrBrakeM},~\ref{th:bif-existLagrBrakeM},~\ref{th:bif-suffLagrBrakeM}}\label{sec:Lagr7}
\setcounter{equation}{0}

Following the paragraph above (\ref{e:gamma1})
we take a path  $\overline{\gamma}\in EC^7(S_\tau;M)$  such that
 \begin{eqnarray}\label{e:gamma1brake}
{\rm dist}_g(\gamma_\mu(t), \overline{\gamma}(t))<\iota\;\forall t\in
\mathbb{R}.
\end{eqnarray}
We first assume:
\begin{eqnarray}\label{e:gamma2brake}
d_g(\gamma_\lambda(t), \overline{\gamma}(t))<\iota,\quad\forall (\lambda,t)\in
\Lambda\times\mathbb{R}.
\end{eqnarray}
(For cases of Theorems~\ref{th:bif-nessLagrBrakeM},\ref{th:bif-suffLagrBrakeM},
by contradiction we may use nets to prove that (\ref{e:gamma2brake}) is satisfied after shrinking $\Lambda$ toward $\mu$.)
Then (\ref{e:gamma1brake}) and (\ref{e:gamma2brake}) imply
\begin{eqnarray}\label{e:gamma3brake}
d_g(\gamma_\lambda(t), \gamma_\mu([0,\tau]))\le
d_g(\gamma_\lambda(t), \gamma_\mu(t))<2\iota,\quad\forall (\lambda,t)\in
\Lambda\times\mathbb{R}.
\end{eqnarray}
By Claim~\ref{cl:pallVEctorF} we have
 a unit orthogonal parallel $C^5$ frame field
  $\R\to \overline{\gamma}^\ast TM,\;t\mapsto (e_1(t),\cdots, e_n(t))$
to satisfy
\begin{eqnarray}\label{e:evenframe}
(e_1(\tau\pm t),\cdots, e_n(\tau\pm t))=(e_1(t),\cdots, e_n(t))\quad\forall t\in \R.
\end{eqnarray}
Let $B^n_{2\iota}(0):=\{x\in\R^n\,|\, |x|<2\iota\}$ and $\exp$ denote the exponential map of $g$.   Then
 \begin{eqnarray*}
 \phi_{\overline{\gamma}}:\R\times B^n_{2\iota}(0)\to
M,\;(t,x)\mapsto\exp_{\overline{\gamma}(t)}\Big(\sum^n_{i=1}x_i
e_i(t)\Big)
 \end{eqnarray*}
 is a $C^5$ map and  satisfies 
 \begin{equation}\label{e:3.28}
\phi_{\overline{\gamma}}(\tau\pm t, x)=\phi_{\overline{\gamma}}(t, x)\quad\hbox{and}\quad
\frac{d}{dt}\phi_{\overline{\gamma}}(\tau\pm t, x)=\pm(\phi_{\overline{\gamma}})'_1(\tau\pm t, x)
\end{equation}
 for any $(t,x)\in \R\times B^n_{2\iota}(0)$.  By  \cite[Theorem~4.3]{PiTa01}, there exists
  a $C^2$ coordinate chart around $\overline{\gamma}$ on the $C^4$ Banach manifold
 $EC^{1}(S_\tau; M)$,
  \begin{eqnarray}\label{e:EBanachM**}
\Phi_{\overline{\gamma}}:EC^1(S_\tau; B^n_{2\iota}(0))=\{\xi\in
C^1(S_\tau; \mathbb{R}^n) \,|\,\|\xi\|_{C^0}<2\iota\} \to EC^{1}(S_\tau; M)
\end{eqnarray}
given by $\Phi_{\overline{\gamma}}(\xi)(t)=\phi_{\overline{\gamma}}(t,\xi(t))$,   and
$$
d\Phi_{\overline{\gamma}}(0):EC^1(S_\tau; \mathbb{R}^n)\to T_{\overline{\gamma}}EC^{1}(S_\tau; M),\;
\xi\mapsto\sum^n_{j=1}\xi_je_j.
$$
Define
$d_\infty(\gamma_1,\gamma_2)=\max_{t\in S_\tau}{\rm dist}_g(\gamma_1(t),\gamma_2(t))$
for $\gamma_i\in EC^1(S_\tau, M)$, $i=1,2$. Then
 \begin{eqnarray}\label{e:EBanachM**+}
\Phi_{\overline{\gamma}}(C^1(S_\tau; B^n_{2\iota}(0)))=
\mathcal{U}(\overline{\gamma}, 2\iota):=\{\gamma\in C^1(S_\tau, M)\,|\, d_\infty(\gamma,\overline{\gamma})<2\iota\},
\end{eqnarray}

Let $EC^{1}(S_\tau; \mathbb{R}^n)=\{\xi\in C^1(\R;\mathbb{R}^n)\,|\,\xi(t+\tau)=\xi(t)\,\&\, \xi(-t)=\xi(t)\;\forall t\in\R\}$ and
\begin{eqnarray}\label{e:EBanachM***}
EC^1(S_\tau; B^n_{2\iota}(0))=\{\xi\in EC^1(S_\tau; \mathbb{R}^n) \,|\,\|\xi\|_{C^0}<2\iota\}.
\end{eqnarray}
 Since $\overline{\gamma}\in C^7(S_\tau;M)\cap EC^1(S_\tau;M)$, it follows from (\ref{e:EBanachM**+}) and (\ref{e:EBanachM***}) that
 \begin{eqnarray*}
\Phi_{\overline{\gamma}}(EC^1(S_\tau; B^n_{2\iota}(0)))=
EC^{1}(S_\tau; M)\cap\mathcal{U}(\overline{\gamma}, 2\iota)=\{\gamma\in EC^1(S_\tau, M)\,|\, d_\infty(\gamma,\overline{\gamma})<2\iota\}.
\end{eqnarray*}
Hence we obtain  the following:

\begin{claim}\label{cl:Echart}
The restriction of $\Phi_{\overline{\gamma}}$ to $EC^1(S_\tau; B^n_{2\iota}(0)$
 gives rise to  a $C^2$ coordinate chart around $\overline{\gamma}$ on  $EC^{1}(S_\tau; M)$.
 And $\Phi_{\overline{\gamma}}(EC^1(S_\tau; B^n_{\iota}(0)))$ contains all
$\gamma_\lambda$ by (\ref{e:gamma2brake}).
 \end{claim}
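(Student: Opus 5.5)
The claim has two parts: first, that the restriction of $\Phi_{\overline{\gamma}}$ to $EC^1(S_\tau; B^n_{2\iota}(0))$ is a $C^2$ chart of $EC^1(S_\tau;M)$ around $\overline{\gamma}$; second, that its image over $EC^1(S_\tau;B^n_{\iota}(0))$ contains every $\gamma_\lambda$. I would treat $EC^1(S_\tau;M)$ as the fixed-point set of a smooth involution and check that the chart intertwines it with a linear involution.

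\textbf{Step 1: the involutions.} Define $\sigma$ on $C^1(S_\tau;M)$ by $(\sigma\gamma)(t)=\gamma(-t)$, and $\sigma_0$ on $C^1(S_\tau;\mathbb{R}^n)$ by the same formula. Then $\sigma_0$ is a linear isometric involution, and its fixed-point space is the closed subspace $EC^1(S_\tau;\mathbb{R}^n)$. The involution $\sigma$ preserves $\mathcal{U}(\overline{\gamma},2\iota)$, because $\overline{\gamma}$ is even, so $d_\infty(\sigma\gamma,\overline{\gamma})=d_\infty(\gamma,\overline{\gamma})$.

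\textbf{Step 2: equivariance of the chart.} From (\ref{e:evenframe}) together with $\tau$-periodicity, the frame satisfies $e_i(-t)=e_i(t)$. Since also $\overline{\gamma}(-t)=\overline{\gamma}(t)$, the first identity of (\ref{e:3.28}) gives $\phi_{\overline{\gamma}}(-t,x)=\phi_{\overline{\gamma}}(t,x)$. Therefore
\[
\Phi_{\overline{\gamma}}(\sigma_0\xi)(t)=\phi_{\overline{\gamma}}(t,\xi(-t))=\phi_{\overline{\gamma}}(-t,\xi(-t))=\sigma\Phi_{\overline{\gamma}}(\xi)(t),
\]
that is, $\Phi_{\overline{\gamma}}\circ\sigma_0=\sigma\circ\Phi_{\overline{\gamma}}$. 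It follows that $\xi$ is a fixed point of $\sigma_0$ if and only if $\Phi_{\overline{\gamma}}(\xi)$ is a fixed point of $\sigma$. Combined with (\ref{e:EBanachM**+}), this yields the displayed set equality preceding the claim. So $\Phi_{\overline{\gamma}}$ maps an open subset of the closed linear subspace $EC^1(S_\tau;\mathbb{R}^n)$ bijectively onto an open subset of $EC^1(S_\tau;M)$.

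\textbf{Step 3: chart property.} Since $\Phi_{\overline{\gamma}}$ is a $C^2$ chart of $C^1(S_\tau;M)$ by \cite[Theorem~4.3]{PiTa01}, and it carries the linear subspace onto the submanifold, its restriction is a $C^2$ submanifold chart. The inverse is the restriction of the $C^2$ inverse chart. This also confirms that $EC^1(S_\tau;M)$ carries the induced Banach manifold structure.

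\textbf{Step 4: containment of the $\gamma_\lambda$.} Fix $\lambda$. By (\ref{e:gamma2brake}), $d_\infty(\gamma_\lambda,\overline{\gamma})<\iota$. Because $\iota$ is below the injectivity radius, the vector ${\bf u}_\lambda(t)$ determined by $\exp_{\overline{\gamma}(t)}\bigl(\sum_i{\bf u}_\lambda^i(t)e_i(t)\bigr)=\gamma_\lambda(t)$ is unique and satisfies $|{\bf u}_\lambda(t)|<\iota$. It is $C^1$ in $t$ because the inverse of the exponential map is smooth. Evenness and periodicity of ${\bf u}_\lambda$ follow from the same properties of $\gamma_\lambda$, $\overline{\gamma}$ and the frame, together with uniqueness. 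Hence ${\bf u}_\lambda\in EC^1(S_\tau;B^n_\iota(0))$ and $\Phi_{\overline{\gamma}}({\bf u}_\lambda)=\gamma_\lambda$.

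\textbf{Main obstacle.} I expect Step 2 to be the delicate point: one must verify that the frame from Claim~\ref{cl:pallVEctorF} is genuinely even, so that $\phi_{\overline{\gamma}}(-t,x)=\phi_{\overline{\gamma}}(t,x)$ holds exactly and not merely up to an orthogonal matrix as in Section~\ref{sec:LagrPPerio1}. Everything after that step is formal.
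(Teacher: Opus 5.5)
Your proposal is correct and follows essentially the paper's route. The paper likewise combines the evenness and $\tau$-periodicity of $\phi_{\overline{\gamma}}$ with (\ref{e:EBanachM**+}) to obtain $\Phi_{\overline{\gamma}}(EC^1(S_\tau;B^n_{2\iota}(0)))=EC^1(S_\tau;M)\cap\mathcal{U}(\overline{\gamma},2\iota)$, and then uses uniqueness of ${\bf u}_\lambda$ for the containment; the evenness (\ref{e:evenframefield1}) of the parallel frame in Claim~\ref{cl:pallVEctorF} already settles your ``main obstacle.'' Your equivariance $\Phi_{\overline{\gamma}}\circ\sigma_0=\sigma\circ\Phi_{\overline{\gamma}}$, together with injectivity of the chart, simply makes explicit the reverse inclusion that the paper leaves implicit.
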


Since the injectivity radius of $g$ at each point on $\overline{\gamma}(S_\tau)$ is at least $2\iota$,
by (\ref{e:gamma2brake}) there exists a unique map  ${\bf u}_\lambda:\R\to B^n_{\iota}(0)$  such that
$$
\gamma_\lambda(t)=\phi_{\overline{\gamma}}(t,{\bf u}_\lambda(t))=\exp_{\overline{\gamma}(t)}\Big(\sum^n_{i=1}{\bf u}_\lambda^i(t)
e_i(t)\Big)\quad\forall t\in \R.
$$
Note that $\overline{\gamma}$ is even and $\tau$-periodic. From (\ref{e:evenframe}) we derive
$$
\gamma_\lambda(\tau\pm t)=\exp_{\overline{\gamma}(\tau\pm t)}\Big(\sum^n_{i=1}{\bf u}_\lambda^i(\tau\pm t)
e_i(\tau\pm t)\Big)=\exp_{\overline{\gamma}(t)}\Big(\sum^n_{i=1}{\bf u}_\lambda^i(\tau\pm t)
e_i(t)\Big).
$$
Hence ${\bf u}_\lambda\in EC^1(S_\tau; B^n_{\iota}(0))\cap C^2(S_\tau; B^n_{\iota}(0))$.
By Lemmas~\ref{lem:twoCont},\ref{lem:twoCont+} we also see that
 $$
 (\lambda,t)\mapsto {\bf u}_\lambda(t),\quad (\lambda,t)\mapsto \dot{\bf u}_\lambda(t)\quad\hbox{and}\quad
 (\lambda,t)\mapsto \ddot{\bf u}_\lambda(t)
 $$
 are continuous maps from  $\Lambda\times \R$ to $\R^n$.

 Define $L^\ast_\lambda: \R\times B^n_{2\iota}(0)\times\R^n\to\R$ by
\begin{equation}\label{e:3.15}
 L^\ast_\lambda(t, x,  v)=L_\lambda\big(t, \phi_{\overline{\gamma}}(t, x),
(\phi_{\overline{\gamma}})'_1(t, x)+ (\phi_{\overline{\gamma}})'_2(t, x)[v]\big).
\end{equation}
Then  it follows from (\ref{e:3.28}) that
$L^\ast_\lambda(\tau+t, x, v)=L^\ast_\lambda(t, x, v)$
and
\begin{eqnarray}\label{e:3.31}
 L^\ast_\lambda(\tau-t, x, -v)&=&L_\lambda\left(\tau-t, \phi_{\overline{\gamma}}(\tau-t, x),
(\phi_{\overline{\gamma}})'_1(\tau-t, x)+ (\phi_{\overline{\gamma}})'_2(\tau-t, x)[-v]\right)\nonumber\\
&=&L_\lambda\left(\tau-t, \phi_{\overline{\gamma}}(t, x), -(\phi_{\overline{\gamma}})'_1(t, x)-
(\phi_{\overline{\gamma}})'_2(t, x)[v]\right)\nonumber\\
&=&L_\lambda\left(t, \phi_{\overline{\gamma}}(t,x), (\phi_{\overline{\gamma}})'_1(t,x)+ (\phi_{\overline{\gamma}})'_2(t, x)[v]\right)\nonumber\\
&=&L^\ast_\lambda(t, x, v),\quad\forall (t, x, v)\in  \R\times B^n_{2\iota}(0)\times\R^n.
\end{eqnarray}
As before we also define $\tilde L^\ast:\Lambda\times \mathbb{R}\times B^n_{\iota}(0)\times\R^n\to\R$ by
\begin{equation}\label{e:ModifiedLEuII}
\tilde L^\ast(\lambda, t, q,v)=\tilde L^\ast_{\lambda}(t, q,v)= {L}^\ast(\lambda, t, q+ {\bf u}_\lambda(t), v+\dot{\bf u}_\lambda(t)).
\end{equation}
It satisfies Proposition~\ref{th:bif-LagrGenerEu} after the interval $[0,\tau]$ is replaced by $\mathbb{R}$.
Moreover, (\ref{e:3.31}) leads to
\begin{eqnarray*}
\tilde L^\ast(\lambda,\tau-t, q,-v)&=& {L}^\ast(\lambda, \tau-t, q+ {\bf u}_\lambda(\tau-t), -v+\dot{\bf u}_\lambda(\tau-t))\\
 &=&{L}^\ast(\lambda, \tau-t, q+ {\bf u}_\lambda(t), -v-\dot{\bf u}_\lambda(t))\\
 &=&{L}^\ast(\lambda, t, q+ {\bf u}_\lambda(t), v+\dot{\bf u}_\lambda(t))
 =\tilde L^\ast(\lambda, t, q,v)
 \end{eqnarray*}
because ${\bf u}_\lambda(\tau-t)={\bf u}_\lambda(t)$ implies $-\dot{\bf u}_\lambda(\tau-t)=\dot{\bf u}_\lambda(t)$.

 Now  we have a family of $C^2$ functionals
\begin{equation*}
\tilde{\mathcal{L}}^E_\lambda: EC^1(S_\tau; B^n_{\iota}(0))\to\R,\; x\mapsto \int^\tau_0\tilde{L}^\ast_\lambda(t, x(t), \dot{x}(t))dt,\quad\lambda\in\Lambda.
\end{equation*}
(See the proof of the first claim in \cite[Proposition~4.2]{Lu9}).
For $\xi\in EC^1(S_\tau; B^n_{\iota}(0))$, since
\begin{equation*}
 \frac{d}{dt}\Phi_{\overline{\gamma}}(\xi+{\bf u}_\lambda)(t)=(\phi_{\overline{\gamma}})'_1(t,
\xi(t)+{\bf u}_\lambda(t))+ (\phi_{\overline{\gamma}})'_2(t, \xi(t)+{\bf u}_\lambda(t))[\dot{\xi}(t)+
\dot{\bf u}_\lambda(t)]\quad\forall t\in\R,
\end{equation*}
we deduce
\begin{equation}\label{e:3.32}
\tilde{\cal L}^E_{\lambda}(\xi)={\cal
L}^E_{\lambda}\left(\Phi_{\overline{\gamma}}(\xi+{\bf u}_\lambda)\right)\quad\forall\xi\in EC^1(S_\tau; B^n_{\iota}(0))
\end{equation}
and therefore that
\begin{equation*}
m^-_\tau(\tilde{\mathcal{L}}^E_\lambda, 0)=m^-_\tau(\mathcal{L}^E_\lambda, \gamma_\lambda)\quad
\hbox{and}\quad
m^0_\tau(\tilde{\mathcal{L}}^E_\lambda, 0)=m^0_\tau(\mathcal{L}^E_\lambda, \gamma_\lambda).
\end{equation*}

For the function $\tilde L^\ast$ in (\ref{e:ModifiedLEuII}),  a positive number $\rho_0>0$ and a subset $\hat\Lambda\subset\Lambda$ which is either compact or sequential compact, as in Lemma~\ref{lem:Gener-modif} using Lemmas~\ref{lem:Lagr},~\ref{lem:reduction}
we can construct a  continuous function $\check{L}:\hat\Lambda\times \mathbb{R}\times B^n_{3\iota/4}(0)\times\mathbb{R}^n\to\R$
 satisfying the  properties  (L0)-(L6) in  Lemma~\ref{lem:Gener-modif} on
 $\hat\Lambda\times \mathbb{R}\times B^n_{3\iota/4}(0)\times\mathbb{R}^n$ and the following equality
 \begin{eqnarray*}
\check{L}(\lambda, -t, q,-v)=\check{L}(\lambda, t, q,v)=\check{L}(\lambda,\tau+t, q,v)
 \end{eqnarray*}
 for all $(\lambda,t, q,v)\in\hat\Lambda\times \mathbb{R}\times B^n_{3\iota/4}(0)\times\mathbb{R}^n$.
  Let us write
\begin{eqnarray*}
&&{\bf H}:=W^{1,2}(S_\tau;\R^{n}),\quad
 {\bf X}:=C^1(S_\tau;\R^{n}),\\
&&\mathcal{U}:=W^{1,2}(S_\tau;B^n_{\iota/2}(0),\quad
\mathcal{U}^X:=\mathcal{U}\cap{\bf X}=EC^1\big(S_\tau;B^n_{\iota/2}(0)\big),\\
&&{\bf H}_e:=EW^{1,2}(S_\tau;\R^{n})=\{x\in W^{1,2}(S_\tau; \R^n)\,|\, x(-t)=x(t)\;\forall t\},
                                         \quad {\bf X}_e:=EC^1(S_\tau;\R^{n}),\\
&&\mathcal{U}_e:=\big\{u\in W^{1,2}\big(S_\tau;
B^n_{\iota/2}(0)\big)\,\big|\,u(-t)=u(t)\;\forall t \big\},\\
&&\mathcal{U}^X_e:=\mathcal{U}\cap{\bf X}=\big\{u\in C^1\big(S_\tau;
B^n_{\iota/2}(0)\big)\,\big|\,u(-t)=u(t)\;\forall t \big\}.
\end{eqnarray*}
 For each $\lambda\in\hat\Lambda$ let us define functionals
 \begin{eqnarray*}
\check{\mathcal{L}}_\lambda: \mathcal{U}\to\R,\; x\mapsto \int^\tau_0\check{L}_\lambda(t, x(t), \dot{x}(t))dt\quad\hbox{and}\quad
\check{\mathcal{L}}_\lambda^E:\mathcal{U}_e\to\R,\; x\mapsto \int^\tau_0\check{L}_\lambda(t, x(t), \dot{x}(t))dt.
\end{eqnarray*}
Clearly, $\check{\mathcal{L}}_\lambda|_{\mathcal{U}_e}=\check{\mathcal{L}}_\lambda^E$. As a special case,
$\{(\check{\mathcal{L}}_\lambda,  \mathcal{U}, \mathcal{U}^X)\,|\,\lambda\in\hat\Lambda\}$
 has the same properties as $\{(\check{\mathcal{E}}_\lambda,  \mathcal{U}, \mathcal{U}^X)\,|\,\lambda\in\hat\Lambda\}$
 in Section~\ref{sec:LagrPPerio1.2}.
 By the arguments in \cite{Lu1-}, $\{(\check{\mathcal{L}}_\lambda^E,  \mathcal{U}_e, \mathcal{U}^X_e)\,|\,\lambda\in\hat\Lambda\}$
 has also the same properties. (In fact, from the expression of $\nabla\check{\mathcal{L}}_\lambda$ it is not hard to prove that
 $\nabla\check{\mathcal{L}}_\lambda(x)$ is even for each $x\in\mathcal{U}_e$. Therefore
 $\nabla\check{\mathcal{L}}_\lambda^E(x)=\nabla\check{\mathcal{L}}_\lambda(x)$  for each $x\in\mathcal{U}_e$.)
 This implies that for each $x\in \mathcal{U}_e$ the operator $B_\lambda(x)\in\mathscr{L}_s({\bf H})$ defined by (\ref{e:gradient4Lagr+})
 maps ${\bf H}_e$ into ${\bf H}_e$.
  Almost repeating  the proofs in  Section~\ref{sec:LagrBound} we can obtain the required results.
Of course, \cite[\S8]{Lu12} also provides a way.

\section{Corollaries and examples}\label{sec:Cor-example}

In this section, we first derive some corollaries of the main results using the abstract framework in Appendix~\ref{app:Abst}, as these corollaries are more readily applicable. We then apply these results to physical Lagrangians---such as those involving potential and electromagnetic forces---and finally present bifurcation results for the planar forced simple pendulum.

\subsection{Corollaries}\label{sec:Cor-example-I}

\begin{assumption}\label{ass:6.A}
{\rm  Let $(M, g, \mathbb{I}_g)$ be as in Assumption~1.0 in Introduction.
For a real $\tau>0$, consider a function $W\in  C^2([0, \tau]\times M, \mathbb{R})$, and a $C^2$ Lagrangian $\mathbf{L}(t, q,v)$  on $[0, \tau]\times TM$ that is
 strictly convex with respect to $v$. 
 Let ${\bf N}$ be either ${\rm Graph}(\mathbb{I}_{g})$ or the product $S_0\times S_1$ of the submanifolds
 $S_0$ and $S_1$  in Assumption~\ref{ass:Lagr6}. 
Assume that  $\bar{\gamma}\in C^2([0, \tau]; M)$ satisfies 
Lagrangian  boundary value problem:
  \begin{equation*}
\left.\begin{array}{ll}
\frac{d}{dt}\big(\partial_v\mathbf{L}(t, \gamma(t), \dot{\gamma}(t))\big)-
\partial_q \mathbf{L}(t, \gamma(t), \dot{\gamma}(t))=0\;\;\forall
t\in [0,\tau],\\
(\gamma(0), \gamma(\tau))\in{\bf N}\quad\hbox{and}\quad\\
\partial_v\mathbf{L}(0, \gamma(0), \dot{\gamma}(0))[v_0]=\partial_v\mathbf{L}(\tau, \gamma(\tau), \dot{\gamma}(\tau))[v_1]\;\;\forall
(v_0,v_1)\in T_{(\gamma(0),\gamma(\tau))}{\bf N}
\end{array}\right\}
\end{equation*} 
Assume that for all $t$, $\partial_q W(t, \bar{\gamma}(t))=0$,
and that  the Hessian of each $W(t,\cdot)$ at the critical point $\bar{\gamma}(t)$ 
is either positive definite or negative definite.
}
\end{assumption}

Under Assumption~\ref{ass:6.A}, for $\lambda\in\mathbb{R}$ and $(t,q,v)\in [0, \tau]\times TM$, we define 
\begin{equation}\label{e:6.A}
L(\lambda, t, q, v)=L_\lambda(t, q, v)=\mathbf{L}(t,q,v)+\lambda W(t,q).
\end{equation}
It is easy to check that $L$ satisfies Assumptions~\ref{ass:Lagr6},~\ref{ass:Lagr7}.
With this $L$, let $\mathcal{E}_\lambda$ and $\mathscr{E}_\lambda$ be given by 
(\ref{e:LagrGener-}) and (\ref{e:Lagr0}), respectively. Then 
$\mathcal{E}_\lambda=\mathcal{E}_0+\lambda\mathcal{W}$ and 
$\mathscr{E}_\lambda= \mathscr{E}_0+ \lambda\mathscr{W}$, where
$$
\mathcal{E}_0(\gamma)=\int^\tau_0L_\lambda(t, \gamma(t), \dot{\gamma}(t))dt\quad\text{and}\quad
\mathcal{W}=\int^\tau_0W(t, \gamma(t))dt
$$
for $\gamma\in C^{1}_{S_0\times S_1}([0,\tau]; M)$, and
$$
\mathscr{E}_0(\gamma)=\int^\tau_0L_\lambda(t, \gamma(t), \dot{\gamma}(t))dt\quad\text{and}\quad
\mathscr{W}=\int^\tau_0W(t, \gamma(t))dt
$$
for $\gamma\in C^{1}_{\mathbb{I}_g}([0, \tau]; M)$.
When $\mathbf{N}$ is equal to $S_0\times S_1$ (resp.~$\mathrm{Graph}(\mathbb{I}_{g})$), $\bar{\gamma}$ is a critical point of $\mathcal{E}_0$ and $\mathcal{W}$ (resp.~$\mathscr{E}_0$ and $\mathscr{W}$), and hence of $\mathcal{E}_\lambda$ (resp.~$\mathscr{E}_\lambda$), for all $\lambda\in\mathbb{R}$, because of Assumption~\ref{ass:6.A}.
A standard computation shows that the second differentials of 
$\mathcal{W}$ and $\mathscr{W}$ at $\bar{\gamma}$ are, respectively,
\begin{align*}
D^{2}\mathcal{W}(\bar{\gamma})[\xi,\xi]&=\int_{0}^{\tau}
\langle \mathrm{Hess} W(t, \bar{\gamma})\xi(t), \xi(t)\rangle_g dt,\quad\xi\in W^{1,2}_{S_0\times S_1}(\bar{\gamma}^\ast TM),\\
D^{2}\mathscr{W}(\bar{\gamma})[\xi,\xi]&=\int_{0}^{\tau}
\langle \mathrm{Hess} W(t, \bar{\gamma})\xi(t), \xi(t)\rangle_g dt,\quad\xi\in
W^{1,2}_{\mathbb{I}_g}(\bar{\gamma}^\ast TM).
\end{align*}
Therefore, by the standard methods in functional analysis it is easily proved that
$D^{2}\mathcal{W}(\bar{\gamma})$ and $D^{2}\mathscr{W}(\bar{\gamma})$
are given by self-adjoint and compact operators 
$W^{1,2}_{S_0\times S_1}(\bar{\gamma}^\ast TM)$ and
$W^{1,2}_{\mathbb{I}_g}(\bar{\gamma}^\ast TM)$, respectively.
Moreover, since $\bar{\gamma}([0,\tau])\subset M$ is compact,
by modifying $L$ outside some compact neighborhood of $\bar{\gamma}([0,\tau])\subset M$ as done in this paper (if necessary), a localized argument shows 
that both $D^{2}\mathcal{E}_0(\bar{\gamma})$ and $D^{2}\mathscr{E}_0(\bar{\gamma})$
are self-adjoint Fredholm and have  finite Morse indexes (cf.~\cite[Proposition~3.1(iii)]{AS09}
and \cite{Du}). 
 (Recall that a continuous symmetric bilinear form on a Hilbert space $H$ is said to Fredholm if 
the associated self-adjoint operator is Fredholm.) 
% A continuous symmetric bilinear form $\alpha$ on the Hilbert space $\mathbb{H}$ is said to be \textbf{Fredholm} if the associated self-adjoint operator $A$ is Fredholm. Its \textbf{Morse index} is the dimension of the $A$-invariant subspace of $\mathbb{H}$ corresponding to the negative part of the spectrum of $A$. So the Morse index of $\alpha$ is finite if and only if the negative spectrum of $A$ consists of finitely many eigenvalues with finite multiplicity. All of these notions do not depend on the choice of the Hilbert product on $\mathbb{H}$.
Hence, for any $\lambda\in\mathbb{R}$, $D^{2}\mathcal{E}_\lambda(\bar{\gamma})$ and $D^{2}\mathscr{E}_\lambda(\bar{\gamma})$
are also self-adjoint Fredholm and have also finite Morse indexes.
By Proposition~\ref{prop:Abst} we immediately obtain:

\begin{proposition}\label{prop:6.A}
Under Assumption~\ref{ass:6.A}, suppose that $\mathbf{N}=S_0\times S_1$
and $m^0(\mathcal{E}_0, \bar{\gamma})> 0$. Then 
there exists $0 < \delta_0 < 1$ such that the following hold:
\begin{itemize}
\item[\rm (i)] If the Hessian of each $W(t,\cdot)$ at the critical point $\bar{\gamma}(t)$ 
is positive definite, then
\[
m^-(\mathcal{E}_\lambda, \bar{\gamma}) = \begin{cases}
m^-(\mathcal{E}_0, \bar{\gamma}), & 0 < \lambda < \delta_0,\\[4pt]
m^-(\mathcal{E}_0, \bar{\gamma}) + m^0(\mathcal{E}_0, \bar{\gamma}), & -\delta_0 < \lambda < 0.
\end{cases}
\]
\item[\rm (ii)] If the Hessian of each $W(t,\cdot)$ at the critical point $\bar{\gamma}(t)$ 
is  negative definite, then
\[
m^-(\mathcal{E}_\lambda, \bar{\gamma}) = \begin{cases}
m^-(\mathcal{E}_0, \bar{\gamma}) + m^0(\mathfrak{E}_0, \bar{\gamma}), & 0 < \lambda < \delta_0,\\[4pt]
m^-(\mathcal{E}_0, \bar{\gamma}), & -\delta_0 < \lambda < 0.
\end{cases}
\]
\end{itemize}
Similarly, if $\mathbf{N}=\mathrm{Graph}(\mathbb{I}_{g})$ and $m^0(\mathscr{E}_0, \bar{\gamma})> 0$, then
(i) and (ii) hold with $\mathcal{E}_\lambda$ and $\mathcal{E}_0$ replaced by $\mathscr{E}_\lambda$ and $\mathscr{E}_0$, respectively.
\end{proposition}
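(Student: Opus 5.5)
The plan is to reduce Proposition~\ref{prop:6.A} to the abstract Proposition~\ref{prop:Abst} of Appendix~\ref{app:Abst}. I expect Proposition~\ref{prop:Abst} to govern a linear family $B_\lambda=B_0+\lambda C$ of self-adjoint Fredholm operators on a Hilbert space. It should require $B_0$ to have finite Morse index and $C$ to be compact and definite on $\ker B_0$. Its conclusion should be that for $0<|\lambda|$ small the Morse index of $B_\lambda$ equals either $m^-(B_0)$ or $m^-(B_0)+m^0(B_0)$, according to the sign of $\lambda$ and of $C|_{\ker B_0}$.

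I will take $H=W^{1,2}_{S_0\times S_1}(\bar\gamma^\ast TM)$, respectively $W^{1,2}_{\mathbb{I}_g}(\bar\gamma^\ast TM)$. Let $B_0$ be the operator representing $D^2\mathcal{E}_0(\bar\gamma)$ and $C$ the operator representing $D^2\mathcal{W}(\bar\gamma)$. Since $\bar\gamma$ is a common critical point, $D^2\mathcal{E}_\lambda(\bar\gamma)=D^2\mathcal{E}_0(\bar\gamma)+\lambda D^2\mathcal{W}(\bar\gamma)$, so $B_\lambda=B_0+\lambda C$ exactly. The discussion preceding the proposition already gives that $B_0$ is self-adjoint Fredholm with finite Morse index, and that $C$ is compact and self-adjoint. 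By \cite{Du}, the Morse index and nullity of $B_\lambda$ on $H$ coincide with $m^-(\mathcal{E}_\lambda,\bar\gamma)$ and $m^0(\mathcal{E}_\lambda,\bar\gamma)$.

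The remaining hypothesis to check is definiteness of $C$ on $\ker B_0$. This space is finite dimensional because $m^0(\mathcal{E}_0,\bar\gamma)<\infty$. For $\xi\in\ker B_0$ with $\xi\ne0$ we have
\[
(C\xi,\xi)=\int_0^\tau\langle \mathrm{Hess}\,W(t,\bar\gamma(t))\xi(t),\xi(t)\rangle_g\,dt .
\]
If each Hessian is positive definite, this integral is $>0$: $\xi$ is continuous and not identically zero, so it is nonzero on an open set of times. Compactness of the unit sphere in $\ker B_0$ then gives a uniform bound $(C\xi,\xi)\ge c\|\xi\|^2$ with $c>0$. The negative definite case is symmetric.

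Given this, I would argue directly if Proposition~\ref{prop:Abst} is phrased differently. Split $H=H^-\oplus H^0\oplus H^+$ by the spectral decomposition of $B_0$. Because $0$ is isolated in the spectrum of the Fredholm operator $B_0$, we have $B_0\ge a>0$ on $H^+$ and $B_0\le -a$ on $H^-$, with $\dim H^-<\infty$. For $|\lambda|<\delta_0$ a standard perturbation argument (Schur complement, or a min-max estimate of the off-diagonal terms $\lambda C$) shows three things. First, $B_\lambda$ stays negative definite on a space of dimension $m^-(B_0)$. Second, it stays positive on a complement of $H^-\oplus H^0$. Third, on the block $H^0$ its sign is that of $\lambda C|_{H^0}$, corrected by terms of order $O(\lambda^2)$. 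With positive Hessian, $\lambda>0$ therefore gives index $m^-(\mathcal{E}_0,\bar\gamma)$ and $\lambda<0$ gives $m^-(\mathcal{E}_0,\bar\gamma)+m^0(\mathcal{E}_0,\bar\gamma)$, which is (i); the negative Hessian case is symmetric and gives (ii).

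I expect this block-sign step to be the main obstacle if Proposition~\ref{prop:Abst} does not already cover it. The point is to control the coupling between $H^0$ and $H^\pm$ so that it contributes only at order $\lambda^2$ against the order-$\lambda$ definite term. The Graph$(\mathbb{I}_g)$ case then follows verbatim with $\mathscr{E}$ and $\mathscr{W}$ in place of $\mathcal{E}$ and $\mathcal{W}$.
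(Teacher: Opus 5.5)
Your proposal is correct and matches the paper's argument. The paper likewise writes $D^2\mathcal{E}_\lambda(\bar\gamma)=D^2\mathcal{E}_0(\bar\gamma)+\lambda D^2\mathcal{W}(\bar\gamma)$, with the first form self-adjoint Fredholm of finite Morse index and the second given by a compact operator. It then applies Proposition~\ref{prop:Abst} to $B_\lambda=B_0+\lambda C$, where $Q=(\cdot,C\,\cdot)|_{\ker B_0}$ is definite by the sign of the Hessian of $W$. Proposition~\ref{prop:Abst} already covers any continuous path of self-adjoint Fredholm operators that is differentiable at $0$ with nondegenerate $Q$, so your Schur-complement fallback is not needed, although it is a valid alternative proof in this linear case.
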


\begin{assumption}\label{ass:6.B}
{\rm  Let $(M, g, \mathbb{I}_g)$ be as in Assumption~1.0 in Introduction.
Consider a function $W\in  C^2(\mathbb{R}\times M, \mathbb{R})$, and a $C^2$ Lagrangian $\mathbf{L}(t, q,v)$  on $\mathbb{R}\times TM$ that is
 strictly convex with respect to $v$. For some $\tau>0$, we assume that both $W$ and
$\mathbf{L}$ are also $\mathbb{I}_g$-invariant in the
 sense of (\ref{e:Lagr8}), i.e., for all $(t,q,v)\in\mathbb{R}\times TM$,
\begin{equation*}
W(t, q)=W(t+\tau, \mathbb{I}_g(q)),\quad
\mathbf{L}(t+\tau, \mathbb{I}_g(q), d\mathbb{I}_g(q)[v])=\mathbf{L}(t, q,v).
\end{equation*}
Let $\bar{\gamma}\in C^2(\mathbb{R}, M)$ satisfy the problem 
\begin{equation*}
\left.\begin{array}{ll}
&\frac{d}{dt}\big(\partial_v\mathbf{L}(t, \gamma(t), \dot{\gamma}(t))\big)-\partial_q\mathbf{L}(t, \gamma(t), \dot{\gamma}(t))=0\;\;\forall t\in\mathbb{R},\\
&\mathbb{I}_g(\gamma(t))=\gamma(t+\tau)\quad\forall t\in\mathbb{R}.
\end{array}\right\}
\end{equation*}
Assume that for all $t$, $\partial_q W(t, \bar{\gamma}(t))=0$,
and that  the Hessian of each $W(t,\cdot)$ at the critical point $\bar{\gamma}(t)$ 
is either positive definite or negative definite.
}
\end{assumption}

Let $L(\lambda, t, q, v)=L_\lambda(t, q, v)=\mathbf{L}(t,q,v)+\lambda W(t,q)$ 
for $\lambda\in\mathbb{R}$ and $(t,q,v)\in\mathbb{R}\times TM$.
Then $L$ satisfies Assumption~\ref{ass:Lagr8}, and 
$\bar{\gamma}$ solves (\ref{e:Lagr10}) with this $L$, i.e., $\bar{\gamma}$ is a critical
point of each $C^2$ functional 
 $\mathfrak{E}_\lambda:=\mathfrak{E}_0+\lambda\mathfrak{W}$
 on the $C^4$ Banach manifold $\mathcal{X}^1_{\tau}(M, \mathbb{I}_g)$, where 
\begin{equation*}
\mathfrak{E}_0(\gamma)=\int_0^\tau \mathbf{L}(t, \gamma(t), \dot{\gamma}(t)) \, dt\quad\text{and}\quad
\mathfrak{W}(\gamma)=\int_0^\tau W(t, \gamma(t)) \, dt.
\end{equation*}
Note that
 the tangent space $T_{\gamma}\mathcal{X}^1_{\tau}(M, \mathbb{I}_g)=\{\xi\in C^1(\gamma^\ast TM\,|\,
 d\mathbb{I}_g(\gamma(t))[\xi(t)]=\xi(t+\tau)\}$
 for $\gamma\in\mathcal{X}^1_{\tau}(M, \mathbb{I}_g)$.
Let $W^{1,2}\left(T_{\gamma}\mathcal{X}^1_{\tau}(M, \mathbb{I}_g)\right)$
denote the completion of $T_{\gamma}\mathcal{X}^1_{\tau}(M, \mathbb{I}_g)$ with respect to 
the inner product given by (\ref{e:1.1}). 
Since $\partial_{vv}\mathbf{L}(t, \bar{\gamma}(t), \dot{\gamma}(t))$ is positive definite,
we have a new Riemannian metric on the vector bundle $\bar{\gamma}^\ast TM\to\mathbb{R}$ given by
$$
\langle u, v\rangle^\ast=\langle \partial_{vv}\mathbf{L}(t, \bar{\gamma}(t), \dot{\gamma}(t))u, v\rangle_{g},
\quad\forall u, v\in (\bar{\gamma}^\ast TM)_t=T_{\bar{\gamma}(t)}M,
$$ 
By \cite[Assertion~4.6]{Dav} (with the same proof as \cite[Proposition~5.1]{BoTr}),
there exists is a covariant derivative $D$ compatible with the metric such that
$$
\langle\!\langle \xi, \eta\rangle\!\rangle=\int^\tau_0
[\langle D\xi(t), D\eta(t)\rangle^\ast+ \langle \xi(t), \eta(t)\rangle^\ast]dt
$$
defines a new inner product on
$W^{1,2}\left(T_{\gamma}\mathcal{X}^1_{\tau}(M, \mathbb{I}_g)\right)$
that induces a norm equivalent to the one induced by the original inner product $\langle\cdot,\cdot\rangle_{1,2}$. As pointed out in \cite[Remark~4.14]{Dav},
under this new inner product,  $D^{2}\mathfrak{E}_0(\bar{\gamma})$
is self-adjoint Fredholm and has  finite Morse index. (See \cite{BoTr} and \cite{Lu5} for more details.)
By Claim~\ref{cl:Abst}, under the inner product given by (\ref{e:1.1}),
 $D^{2}\mathfrak{E}_0(\bar{\gamma})$
is also self-adjoint Fredholm and has also finite Morse index.
As above, we know that
\begin{align*}
D^{2}\mathfrak{W}(\bar{\gamma})[\xi,\xi]&=\int_{0}^{\tau}
\langle \mathrm{Hess} W(t, \bar{\gamma})\xi(t), \xi(t)\rangle_g dt,\quad\xi\in 
W^{1,2}\left(T_{\gamma}\mathcal{X}^1_{\tau}(M, \mathbb{I}_g)\right),
\end{align*}
can be given a compact self-adjoint operator on $W^{1,2}\left(T_{\gamma}\mathcal{X}^1_{\tau}(M, \mathbb{I}_g)\right)$. Hence Proposition~\ref{prop:Abst} yields:

\begin{proposition}\label{prop:6.B}
Under Assumption~\ref{ass:6.B}, suppose that $m^0(\mathfrak{E}_0, \bar{\gamma})> 0$. Then 
there exists $0 < \delta_0 < 1$ such that the following hold:
\begin{itemize}
\item[\rm (i)] If the Hessian of each $W(t,\cdot)$ at the critical point $\bar{\gamma}(t)$ 
is positive definite, then
\[
m^-(\mathfrak{E}_\lambda, \bar{\gamma}) = \begin{cases}
m^-(\mathfrak{E}_0, \bar{\gamma}), & 0 < \lambda < \delta_0,\\[4pt]
m^-(\mathfrak{E}_0, \bar{\gamma}) + m^0(\mathfrak{E}_0, \bar{\gamma}), & -\delta_0 < \lambda < 0.
\end{cases}
\]
\item[\rm (ii)] If the Hessian of each $W(t,\cdot)$ at the critical point $\bar{\gamma}(t)$ 
is  negative definite, then
\[
m^-(\mathfrak{E}_\lambda, \bar{\gamma}) = \begin{cases}
m^-(\mathfrak{E}_0, \bar{\gamma}) + m^0(\mathfrak{E}_0, \bar{\gamma}), & 0 < \lambda < \delta_0,\\[4pt]
m^-(\mathfrak{E}_0, \bar{\gamma}), & -\delta_0 < \lambda < 0.
\end{cases}
\]
\end{itemize}
\end{proposition}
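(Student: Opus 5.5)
The plan is to reduce Proposition~\ref{prop:6.B} to the abstract perturbation result Proposition~\ref{prop:Abst} of Appendix~\ref{app:Abst}. That result treats a family $A+\lambda K$ on a separable Hilbert space, where $A$ is self-adjoint Fredholm with finite Morse index and $K$ is compact self-adjoint and definite. Take the Hilbert space $H:=W^{1,2}\left(T_{\bar\gamma}\mathcal{X}^1_{\tau}(M, \mathbb{I}_g)\right)$ with the inner product (\ref{e:1.1}). Let $A$ represent $D^2\mathfrak{E}_0(\bar\gamma)$ and $K$ represent $D^2\mathfrak{W}(\bar\gamma)$. Since $\mathfrak{E}_\lambda=\mathfrak{E}_0+\lambda\mathfrak{W}$ and $\bar\gamma$ is a common critical point, we have $D^2\mathfrak{E}_\lambda(\bar\gamma)=A+\lambda K$. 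By definition (\ref{e:Lagr12}) and the restriction correspondence, $m^\star(\mathfrak{E}_\lambda,\bar\gamma)$ equals the index or nullity of the restricted functional $\mathscr{E}_\lambda$ at $\bar\gamma|_{[0,\tau]}$. These, in turn, agree with the Morse index and nullity of $A+\lambda K$ on $H$, by Duistermaat's extension noted around (\ref{e:Lagr3}).

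The hypotheses are checked in three steps, following the paragraph preceding the statement.
\begin{enumerate}
\item Fredholmness of $A$ with finite index. With respect to the $\partial_{vv}\mathbf{L}$-adapted inner product $\langle\!\langle\cdot,\cdot\rangle\!\rangle$ of \cite[Assertion~4.6]{Dav}, this is \cite[Remark~4.14]{Dav}. Claim~\ref{cl:Abst} then transfers it to the equivalent inner product (\ref{e:1.1}), because Fredholmness and the Morse index of a bounded symmetric form do not depend on the choice of equivalent Hilbert structure.
\item Compactness of $K$. The form $\int_0^\tau\langle \mathrm{Hess}\,W(t,\bar\gamma)\xi,\eta\rangle_g\,dt$ is bounded on $L^2$, and the embedding $W^{1,2}\hookrightarrow C^0$ is compact, so $K$ is compact.
\item Definiteness of $K$. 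Because $\mathrm{Hess}\,W(t,\bar\gamma(t))$ is continuous in $t$ and $[0,\tau]$ is compact, positive (respectively negative) definiteness gives $c>0$ with $\pm (K\xi,\xi)_{1,2}\ge c\|\xi\|_{L^2}^2$. This is strictly positive (respectively negative) for every $\xi\ne0$, and in particular on $\ker A$.
\end{enumerate}

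Proposition~\ref{prop:Abst} should then give $\delta_0$ from the standard splitting argument. Write $H=H^-\oplus H^0\oplus H^+$ for the spectral decomposition of $A$. For small $|\lambda|$, the negative and positive parts persist, since the spectral gap of $A$ around $0$ is stable under small compact perturbations. The nullity space $H^0=\ker A$ is finite dimensional, and on it the perturbation acts to first order as $\lambda P_0KP_0$, which is definite. Hence for $\lambda>0$ with $K>0$, all of $\ker A$ moves to the positive side, so $m^-$ is unchanged and $m^0=0$. For $\lambda<0$, all of $\ker A$ moves to the negative side and adds $m^0(\mathfrak{E}_0,\bar\gamma)$ to the index. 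The case $K<0$ is symmetric.

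The main obstacle is to confirm that Proposition~\ref{prop:Abst} requires only definiteness of $K$ on $\ker A$, together with compactness, rather than something stronger. If a stronger condition were required, the first-order argument would need a Lyapunov–Schmidt (Schur complement) reduction onto $\ker A$. In that reduction one controls the $O(\lambda^2)$ correction by $\|K\|^2\|(A|_{(\ker A)^\perp})^{-1}\|$, which is dominated by $|\lambda|\,c'$ with $c'$ the minimum of $|(K\xi,\xi)|$ on the unit sphere of $\ker A$. Such a $c'$ exists by finite dimensionality.
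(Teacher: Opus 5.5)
Your proposal is correct and follows the paper's argument. Davini's $\partial_{vv}\mathbf{L}$-adapted inner product together with Claim~\ref{cl:Abst} gives that $D^2\mathfrak{E}_0(\bar\gamma)$ is self-adjoint Fredholm with finite Morse index for the inner product (\ref{e:1.1}), $D^2\mathfrak{W}(\bar\gamma)$ is compact, and Proposition~\ref{prop:Abst} is then applied to the path $B_\lambda=A+\lambda K$, for which $\dot B_0=K$.

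The obstacle you flag does not arise. Proposition~\ref{prop:Abst} only requires the form $Q(x,y)=(x,Ky)_{1,2}$ to be nondegenerate on $\ker A$, and its positive and negative definite cases are exactly the two special cases stated there. So the Schur-complement fallback is not needed.
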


\begin{assumption}\label{ass:6.C}
{\rm  Let $(M, g)$ be as in Assumption~1.0 in Introduction.
Consider a function $W\in  C^2(\mathbb{R}\times M, \mathbb{R})$, 
and a $C^2$ Lagrangian $\mathbf{L}(t, q,v)$  on $\mathbb{R}\times TM$ that is
 strictly convex with respect to $v$. For some $\tau>0$, we assume that $W$ and
$\mathbf{L}$ are also $\tau$-periodic in time $t$, and that $\mathbf{L}$ satisfies
 \begin{equation*}
\mathbf{L}(-t, q, -v)=\mathbf{L}(t, q, v)=\mathbf{L}(t+\tau, q, v)\quad\forall (t,q,v)\in\R\times TM.
\end{equation*}
Let $\bar{\gamma}\in C^2(\mathbb{R}, M)$ satisfy the problem 
\begin{equation*}
\left.\begin{array}{ll}
&\frac{d}{dt}\big(\partial_v\mathbf{L}(t, \gamma(t), \dot{\gamma}(t))\big)-\partial_q\mathbf{L}(t, \gamma(t), \dot{\gamma}(t))=0\;\;\forall t\in\mathbb{R},\\
&\gamma(-t)=\gamma(t)=\gamma(t+\tau)\quad\forall t\in\mathbb{R}.
\end{array}\right\}
\end{equation*}
Assume that for all $t$, $\partial_q W(t, \bar{\gamma}(t))=0$,
and that  the Hessian of each $W(t,\cdot)$ at the critical point $\bar{\gamma}(t)$ 
is either positive definite or negative definite.
}
\end{assumption}

As above, let $L(\lambda, t, q, v)=L_\lambda(t, q, v)=\mathbf{L}(t,q,v)+\lambda W(t,q)$ 
for $\lambda\in\mathbb{R}$ and $(t,q,v)\in\mathbb{R}\times TM$.
Then $L$ satisfies Assumption~\ref{ass:Lagrbrake}, and 
$\bar{\gamma}$ solves (\ref{e:PPerLagrBrakeorbit}) with this $L$, therefore is a critical
point of each $C^2$ functional ${\mathcal{L}}^E_\lambda:=
{\mathcal{L}}^E_0+ \lambda {\mathcal{W}}^E$
  on the $C^4$ Banach manifold $EC^{1}(S_\tau; M)$, where 
\begin{equation*}
{\mathcal{L}}^E_0(\gamma)=\int_0^\tau \mathbf{L}(t, \gamma(t), \dot{\gamma}(t)) \, dt\quad\text{and}\quad
{\mathcal{W}}^E(\gamma)=\int_0^\tau W(t, \gamma(t)) \, dt.
\end{equation*}
Let $EW^{1,2}(\bar{\gamma}^\ast TM)=\{\xi\in W^{1,2}_{\rm loc}(\bar{\gamma}^\ast TM)\,|\,\xi(-t)=\xi(t)=\xi(t+\tau)\quad\forall t\in\mathbb{R}\}$ with
the inner product given by (\ref{e:1.1}).
 Modifying the arguments in \cite[\S5.1]{BoTr} via the technique methods in Appendix~\ref{app:Exp},
on $EW^{1,2}(\bar{\gamma}^\ast TM)$, we can prove that $D^{2}\mathcal{L}^{E}_0(\bar{\gamma})$
is self-adjoint Fredholm and has also finite Morse index and that 
$D^{2}\mathcal{W}^E(\bar{\gamma})$ is  given a compact self-adjoint operator (cf. \cite{Lu0, Lu1-, Lu1}).
 Therefore, by Proposition~\ref{prop:Abst} we have:

\begin{proposition}\label{prop:6.C}
Under Assumption~\ref{ass:6.C}, suppose that $m^0(\mathcal{L}^{E}_0, \bar{\gamma})> 0$. Then 
there exists $0 < \delta_0 < 1$ such that the following hold:
\begin{itemize}
\item[\rm (i)] If the Hessian of each $W(t,\cdot)$ at the critical point $\bar{\gamma}(t)$ 
is positive definite, then
\[
m^-(\mathcal{L}^{E}_\lambda, \bar{\gamma}) = \begin{cases}
m^-(\mathcal{L}^{E}_0, \bar{\gamma}), & 0 < \lambda < \delta_0,\\[4pt]
m^-(\mathcal{L}^{E}_0, \bar{\gamma}) + m^0(\mathcal{L}^{E}_0, \bar{\gamma}), & -\delta_0 < \lambda < 0.
\end{cases}
\]
\item[\rm (ii)] If the Hessian of each $W(t,\cdot)$ at the critical point $\bar{\gamma}(t)$ 
is  negative definite, then
\[
m^-(\mathcal{L}^{E}_\lambda, \bar{\gamma}) = \begin{cases}
m^-(\mathcal{L}^E_0, \bar{\gamma}) + m^0(\mathcal{L}^{E}_0, \bar{\gamma}), & 0 < \lambda < \delta_0,\\[4pt]
m^-(\mathcal{L}^E_0, \bar{\gamma}), & -\delta_0 < \lambda < 0.
\end{cases}
\]
\end{itemize}
\end{proposition}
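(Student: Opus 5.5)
The plan is to reduce Proposition~\ref{prop:6.C} to the abstract perturbation result Proposition~\ref{prop:Abst} from Appendix~\ref{app:Abst}, exactly as was done for Propositions~\ref{prop:6.A} and~\ref{prop:6.B}. The family of functionals is affine in $\lambda$: $\mathcal{L}^E_\lambda=\mathcal{L}^E_0+\lambda\mathcal{W}^E$. So at the common critical point $\bar\gamma$ we get
\[
D^2\mathcal{L}^E_\lambda(\bar\gamma)=D^2\mathcal{L}^E_0(\bar\gamma)+\lambda D^2\mathcal{W}^E(\bar\gamma)
\]
on the Hilbert space $EW^{1,2}(\bar\gamma^\ast TM)$. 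The two points to establish are, first, that $D^2\mathcal{L}^E_0(\bar\gamma)$ is represented by a self-adjoint Fredholm operator with finite Morse index, and second, that $D^2\mathcal{W}^E(\bar\gamma)$ is represented by a compact self-adjoint operator which is positive (respectively negative) definite on the finite-dimensional kernel of $D^2\mathcal{L}^E_0(\bar\gamma)$. Once these hold, Proposition~\ref{prop:Abst} gives the index jump of size $m^0$ across $\lambda=0$ in the stated direction.

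For the first point I work in the chart of Claim~\ref{cl:Echart} together with the even frame field of Claim~\ref{cl:pallVEctorF}. This identifies $EW^{1,2}(\bar\gamma^\ast TM)$ with ${\bf H}_e=EW^{1,2}(S_\tau;\mathbb{R}^n)$ via $\xi\mapsto\sum\xi_je_j$, and it preserves Morse index and nullity, as in (\ref{e:3.32}). In these coordinates the Hessian has the form (\ref{e:gradient4Lagr+}), so it splits as $P+Q$, with $P$ positive definite and $Q$ compact, as in Proposition~\ref{prop:funct-analy}(v). Since $B(\bar\gamma)$ preserves ${\bf H}_e$, as noted in Section~\ref{sec:Lagr7}, the same splitting restricts to ${\bf H}_e$. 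It follows that the restricted operator is Fredholm and has finite Morse index, because $P+Q$ is a compact perturbation of a positive operator.

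Claim~\ref{cl:Abst} lets me pass between the pulled-back inner product and the one given by (\ref{e:1.1}). I expect this change of inner product, together with the evenness bookkeeping (checking that $P$ and $Q$ map even sections to even sections), to be the main technical obstacle.

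For the second point I note that $D^2\mathcal{W}^E(\bar\gamma)[\xi,\eta]=\int_0^\tau\langle\mathrm{Hess}\,W(t,\bar\gamma(t))\xi,\eta\rangle_g\,dt$, because $\partial_qW(t,\bar\gamma(t))=0$ kills the connection terms. This form involves only $L^2$ pairings. Since the embedding $W^{1,2}\hookrightarrow C^0$ is compact, the associated operator is compact; it is self-adjoint by symmetry of the Hessian. Under the definiteness hypothesis, say $\mathrm{Hess}\,W\ge c\,g$ with $c>0$ on the compact set $\bar\gamma([0,\tau])$, the form $\int\langle\mathrm{Hess}\,W\,\xi,\xi\rangle$ is at least $c\|\xi\|_{L^2}^2$. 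This is strictly positive on every nonzero $\xi$, so in particular it is positive definite on $\ker D^2\mathcal{L}^E_0(\bar\gamma)$; the negative case is symmetric.

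Applying Proposition~\ref{prop:Abst} then produces $\delta_0$ as in the statement. A positive perturbation moves the $m^0$ kernel directions to the positive side for $\lambda>0$ and to the negative side for $\lambda<0$, which gives (i); reversing signs gives (ii).
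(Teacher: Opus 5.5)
Your proposal is correct and is essentially the paper's argument. You split $D^2\mathcal{L}^E_\lambda(\bar\gamma)=D^2\mathcal{L}^E_0(\bar\gamma)+\lambda D^2\mathcal{W}^E(\bar\gamma)$ on $EW^{1,2}(\bar\gamma^\ast TM)$ into a self-adjoint Fredholm part with finite Morse index plus $\lambda$ times a compact self-adjoint part that is definite on the kernel, and then apply Proposition~\ref{prop:Abst}.

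The only difference is in how the Fredholm step is checked. The paper adapts a metric-compatible covariant-derivative argument using the even frame of Claim~\ref{cl:pallVEctorF}, whereas you use the chart and the $P+Q$ splitting directly. Your ``evenness bookkeeping'' can also be skipped: represent the forms on ${\bf H}_e$ via Riesz directly, since Morse index and nullity depend only on the bilinear form.
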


By Proposition~\ref{prop:6.A} and
Theorems~\ref{th:bif-suffLagrGener},~\ref{th:bif-suffLagr*}, we arrive at:

  \begin{theorem}[\textsf{Alternative bifurcations of Rabinowitz's type}]\label{th:6.A}
  Under Assumption~\ref{ass:6.A}, suppose that $\mathbf{N}=S_0\times S_1$,
  $\bar{\gamma}(0)\ne\bar{\gamma}(\tau)$
{\rm (}if $\dim S_0>0$ and $\dim S_1>0${\rm )}, 
and $m^0(\mathcal{E}_0, \bar{\gamma})> 0$. Then 
  one of the following alternatives occurs:
\begin{enumerate}
\item[\rm (i)] The problem (\ref{e:LagrGener})--(\ref{e:LagrGenerB})
 with $L$ given by (\ref{e:6.A}) and $\lambda=0$ has a sequence of solutions, $\gamma_k\ne \bar{\gamma}$, $k=1,2,\cdots$,
which converges to $\bar{\gamma}$ in $C^2([0,\tau], M)$.

\item[\rm (ii)] There exists $\delta^\ast>0$ such that
for every non-zero $\lambda\in(-\delta^\ast, \delta^\ast)$,  
the problem (\ref{e:LagrGener})--(\ref{e:LagrGenerB})
 with $L$ given by (\ref{e:6.A}) and parameter value $\lambda$
possesses  a  solution $\alpha_\lambda\ne \bar{\gamma}$ 
 such that  $\alpha_\lambda-\bar{\gamma}$
 converges to zero in  $C^2([0,\tau], \R^N)$ as $\lambda\to 0$.
{\rm (}Recall that we have assumed $M\subset\R^N$.{\rm )}

\item[\rm (iii)] For a given neighborhood $\mathcal{W}$ of $\bar{\gamma}$ in $C^1([0,\tau], M)$, 
there is  a one-sided  neighborhood $\Lambda^0$ of $0$ in $\mathbb{R}$ such that
for any $\lambda\in\Lambda^0\setminus\{\mu\}$, 
the problem (\ref{e:LagrGener})--(\ref{e:LagrGenerB})
 with $L$ given by (\ref{e:6.A}) and parameter value $\lambda$
has at least two distinct solutions in $\mathcal{W}$, $\gamma_\lambda^1\ne \bar{\gamma}$ and $\gamma_\lambda^2\ne \bar{\gamma}$,
which can also be chosen to satisfy $\mathcal{E}_{\lambda}(\gamma_\lambda^1)\ne \mathcal{E}_{\lambda}(\gamma_\lambda^2)$
provided that  $m^0(\mathcal{E}_{0}, \bar{\gamma})>1$ and (\ref{e:LagrGener})--(\ref{e:LagrGenerB}) with parameter value $\lambda$
has only finitely many distinct solutions in $\mathcal{W}$.
\end{enumerate}
Similarly, if $\mathbf{N}=\mathrm{Graph}(\mathbb{I}_{g})$ and $m^0(\mathscr{E}_0, \bar{\gamma})> 0$, then
one of the above alternatives (i)--(iii) occurs
 with (\ref{e:LagrGener})--(\ref{e:LagrGenerB}), and $\mathcal{E}_\lambda$ and $\mathcal{E}_0$ replaced by 
(\ref{e:Lagr2}), and $\mathscr{E}_\lambda$ and $\mathscr{E}_0$, respectively.
\end{theorem}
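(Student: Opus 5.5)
The plan is to obtain Theorem~\ref{th:6.A} by direct application of Theorem~\ref{th:bif-suffLagrGener} (for $\mathbf{N}=S_0\times S_1$) and Theorem~\ref{th:bif-suffLagr*} (for $\mathbf{N}=\mathrm{Graph}(\mathbb{I}_g)$), using Proposition~\ref{prop:6.A} to supply the index hypotheses. Take $\Lambda=(-1,1)$, $\mu=0$, the Lagrangian $L$ from (\ref{e:6.A}), and the constant trivial branch $\gamma_\lambda\equiv\bar{\gamma}$ for all $\lambda\in\Lambda$.

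\textbf{Step 1: the standing assumptions hold.} As already noted after (\ref{e:6.A}), $L$ satisfies Assumptions~\ref{ass:Lagr6} and~\ref{ass:Lagr7}. The only point to check is that every partial derivative of $L$ depends continuously on $\lambda$, which is clear because $L$ is affine in $\lambda$. Since $\partial_qW(t,\bar{\gamma}(t))=0$, the Euler--Lagrange equation and boundary conditions for $L_\lambda$ along $\bar{\gamma}$ coincide with those for $\mathbf{L}$. Hence $\bar{\gamma}$ solves (\ref{e:LagrGener})--(\ref{e:LagrGenerB}) (resp.\ (\ref{e:Lagr2})) for every $\lambda$. Because the branch is constant in $\lambda$, the continuity requirements in Assumption~\ref{ass:LagrGenerB} (resp.\ Assumption~\ref{ass:Lagr7}) hold trivially. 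The condition $\gamma_\mu(0)\ne\gamma_\mu(\tau)$ required in Theorem~\ref{th:bif-suffLagrGener} is exactly the hypothesis $\bar{\gamma}(0)\ne\bar{\gamma}(\tau)$ imposed when $\dim S_0>0$ and $\dim S_1>0$.

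\textbf{Step 2: index hypotheses.} Proposition~\ref{prop:6.A} gives $\delta_0\in(0,1)$ such that on one of the punctured half-intervals $(0,\delta_0)$, $(-\delta_0,0)$ the Morse index equals $m^-(\mathcal{E}_0,\bar{\gamma})$, and on the other it equals $m^-(\mathcal{E}_0,\bar{\gamma})+m^0(\mathcal{E}_0,\bar{\gamma})$; which half gets which value depends on the sign of the Hessian. We also need $m^0(\mathcal{E}_\lambda,\bar{\gamma})=0$ for $0<|\lambda|<\delta_0$, which is not stated in Proposition~\ref{prop:6.A} and is the step I expect to need care. I would obtain it from the abstract result behind Proposition~\ref{prop:Abst}: for a family $A+\lambda K$ with $A$ self-adjoint Fredholm and $K$ compact and definite on $\ker A$ (definite here because the Hessian of $W(t,\cdot)$ at $\bar{\gamma}(t)$ is definite and $\int\langle\mathrm{Hess}W\,\xi,\xi\rangle_g\,dt$ is then definite on all of $W^{1,2}$), the point $0$ is an isolated degenerate parameter. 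The argument is a Lyapunov--Schmidt splitting along $\ker A$: for small $\lambda\ne0$, the restricted form on the kernel part is approximately $\lambda K|_{\ker A}$, which is nondegenerate. Alternatively, the jump count itself forces nondegeneracy: the total change in index is exactly $m^0$, and the kernel eigenvalues all move off $0$ with speed $\lambda$ and a definite sign, so none can stay at zero. After shrinking $\delta_0$ if necessary, we get $m^0(\mathcal{E}_\lambda,\bar{\gamma})=0$ for $0<|\lambda|<\delta_0$.

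\textbf{Step 3: conclusion.} With these hypotheses verified on $\Lambda=(-\delta_0,\delta_0)$, Theorem~\ref{th:bif-suffLagrGener} produces the alternatives (i)--(iii). Since $\gamma_\lambda=\bar{\gamma}$, these translate verbatim into (i)--(iii) of Theorem~\ref{th:6.A}, with $\delta^\ast=\delta_0$ in (ii). The case $\mathbf{N}=\mathrm{Graph}(\mathbb{I}_g)$ is identical: use the second part of Proposition~\ref{prop:6.A} together with Theorem~\ref{th:bif-suffLagr*}. No endpoint condition is needed there, since Theorem~\ref{th:bif-suffLagr*} does not require $\gamma_\mu(0)\ne\gamma_\mu(\tau)$.
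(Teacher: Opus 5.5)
Your proposal is correct and is exactly the paper's argument: Theorem~\ref{th:6.A} is stated there as an immediate consequence of Proposition~\ref{prop:6.A} together with Theorems~\ref{th:bif-suffLagrGener} and~\ref{th:bif-suffLagr*}, applied to the constant branch $\gamma_\lambda\equiv\bar{\gamma}$. The paper leaves implicit the hypothesis $m^0(\mathcal{E}_\lambda,\bar{\gamma})=0$ for small $\lambda\neq 0$, which you rightly single out. It follows from the bound $|\lambda_i(t)|\ge\epsilon|t|$ in the proof of Proposition~\ref{prop:Abst}, since this makes $B_t$ nondegenerate on the spectral subspace $E_t\supset\ker B_t$; that is your perturbation argument. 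One small adjustment: $\delta^\ast$ in (ii) may have to be taken smaller than $\delta_0$, because Theorem~\ref{th:bif-suffLagrGener}(ii) only covers $\lambda$ near $0$.
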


Proposition~\ref{prop:6.B} and Theorem~\ref{th:bif-suffLagr**} directly lead to:
  
\begin{theorem}[\textsf{Alternative bifurcations of Rabinowitz's type}]\label{th:6.B}
Under Assumption~\ref{ass:6.B}, suppose that $m^0(\mathfrak{E}_0, \bar{\gamma})> 0$. 
Then  one of the following alternatives occurs:
\begin{enumerate}
\item[\rm (i)] The problem (\ref{e:Lagr10}) with 
$L_\lambda(t, q, v)=\mathbf{L}(t,q,v)+\lambda W(t,q)$ and
 $\lambda=0$ has a sequence of solutions, $\gamma_k\ne \bar{\gamma}$, $k=1,2,\cdots$,
 such that $\gamma_k\to\bar{\gamma}$ in $\mathcal{X}^2_{\tau}(M, \mathbb{I}_g)$
 (or equivalently $\gamma_k|_{[0,\tau]}-\bar{\gamma}|_{[0,\tau]}$  converges to zero in  $C^2([0,\tau], \R^N)$).

\item[\rm (ii)]  There exists $\delta^\ast>0$ such that
for every non-zero $\lambda\in(-\delta^\ast, \delta^\ast)$,  
the problem (\ref{e:Lagr10}) with $L_\lambda(t, q, v)=\mathbf{L}(t,q,v)+\lambda W(t,q)$ and
parameter value $\lambda$ possesses  a  solution $\alpha_\lambda\ne \bar{\gamma}$
 such that  $\|(\alpha_\lambda-\bar{\gamma})|_{[0,\tau]}\|_{C^2([0,\tau], \R^N)}\to 0$  as $\lambda\to 0$.

\item[\rm (iii)] For a given neighborhood $\mathcal{W}$ of $\bar{\gamma}$ in $\mathcal{X}^1_{\tau}(M, \mathbb{I}_g)$,
there exists a one-sided neighborhood $\Lambda^0$ of $0$ in $\mathbb{R}$ such that
for any $\lambda\in\Lambda^0\setminus\{0\}$, the problem (\ref{e:Lagr10}) with 
$L_\lambda(t, q, v)=\mathbf{L}(t,q,v)+\lambda W(t,q)$ and parameter value $\lambda$
has at least two distinct solutions in $\mathcal{W}$, $\gamma_\lambda^1\ne \bar{\gamma}$ and $\gamma_\lambda^2\ne \bar{\gamma}$,
which can also be chosen to satisfy $\mathfrak{E}_{\lambda}(\gamma_\lambda^1)\ne\mathfrak{E}_{\lambda}(\gamma_\lambda^2)$
provided that  $m^0_\tau(\mathfrak{E}_{\mu}, \bar{\gamma})>1$ and the problem (\ref{e:Lagr10}) with parameter value
$\lambda\in\Lambda^0\setminus\{0\}$ has only finitely many distinct solutions in $\mathcal{W}$.
\end{enumerate}
\end{theorem}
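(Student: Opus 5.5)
The statement to prove is Theorem~\ref{th:6.B}. It should follow by checking that the one-parameter family $L_\lambda=\mathbf{L}+\lambda W$, $\lambda\in\Lambda:=(-1,1)$, together with the constant trivial branch $\gamma_\lambda:=\bar\gamma$, satisfies all hypotheses of Theorem~\ref{th:bif-suffLagr**} with $\mu=0$. That theorem then yields alternatives (i)--(iii) directly. The parallel statement Theorem~\ref{th:6.A} was obtained the same way from Proposition~\ref{prop:6.A} and Theorems~\ref{th:bif-suffLagrGener},~\ref{th:bif-suffLagr*}.

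\emph{Step 1: Assumption~\ref{ass:Lagr8}.} Since $\mathbf{L}$ and $W$ are $C^2$, $L(\lambda,t,q,v)=\mathbf{L}(t,q,v)+\lambda W(t,q)$ is $C^2$ in $(t,q,v)$, and all its partial derivatives are affine in $\lambda$, hence jointly continuous. Strict convexity in $v$ is inherited from $\mathbf{L}$, because $W$ does not depend on $v$. The $\mathbb{I}_g$-invariance (\ref{e:Lagr8}) follows from the assumed invariance of $\mathbf{L}$ and $W$. For the branch, $\partial_qW(t,\bar\gamma(t))=0$ shows that $\bar\gamma$ solves (\ref{e:Lagr10}) for every $\lambda$. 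The maps $(\lambda,t)\mapsto\bar\gamma(t)$ and $(\lambda,t)\mapsto\dot{\bar\gamma}(t)$ are trivially continuous. If the paper insists that $L$ be "truly dependent on time", I would note that this is used only to exclude time-shift degeneracy and plays no role in the proof of Theorem~\ref{th:bif-suffLagr**}.

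\emph{Step 2: index hypotheses.} The hypothesis $m^0_\tau(\mathfrak{E}_0,\bar\gamma)\ne0$ is given. Proposition~\ref{prop:6.B} supplies $\delta_0$ such that the Morse index equals $m^-(\mathfrak{E}_0,\bar\gamma)$ on one punctured half-interval and $m^-(\mathfrak{E}_0,\bar\gamma)+m^0(\mathfrak{E}_0,\bar\gamma)$ on the other. Which side carries which value depends on whether the Hessian of $W$ is positive or negative definite, but Theorem~\ref{th:bif-suffLagr**} only needs the two values to be taken on the two sides in some order.

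Theorem~\ref{th:bif-suffLagr**} also requires $m^0_\tau(\mathfrak{E}_\lambda,\bar\gamma)=0$ for $\lambda\ne0$ near $0$. This is the main obstacle, because it is not stated explicitly in Proposition~\ref{prop:6.B}. I would extract it from the abstract Proposition~\ref{prop:Abst} in Appendix~\ref{app:Abst}, whose proof presumably gives it. Failing that, I would argue directly. Write $D^2\mathfrak{E}_\lambda(\bar\gamma)=B_0+\lambda K$, where $B_0$ is self-adjoint Fredholm and $K$ is compact and definite on the $W^{1,2}$ space. For $\xi\in\ker B_0$ we have $(K\xi,\xi)\ne0$ whenever $\xi\ne0$, since $\mathrm{Hess}\,W$ is definite along $\bar\gamma$ and $\xi$ is continuous, so $\int\langle\mathrm{Hess}\,W\,\xi,\xi\rangle\,dt$ has a fixed sign. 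Standard perturbation theory, via a Lyapunov--Schmidt reduction onto $\ker B_0$, then shows that the eigenvalues of $B_0+\lambda K$ emanating from $0$ are $\lambda\,\sigma_j+o(\lambda)$. Here the $\sigma_j$ are the eigenvalues of the compression $P_{\ker B_0}KP_{\ker B_0}$, which are all nonzero and of one sign. Hence $B_0+\lambda K$ is nondegenerate for $0<|\lambda|<\delta_0$. The other eigenvalues stay bounded away from $0$ for small $|\lambda|$.

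\emph{Step 3: conclusion.} With these hypotheses checked, Theorem~\ref{th:bif-suffLagr**} applied with $\Lambda=(-\delta_0,\delta_0)$ and $\mu=0$ gives its alternatives (i), (ii) and (iii). Since $\gamma_\lambda=\bar\gamma$, these read off verbatim as (i), (ii) and (iii) of Theorem~\ref{th:6.B}. In (ii), take $\delta^\ast$ to be the radius of the neighborhood supplied by the theorem.
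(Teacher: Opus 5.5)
Your proposal is correct and follows the paper's route: the paper obtains Theorem~\ref{th:6.B} directly from Proposition~\ref{prop:6.B} and Theorem~\ref{th:bif-suffLagr**}, with the constant branch $\gamma_\lambda=\bar\gamma$ and $\mu=0$. The vanishing of $m^0_\tau(\mathfrak{E}_\lambda,\bar\gamma)$ for small $\lambda\neq0$, which the paper leaves implicit, does follow from the proof of Proposition~\ref{prop:Abst}: there $\ker B_t\subset E_t$, and the Gram matrix of $B_t|_{E_t}$ has eigenvalues $\lambda_i(t)$ satisfying $|\lambda_i(t)/t|\ge\epsilon$.
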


By Proposition~\ref{prop:6.C} and Theorem~\ref{th:bif-suffLagrBrakeM} we obtain:

\begin{theorem}[\textsf{Alternative bifurcations of Rabinowitz's type}]\label{th:6.C}
Under Assumption~\ref{ass:6.C}, suppose that $m^0(\mathcal{L}^{E}_0, \bar{\gamma})> 0$. Then 
  one of the following alternatives occurs:
\begin{enumerate}
\item[\rm (i)] The problem (\ref{e:PPerLagrBrakeorbit})
 with  $L_\lambda(t, q, v)=\mathbf{L}(t,q,v)+\lambda W(t,q)$ and $\lambda=0$ has a sequence of solutions, $\gamma_k\ne \bar{\gamma}$, $k=1,2,\cdots$,
which converges to $\bar{\gamma}$ in $C^2(S_\tau, M)$.

\item[\rm (ii)] There exists $\delta^\ast>0$ such that
for every non-zero $\lambda\in(-\delta^\ast, \delta^\ast)$,  
the problem (\ref{e:PPerLagrBrakeorbit}) with  $L_\lambda(t, q, v)=\mathbf{L}(t,q,v)+\lambda W(t,q)$ and
parameter value $\lambda$, possesses  a  solution $\alpha_\lambda\ne \bar{\gamma}$ 
such that  $\alpha_\lambda-\bar{\gamma}$
 converges to zero in  $C^2(S_\tau, \R^N)$ as $\lambda\to 0$.
{\rm (}Recall that $M\subset\R^N$.{\rm )}

\item[\rm (iii)] For a given neighborhood $\mathcal{W}$ of $\bar{\gamma}$ in $C^2(S_\tau, M)$,
there exists a one-sided neighborhood $\Lambda^0$ of $0$ in $\mathbb{R}$ such that
for any $\lambda\in\Lambda^0\setminus\{0\}$, (\ref{e:PPerLagrBrakeorbit}) with 
 $L_\lambda(t, q, v)=\mathbf{L}(t,q,v)+\lambda W(t,q)$ and parameter value $\lambda$
has at least two distinct solutions in $\mathcal{W}$, $\gamma_\lambda^1\ne \bar{\gamma}$ and $\gamma_\lambda^2\ne \bar{\gamma}$,
which can also be chosen to satisfy $\mathcal{L}^E_{\lambda}(\gamma_\lambda^1)\ne\mathcal{L}^E_{\lambda}(\gamma_\lambda^2)$
provided that $m^0_\tau(\mathcal{L}^E_{\mu}, \bar{\gamma})>1$ and (\ref{e:PPerLagrBrakeorbit}) with parameter value $\lambda$
has only finitely many solutions in $\mathcal{W}$.
\end{enumerate}
\end{theorem}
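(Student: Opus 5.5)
Theorem~\ref{th:6.C} is a direct combination of Proposition~\ref{prop:6.C} with Theorem~\ref{th:bif-suffLagrBrakeM}, applied to the real interval $\Lambda=(-\delta_0,\delta_0)$ (with $\delta_0$ from Proposition~\ref{prop:6.C}), the constant trivial branch $\gamma_\lambda\equiv\bar{\gamma}$ for all $\lambda\in\Lambda$, and $\mu=0\in{\rm Int}(\Lambda)$. First I check the standing hypotheses. Assumption~\ref{ass:Lagrbrake} holds for $L_\lambda=\mathbf{L}+\lambda W$: the evenness and periodicity in (\ref{e:brakeLM}) come from Assumption~\ref{ass:6.C}, where $W$ must also satisfy $W(-t,q)=W(t,q)$. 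If that is not explicitly assumed, I either add it or note that it is needed. The chart regularity ($C^2$ in $(t,q,v)$, with derivatives continuous in $\lambda$) and the strict convexity in $v$ are clear, since $\lambda W$ is affine in $\lambda$ and independent of $v$. Assumption~\ref{ass:Lagr7brake} holds trivially because the branch is constant in $\lambda$ and $\bar{\gamma}$ is a brake orbit for every $\lambda$: $\partial_qW(t,\bar{\gamma}(t))=0$ makes the extra Euler--Lagrange term vanish.

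Next I verify the index hypotheses of Theorem~\ref{th:bif-suffLagrBrakeM}. By assumption $m^0_\tau(\mathcal{L}^E_0,\bar{\gamma})>0$. The remaining requirements are that $m^0_\tau(\mathcal{L}^E_\lambda,\bar{\gamma})=0$ for $\lambda\ne0$ near $0$, and that $m^-$ equals $m^-(\mathcal{L}^E_0,\bar\gamma)$ on one side and $m^-(\mathcal{L}^E_0,\bar\gamma)+m^0(\mathcal{L}^E_0,\bar\gamma)$ on the other. The second is exactly Proposition~\ref{prop:6.C}(i) or (ii), depending on the sign of the Hessian of $W$. The nullity statement I extract from the same abstract mechanism (Proposition~\ref{prop:Abst}): $D^2\mathcal{W}^E(\bar\gamma)$ is compact and definite on $EW^{1,2}(\bar\gamma^\ast TM)$. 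Hence for small $\lambda\ne0$ the perturbation $D^2\mathcal{L}^E_0+\lambda D^2\mathcal{W}^E$ moves the kernel eigenvalues strictly off zero, while the other spectrum stays bounded away from zero. If Proposition~\ref{prop:Abst} does not state the nullity conclusion explicitly, I derive it from the index jump: the negative index count being exactly $m^-$ or $m^-+m^0$ together with Fredholmness forces the nullity to vanish. This step is the main point to check, and it may require shrinking $\delta_0$.

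Applying Theorem~\ref{th:bif-suffLagrBrakeM} with $\mu=0$ then gives its alternatives (i)--(iii) verbatim. The only translation needed is $\gamma_\lambda=\bar\gamma$, so that ``$\alpha_\lambda-\gamma_\lambda\to0$'' becomes ``$\alpha_\lambda-\bar\gamma\to0$''. Alternative (ii), stated for every $\lambda\in\Lambda\setminus\{0\}$ near $0$, yields the $\delta^\ast>0$ in (ii). Alternative (iii) transfers directly with $\Lambda^0$ a one-sided neighborhood of $0$.
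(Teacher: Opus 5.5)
Your route is exactly the paper's, which obtains Theorem~\ref{th:6.C} in one line from Proposition~\ref{prop:6.C} and Theorem~\ref{th:bif-suffLagrBrakeM}. Your extra checks are well taken. Assumption~\ref{ass:6.C} does omit the evenness $W(-t,q)=W(t,q)$ needed for (\ref{e:brakeLM}). The vanishing of $m^0_\tau(\mathcal{L}^E_\lambda,\bar{\gamma})$ for small $\lambda\ne 0$ is not stated in Proposition~\ref{prop:6.C}, but it follows from Step~3 of the proof of Proposition~\ref{prop:Abst}, which is your first, spectral argument: the estimate $|t^{-1}\lambda_i(t)-\mu_i|<\epsilon$ with all $\mu_i\ne 0$ makes $B_t|_{E_t}$ nondegenerate, and $B_t$ has no spectrum in $(-c,c)$ on $E_t^\perp$.

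Drop your fallback argument, however. Upper semicontinuity of $m^-+m^0$ forces $m^0_\tau(\mathcal{L}^E_\lambda,\bar{\gamma})=0$ only on the side where the index jumps to $m^-+m^0$. On the side where the index stays equal to $m^-(\mathcal{L}^E_0,\bar{\gamma})$, the index count alone says nothing about the nullity.
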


In summary, the above discussion indicates that for a sufficiently smooth Lagrangian function that is strictly convex fiberwise, the corresponding Lagrangian system admits a perturbation of the Lagrangian function near a solution at which the action functional has a non-vanishing nullity, such that a Rabinowitz-type bifurcation occurs near that solution.

Consider  Physical Lagrangians (including  potential
 and electromagnetic forces)  given by
\begin{align}\label{e:6.B}
\mathbf{L}(t,q,v) = \frac{1}{2}\langle P(t,q)v, v\rangle_g+ \alpha(t, q)[v]- V(t, q),
\end{align}
where  $P(t,q):T_qM\to T_qM$ is self-adjoint and positive definite with respect to
$\langle \cdot,\cdot\rangle_g$, 
$\alpha$ is a  $t$-dependent one-form (the magnetic potential),
$\alpha(t, q)[v]$ denotes the value of $\alpha(t, q)\in T_q^\ast M$ at $v\in T_qM$,
and $V$ is a  $t$-dependent potential energy. 
They also satisfy the following respective conditions in the three cases:
\begin{itemize}
\item[\rm (i)] The functions $[0, \tau]\times M\to\mathbb{R},\; (t,q)\mapsto V(t,q)$ and
\begin{align*}
[0, \tau]\times TM\to\mathbb{R},\; (t,q,v)\mapsto \langle P(t,q)v, v\rangle_g,\quad
[0, \tau]\times TM\to\mathbb{R},\; (t,q,v)\mapsto \alpha(t, q)[v]
\end{align*}
 are $C^2$.
\item[\rm (ii)] The functions $\mathbb{R}\times M\to\mathbb{R},\; (t,q)\mapsto V(t,q)$ and
\begin{align*}
\mathbb{R}\times TM\to\mathbb{R},\; (t,q,v)\mapsto \langle P(t,q)v, v\rangle_g,\quad
\mathbb{R}\times TM\to\mathbb{R},\; (t,q,v)\mapsto \alpha(t, q)[v]
\end{align*}
 are $C^2$, and also satisfy for all $(t,q,v)\in\mathbb{R}\times TM$,
\begin{equation*}
P(t, q)=P(t+\tau, \mathbb{I}_g(q)),\quad V(t, q)=V(t+\tau, \mathbb{I}_g(q)),\quad
(\mathbb{I}_g)^\ast\alpha(t+\tau,\cdot)=\alpha(t,\cdot).
\end{equation*}

\item[\rm (iii)] $P$, $\alpha$ and $V$ satisfy the first three lines in (ii) and 
the following equalities 
 \begin{align*}
& P(-t, q)=P(t, q)=P(t+\tau, q),   \quad V(-t, q)=V(t, q)=V(t+\tau, q),\\
 &-\alpha(-t, q)=\alpha(t, q)=\alpha(t+\tau, q)\quad
\text{for all $(t,q)\in\R\times M$.}
\end{align*}
\end{itemize} 
Then Theorems~\ref{th:6.A},~\ref{th:6.B},~\ref{th:6.C}
are applicable to $\mathbf{L}$ in (\ref{e:6.B})
provided that  conditions (i), (ii) and (iii)
are satisfied, respectively, 

When $M$ is $\mathbb{R}^n$ or an open subset of $\mathbb{R}^n$,
we can construct more general examples than those in Theorems~\ref{th:6.A},~\ref{th:6.B},~\ref{th:6.C}.

For a real $\tau>0$, consider a $C^3$ function
 $L:(-1,1)\times [0,\tau]\times \mathbb{R}^n\times \mathbb{R}^n\to\R$.
 Suppose that $L(\lambda, t, q, v)$ is strictly convex in $v$ for
 all $(\lambda,t,q)$. Let $R$ be a linear subspace of $\mathbb{R}^n\times\mathbb{R}^n$,
 which is either the product $V_0\times V_1$ of  
  two linear subspaces $V_0$ and $V_1$ of $\mathbb{R}^n$
  or the graph $\mathrm{Graph}(E)$ of an orthogonal matrix $E$ of order $n$,
  i.e., $R=\{(x^\top, (Ex)^\top)^\top\,|\, x\in \mathbb{R}^n\}$.
For each $\lambda\in (-1,1)$, let $x_\lambda\in C^3([0,\tau]; \mathbb{R}^n)$ satisfy 
the following boundary value problem:
\begin{equation}\label{e:6.1}
\left.\begin{array}{ll}
\frac{d}{dt}\big(\partial_vL_\lambda(t, x(t), \dot{x}(t))\big)-\partial_q L_\lambda(t, x(t), \dot{x}(t))=0\;\forall
t\in [0,\tau],\\
x\in C^2([0, \tau]; \mathbb{R}^n),\; (x(0)^\top, x(\tau)^\top)^\top\in R\quad\hbox{and}\quad\\
\partial_vL_\lambda(0, x(0), \dot{x}(0))[v_0]=\partial_vL_\lambda(\tau, x(\tau), \dot{x}(\tau))[v_1]
\quad\forall (v_0^\top, v_1^\top)^\top\in R.
\end{array}\right\}
\end{equation}
That is, $x_\lambda$ is a critical point of the $C^2$ functional
\begin{equation}\label{e:6.2}
\mathcal{E}_\lambda: C^{1}_{R}([0,\tau]; \mathbb{R}^n)\to\mathbb{R},\;\gamma\mapsto \int^\tau_0L_\lambda(t, x(t), \dot{x}(t))dt.
\end{equation}
It is also assumed that $(-1,1)\times [0,\tau]\ni (\lambda,t)\mapsto x_\lambda(t)\in \mathbb{R}^n$ 
is $C^1$. Let $\mathbf{H}_R={\bf H}_{V_0\times V_1}$ if $R=V_0\times V_1$,
and $\mathbf{H}_R={\bf H}_{E}=W^{1,2}_{E}([0,\tau];\R^{n})$ if $R=\text{Graph}(E)$.
By Proposition~\ref{prop:funct-analy}, 
 the G\^ateaux derivative  $B_\lambda(x_\lambda)\in{\mathscr{L}}_s({\bf H}_{R})$ at $x_\lambda$ of  the gradient $\nabla{\mathcal{E}}_{\lambda}$ is a self-adjoint Fredholm operator  given by
\begin{eqnarray*}
 (B_\lambda(x_\lambda)\xi,\eta)_{1,2}
   = \int_0^{\tau} \Bigl(\!\! \!\!\!&&\!\!\!\!\!
   \bigl(P_\lambda(t)\dot{\xi}(t), \dot{\eta}(t)\bigr)_{\mathbb{R}^n}
+ \bigl(Q_\lambda(t)\xi(t), \dot{\eta}(t)\bigr)_{\mathbb{R}^n}\nonumber \\
&& + \bigl(\dot{\xi}(t), Q_\lambda(t)\eta(t)\bigr)_{\mathbb{R}^n}
  +\bigl(R_\lambda(t)\xi(t), \eta(t)\bigr)_{\mathbb{R}^n}\Bigr) \, dt
\end{eqnarray*}
for any  $\xi,\eta\in {\bf H}_{R}$, where 
${P}_\lambda(t)=\partial_{vv}{L}_{{\lambda}}\bigl(t, x_\lambda(t),
\dot{x}_\lambda(t)\bigr)$, ${Q}_\lambda(t)=D_{{q}{v}}{L}_{{\lambda}}\bigl(t, x_\lambda(t),
\dot{x}_\lambda(t)\bigr)$, ${R}_\lambda(t)=D_{{q}{q}}{L}_{{\lambda}}\bigl(t, x_\lambda(t),
\dot{x}_\lambda(t)\bigr)$. 
Since $L$ is $C^3$, it is easily verified that $B_\lambda(x_\lambda)$ is differentiable in $\lambda$ and
 \begin{eqnarray}\label{e:6.3}
 (\frac{d}{d\lambda}B_\lambda(x_\lambda)\xi,\eta)_{1,2}
   = \int_0^{\tau} \Bigl(\!\! \!\!\!&&\!\!\!\!\!
   \bigl(\frac{d}{d\lambda}P_\lambda(t)\dot{\xi}(t), \dot{\eta}(t)\bigr)_{\mathbb{R}^n}
+ \bigl(\frac{d}{d\lambda}Q_\lambda(t)\xi(t), \dot{\eta}(t)\bigr)_{\mathbb{R}^n}\nonumber \\
&& + \bigl(\dot{\xi}(t), \frac{d}{d\lambda}Q_\lambda(t)\eta(t)\bigr)_{\mathbb{R}^n}
  +\bigl(\frac{d}{d\lambda}R_\lambda(t)\xi(t), \eta(t)\bigr)_{\mathbb{R}^n}\Bigr) \, dt.
\end{eqnarray}
Now suppose ${\rm ker}B_0(x_0)\ne 0$. Then ${\rm ker}B_0(x_0)$ consists of solutions $\zeta$ of
the following linear boundary value problem:
\begin{equation}\label{e:6.4}
\left.\begin{array}{ll}
\frac{d}{dt}\big(P_0(t)\dot{\zeta}(t)+ Q_0(t)\zeta(t)\big)-
\big(Q_0(t)^\top\dot{\zeta}(t)+ R_0(t)\zeta(t)\big)=0\;\forall
t\in [0,\tau],\\
\zeta\in C^2([0, \tau]; \mathbb{R}^n),\;(\zeta(0)^\top, \zeta(\tau)^\top)^\top\in R\quad\hbox{and}\quad\\
(P_0(0)\dot{\zeta}(0)+ Q_0(0)\zeta(0), v_0)_{\mathbb{R}^n}=
(P_0(\tau)\dot{\zeta}(\tau)+ Q_0(\tau)\zeta(\tau), v_1)_{\mathbb{R}^n}
\quad\forall (v_0^\top, v_1^\top)\in R.
\end{array}\right\}
\end{equation}
By Proposition~\ref{prop:Abst}, we immediately obtain:

\begin{proposition}\label{prop:6.1}
Under the above assumptions, suppose that $m^0(\mathcal{E}_0, x_0) > 0$. Then there exists $0 < \delta_0 < 1$ such that the following hold:
\begin{itemize}
\item[\rm (i)] If the self-adjoint operator $\left.\frac{d}{d\lambda}B_\lambda(x_\lambda)\right|_{\lambda=0}$ defined by (\ref{e:6.3}) is positive definite on the solution space of (\ref{e:6.4}), then
\[
m^-(\mathcal{E}_\lambda, x_\lambda) = \begin{cases}
m^-(\mathcal{E}_0, x_0), & 0 < \lambda < \delta_0,\\[4pt]
m^-(\mathcal{E}_0, x_0) + m^0(\mathcal{E}_0, x_0), & -\delta_0 < \lambda < 0.
\end{cases}
\]
\item[\rm (ii)] If the operator $\left.\frac{d}{d\lambda}B_\lambda(x_\lambda)\right|_{\lambda=0}$
is negative definite on the solution space of (\ref{e:6.4}), then
\[
m^-(\mathcal{E}_\lambda, x_\lambda) = \begin{cases}
m^-(\mathcal{E}_0, x_0) + m^0(\mathcal{E}_0, x_0), & 0 < \lambda < \delta_0,\\[4pt]
m^-(\mathcal{E}_0, x_0), & -\delta_0 < \lambda < 0.
\end{cases}
\]
\end{itemize}
Similarly, if  $R=\text{Graph}(E)$ and $m^0(\mathcal{E}_0, x_0)> 0$, then
(i) and (ii) hold. 
\end{proposition}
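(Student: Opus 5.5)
The plan is to reduce Proposition~\ref{prop:6.1} to the abstract perturbation result Proposition~\ref{prop:Abst} of Appendix~\ref{app:Abst}, exactly as was done for Propositions~\ref{prop:6.A}--\ref{prop:6.C}. There the perturbation was linear in $\lambda$ with a compact definite derivative; here the family $\lambda\mapsto B_\lambda(x_\lambda)$ is only $C^1$. So I will check that the hypotheses of Proposition~\ref{prop:Abst} cover a $C^1$ family of self-adjoint Fredholm operators on the fixed Hilbert space ${\bf H}_R$, whose derivative at $\lambda=0$ is definite on the kernel.

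\emph{Step 1: Fredholmness and identification of the Morse data.} By Proposition~\ref{prop:funct-analy}(v) (and its analogue Proposition~\ref{prop:Pfunct-analy} when $R=\mathrm{Graph}(E)$), applied after the translation $x\mapsto x+x_\lambda$ and the modification of Lemma~\ref{lem:Gener-modif}, the operator $B_\lambda(x_\lambda)=P_\lambda+Q_\lambda$ is self-adjoint Fredholm. Here $P_\lambda$ is positive definite and $Q_\lambda$ is compact, so each $B_\lambda(x_\lambda)$ has finite Morse index. Moreover $m^-(\mathcal{E}_\lambda,x_\lambda)$ and $m^0(\mathcal{E}_\lambda,x_\lambda)$ are the dimensions of the negative spectral space and of the kernel of $B_\lambda(x_\lambda)$. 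This uses Proposition~\ref{prop:funct-analy}(viii): the eigenvectors for nonpositive eigenvalues lie in the $C^1$ space, so the $C^1$ and $W^{1,2}$ indices agree. The kernel of $B_0(x_0)$ is the solution space of (\ref{e:6.4}).

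\emph{Step 2: $C^1$ dependence on $\lambda$.} Since $L$ is $C^3$ and $(\lambda,t)\mapsto x_\lambda(t)$ is $C^1$, I also need $\dot x_\lambda$ to be $C^1$ in $\lambda$. I expect to obtain this from the ODE: differentiability of solutions of the Euler--Lagrange equation with respect to parameters. Granting it, the coefficient matrices $P_\lambda,Q_\lambda,R_\lambda$ are $C^1$ in $\lambda$, uniformly in $t$. The estimate
\[
\|B_\lambda-B_0-\lambda\dot B_0\|\le C\sup_t\bigl(|P_\lambda-P_0-\lambda\dot P_0|+|Q_\lambda-Q_0-\lambda\dot Q_0|+|R_\lambda-R_0-\lambda\dot R_0|\bigr)=o(\lambda)
\]
then shows that $\lambda\mapsto B_\lambda(x_\lambda)$ is norm-differentiable with derivative (\ref{e:6.3}). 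This is the step I regard as the main obstacle: the $\lambda$-regularity of $\dot x_\lambda$ is not stated explicitly, and I would first try to derive it from the $C^3$ hypothesis via the variational equation.

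\emph{Step 3: spectral perturbation.} With $\mathcal{K}=\ker B_0$ and orthogonal projection $\Pi$, Kato's reduction shows that for small $\lambda$ the eigenvalues of $B_\lambda$ near $0$ (there are $m^0$ of them, counted with multiplicity) are $\lambda\,\sigma_j+o(\lambda)$, where the $\sigma_j$ are the eigenvalues of $\Pi\dot B_0\Pi|_{\mathcal K}$. The remaining spectrum stays a fixed distance from $0$, so it contributes exactly $m^-(\mathcal{E}_0,x_0)$ negative eigenvalues. If $\dot B_0$ is positive definite on $\mathcal K$, all $\sigma_j>0$; the $m^0$ small eigenvalues are therefore positive for $\lambda>0$ and negative for $\lambda<0$, which gives (i). Case (ii) follows symmetrically. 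This is exactly the content of Proposition~\ref{prop:Abst}, so I invoke it to conclude, with $\delta_0$ chosen from the $o(\lambda)$ bound.
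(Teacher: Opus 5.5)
Your proposal is correct and follows the paper's route. The paper also just applies Proposition~\ref{prop:Abst} to $\lambda\mapsto B_\lambda(x_\lambda)$ on the fixed space $\mathbf{H}_R$, with $Q$ the form (\ref{e:6.3}) on $\ker B_0(x_0)$, so that $m^-(Q)=0$ in case (i) and $m^-(Q)=m^0(\mathcal{E}_0,x_0)$ in case (ii). The paper calls the $\lambda$-differentiability of $B_\lambda(x_\lambda)$ ``easily verified'' from $L\in C^3$, and you are right that it needs $\lambda$-differentiability of $\dot x_\lambda$, which the hypotheses do not give directly. It follows from the implicit function theorem applied to the $C^1$ flow of the Euler--Lagrange equation: for small $t_0>0$ the map $\dot x(0)\mapsto x(t_0)$ has invertible derivative and $\lambda\mapsto x_\lambda(0),x_\lambda(t_0)$ are $C^1$, so $\lambda\mapsto\dot x_\lambda(0)$ is $C^1$, and hence $\lambda\mapsto(x_\lambda,\dot x_\lambda)$ is $C^1$ uniformly in $t$.
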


This, together with the Theorems~\ref{th:bif-suffLagrGenerEu},~\ref{th:Pbif-suffLagrGenerEu},
leads to:

 \begin{theorem}[\textsf{Alternative bifurcations of Rabinowitz's type}]\label{th:6.D}
Suppose that $m^0(\mathcal{E}_0, x_0) > 0$, and that one of the conditions (i) and (ii) in Proposition~\ref{prop:6.1} holds. Then one of the following alternatives occurs:
\begin{enumerate}
\item[\rm (i)] The problem (\ref{e:6.1}) with $\lambda = 0$ has a sequence of nonzero solutions $\{x_k\}_{k=1}^\infty$ in $C^2([0,\tau], \mathbb{R}^n)$ that converges to the 
    trivial solution (i.e., $x_k \to 0$ in $C^2$).

\item[\rm (ii)] For every $\lambda$ sufficiently close to $0$ but $\lambda \neq 0$, there exists a nonzero solution $y_\lambda$ of (\ref{e:6.1}) with parameter $\lambda$, such that $y_\lambda \to 0$ in $C^2([0,\tau], \mathbb{R}^n)$ as $\lambda \to 0$.

\item[\rm (iii)] Given any neighborhood $\mathfrak{W}$ of $0$ in $C^1_{R}([0,\tau]; 
\mathbb{R}^n)$, there exists a one-sided neighborhood $\Lambda^0$ of $0$ (i.e., $\Lambda^0 \subset (0, \delta)$ or $\Lambda^0 \subset (-\delta, 0)$ for some $\delta > 0$) such that for every $\lambda \in \Lambda^0 \setminus \{0\}$, problem (\ref{e:6.1}) with parameter $\lambda$ has at least two distinct nonzero solutions $y_\lambda^1, y_\lambda^2 \in \mathfrak{W}$. Moreover, if $m^0(\mathcal{E}_{0}, 0) > 1$ and problem (\ref{e:6.1}) with parameter $\lambda$ has only finitely many solutions in $\mathfrak{W}$, then these two solutions can be chosen to satisfy $\mathcal{E}_\lambda(y^1_\lambda) \neq \mathcal{E}_\lambda(y^2_\lambda)$.
\end{enumerate}
Similarly, if  $R=\text{Graph}(E)$ and $m^0(\mathcal{E}_0, x_0)> 0$, then
one of the above alternatives (i)--(iii) occurs.
\end{theorem}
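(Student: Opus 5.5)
The plan is to reduce Theorem~\ref{th:6.D} to the Euclidean Rabinowitz-type results, Theorems~\ref{th:bif-suffLagrGenerEu} and~\ref{th:Pbif-suffLagrGenerEu}. Proposition~\ref{prop:6.1} supplies the index hypotheses they require.

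First I translate the trivial branch to zero. Put $\tilde L(\lambda,t,q,v)=L(\lambda,t,q+x_\lambda(t),v+\dot x_\lambda(t))$ for $q$ in a small ball $B^n_\iota(0)$. Since $(\lambda,t)\mapsto x_\lambda(t)$ is $C^1$ and each $x_\lambda$ is $C^3$, the continuity of $\dot x_\lambda$ and $\ddot x_\lambda$ in $(\lambda,t)$ follows from Lemma~\ref{lem:regu}(i). Because $L$ is $C^3$, $\tilde L$ satisfies (a) and (b) of Proposition~\ref{th:bif-LagrGenerEu} on $(-1,1)\times[0,\tau]\times B^n_\iota(0)\times\mathbb{R}^n$.

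The boundary data need a check. For $R=V_0\times V_1$ the translated problem is exactly (\ref{e:LagrGenerEu})--(\ref{e:LagrGenerBEu}). For $R=\mathrm{Graph}(E)$, $x_\lambda$ itself satisfies the $E$-condition, and $E$ is linear. Hence $y=x-x_\lambda$ obeys $y(\tau)=Ey(0)$, and the conormal condition is preserved because $\partial_v\tilde L=\partial_v L$ at corresponding points. So the problem takes the form (\ref{e:PLagrGenerEu})--(\ref{e:PLagrGenerBEu}) with $E_{\overline\gamma}=E$.

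The translated functional $\tilde{\mathcal E}^\ast_\lambda(y)=\mathcal E_\lambda(y+x_\lambda)$ has $0$ as a critical point. Its Morse index and nullity at $0$ equal $m^-(\mathcal E_\lambda,x_\lambda)$ and $m^0(\mathcal E_\lambda,x_\lambda)$.

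Next I verify the hypotheses of Theorem~\ref{th:bif-suffLagrGenerEu} (resp.\ Theorem~\ref{th:Pbif-suffLagrGenerEu}) with $\Lambda=(-\delta_0,\delta_0)$ and $\mu=0$. Here $m^0(\tilde{\mathcal E}^\ast_0,0)>0$ by assumption. Under condition (i) or (ii), Proposition~\ref{prop:6.1} shows that $m^-$ equals $m^-(\mathcal E_0,x_0)$ on one deleted half neighbourhood of $0$ and $m^-(\mathcal E_0,x_0)+m^0(\mathcal E_0,x_0)$ on the other.

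The remaining hypothesis, $m^0(\mathcal E_\lambda,x_\lambda)=0$ for $\lambda\ne0$ near $0$, is the main obstacle. I would extract it from the abstract result behind Proposition~\ref{prop:6.1}, Proposition~\ref{prop:Abst} in Appendix~\ref{app:Abst}. The mechanism is a perturbation argument: $B_\lambda(x_\lambda)=B_0(x_0)+\lambda\dot B+o(\lambda)$, and $\dot B$ is definite on $\ker B_0(x_0)$. Then the $m^0$ eigenvalues near $0$ all move strictly to one side as $\lambda$ crosses $0$. So the kernel is trivial for small $\lambda\ne0$, and this is exactly what makes the index jump by $m^0$.

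If Proposition~\ref{prop:Abst} does not state this explicitly, I would prove it directly. Apply a Lyapunov--Schmidt reduction to the Fredholm family. The reduced symmetric matrix on $\ker B_0(x_0)$ is $\lambda\,\dot B|_{\ker}+o(\lambda)$, which is nonsingular for small $\lambda\ne0$.

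Finally, apply the Euclidean theorem to obtain alternatives (i)--(iii) for $y$, and translate back via $x=y+x_\lambda$. Since $x_\lambda\to x_0$ in $C^2$, $C^2$-convergence of $y$ to $0$ gives the stated convergence. Neighbourhoods of $0$ correspond under the translation, and energies agree because $\tilde{\mathcal E}^\ast_\lambda(y)=\mathcal E_\lambda(y+x_\lambda)$. In the statement, "nonzero solutions" and "$x_k\to0$" are read relative to the translated trivial branch.
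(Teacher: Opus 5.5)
Your proposal is correct and follows the same route as the paper, which obtains Theorem~\ref{th:6.D} directly by combining Proposition~\ref{prop:6.1} with Theorems~\ref{th:bif-suffLagrGenerEu} and~\ref{th:Pbif-suffLagrGenerEu}. Your additional steps supply details that the paper leaves implicit: translating the branch $x_\lambda$ to $0$, and checking that $m^0(\mathcal{E}_\lambda,x_\lambda)=0$ for small $\lambda\ne 0$, which does follow from the estimate $|\lambda_i(t)|\ge\epsilon|t|$ in Step~3 of the proof of Proposition~\ref{prop:Abst}.
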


The results analogous to Theorems~\ref{th:6.B} and ~\ref{th:6.C} can be readily formulated.
We omit them. Instead, we consider a concrete example.

 \subsection{Example: planar simple pendulum}\label{sec:Cor-example-II}

 Consider the  planar simple pendulum equation:
\begin{equation}\label{e:6.9}
\ddot{u}+\frac{g}{l}\sin u=0, 
\end{equation}
where $g$ is the universal gravitational constant, $l$ is the length of the pendulum, and one end of the pendulum is fixed at the origin. It is well known that there are two kinds of nontrivial
motions of this planar simple pendulum: rotations around the origin and oscillations. 
The linearized equation of (\ref{e:6.9}) at the zero solution is
$l\ddot{u}(t)+ g u(t)=0$,
which has the  general solution  $u=C_1\cos(\omega t)+ C_2\sin(\omega t)$, where $\omega=\sqrt{\frac{g}{l}}$. These solutions have period $T:=\frac{2\pi}{\omega}$.
$T$-periodic solutions of (\ref{e:6.9}) are critical points of the functional
\begin{equation}\label{e:6.10}
\Phi(u)=\int^T_0\textsf{L}(t, u(t), \dot{u}(t)) dt
\end{equation}
on $W^{1,2}(\mathbb{R}/(T\mathbb{Z}), \mathbb{R})$, where
$\textsf{L}(t, x, y) =\frac{1}{2}l y^2+ g\cos x$.
From the above arguments, we see that  $0\in W^{1,2}(\mathbb{R}/(T\mathbb{Z}), \mathbb{R})$ is 
a  critical point of $\Phi$ with nullity $m^0(\Phi, 0)=2$.
Let $W(\lambda,t,x)$ be a $C^3$ function on $(-1, 1)\times\mathbb{R}^2$ that is $T$-periodic in time $t$.
We also assume 
\begin{equation}\label{e:6.11}
W_x(\lambda,t,0)\equiv 0,\quad W_{xx\lambda}(0, \cdot, 0)\ne 0\quad
\text{and either $W_{xx\lambda}(0, \cdot, 0)\ge 0$ or 
$W_{xx\lambda}(0, \cdot, 0)\le 0$}.
\end{equation}
A clear example of the function $W$ satisfying the above conditions is 
\begin{equation}\label{e:6.12}
W(\lambda,t,x)=(1+\sin(\frac{2\pi}{T}t))\sin\lambda\cos x.
\end{equation}
 Define $L(\lambda,t,x,y) := \textsf{L}(t,x,y) + W(\lambda, t,x)$, and let $\Phi_\lambda$ denote the corresponding action functional defined on  $W^{1,2}(\mathbb{R}/(T\mathbb{Z}), \mathbb{R})$. 
 The first equality in (\ref{e:6.11}) implies that 
  $0\in W^{1,2}(\mathbb{R}/(T\mathbb{Z}), \mathbb{R})$
 is a critical point of each $\Phi_\lambda$.
   Let $B_\lambda(0)$ be the G\^ateaux derivative  of  the gradient $\nabla{\Phi}_{\lambda}$
 at $0\in W^{1,2}(\mathbb{R}/(T\mathbb{Z}), \mathbb{R})$.  Then
 $\mathrm{ker}B_0(0)=\{u=C_1\cos(\omega t)+ C_2\sin(\omega t)\,|\,C_1, C_2\in\mathbb{R}\}$.
  Note that
 \begin{align*}
 &{P}_\lambda(t)=\partial_{yy}{L}_{{\lambda}}\bigl(t, 0, 0\bigr)=l,\qquad {Q}_\lambda(t)=D_{{x}{y}}{L}_{{\lambda}}\bigl(t, 0, 0\bigr)=0,\\ &{R}_\lambda(t)=D_{{x}{x}}{L}_{{\lambda}}\bigl(t, 0, 0\bigr)=-g+ W_{xx}(\lambda, t, 0).
\end{align*}
 By (\ref{e:6.3}), we obtain the following identity  for all $\xi, \eta\in W^{1,2}(\mathbb{R}/(T\mathbb{Z}), \mathbb{R})$,
  \begin{equation}\label{e:6.13}
 (\frac{d}{d\lambda}B_\lambda(x_\lambda)\big|_{\lambda=0}\xi,\eta)_{1,2}
   = \int_0^{T}W_{xx\lambda}(0, t, 0)\xi(t)\eta(t)\, dt.
\end{equation}
 It follows from the assumptions in (\ref{e:6.11}) that
 $\frac{d}{d\lambda}B_\lambda(x_\lambda)\big|_{\lambda=0}$ is either positive definite
 or negative definite.
  Since $m^0(\Phi, 0)=2>0$.
By apply Theorem~\ref{th:6.D} (with $E=I_n$ being the identity matrix of order $n$)
to this $L$, we readily deduce that  one of the following alternatives occurs:
\begin{enumerate}
\item[\rm (i)] The forced  simple pendulum
\begin{equation}\label{e:6.14}
m\ell\ddot{u}(t)+ g\sin u(t)=W_x(0, t, u(t))
\end{equation}
 has a sequence of nontrivial $T$-periodic solutions $\{u_k\}^\infty_{k=1}$
 such that  $u_k|_{[0, T]}$  converges to zero in  $C^2([0, T])$.

\item[\rm (ii)]  There exists $\delta^\ast>0$ such that
for every  $\lambda\in(-\delta^\ast, \delta^\ast)\setminus\{0\}$,  
the forced  simple pendulum
\begin{equation}\label{e:6.15}
m\ell\ddot{u}(t)+ g\sin u(t)=W_x(\lambda, t, u(t))
\end{equation}
 possesses  a nontrivial $T$-periodic solution $u_\lambda$
 such that  $u_\lambda|_{[0, T]}$  converges to zero in  $C^2([0, T])$)
   as $\lambda\to 0$.

\item[\rm (iii)] For a given neighborhood $\mathcal{W}$ of zero in 
$C^2([0, T])$, there exists a one-sided neighborhood $\Lambda^0$ of $0$ in $\mathbb{R}$ such that
for any $\lambda\in\Lambda^0\setminus\{0\}$, the problem (\ref{e:6.15}) 
has at least two distinct nontrivial solutions $u_\lambda^1$ and $u_\lambda^2$
whose restrictions to $[0, T]$ belong to $\mathcal{W}$.
Moreover, since $m^0(\Phi, 0)=2>1$, these two solutions  can also be chosen to satisfy 
$$
\int^T_0\textsf{L}(t, u_\lambda^1(t), \dot{u}_\lambda^1(t)) dt+ \int^T_0W(\lambda,t,
u_\lambda^1(t))dt\ne \int^T_0\textsf{L}(t, u_\lambda^2(t), \dot{\gamma}_\lambda^2(t)) dt+ \int^T_0W(\lambda,t, u_\lambda^2(t))dt
$$
provided that  the problem (\ref{e:6.15})  has only finitely many distinct solutions, 
and their restrictions to $[0, T]$ belong to $\mathcal{W}$.
\end{enumerate}

\begin{remark}\label{rm:6.1}
{\rm When $W(\lambda, t, x)$ has a form $\lambda \mathbf{W}(t, x)$, where
 $\mathbf{W}(t,x)$ is a $C^3$ function on $(-1, 1)\times\mathbb{R}$ that is $T$-periodic in time $t$
 and satisfies the following conditions:
\begin{equation}\label{e:6.16}
\mathbf{W}_x(t,0)\equiv 0,\quad \mathbf{W}_{xx\lambda}(\cdot, 0)\ne 0\quad
\text{and either $\mathbf{W}_{xx}(\cdot, 0)\ge 0$ or 
$\mathbf{W}_{xx}(\cdot, 0)\le 0$}.
\end{equation}
We have a $C^2$ functional $\mathfrak{W}$ on $W^{1,2}(\mathbb{R}/(T\mathbb{Z}), \mathbb{R})$ defined by
\begin{equation*}
\mathfrak{W}(u)=\int^T_0\mathbf{W}(t, u(t)) dt
\end{equation*}
with critical point $0\in W^{1,2}(\mathbb{R}/(T\mathbb{Z}), \mathbb{R})$. 
When $\mathbf{W}_{xx}(\cdot, 0)\ge 0$ (resp.  
$\mathbf{W}_{xx}(\cdot, 0)\le 0$), $D^2\mathfrak{W}(0)$ is positive definite (resp. negative definite)
on $\mathrm{ker}B_0(0)=\{u=C_1\cos(\omega t)+ C_2\sin(\omega t)\,|\,C_1, C_2\in\mathbb{R}\}$.
By applying \cite[Theorem~6.6]{MaWi} to $\alpha=\Phi$ and $\beta=\mathfrak{W}$, we obtain:\\
\noindent{\bf Conclusion}.\quad\emph{For each sufficiently small $\epsilon>0$, eauation
\begin{equation}\label{e:6.17}
m\ell\ddot{u}(t)+ g\sin u(t)=\lambda W_x(t, u(t))
\end{equation}
has at least two solutions $(\lambda_{\epsilon}, u_{\epsilon})$ such that
$\mathfrak{W}(u_\epsilon)=\epsilon$ (resp. $\mathfrak{W}(u_\epsilon)=-\epsilon$)
provided that $\mathbf{W}_{xx}(\cdot, 0)\ge 0$ (resp.  
$\mathbf{W}_{xx}(\cdot, 0)\le 0$). Moreover, $\lambda_\epsilon\to 0$ as $\epsilon\to 0$.}

Therefore, our above example cannot be derived from \cite[Theorem~6.6]{MaWi}.
}
\end{remark}

\appendix

\section{Proofs of some lemmas}\label{app:Exp}

\paragraph{Exponential map}
Let $M$ be a $n$-dimensional,  $C^k$-smooth  manifold.
Its tangent bundle $TM$ is a $C^{k-1}$-smooth manifold of dimension $2n$,
whose points are denoted by $(x,v)$, with $x\in M$ and $v\in T_xM$.
Let $g$ be a $C^{k-1}$ Riemannian metric on $M$.
Let $\varphi:U\to \varphi(U)\subset\mathbb{R}^n,\;q\mapsto\varphi(q)=(x^1(q),\cdots, x^n(q))$
be a coordinate chart on $M$,   $g_{ij}=g(\frac{\partial}{\partial x^i}, \frac{\partial}{\partial x^j})$
and $(g^{ij})=(g_{ij})^{-1}$. Then the Christoffel symbols
$$
\Gamma^k_{ij}=\frac{1}{2}g^{kj}\Big(\frac{\partial g_{li}}{\partial x^j}+ \frac{\partial g_{lj}}{\partial x^i}-
\frac{\partial g_{ij}}{\partial x^l}\Big)
$$
are $C^{k-2}$ and hence the exponential map $\exp:TM\to M$ is a $C^{k-2}$ map.
%(cf. page 209 in Schwart's nonlinear FA).
There exists a fibrewise convex neighborhood $\mathcal{U}(0_{TM})$ of the zero section of $TM$ such that  the map
\begin{eqnarray}\label{e:App.1}
\mathbb{F}:\mathcal{U}(0_{TM})\to M\times M,\;(q,v)\mapsto (q,\exp_qv)
\end{eqnarray}
is a $C^{k-2}$ immersion.

For a $W^{1,2}$-curve $\gamma:[a,b]\to M$, a $W^{1,2}$ vector field $V$ along $\gamma$ and $t\in \gamma^{-1}(U)$ we can write
$$
V(t)=\sum_jv^j(t)\frac{\partial}{\partial x^j}|_{\gamma(t)}\quad\hbox{and}\quad \dot{\gamma}(t)=\sum_j\dot{\gamma}^j(t)\frac{\partial}{\partial x^j}|_{\gamma(t)},
$$
where $\gamma^j(t)=x^j(\gamma(t))$. The ($W^{1,2}$) covariant derivative of $V$ along $\gamma$
\begin{equation}\label{e:covariantDerivative}
\frac{DV}{dt}=\sum_i\frac{Dv^i}{dt}\frac{\partial}{\partial x^i}|_{\gamma(t)}=
\sum_i\Big(\frac{dv^i}{dt}+\Gamma^{i}_{jk}(\gamma(t))\dot{\gamma}^j(t)v^k(t)\Big)\frac{\partial}{\partial x^i}|_{\gamma(t)}.
\end{equation}
The vector field $V$ is called parallel if $\frac{DV}{dt}\equiv 0$.

Let $\pi:TM\to M$ be the bundle projection and $\mathscr{U}=\pi^{-1}(U)$.
$\varphi$ induces a chart on $M$,
\begin{equation}\label{e:inducedChart}
\Phi:\mathscr{U}\to\varphi(U)\times\mathbb{R}^n,\;(q,v)\mapsto (x^1(q,v),\cdots, x^n(q,v), u^1(q,v),\cdots, u^n(q,v)),
\end{equation}
where $x^i(q,v)=x^i(q)$ and $u^i(q,v)=dx^i(q)[v]$ for $i=1,\cdots,n$, i.e.,
$v=\sum^n_{i=1}u^1(q,v)\frac{\partial}{\partial x^i}|_q$.
It is computed in the standard textbooks in Riemannian geometry that
\begin{eqnarray}\label{e:App.2}
d\mathbb{F}(q,0)\Big[\frac{\partial}{\partial x^i}(q,0)\Big]=\Big(\frac{\partial}{\partial x^i}(q), \frac{\partial}{\partial x^i}(q)\Big),\quad
d\mathbb{F}(q,0)\Big[\frac{\partial}{\partial u^i}(q,0)\Big]=\Big(0, \frac{\partial}{\partial x^i}(q)\Big).
\end{eqnarray}

\begin{claim}\label{cl:pallVEctorF}
Let $k>2$. For an even and $\tau$-periodic $C^{k-2}$-curve $\gamma:\R\to M$,
there exist unit orthogonal parallel $C^{k-2}$ frame fields along $\gamma$,
$\{e_1,\cdots, e_n\}$, such that for all $t\in\R$,
\begin{eqnarray}\label{e:evenframefield1}
&&(e_1(-t),\cdots, e_n(-t))=(e_1(t),\cdots, e_n(t)),\\
&&(e_1(t+\tau),\cdots, e_n(t+\tau))=(e_1(t),\cdots, e_n(t)).\label{e:evenframefield2}
\end{eqnarray}
\end{claim}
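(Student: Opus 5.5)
We construct the frame by parallel transport and then exploit the symmetries of $\gamma$. Fix a unit orthonormal basis $(e_1^0,\dots,e_n^0)$ of $T_{\gamma(0)}M$. Let $e_i(t)$ be its parallel transport along $\gamma$ with respect to the Levi-Civita connection of $g$. The coefficients of the parallel transport equation (\ref{e:covariantDerivative}) are $\Gamma^i_{jk}(\gamma(t))\dot\gamma^j(t)$, which are $C^{k-3}$. Solving this linear ODE along $\gamma$ chart by chart gives $C^{k-2}$ fields that exist on all of $\mathbb{R}$ (linearity rules out blow-up). They remain orthonormal because parallel transport is an isometry.

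The first step is evenness. Put $\tilde e_i(t):=e_i(-t)$. Since $\gamma(-t)=\gamma(t)$, the field $\tilde e_i$ is a vector field along $\gamma$. Differentiating the reparametrized curve $\tilde\gamma(t)=\gamma(-t)=\gamma(t)$ gives $\dot\gamma(-t)=-\dot\gamma(t)$. Substituting into (\ref{e:covariantDerivative}) yields
\[
\frac{D\tilde e_i}{dt}(t)=-\frac{De_i}{dt}(-t)=0 .
\]
Thus $\tilde e_i$ is parallel along $\gamma$ with $\tilde e_i(0)=e_i(0)$, and uniqueness of solutions of the linear ODE gives (\ref{e:evenframefield1}).

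The second step is periodicity, which is the main obstacle. In general $e_i(\tau)\ne e_i(0)$, since the holonomy around the loop $\gamma|_{[0,\tau]}$ need not be trivial, so the symmetries must force it. Periodicity and evenness together give $\gamma(\tau-t)=\gamma(-t)=\gamma(t)$, so $\gamma$ is symmetric about $t=\tau/2$. Apply the argument of the first step to the reflection $s\mapsto \tau-s$. The field $t\mapsto e_i(\tau-t)$ is parallel along $\gamma$, hence equals $\sum_j A_{ji}e_j(t)$ for a constant orthogonal matrix $A$. Evaluating at $t=\tau/2$ gives $e_i(\tau/2)=\sum_jA_{ji}e_j(\tau/2)$, so $A=I$. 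Therefore $e_i(\tau-t)=e_i(t)$ for all $t$. Combining this with evenness, $e_i(t+\tau)=e_i(\tau-(-t))=e_i(-t)=e_i(t)$, which is (\ref{e:evenframefield2}).

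No special choice of the initial basis is needed; the symmetry about $\tau/2$ does all the work. The delicate point is justifying the chain rule for $\tilde e_i$ when $\gamma$ is only $C^{k-2}$. Since $k>2$, $\gamma$ is at least $C^1$, which suffices for (\ref{e:covariantDerivative}).
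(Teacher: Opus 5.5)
Your proof is correct and follows essentially the same route as the paper. You parallel transport a frame from $\gamma(0)$, get evenness from uniqueness applied to the reflected field $t\mapsto e_i(-t)$, and get periodicity from uniqueness at $\tau/2$, the fixed point of a second reflection; the only difference is that you use the single reflection $t\mapsto\tau-t$ on all of $\mathbb{R}$ and compose it with evenness, whereas the paper uses $t\mapsto k\tau-t$ on $[0,k\tau]$ for each $k\in\mathbb{N}$ followed by a case analysis, so your version is slightly more economical.
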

\begin{proof}[\bf Proof]
Starting with a unit orthogonal  frame at $T_{\gamma(0)}M$ and using the parallel
transport along $\overline{\gamma}$ with respect to the Levi-Civita
connection of the Riemannian metric $g$ we get a unit orthogonal
parallel $C^5$ frame field $\R\to {\gamma}^\ast TM,\;t\mapsto
(e_1(t),\cdots, e_n(t))$.

Firstly, we prove that (\ref{e:evenframefield1}) is satisfied.
In fact, let $(U; x^j)$ be a local coordinate system  around a point in ${\gamma}(\R)$.
Then we can write
$$
e_k(t)=\sum^n_{i=1}e^l_k(t)\frac{\partial}{\partial x^l}({\gamma}(t))\quad\forall t\in{\gamma}^{-1}(U),
\quad k=1,\cdots,n.
$$
Since $\{e_1,\cdots,e_n\}$ is a parallel frame field,
\begin{equation*}
\frac{d}{dt}e^i_l(t)+\Gamma^{i}_{jk}({\gamma}(t))\dot{{\gamma}}^j(t)e^k_l(t)=0,\quad\forall t\in{\gamma}^{-1}(U),\quad i=1,\cdots,n.
\end{equation*}
Note that ${\gamma}(-t)={\gamma}(t)$ implies $t\in{\gamma}^{-1}(U)$ if and only if $-t\in{\gamma}^{-1}(U)$.
We have
\begin{eqnarray*}
\frac{d}{dt}(e^i_l(-t))=-\dot{e}^i_l(-t)&=&\Gamma^{i}_{jk}({\gamma}(-t))\dot{{\gamma}}^j(-t)e^k_l(-t)\\
&=&-\Gamma^{i}_{jk}({\gamma}(-t))\frac{d}{dt}({\gamma}^j(-t))e^k_l(-t)\\
&=&-\Gamma^{i}_{jk}({\gamma}(t))\frac{d}{dt}({\gamma}^j(t))e^k_l(-t).
\end{eqnarray*}
 It follows that
$\{e_1(-\cdot),\cdots,e_n(-\cdot)\}$ is also a parallel frame field along ${\gamma}$.
Since $e_k(t)$ and $e_k(-t)$ agree at $t=0$, $k=1,\cdots,n$,
by the theorem of existence and uniqueness of ODE we obtain (\ref{e:evenframefield1}).

Next, we prove that for any $k\in\N$ there holds
\begin{eqnarray}\label{e:evenframefield3}
(e_1(k\tau-t),\cdots, e_n(k\tau-t))=(e_1(t),\cdots, e_n(t))\quad\forall t\in [0, k\tau].
\end{eqnarray}
Since for any $t\in [0, k\tau]$ it holds that $t\in{\gamma}^{-1}(U)$ if and only if $-t\in{\gamma}^{-1}(U)$, as above we get
\begin{eqnarray*}
\frac{d}{dt}(e^i_l(k\tau-t))=-\dot{e}^i_l(k\tau-t)&=&\Gamma^{i}_{jk}({\gamma}(k\tau-t))\dot{{\gamma}}^j(k\tau-t)e^k_l(k\tau-t)\\
&=&-\Gamma^{i}_{jk}({\gamma}(-t))\frac{d}{dt}({\gamma}^j(k\tau-t))e^k_l(k\tau-t)\\
&=&-\Gamma^{i}_{jk}({\gamma}(t))\frac{d}{dt}({\gamma}^j(t))e^k_l(k\tau-t),\quad l=1,\cdots,n.
\end{eqnarray*}
Therefore $[0, k\tau]\to e_l(t)$ and $[0, k\tau]\to e_l(k\tau-t)$
are parallel frame fields along $\gamma|_{[0, k\tau]}$ and have the same value at $t=k\tau/2$.
It follows that $e_l(k\tau-t)=e_l(t)$ for any $t\in[0, k\tau]$ and $l=1,\cdots,n$.

Finally, we prove (\ref{e:evenframefield2}). For any $t>0$, let us choose $k\in\N$ such that $t<k\tau$. Then
$$
e_l(\tau+t)=e_l((k+1)\tau-(\tau+t))=e_l(k\tau-t)=e_l(t).
$$
If $t<-\tau$, this and (\ref{e:evenframefield1}) lead to $e_l(\tau+t)=e_l(-\tau-t)=e_l(\tau+(-\tau-t))=e_l(-t)=e_l(t)$.
If $-\tau\le t<0$ then $0\le t+\tau<\tau$. By (\ref{e:evenframefield3}) and (\ref{e:evenframefield1}) we derive
$e_l(\tau+t)=e_l(\tau-(\tau+t))=e_l(-t)=e_l(t)$.
Summarizing these  (\ref{e:evenframefield2}) is proved.

\textsf{{Note}}: Since $\gamma$ is even, as a loop $\gamma:S_\tau\to M$ is contractible. Thus $\gamma^\ast TM\to S_\tau$ has an orthogonal
trivialization. This can only lead to the existence of an unit orthogonal  frame fields along $\gamma$
satisfying (\ref{e:evenframefield1}).
\end{proof}

\begin{proof}[\bf Proof of Lemma~\ref{lem:twoCont}]
By Lemma~\ref{lem:regu} it suffices to prove the third and fourth assertions.
Follow the notations above Lemma~\ref{lem:twoCont}.

{\it Step~1}(\textsf{Prove that $(\lambda,t)\mapsto {\bf u}_\lambda(t)$ is continuous, and $C^2$ with respect to $t$}).
By ($\spadesuit$) in the first paragraph in Section~\ref{sec:LagrBound.1.1} and (\ref{e:gamma1}), for all $t\in [0, \tau]$ we have
\begin{equation}\label{e:two-chart}
\mathbb{F}\Big(\overline{\gamma}(t),
\sum^n_{i=1}{\bf u}_\lambda^i(t)e_i(t)\Big)=\Big(\overline{\gamma}(t),\exp_{\overline{\gamma}(t)}\Big(\sum^n_{i=1}{\bf u}_\lambda^i(t)
e_i(t)\bigg)\Big)=\Big(\overline{\gamma}(t), \gamma_\lambda(t)\Big).
\end{equation}
By (\ref{e:gamma3}), $(\lambda,t)\mapsto\gamma_\lambda(t)$ is a continuous map from $\Lambda\times[0,\tau]$ into
the open subset ${\bf U}_{3\iota}(\gamma_\mu([0, \tau]))$ of $M$. These and ($\clubsuit$) in the first paragraph in Section~\ref{sec:LagrBound.1.1}
imply the composition
$$
\Lambda\times[0,\tau]\to TM,\;(\lambda,t)\mapsto \Big(\mathbb{F}|_{\mathcal{W}(0_{TM})}\Big)^{-1}(\overline{\gamma}(t),\gamma_\lambda(t))=
\Big(\overline{\gamma}(t),
\sum^n_{i=1}{\bf u}_\lambda^i(t)e_i(t)\Big)
$$
is continuous, and $C^2$ with respect to $t$. Since  $g$ is a $C^6$ Riemannian metric on $M$, we obtain
$$
\Lambda\times[0,\tau]\to \mathbb{R},\;(\lambda,t)\mapsto g\Big(\Big(\overline{\gamma}(t),
\sum^n_{i=1}{\bf u}_\lambda^i(t)e_i(t)\Big), \Big(\overline{\gamma}(t),
e_j(t)\Big)\Big)={\bf u}_\lambda^j(t)
$$
is continuous, and $C^2$ with respect to $t$, for each $j=1,\cdots,n$.

{\it Step~2}(\textsf{Prove that $(\lambda,t)\mapsto \dot{\bf u}_\lambda(t)$ is continuous}).
Fix a point $t_0\in [0,\tau]$. Let $\varphi:U\to \varphi(U)\subset\mathbb{R}^n$
be a coordinate chart centered at $\overline{\gamma}(t_0)$
with $\varphi(q)=(x^1(q),\cdots, x^n(q))$. For example, we can take $U=\phi_{\overline{\gamma}}(t_0,B^n_{2\iota}(0))$
and $\varphi=(\phi_{\overline{\gamma}}(t_0,\cdot))^{-1}$.
It has the induced chart $(\pi^{-1}(U),\Phi)$ on $TM$ as in (\ref{e:inducedChart}).
Let $J=(\overline{\gamma})^{-1}(U)$, which is an open neighborhood of $t_0$ in $[0,\tau]$.
For each $t\in J$, we have two bases of $T_{\overline{\gamma}(t)}M$,
$$
e_1(t),\cdots,e_n(t)\quad\hbox{and}\quad \frac{\partial}{\partial x^1}|_{\overline{\gamma}(t)},\cdots,\frac{\partial}{\partial x^n}|_{\overline{\gamma}(t)}.
$$
Hence there exists a unique non-degenerate matrix $(A_{ij}(t))$  of order $n$ such that
$$
e_i(t)=\sum^n_{j=1}A_{ij}(t)\frac{\partial}{\partial x^j}|_{\overline{\gamma}(t)},\quad i=1,\cdots,n.
$$
It follows that
$$
\sum^n_{i=1}{\bf u}_\lambda^i(t)e_i(t)=\sum^n_{i=1}{\bf u}_\lambda^i(t)\sum^n_{j=1}A_{ij}(t)\frac{\partial}{\partial x^j}|_{\overline{\gamma}(t)}=
\sum^n_{j=1}\Big(\sum^n_{i=1}{\bf u}_\lambda^i(t)A_{ij}(t)\Big)\frac{\partial}{\partial x^j}|_{\overline{\gamma}(t)}.
$$
Let $\varphi(\overline{\gamma}(t))=(x^1(\overline{\gamma}(t)),\cdots, x^n(\overline{\gamma}(t))$. 
Then for each $t\in J$ we have
\begin{eqnarray}\label{e:two-chart2}
\Phi\Big(\overline{\gamma}(t),
\sum^n_{i=1}{\bf u}_\lambda^i(t)e_i(t)\Big)=\Big(x^1(\overline{\gamma}(t)),\cdots, x^n(\overline{\gamma}(t),
\sum^n_{i=1}{\bf u}_\lambda^i(t)A_{i1}(t),\cdots,\sum^n_{i=1}{\bf u}_\lambda^i(t)A_{in}(t)\Big).\nonumber
\end{eqnarray}
It is clear that $\varphi\times\varphi: U\times U\to B^n_{2\iota}(0)\times B^n_{2\iota}(0)$ given by $\varphi\times\varphi(x,y)=(\varphi(x),\varphi(y))$
is a chart on $M\times M$ centered at $(\overline{\gamma}(t_0), \overline{\gamma}(t_0))$.
Take a small neighborhood $J_0\subset J$ of $t_0$ in $[0,\tau]$ such that
$d_g(\overline{\gamma}(t), \overline{\gamma}(t_0))<\iota$ for all $t\in J_0$. Then (\ref{e:gamma2}) implies
\begin{eqnarray}\label{e:gamma2*}
d_g(\gamma_\lambda(t), \overline{\gamma}(t_0))<2\iota,\quad\forall (\lambda,t)\in
\Lambda\times J_0.
\end{eqnarray}
Hence $\{\gamma_\lambda(t)\,|\, (\lambda,t)\in\Lambda\times J_0\}$ is contained in the chart $(U,\varphi)$. Let
$$
(\varphi\times\varphi)\left(\overline{\gamma}(t), \gamma_\lambda(t)\right)=
\left(x^1(\overline{\gamma}(t)),\cdots, x^n(\overline{\gamma}(t), x^1(\gamma_\lambda(t)),\cdots, x^n(\gamma_\lambda(t))\right).
$$
By this, (\ref{e:two-chart}) and (\ref{e:two-chart2}) we obtain that for all $(\lambda,t)\in\Lambda\times J_0$,
\begin{eqnarray}\label{e:two-chart3}
&&\Phi\circ(\mathbb{F})^{-1}\circ(\varphi\times\varphi)^{-1}\left(x^1(\overline{\gamma}(t)),\cdots, x^n(\overline{\gamma}(t), x^1(\gamma_\lambda(t)),\cdots, x^n(\gamma_\lambda(t))\right)\nonumber\\
&&=\Big(x^1(\overline{\gamma}(t)),\cdots, x^n(\overline{\gamma}(t),
\sum^n_{i=1}{\bf u}_\lambda^i(t)A_{i1}(t),\cdots,\sum^n_{i=1}{\bf u}_\lambda^i(t)A_{in}(t)\Big).
\end{eqnarray}
Note that $\Psi:=\Phi\circ\left(\mathbb{F}|_{\mathcal{W}(0_{TM})}\right)^{-1}\circ(\varphi\times\varphi)^{-1}$ is a $C^5$ diffeomorphism onto its image set.
Taking the derivative of $t$ for the equation  in (\ref{e:two-chart3}) we arrive at
{\small
\begin{eqnarray}\label{e:two-chart4}
&&D\Psi\Big(x^1(\overline{\gamma}(t)),\cdots, x^n(\overline{\gamma}(t), x^1(\gamma_\lambda(t)),\cdots, x^n(\gamma_\lambda(t))\Big)\Big[
\frac{d}{dt}x^1(\overline{\gamma}(t)),\cdots, \frac{d}{dt}
x^n(\overline{\gamma}(t), \frac{d}{dt}x^1(\gamma_\lambda(t)),\nonumber\\
&&\hspace{110mm}\cdots, \frac{d}{dt}x^n(\gamma_\lambda(t)\Big]
\nonumber\\
&&=\Big(\frac{d}{dt}x^1(\overline{\gamma}(t)),\cdots, \frac{d}{dt}x^n(\overline{\gamma}(t),
\sum^n_{i=1}{\bf u}_\lambda^i(t)\frac{d}{dt}A_{i1}(t),\cdots,\sum^n_{i=1}{\bf u}_\lambda^i(t)\frac{d}{dt}A_{in}(t)\Big)\nonumber\\
&&+\Big(\frac{d}{dt}x^1(\overline{\gamma}(t)),\cdots, \frac{d}{dt}x^n(\overline{\gamma}(t),
\sum^n_{i=1}\frac{d}{dt}{\bf u}_\lambda^i(t)A_{i1}(t),\cdots,\sum^n_{i=1}\frac{d}{dt}{\bf u}_\lambda^i(t)A_{in}(t)\Big)\nonumber\\
&&\quad\forall (\lambda,t)\in\Lambda\times J_0.
\end{eqnarray}}
Since all $A_{ij}$ are $C^5$, $D\Psi$ is $C^4$, $(\lambda,t)\mapsto{\bf u}_\lambda(t)$ is continuous (by Step 1), and
$$
(\lambda,t)\mapsto x^1(\gamma_\lambda(t))\quad\hbox{and}\quad
(\lambda,t)\mapsto\frac{d}{dt}x^1(\gamma_\lambda(t))
$$
are continuous in $\Lambda\times J_0$ (by Assumption~\ref{ass:LagrGenerB}),
it follows from (\ref{e:two-chart4}) that
{\small
$$
\Lambda\times J_0\ni (\lambda,t)\mapsto
\Big(\sum^n_{i=1}\frac{d}{dt}{\bf u}_\lambda^i(t)A_{i1}(t),\cdots,\sum^n_{i=1}\frac{d}{dt}{\bf u}_\lambda^i(t)A_{in}(t)\Big)=
\Big(\frac{d}{dt}{\bf u}_\lambda^1(t),\cdots,\frac{d}{dt}{\bf u}_\lambda^n(t)\Big)(A_{ij}(t))
$$}
is continuous. Moreover, all $(A_{ij}(t))$ are invertible. These lead to
$$
\Lambda\times J_0\ni (\lambda,t)\mapsto
\Big(\frac{d}{dt}{\bf u}_\lambda^1(t),\cdots,\frac{d}{dt}{\bf u}_\lambda^n(t)\Big)
$$
is continuous. Since $t_0\in [0,\tau]$ is arbitrarily chosen, the required claim is proved.
\end{proof}

\section{Morse indexes of a class of abstract functionals}\label{app:Abst}

Let $H$ be a separable real  Hilbert space with inner product $(\cdot,\cdot)$, and let $\mathcal{F}^{sa}(H)$
denote the set of all bounded linear self-adjoint Fredholm operators on $H$.
For a $C^0$ path $t\mapsto B_{t}\in \mathcal{F}^{sa}(H)$, $t\in (-\eta, \eta)$,
we have a family of functionals on
$H$, 
$$
f_t(x)=\frac{1}{2}(B_tx,x),\quad x\in H,
$$
for which  the origin $0\in H$ is a critical point with finite Morse index.
Suppose that $\dim{\rm ker}B_0=N>0$ and $t\mapsto B_{t}$ is differentiable at $0$.
The derivative of $B_t$ at $t=0$, denoted by $\dot{B}_{0}$, belongs to $\mathscr{L}_s(H)$.
Consider the symmetric bilinear form $Q$ on 
$\operatorname{ker} B_{0}$ given by
\[
Q(x, y) = \frac{\mathrm{d}}{\mathrm{d} t}(x, B_{t}(y))\big|_{t=0} = (x, \dot{B}_{0}(y)), \quad x, y \in \operatorname{ker} B_{0}.
\]
The following result is contained in the proof of \cite[Lemma~3.23]{Fur04}. 
For the sake of completeness, we present a detailed proof.
%The proof can be completed by modifying the proof of \cite[Lemma~3.23]{Fur04}.

\begin{proposition}\label{prop:Abst}
Under the above assumptions, suppose that $Q$ is  non-degenerate and has  the Morse index $m^-(Q)=q$.
Then there exists $0<\delta_0<\eta$ such that
 \begin{align}\label{e:Abst0}
 m^-(f_t,0)=
 \begin{cases}
 m^-(Q)+ m^-(f_0,0) &\;\text{for}\quad 0<t<\delta_0,\\
 m^0(f_0,0)-m^-(Q)+ m^-(f_0,0) &\;\text{for}\quad-\delta_0<t<0.
 \end{cases}
 \end{align}
In particular, we have the following special cases:
\[
m^{-}(f_{t},0)=\begin{cases}
m^{-}(f_{0},0), & 0<t<\delta_0,\\
m^{-}(f_{0},0)+m^{0}(f_{0},0), & -\delta_0<t<0,
\end{cases}
\quad \text{if } Q \text{ is positive definite,}
\]
and
\[
m^{-}(f_{t},0)=\begin{cases}
m^{-}(f_{0},0)+m^{0}(f_{0},0), & 0<t<\delta_0,\\
m^{-}(f_{0},0), & -\delta_0<t<0,
\end{cases}
\quad \text{if } Q \text{ is negative definite.}
\]
\end{proposition}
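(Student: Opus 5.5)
Throughout, set $N=\dim\ker B_0$ and $m=m^-(f_0,0)=m^-(B_0)$, and write $P_0$ for the orthogonal projection onto $\ker B_0$. The plan is a spectral perturbation argument: isolate the eigenvalues of $B_t$ near $0$ via a Riesz projection, and show that their signs for small $t\ne0$ are governed by the sign of $tQ$.

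First, since $B_0$ is self-adjoint Fredholm, $0$ is an isolated point of $\sigma(B_0)$, and $m$ is finite, as the paper assumes. Choose $r>0$ with $\sigma(B_0)\cap[-2r,2r]=\{0\}$. Let $\Gamma$ be the circle $|z|=r$. By norm continuity of $t\mapsto B_t$, for $|t|$ small we have $\Gamma\cap\sigma(B_t)=\emptyset$, and the Riesz projection
\[
P_t=\frac{1}{2\pi i}\oint_\Gamma (z-B_t)^{-1}\,dz
\]
is an orthogonal projection depending continuously on $t$. Hence $\operatorname{rank}P_t=N$: exactly $N$ eigenvalues of $B_t$ (with multiplicity) lie in $(-r,r)$, and the rest of $\sigma(B_t)$ avoids $[-r,r]$.

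The spectral subspace of $B_t$ for $(-\infty,-r)$ varies continuously, since it is also a Riesz projection, this time around the finite part $\sigma(B_0)\cap(-\infty,-r)$, which consists of finitely many eigenvalues of finite multiplicity. So its dimension stays equal to $m$. Consequently
\[
m^-(f_t,0)=m+\#\{\text{negative eigenvalues of }B_t|_{\operatorname{Im}P_t}\}.
\]

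Next, reduce to an $N\times N$ problem. Let $U_t$ be the standard (Kato) intertwining unitary with $U_tP_0U_t^{-1}=P_t$, differentiable at $t=0$ because $B_t$ is, and set $C_t=P_0U_t^{-1}B_tU_tP_0$ on $\ker B_0$. Its eigenvalues are those of $B_t|_{\operatorname{Im}P_t}$, and $C_0=0$. The key claim is
\[
C_t=t\,P_0\dot B_0P_0+o(t).
\]
To verify it, differentiate $U_t^{-1}B_tU_t$ at $0$. Terms such as $P_0\dot U_0^{-1}B_0P_0$ and $P_0B_0\dot U_0P_0$ vanish because $B_0P_0=0=P_0B_0$. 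This leaves $P_0\dot B_0P_0$, whose quadratic form is $Q$.

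Alternatively, one can avoid $U_t$ altogether by a variational estimate. For $x\in\ker B_0$, $(B_tx,x)=tQ(x,x)+o(t)|x|^2$. Combine this with a Schur-complement bound on $(\ker B_0)^\perp$, where $B_t$ is uniformly invertible with the same negative count $m$. The off-diagonal block is $O(t)$, so the correction term is $O(t^2)$.

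Now use nondegeneracy. By hypothesis $Q$ is nondegenerate with $q$ negative and $N-q$ positive eigenvalues, all of absolute value at least some $c>0$. For $0<|t|<\delta_0$ the remainder is at most $c|t|/2$, so $C_t/t$ has the same inertia as $Q$. Hence:
\begin{itemize}
\item for $t>0$, $C_t$ has exactly $q$ negative eigenvalues;
\item for $t<0$, $C_t$ has exactly $N-q$ negative eigenvalues.
\end{itemize}
In both cases $C_t$ has no zero eigenvalue. Plugging into the formula above gives (\ref{e:Abst0}). The special cases are $q=0$ (positive definite $Q$) and $q=N$ (negative definite $Q$).

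The main obstacle is the first-order expansion of the $N$ small eigenvalues when $B_t$ is only differentiable at $0$, not $C^1$. I would handle this with the Schur-complement route. Write $B_t$ in block form relative to $H=\ker B_0\oplus(\ker B_0)^\perp$:
\[
B_t=\begin{pmatrix}a_t&b_t\\ b_t^*&d_t\end{pmatrix},
\qquad a_t=tP_0\dot B_0P_0+o(t),\quad b_t=O(t),
\]
with $d_t$ invertible and $\|d_t^{-1}\|$ uniformly bounded. The negative count is then $m^-(d_t)+m^-\bigl(a_t-b_td_t^{-1}b_t^*\bigr)$. Here $m^-(d_t)=m$, and $a_t-b_td_t^{-1}b_t^*=tP_0\dot B_0P_0+o(t)$, so the inertia count above applies directly. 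This uses only differentiability at $0$ and the norm continuity of $B_t$.
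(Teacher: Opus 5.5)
Your proof is correct, but the route you ultimately rely on, the Schur complement, differs from the paper's.

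The paper works with the moving spectral subspace $E_t=\operatorname{Im}P_t$. It complexifies $H$ to define the Riesz projections and shows $t\mapsto P_t$ is differentiable at $0$ via the resolvent integral. It then takes an eigenbasis $e_1,\dots,e_N$ of $Q$ in $\ker B_0$ and expands $\frac1t(B_tP_te_i,P_te_j)-(\dot B_0e_i,e_j)$ into four terms that tend to $0$. The signs of the eigenvalues of the $N\times N$ matrix $\bigl((B_tP_te_i,P_te_j)\bigr)$ come from Weyl's perturbation inequality. The part of $\sigma(B_t)$ below $-c$ is handled by the Riesz--Sz.-Nagy stability of isolated eigenvalues.

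You instead use the \emph{fixed} splitting $\ker B_0\oplus(\ker B_0)^\perp$, the bounded congruence $S^*B_tS=\operatorname{diag}(a_t-b_td_t^{-1}b_t^*,\,d_t)$, and Sylvester's law. This needs only $B_t=B_0+t\dot B_0+o(t)$ at the single point $t=0$, with no contour integrals or differentiability of $P_t$. It also makes your first two paragraphs and the Kato unitary superfluous. The identity $m^-(d_t)=m$ follows from an elementary perturbation argument: $d_0$ is uniformly negative on its negative spectral subspace $L$ and uniformly positive on the orthogonal complement of $L$ in $(\ker B_0)^\perp$, and both properties persist for $\|d_t-d_0\|$ small.

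Your route also sidesteps an imprecision in the paper. The paper asserts that $P_te_1,\dots,P_te_N$ is an orthonormal basis of $E_t$, but it is only a basis for small $t$, so Sylvester's law is needed there as well.

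What the moving-subspace approach (the paper's, and your $U_t$ variant) buys in return is information about the small eigenvalues themselves: they leave $0$ with slopes governed by $Q$. The Schur complement tracks only inertia.

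If you keep the $U_t$ variant, note that Kato's ODE construction requires $P_t$ to be $C^1$. With differentiability only at $0$, use the explicit formula $U_t=\bigl(I-(P_t-P_0)^2\bigr)^{-1/2}\bigl(P_tP_0+(I-P_t)(I-P_0)\bigr)$, which is differentiable at $0$ whenever $P_t$ is.
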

\begin{proof}[\bf Proof]
\noindent{\bf Step 1}. 
Consider the complexification $H_{\mathbb{C}} =\{z = x + iy\,|\, x, y \in H\}$ of the Hilbert space $H$, 
equipped with the complex linear structure and complex inner product defined by
  \begin{align*}
  & (a + ib)(x + iy) = (ax - by) + i(bx + ay), \quad  a, b \in \mathbb{R},\\
  &\langle z_1, z_2 \rangle_{\mathbb{C}} = (x_1, x_2)_\mathbb{R} + (y_1, y_2)_\mathbb{R} + i[(y_1, x_2)_\mathbb{R} - (x_1, y_2)_\mathbb{R}]
  \end{align*}
  for \( z_1 = x_1 + iy_1, \, z_2 = x_2 + iy_2 \). 
%An equivalent definition (more directly compatible with linear extension of the real inner product) is:
%\[\langle x + iy, x' + iy' \rangle_C := (x, x')_\mathbb{R} + (y, y')_\mathbb{R} + i[(y, x')_\mathbb{R} - (x, y')_\mathbb{R}],\]
%or more simply understood as: extending the original real inner product **complex linearly** in the first argument and conjugate linearly in the second:
%\[\langle z_1, z_2 \rangle_C := (x_1, x_2)_\mathbb{R} + (y_1, y_2)_\mathbb{R} + i(x_1, y_2)_\mathbb{R} - i(y_1, x_2)_\mathbb{R}.\]
%In practice, it is common to: extend the real inner product \((\cdot, \cdot)_\mathbb{R}\) as a bilinear form to complex coefficients, and then the inner product is:
% (2) Complexified operator \(\tilde{A}_t\)
The complexified operator \(\tilde{B}_t\) of $B_t$ is the complex linear extension of \(B_t\) defined 
for \(z = x + iy \in H_{\mathbb{C}}\) by
\[
\tilde{B}_t(z) = B_t x + iB_t y.
\]
It is a self-adjoint operator with respect to the complex inner 
product \(\langle z_1, z_2 \rangle_{\mathbb{C}}\).
% **Preservation of self-adjointness**:
%  Since \(B_t\) is self-adjoint with respect to the real inner product, \(\tilde{B}_t\) is a self-adjoint operator with respect to the complex inner product \(\langle \cdot, \cdot \rangle_C\):
%  \[
%  \langle \tilde{A}_t z_1, z_2 \rangle_C = \langle z_1, \tilde{A}_t z_2 \rangle_C.
%  \]
%- **Spectral relation**:
  The spectrum of \(\tilde{B}_t\) is the same as that of \(B_t\) (which is a subset of the real numbers), and for a real spectral point \(\lambda\), the complex eigenspace is the complexification of the real eigenspace.
Since $B_0$ is a bounded linear self-adjoint Fredholm operator,
$0\in\sigma(B_0)$ is an isolated point in $\sigma(B_0)$.
Therefore, we can choose $c>0$ such that $\pm c\notin\sigma(B_0)$.
From the  assumption of the continuous family \(\{B_t\}\) there exists
$\delta \in (0,\eta]$  such that
 $\pm c\notin\sigma(B_t)$ for all $|t|\le\delta$. Let $\Gamma_c=\{\zeta\in\mathbb{C}\,|\,
 |\zeta|=c\}$. Define 
the complex projection operators \(\tilde{P}_t\)  on \( H_{\mathbb{C}} \)  by
\[
\tilde{P}_t = \frac{1}{2\pi\sqrt{-1}} \int_{\Gamma_c} (\zeta I_{H_{\mathbb{C}}}- \tilde{B}_t)^{-1} \, d\zeta,\quad |t| \leq \delta,
\]
which are orthogonal projections (self-adjoint and idempotent with respect to the complex inner product).
All these $\tilde{P}_t$  have the constant rank equal to $N=\dim_{\mathbb{C}}\text{ker}\tilde{B}_0$, and 
$$
\tilde{E}_t:=\tilde{P}_t(H_{\mathbb{C}})=\sum_{|\zeta|<c} \text{ker}(\tilde{B}_t - \zeta
I_{H_{\mathbb{C}}})=E_t\oplus iE_t,
$$
where $E_t:=\sum_{-c<\zeta<c} \text{ker}({B}_t - \zeta I_H)$.  
%%%%%%%%%%%%%%%%%%%%%%%%%%%%%%%%%%%%%%%%%%%%%%%%%%%%%%%%%%%%%%%%%%%%%%%%%%%%%%%%%%%%%%%%%%%%%%%%%%%%%%%
% 4. Relation between \( P_t \) and the real space
%The image states that "range of each \( P_t = \sum_{|u|<c} \text{Ker}(A_t - u) \)", where the sum denotes the direct sum of generalized eigenspaces corresponding to eigenvalues \( u \) satisfying \( |u| < \varepsilon \). Since the spectrum is real, in fact only \( u = 0 \) satisfies \( |u| < \varepsilon \) (if we choose \( \varepsilon \) sufficiently small and 0 is the only spectral point inside that disk). Thus:
%\[
%\text{Ran}(P_t) = \text{Ker}(\tilde{A}_t) \quad (\text{in the complexified space})
%\]
%But note, if the generalized eigenspace for the eigenvalue 0 has finite dimension, the real form of \( \text{Ker}(\tilde{A}_t) \) is the complexification of \( \text{Ker}(A_t) \).
%From the image, \( \text{rank}(P_t) = \dim_C \text{Ran}(P_t) = \dim_\mathbb{R} \text{Ker}A_0 \), indicating that the complex rank of this projection equals the real dimension of the original real kernel. This implies that the real dimension of \( \text{Ker}A_t \) is the same for \( |t| \leq \delta \) as at \( t = 0 \), and that \( \text{Ran}(P_t) = \text{Ker}(A_t) \otimes_\mathbb{R} \mathbb{C} \) (complexification).
%Therefore, although \( P_t \) is defined via complexification, 
%%%%%%%%%%%%%%%%%%%%%%%%%%%%%%%%%%%%%%%%%%%%%%%%%
Since $\tilde{E}_t$  is the complexification of a real subspace $E_t$, and hence when 
restricted to \( H \subset H_\mathbb{C} \),  each \( \tilde{P}_t \) yields the real orthogonal projection 
${P}_t$ from the real space $H$ onto \( E_t \).
Moreover, it is not hard to prove that 
the path $t\mapsto \tilde{P}_{t}\in \mathscr{L}_s(H_{\mathbb{C}})$, $t\in (-\delta, \delta)$, is 
continuous and differentiable at $0$, with derivative given by
\[
\dot{\tilde{P}}_0 = \frac{1}{2\pi i} \int_{\Gamma_c} (\zeta I_{H_{\mathbb{C}}}- \tilde{B}_0)^{-1} \dot{\tilde{B}}_0 (\zeta I_{H_{\mathbb{C}}}- 
\tilde{B}_0)^{-1} d\zeta.
\]

\noindent{\bf Step 2}. 
 Consider the orthogonal complementary  $E_t^\bot$ of $E_t$ in $H$.
Since $B_t$ is self-adjoint, the eigenspaces corresponding to distinct eigenvalues of $B_t$
are orthogonal. Hence 
 $$
 \sum_{\zeta<-c} \text{ker}({B}_t - \zeta I_H)\oplus 
 \sum_{\zeta>c} \text{ker}({B}_t - \zeta I_H)\subset E_t^\bot.
 $$
 Let $F_t:=\sum_{\zeta<-c} \text{ker}({B}_t - \zeta I_H)$. Then 
 we have
 \begin{align}\label{e:Abst1}
 m^-(f_0,0)&=\dim F_0,\\
 m^-(f_t,0)&=m^-(f_t|_{E_t},0)+ m^-(f_t|_{F_t},0)\nonumber\\
 &=m^-(f_t|_{E_t},0)+ \dim F_t.\label{e:Abst2}
 \end{align}
Since $\sigma(B_0)\cap (-\infty, -c)$ consists of finitely many eigenvalues with finite (algebraic) multiplicity, say $\lambda_{0,1},\cdots,\lambda_{0, k}$, where each $\lambda_{0,i}$ has multiplicity $m_i$ for $i=1,\cdots,k$, by the arguments at the end of \cite[\S 135]{RieSz} we can shrink $\delta>0$ so that for each $t\in (-\delta,\delta)$, $\sigma(B_t)\cap (-\infty, -c)$ consists exactly of $k$ eigenvalues with finite (algebraic) multiplicity, $\lambda_{t,1},\cdots,\lambda_{t, k}$, and each $\lambda_{t,i}$ 
is near $\lambda_{0,i}$ and also has multiplicity $m_i$, $i=1,\cdots,k$. Hence
\begin{align}\label{e:Abst3}
\dim F_t=\dim F_0,\quad t\in (-\delta, \delta).
\end{align}

\noindent{\bf Step 3}. Define $p=N-q$. Let $\mu_1 \le \mu_2 \le \ldots \le \mu_N$ be all eigenvalues of $Q$, and let $e_1, \ldots, e_N$ be the unit eigenvectors corresponding to them, which form a unit
 orthogonal basis of $E_0$. Then $P_te_1, \ldots, P_te_N$ form a unit
 orthogonal basis of $E_t$ since all  $P_t$ are orthogonal projections. Hence 
 \begin{align}\label{e:Abst4}
 m^-(f_t|_{E_t},0)=\text{dimension of the negative definite space of matrix 
 $\big(\left(B_{t}\circ P_{t}(e_i),P_{t}(e_j)\right)\big)$.}
 \end{align}
   We can assume that $\mu_1 \le \mu_2 \le \ldots \le \mu_q < 0$ and $0 < \mu_{q+1} \le \ldots \le \mu_N$. 
 Define $\epsilon=\frac{1}{2}\min_j|\mu_j|$.
 
 Following the proof of \cite[Lemma~3.23]{Fur04}, for $x,y\in{\rm ker}B_{0}$ there holds:
\begin{align*}
&\left(\frac{1}{t}\cdot B_{t}\circ P_{t}(x), P_{t}(y)\right)-\left(\dot{B}_{0}(x), y\right)\\
=&\left(\left(\frac{1}{t}\cdot(B_{t}- B_{0})-\dot{B}_{0}\right)\circ P_{t}(x),P_{t}(y)\right)+\left(\frac{1}{t}\cdot(P_{t}(x)-x), B_{0}(P_{t}(y))\right)\\
&\quad+\left(\dot{B}_{0}(P_{t}(x)-x), P_{t}(y)\right)+\left(\dot{B}_{0}(x), P_{t}(y)-y\right).
\end{align*}
Let $\dot{P}_{0}\in\mathscr{L}_s(H)$ denote the derivative of the path $P_t$ at $t=0$. 
Since, when $ t \to 0 $, we have
\begin{align*}
 &\|{B_0 (P_t(e_i))}\| \to \|{B_0 (P_0(e_i))}\|=\|{B_0 e_i}\|= 0, \\
 &\|(P_{t}(e_i)-e_i)\|\to \|P_0e_i-e_i\|=0,\\
 &\|{\frac{1}{t} \cdot (P_t(e_i) - e_i)}\|\to\|\dot{P}_0e_i\|,
 \end{align*}
   there exists $ 0 < \delta_0 \leq \delta $ such that  for all $|t|<\delta_0$ and $i,j=1,\cdots, N$,
$$
 \left|\left(\frac{1}{t}\cdot(P_{t}(e_i)-e_i), B_{0}(P_{t}(e_j))\right)
+\left(\dot{B}_{0}(P_{t}(e_i)-e_i),P_{t}(e_j)\right)+\left(\dot{B}_{0}(e_i),P_{t}(e_j)-e_j\right)\right|
<\frac{\epsilon}{N},
$$
and hence
  \begin{align*}
\left|\left(\frac{1}{t}\cdot B_{t}\circ P_{t}(e_i),P_{t}(e_j)\right)-
\left(\dot{B}_{0}(e_i), e_j\right)\right|
<\frac{\epsilon}{N}.
\end{align*}
 Let $\lambda_1(t)\le \lambda_2(t)\le \ldots \le \lambda_N(t)$  be
 the eigenvalues of matrix 
 $\big(\left(B_{t}\circ P_{t}(e_i),P_{t}(e_j)\right)\big)$.
   By Weyl's Perturbation Theorem for  eigenvalues of Hermitian matrices
   (cf.~Problem~4.3.P1 on \cite[page~256]{HoJo13} or the first exercise on \cite[page~408]{HoJo13}),
       we obtain
 $$
 |\frac{1}{t}\lambda_i(t)-\mu_i|< \epsilon\quad\text{and hence}\quad
 |\frac{1}{t}\lambda_i(t)|\ge\epsilon
  $$
 for all $0<|t|<\delta_0$ and $i=1,\cdots,N$. 
  Then for $0<t<\delta_0$ there holds:
    $$
  \lambda_i(t)
  \begin{cases}
  <-\epsilon t, & i=1,\cdots,q,\\
  >\epsilon t, & i=q+1,\cdots, m;
  \end{cases}
  $$
   and for $-\delta_0<t<0$ there holds:
     $$
  \lambda_i(t)
  \begin{cases}
  >-\epsilon t, & i=1,\cdots,q,\\
  <\epsilon t, & i=q+1,\cdots, m.
  \end{cases}
  $$
   It follows from these and (\ref{e:Abst4}) that
  \begin{align}\label{e:Abst5}
 m^-(f_t|_{E_t},0)=\begin{cases}
 q &\;\text{for}\quad 0<t<\delta_0,\\
 p &\;\text{for}\quad-\delta_0<t<0.
 \end{cases}
 \end{align}
 This, together with (\ref{e:Abst1})-(\ref{e:Abst3}), leads to
  \begin{align*}
 m^-(f_t,0)=m^-(f_t|_{E_t},0)+ \dim F_0=
 \begin{cases}
 q+ m^-(f_0,0) &\;\text{for}\quad 0<t<\delta_0,\\
 p+ m^-(f_0,0) &\;\text{for}\quad-\delta_0<t<0.
 \end{cases}
 \end{align*}
(\ref{e:Abst0}) is proved.
\end{proof}

Let $B\in\mathcal{F}^{sa}(H)$, and let $\langle\cdot,\cdot\rangle$ be 
an alternative inner product on $H$ that induces a norm equivalent to the one induced by $(\cdot,\cdot)$. 
Since $\langle\cdot,\cdot\rangle$ is bounded and coercive with respect to the original norm, 
the Lax--Milgram theorem implies that there exists a bounded linear invertible operator
$\Xi:H\to H$ such that  for all $u, v\in H$,
\begin{align}\label{e:Abst6}
\langle u, v\rangle=(\Xi u, v)\quad\text{and hence}\quad
\langle \Xi^{-1}u, v\rangle=( u, v).
\end{align}

\begin{claim}\label{cl:Abst}
The operator $\Xi^{-1}B$ is also  Fredholm, and self-adjoint with respect to
the inner product $\langle\cdot,\cdot\rangle$.
\end{claim}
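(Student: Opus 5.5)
The claim has two parts: self-adjointness of $\Xi^{-1}B$ with respect to $\langle\cdot,\cdot\rangle$, and the Fredholm property. I would verify each directly from (\ref{e:Abst6}), treating $\Xi$ and $\Xi^{-1}$ as bounded isomorphisms of $H$.

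\emph{Self-adjointness.} For $u,v\in H$, the second identity in (\ref{e:Abst6}) gives
\[
\langle \Xi^{-1}Bu, v\rangle=(Bu,v).
\]
Since $B$ is self-adjoint for $(\cdot,\cdot)$, this equals $(u,Bv)=(Bv,u)$. Applying (\ref{e:Abst6}) again gives
\[
(Bv,u)=\langle \Xi^{-1}Bv,u\rangle=\langle u,\Xi^{-1}Bv\rangle,
\]
where the last step uses the symmetry of the real inner product $\langle\cdot,\cdot\rangle$. Hence $\langle \Xi^{-1}Bu,v\rangle=\langle u,\Xi^{-1}Bv\rangle$.

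\emph{Boundedness.} $\Xi^{-1}B$ is bounded for the original norm, and the two norms are equivalent, so it is bounded for the new norm as well.

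\emph{Fredholm property.} I would show that composing $B$ with the isomorphism $\Xi^{-1}$ preserves the Fredholm data:
\begin{itemize}
\item $\ker(\Xi^{-1}B)=\ker B$, which is finite dimensional.
\item $\mathrm{Ran}(\Xi^{-1}B)=\Xi^{-1}(\mathrm{Ran}\,B)$, which is closed because $\mathrm{Ran}\,B$ is closed and $\Xi^{-1}$ is a homeomorphism.
\item The cokernel $H/\Xi^{-1}(\mathrm{Ran}\,B)$ is isomorphic to $H/\mathrm{Ran}\,B$ via $\Xi$, so it is finite dimensional.
\end{itemize}
Fredholmness is a topological property, and the topology is unchanged because the norms are equivalent. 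So $\Xi^{-1}B$ is Fredholm for either norm, with the same index as $B$, namely $0$.

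I expect no real obstacle here. The only point that needs care is noting that $\Xi$ is invertible, which Lax--Milgram supplies by coercivity, and that $\Xi$ is self-adjoint and positive with respect to $(\cdot,\cdot)$. Self-adjointness of $\Xi$ follows from the symmetry of $\langle\cdot,\cdot\rangle$, although the computation above never actually uses it.

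For the subsequent use in the paper, I would also record that the Morse index and the nullity are unchanged. Indeed, $\langle \Xi^{-1}Bu,u\rangle=(Bu,u)$, so both operators represent the same quadratic form, and their negative-definite subspaces and kernels coincide.
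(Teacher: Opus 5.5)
Your proposal is correct and follows the paper's proof. The self-adjointness is the same chain of identities, $\langle \Xi^{-1}Bu,v\rangle=(Bu,v)=(Bv,u)=\langle \Xi^{-1}Bv,u\rangle$, obtained from (\ref{e:Abst6}) and the symmetry of both inner products; for the Fredholm property the paper only remarks that it is clear because $\Xi^{-1}$ is a bounded isomorphism, and your kernel/closed-range/cokernel check simply spells that remark out.
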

\begin{proof}[\bf Proof]
The first claim is clear since $\Xi$ (and 
 consequently $\Xi^{-1}$) is bounded and invertible.
 To prove the second one, let $(\Xi^{-1}B)^\ast$ denote the adjoint operator
 of $\Xi^{-1}B$ with respect to the inner product $\langle\cdot,\cdot\rangle$. Then 
 for any $u,v\in H$,
 \begin{align*}
 \langle (\Xi^{-1}B)^\ast u, v\rangle&=\langle  u, \Xi^{-1}Bv\rangle=
 \langle \Xi^{-1}Bv, u\rangle=(Bv, u)=(Bu,v)=\langle \Xi^{-1}Bu, v\rangle,
 \end{align*}
 where the third and the final equalities come from (\ref{e:Abst6}).
Hence $(\Xi^{-1}B)^\ast=\Xi^{-1}B$, i.e., 
$\Xi^{-1}B$ is self-adjoint with respect to  $\langle\cdot,\cdot\rangle$.
 \end{proof}

As a consequence of Claim~\ref{cl:Abst} and Proposition~\ref{prop:Abst}, we have:

\begin{corollary}\label{cor:Abst}
Under the above assumptions, suppose that $Q$ is  non-degenerate and has  the Morse index $m^-(Q)=q$.
Then there exists $0<\delta_0<\eta$ such that
 \begin{align*}
 m^-(f_t,0)=
 \begin{cases}
 m^-(Q)+ m^-(f_0,0) &\;\text{for}\quad 0<t<\delta_0,\\
 m^0(f_0,0)-m^-(Q)+ m^-(f_0,0) &\;\text{for}\quad-\delta_0<t<0.
 \end{cases}
 \end{align*}
\end{corollary}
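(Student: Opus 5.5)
The plan is to read Corollary~\ref{cor:Abst} directly off Proposition~\ref{prop:Abst}. The only work is to match the quantities in (\ref{e:Abst0}) with those in the corollary, and to check that nothing changes if the inner product on $H$ is replaced by an equivalent one as in (\ref{e:Abst6}).

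\textbf{Step 1: the nullity.} We have $m^0(f_0,0)=\dim\ker B_0=N$. The Hessian of $f_0$ at the critical point $0$ is the form $(B_0x,y)$. Its kernel is $\{x:(B_0x,y)=0\ \forall y\}=\ker B_0$, whose dimension is $N$.

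\textbf{Step 2: the index of $Q$.} Since $Q$ is non-degenerate on the $N$-dimensional space $\ker B_0$, its eigenvalues $\mu_1,\dots,\mu_N$ are all nonzero, exactly $q=m^-(Q)$ of them are negative, and the remaining $p=N-q$ are positive. Thus $m^+(Q)=N-m^-(Q)=m^0(f_0,0)-m^-(Q)$.

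\textbf{Step 3: applying the proposition.} Substituting these identifications into (\ref{e:Abst0}) gives, with the same $\delta_0$,
\[
m^-(f_t,0)=\begin{cases} m^-(Q)+m^-(f_0,0), & 0<t<\delta_0,\\ m^0(f_0,0)-m^-(Q)+m^-(f_0,0), & -\delta_0<t<0, \end{cases}
\]
which is the assertion.

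\textbf{Step 4: robustness under an equivalent inner product.} This is where Claim~\ref{cl:Abst} is used, in case the $B_t$ are only given as self-adjoint Fredholm with respect to an equivalent inner product $\langle\cdot,\cdot\rangle$.
\begin{enumerate}
\item Replace $B_t$ by $\Xi^{-1}B_t$. By Claim~\ref{cl:Abst}, each $\Xi^{-1}B_t$ is self-adjoint Fredholm for $\langle\cdot,\cdot\rangle$.
\item Continuity and differentiability of $t\mapsto\Xi^{-1}B_t$ at $0$ are inherited, since $\Xi^{-1}$ is a fixed bounded operator.
\item By (\ref{e:Abst6}) we have $\langle\Xi^{-1}B_tx,y\rangle=(B_tx,y)$. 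Hence the quadratic forms $f_t$ are unchanged, and so are their Morse indices and nullities.
\item The kernels agree, $\ker\Xi^{-1}B_0=\ker B_0$, because $\Xi^{-1}$ is invertible.
\item The form is unchanged: $\langle x,\Xi^{-1}\dot B_0y\rangle=(x,\dot B_0y)=Q(x,y)$.
\end{enumerate}
Applying Proposition~\ref{prop:Abst} in $(H,\langle\cdot,\cdot\rangle)$ therefore yields the same formula.

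No step is a genuine obstacle. The only point to watch is Step 2: the passage from $p$ to $m^0(f_0,0)-m^-(Q)$ uses non-degeneracy of $Q$, and without it the second line of the formula would fail.
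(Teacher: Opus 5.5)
Your proposal is correct and matches the paper, which obtains the corollary directly as a consequence of Claim~\ref{cl:Abst} and Proposition~\ref{prop:Abst}: passing to the equivalent inner product leaves unchanged the form $(B_tx,y)=\langle\Xi^{-1}B_tx,y\rangle$, the kernel $\ker B_0$ and $Q$, so the proposition applies verbatim. Your Steps 1--3 are redundant, since (\ref{e:Abst0}) is already literally the displayed formula, so Step 4 carries all the content; its opening sentence reverses the roles of the two inner products relative to how you then apply Claim~\ref{cl:Abst}, which is harmless by symmetry.
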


\vspace{2mm}

%{\bf Disclosure statement}\\
%No potential conflict of interest was reported by the author(s).

%\noindent{\bf Acknowledgement}\;I am grateful to the referees for their valuable suggestions on improving the manuscript.\\

\noindent{\bf Funding}\; This work was supported by National Natural Science Foundation of China [12371108].\\
%This work was supported by National Natural Science Foundation of China [11271044 and 12371108].

\noindent{\bf Data Availability}\; We do not analyse or generate any datasets, because our work proceeds within a theoretical
and mathematical approach.\\

\noindent{\bf Declarations}\\

\noindent{\bf Conflict of interest}\; The author has no Conflict of interest to declare that are relevant to the content of this
article.

\renewcommand{\refname}{REFERENCES}

\medskip

\begin{tabular}{l}
 School of Mathematical Sciences, Beijing Normal University\\
 Laboratory of Mathematics and Complex Systems, Ministry of Education\\
 Beijing 100875, The People's Republic of China\\
 E-mail address: gclu@bnu.edu.cn\\
\end{tabular}

\end{document}